\theoremstyle{plain}
\newtheorem{theorem}{Theorem}[section]
\newtheorem{lemma}[theorem]{Lemma}
\newtheorem{defi}[theorem]{Definition}
\newtheorem{cor}[theorem]{Corollary}
\newtheorem{example}[theorem]{Example}
\DeclareMathOperator{\im}{im}
\DeclareMathOperator{\ind}{ind}
\DeclareMathOperator{\sfl}{sf}
\DeclareMathOperator{\sgn}{sgn}
\DeclareMathOperator{\gra}{graph}
\DeclareMathOperator{\codim}{codim}
\DeclareMathOperator{\calkin}{Cal}
\title{Fredholm Operators and Spectral Flow}
\author{Nils Waterstraat}
\date{}
\begin{document}

\maketitle

\tableofcontents

\chapter*{Introduction}
\textit{Fredholm operators} are one of the most important classes of linear operators in mathematics. They were introduced around 1900 in the study of integral operators and by definition they share many properties with linear operators between finite dimensional spaces. They appear naturally in \textbf{global analysis} which is a branch of pure mathematics concerned with the global and topological properties of systems of differential equations on manifolds. One of the basic important facts says that every linear elliptic differential operator acting on sections of a vector bundle over a closed manifold induces a Fredholm operator on a suitable Banach space completion of bundle sections. Every Fredholm operator has an integer-valued index, which is invariant under deformations of the operator, and the most fundamental theorem in global analysis is the \textit{Atiyah-Singer index theorem} \cite{ASThm} which gives an explicit formula for the Fredholm index of an elliptic operator on a closed manifold in terms of topological data. An important special case are selfadjoint elliptic operators, which naturally appear in geometry and physics but for which, however, the Fredholm index vanishes. Atiyah, Patodi and Singer introduced in \cite{AtiyahPatodi} the \textit{spectral flow} as an integer-valued homotopy invariant for (closed) paths of selfadjoint Fredholm operators. Roughly speaking, the spectral flow is the number of eigenvalues which pass through zero in the positive direction from the start of the path to its end. Atiyah, Patodi and Singer proved in \cite{AtiyahPatodi} that the spectral flow of a closed path of selfadjoint elliptic differential operators on a closed manifold can be computed by a topological index in essentially the same way as the Fredholm index in the previous index theorem \cite{ASThm} of Atiyah and Singer. In what follows, we denote the spectral flow of a path $\mathcal{A}=\{\mathcal{A}_\lambda\}_{\lambda\in S^1}$ of selfadjoint Fredholm operators by $\sfl(\mathcal{A})$.\\
The spectral flow has been defined in several different but equivalent ways and it has become a well known and widely used integer-valued homotopy invariant for generally non-closed paths $\mathcal{A}=\{\mathcal{A}_\lambda\}_{\lambda\in I}$ of selfadjoint Fredholm operators (cf. \cite{Phillips}, \cite{UnbSpecFlow}). For example, lots of efforts have been made to compute the spectral flow for paths $\mathcal{A}$ of operators induced by boundary value problems for first order selfadjoint elliptic operators on manifolds with boundary. Several spectral flow formulas have been found in this setting expressing $\sfl(\mathcal{A})$ in terms of various invariants (cf. e.g. \cite{LeschWoj} among many other references). The spectral flow has also been used in \textbf{symplectic analysis}, where, e.g., Floer introduced it as a substitute for the Morse index in order to define a grading of his celebrated homology groups \cite{Floer}. It was thoroughly studied in this setting by Robbin and Salamon in \cite{Robbin-Salamon}, and it was also used before by Salamon and Zehnder in \cite{Salamon-Zehnder}.\\
The aim of these notes is an essentially self-contained introduction to the spectral flow for paths of (generally unbounded) selfadjoint Fredholm operators. We begin by recapitulating well known theory about bounded and unbounded operators in the first two sections following \cite{GohbergClasses}, \cite{Goldberg}, \cite{Weidmann} and \cite{Rudin}, where we particularly focus on spectral theory. The third section is devoted to the gap-topology on the space of all closed operators on a given Hilbert space, which we need in order to deal with continuous paths of operators. In the fourth section we construct the spectral flow and discuss some of its properties following \cite{UnbSpecFlow}, \cite{LeschSpecFlowUniqu} and \cite{Robbin-Salamon}. The final section is devoted to a simple example and some hints for further reading.\\
\vspace{0.1cm}

\noindent
This manuscript are extended lecture notes of a PhD course that the author gave at the Universit\`a degli studi di Torino in Italy in spring 2013. We are grateful to Anna Capietto and Alessandro Portaluri for inviting us to give these lectures, and to the audience for several valuable questions.

%%%%%%%%%%%%%%%%%%%%%%%%%%%%%%%%%%%%%%%%%%%%%%%%%%%%%%%%%%%%%%%%%%%%%%%%%%%%%%%%%%%%%%%%%%%%%%%%%%%%%%%%%%%%%%%%%%%%%%%%%%%%%%%%%%%%%%%%%%%%%%%%%%%%%%%%%%%%%%%%%%%%%%%%%%%%%%%%%%%%%%%%%%%%%%%%%%%%%%%%%%%%%%%%%%%%%%%%%%%%%%%%%%%%%%%%%%%%%%%%%%%%%%%%%%%%%%%%%%%%%%%%%%%%%%%%%%%%%%%%%%%%%%%%%%%%%%%%%%%%%%%%%%%%%%%%%%%%%%%%%%%%%%%%%%%%%%%%%%%%%%%%%%%%%%%%%%%%%%%%%%

\chapter{Linear Operators}

\section{Bounded Operators and Subspaces}
Let $E$ and $F$ be non-trivial complex Banach spaces. We denote throughout by $I_E,I_F$ the identity operators on $E$ and $F$, respectively. Recall that a linear operator $A:E\rightarrow F$ is \textit{bounded} if there exists a constant $c\geq 0$ such that

\begin{align}\label{boundedop}
\|Au\|\leq c\|u\|,\quad u\in E.  
\end{align}
The smallest possible bound in \eqref{boundedop} is the \textit{norm} $\|A\|$ of $A$ and it is given by

\begin{align}\label{normequa}
\|A\|=\sup_{u\neq 0}\frac{\|Au\|}{\|u\|}=\sup_{\|u\|\leq 1}{\|Au\|}=\sup_{\|u\|=1}\|Au\|.
\end{align}
It is a simple exercise for boring train trips to check the equalities in \eqref{normequa} as well as the following lemmata.

\begin{lemma}
The following assertions are equivalent:
\begin{itemize}
\item $A$ is continuous,
\item $A$ is continuous at some $u\in E$,
\item $A$ is continuous at $0\in E$,
\item $A$ is bounded.
\end{itemize}
\end{lemma}
\noindent
In what follows, we denote by $\mathcal{L}(E,F)$ the set of all bounded operators $A:E\rightarrow F$ which is a normed linear space with respect to the operator norm \eqref{normequa}. 

\begin{lemma}
$\mathcal{L}(E,F)$ is a Banach space.
\end{lemma}
\noindent
The proof of the previous lemma only uses that $F$ is complete and hence the dual space $N^\ast:=\mathcal{L}(N,\mathbb{C})$ of any normed linear space $N$ is a Banach space.\\
The following important result in functional analysis is called the \textit{open mapping theorem}, and it is frequently applied to prove the continuity of inverses of operators.

\begin{theorem}\label{openmapping}
If $A\in\mathcal{L}(E,F)$ is surjective, then $A$ is an open map, i.e., $A(U)\subset F$ is open for any open subset $U\subset E$.
\end{theorem}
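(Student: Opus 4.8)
The plan is to deduce everything from the completeness of $E$ and $F$ via the Baire category theorem. Write $B^E_r=\{u\in E:\|u\|<r\}$ and $B^F_r=\{v\in F:\|v\|<r\}$. First I would reduce the assertion to a single inclusion: it suffices to produce a $\delta>0$ with $B^F_\delta\subset A(B^E_1)$. Indeed, if $U\subset E$ is open and $u\in U$, choose $r>0$ with $u+B^E_r\subset U$; then by linearity $A(U)\supset A(u)+rA(B^E_1)\supset A(u)+B^F_{r\delta}$, so $A(U)$ is open.

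Next I would use surjectivity. Since $A$ is linear and onto, $F=\bigcup_{n\geq1}A(B^E_n)=\bigcup_{n\geq1}n\,A(B^E_1)$, hence also $F=\bigcup_{n\geq1}n\,\overline{A(B^E_1)}$. As $F$ is complete, Baire's theorem forces one of the closed sets $n\,\overline{A(B^E_1)}$, and hence $\overline{A(B^E_1)}$ itself, to have nonempty interior, say $v_0+B^F_\varepsilon\subset\overline{A(B^E_1)}$. Since $v_0\in\overline{A(B^E_1)}$ too, linearity of $A$ gives $B^F_\varepsilon=(v_0+B^F_\varepsilon)-v_0\subset\overline{A(B^E_1)}-\overline{A(B^E_1)}\subset\overline{A(B^E_1)-A(B^E_1)}\subset 2\,\overline{A(B^E_1)}$, that is, $B^F_{2\delta}\subset\overline{A(B^E_1)}$ with $\delta=\varepsilon/4$; rescaling, $B^F_{2\delta\cdot 2^{-k}}\subset\overline{A(B^E_{2^{-k}})}$ for every $k\geq0$.

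The decisive step is to remove the closure, that is, to show $B^F_\delta\subset A(B^E_1)$; here the completeness of $E$ enters. Given $y\in F$ with $\|y\|<\delta$, I construct $(u_k)_{k\geq1}$ in $E$ recursively: using $B^F_\delta\subset\overline{A(B^E_{1/2})}$ choose $u_1\in B^E_{1/2}$ with $\|y-Au_1\|<\delta/2$; using $B^F_{\delta/2}\subset\overline{A(B^E_{1/4})}$ choose $u_2\in B^E_{1/4}$ with $\|y-Au_1-Au_2\|<\delta/4$; and so on, so that $\|u_k\|<2^{-k}$ and $\|y-\sum_{j\leq k}Au_j\|<\delta\,2^{-k}$. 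Then $\sum_k\|u_k\|<\sum_{k\geq1}2^{-k}=1$, so by completeness of $E$ the partial sums converge to some $u\in E$ with $\|u\|<1$, and continuity of $A$ gives $Au=\lim_k\sum_{j\leq k}Au_j=y$; hence $y\in A(B^E_1)$.

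Putting the pieces together, $B^F_\delta\subset A(B^E_1)$, which by the first reduction shows that $A$ is an open map. I expect the only genuinely delicate point to be the bookkeeping in the iteration of the third step — arranging the radii so that $\sum_k\|u_k\|$ stays strictly below $1$ and the limit lands inside the open unit ball; the remainder is the familiar Baire-category packaging, with completeness of $F$ used only through Baire's theorem and completeness of $E$ used only for the convergence of $\sum_k u_k$.
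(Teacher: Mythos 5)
Your proof is correct. Note that the paper states Theorem \ref{openmapping} without proof, treating it as a standard prerequisite from functional analysis, so there is no in-paper argument to compare against; what you have written is the standard Baire-category proof. Your bookkeeping in the iterative step is sound: from $B^F_{2\delta\cdot 2^{-k}}\subset\overline{A(B^E_{2^{-k}})}$ you get $\|u_k\|<2^{-k}$ and $\|y-\sum_{j\leq k}Au_j\|<\delta 2^{-k}$ at each stage, and since $\|u_1\|<1/2$ strictly, the tail bound gives $\sum_k\|u_k\|<1/2+\sum_{k\geq 2}2^{-k}=1$, so the limit $u$ lands strictly inside the open unit ball as required. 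The one minor point worth flagging is the chain $\overline{A(B^E_1)}-\overline{A(B^E_1)}\subset\overline{A(B^E_1)-A(B^E_1)}$: this is true because subtraction is jointly continuous, but it deserves a word of justification rather than being folded silently into the inclusion chain.
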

\noindent
In contrast, \textit{compact operators} are never boundedly invertible on infinite dimensional spaces:
\begin{defi}
A linear operator $A:E\rightarrow F$ is compact if $A(U)\subset F$ is relatively compact for any bounded subset $U\subset E$.
\end{defi}
\noindent
We denote by $\mathcal{K}(E,F)$ the set of all compact operators $A:E\rightarrow F$.

\begin{lemma}
Every compact operator is bounded; i.e., $\mathcal{K}(E,F)\subset\mathcal{L}(E,F)$. Moreover, $\mathcal{K}(E,F)$ is closed and products of compact and bounded operators are compact.
\end{lemma}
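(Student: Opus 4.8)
The plan is to treat the three claims in increasing order of difficulty. That every compact operator is bounded is immediate: if $A\colon E\to F$ is compact, then the image of the closed unit ball $B_E=\{u\in E:\|u\|\leq 1\}$ is relatively compact and hence bounded in $F$, so there is a constant $c\geq 0$ with $\|Au\|\leq c$ for all $u\in B_E$, and then \eqref{normequa} gives $A\in\mathcal{L}(E,F)$.

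For the assertion on products I would use two elementary facts: a bounded operator maps bounded sets to bounded sets, and a continuous map sends relatively compact sets to relatively compact sets. Thus, if $A\in\mathcal{K}(E,F)$, $B\in\mathcal{L}(F,G)$ and $U\subset E$ is bounded, then $\overline{A(U)}$ is compact, hence so is $B(\overline{A(U)})$; being compact in a metric space, this set is closed and contains $BA(U)$, so it contains $\overline{BA(U)}$, which is therefore compact. If instead $A\in\mathcal{L}(E,F)$ and $B\in\mathcal{K}(F,G)$, then $A(U)$ is bounded, so $B(A(U))$ is relatively compact. In either case $BA$ is compact.

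The substantive point is the closedness of $\mathcal{K}(E,F)$ in $\mathcal{L}(E,F)$, and here I would exploit that a subset of the \emph{complete} metric space $F$ is relatively compact precisely when it is totally bounded. So let $A_n\to A$ in $\mathcal{L}(E,F)$ with each $A_n$ compact, fix $\varepsilon>0$, and choose $n$ with $\|A-A_n\|<\varepsilon/3$. Since $A_n(B_E)$ is totally bounded, pick $u_1,\dots,u_k\in B_E$ so that the balls of radius $\varepsilon/3$ about $A_nu_1,\dots,A_nu_k$ cover $A_n(B_E)$. The triangle inequality $\|Au-Au_j\|\leq\|Au-A_nu\|+\|A_nu-A_nu_j\|+\|A_nu_j-Au_j\|$ then shows that for each $u\in B_E$ there is a $j$ with $\|Au-Au_j\|<\varepsilon$, i.e.\ the balls of radius $\varepsilon$ about $Au_1,\dots,Au_k$ cover $A(B_E)$; hence $A(B_E)$ is totally bounded, thus relatively compact, and $A$ is compact. (Equivalently, from a bounded sequence in $E$ one extracts by a diagonal argument a subsequence along which every $A_n$ converges, and the same three-term estimate together with completeness of $F$ shows that $A$ converges along it as well.) I do not anticipate a genuine obstacle; the only care needed is to keep track of the three copies of $\varepsilon/3$ and to invoke completeness of $F$ at exactly the step where ``totally bounded'' is upgraded to ``relatively compact''.
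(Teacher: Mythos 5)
Your argument is correct and is the standard one. The paper does not actually prove this lemma: it is one of the statements grouped under ``a simple exercise for boring train trips'' immediately after \eqref{normequa}, and no proof is given in the text, so there is no route to compare against. Two small remarks on your write-up. First, in the closedness step you take a finite $\varepsilon/3$-net for $A_n(B_E)$ whose centers are of the form $A_nu_j$ with $u_j\in B_E$; depending on which definition of total boundedness one starts from, this requires the (one-line) observation that a totally bounded set admits a finite net with centers lying in the set itself, obtained by replacing each ball of a $\delta/2$-net that meets the set by a point of the set inside it. Second, you correctly isolate the single place where completeness of $F$ enters, namely the upgrade from ``totally bounded'' to ``relatively compact''; this is worth emphasising, since it is precisely why $\mathcal{K}(E,F)$ is closed in $\mathcal{L}(E,F)$ only when $F$ is complete, which is part of the standing hypothesis that $F$ is a Banach space.
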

\noindent 
Now we assume that $U,V$ are subspaces of $E$ such that $E=U\oplus V$ algebraically. In this case we have a unique projection $P:E\rightarrow E$ onto $U$ with respect to this decomposition and we can ask about the boundedness of $P$.

\begin{lemma}\label{APPFA-lemma-proj}
The projection $P:E\rightarrow E$ is bounded if and only if $U$ and $V$ are closed.
\end{lemma}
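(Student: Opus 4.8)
The plan is to prove the two implications separately, with the substantive direction resting on the open mapping theorem (Theorem \ref{openmapping}).

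For the easy direction, assume $P$ is bounded. Since $P$ is the projection onto $U$ along $V$, one has $U=\im P=\ker(I_E-P)$ and $V=\ker P$. Both $P$ and $I_E-P$ are bounded, hence continuous, so $U$ and $V$ are kernels of continuous operators and therefore closed. (The identity $\im P=\ker(I_E-P)$ is immediate from $P^2=P$ and should be spelled out in one line.)

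For the converse, suppose $U$ and $V$ are closed. Then, as closed subspaces of the Banach space $E$, they are themselves Banach spaces, and hence so is the product $U\times V$ with the norm $\|(u,v)\|=\|u\|+\|v\|$. I would consider the addition map $\Phi\colon U\times V\to E$, $\Phi(u,v)=u+v$, which is linear, bounded since $\|u+v\|\leq\|(u,v)\|$, and bijective precisely because $E=U\oplus V$ holds algebraically. By Theorem \ref{openmapping}, $\Phi$ is open, so $\Phi^{-1}\colon E\to U\times V$ is bounded. Writing $\pr_1\colon U\times V\to U$ for the projection onto the first coordinate and $\iota\colon U\hookrightarrow E$ for the inclusion, one checks that $P=\iota\circ\pr_1\circ\Phi^{-1}$: indeed, for $x=u+v$ with $u\in U$, $v\in V$ we have $\Phi^{-1}(x)=(u,v)$ and $\iota(\pr_1(u,v))=u=Px$. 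As a composition of bounded operators, $P$ is bounded.

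The only delicate point — and where the hypothesis genuinely enters — is that closedness of $U$ and $V$ is exactly what makes them (and $U\times V$) complete, which is the input needed to apply the open mapping theorem to $\Phi$; the rest is routine bookkeeping. An alternative route is the closed graph theorem, checking directly that $x_n\to x$ and $Px_n\to w$ force $w\in U$ and $x-w\in V$, hence $Px=w$ by uniqueness of the decomposition; but since only the open mapping theorem has been recorded in the text above, I would present the argument via $\Phi$.
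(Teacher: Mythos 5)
Your proof is correct. The paper does not actually supply a proof of this lemma --- it is one of the results left as ``a simple exercise for boring train trips'' --- so there is no official argument to compare against; your route via the open mapping theorem applied to the addition map $\Phi\colon U\times V\to E$ is the standard one, and both directions are handled properly (the easy direction via $U=\ker(I_E-P)$, $V=\ker P$, and the converse via completeness of $U\times V$, which is exactly where the closedness hypothesis is used). Your closing remark about the closed graph theorem is also apt: that result does appear in the paper as Theorem \ref{closedgraph}, but only in the later section on closed operators, so at this point in the exposition the open mapping argument is indeed the natural choice.
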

\noindent
A strictly related problem concerns the following definition.

\begin{defi}\label{APPFA-defi-complemented}
A closed subspace $U$ of a Banach space $E$ is called \textit{complemented} if there exists a closed subspace $V$ of $E$ such that $E=U\oplus V$.
\end{defi}
\noindent
A well known example of a non-complemented subspace of a Banach space is given by

\begin{align*}
\{\{x_n\}_{n\in\mathbb{N}}\subset\mathbb{C}:\lim_{n\rightarrow\infty} x_n=0\}
\end{align*}
which is a closed subspace of

\begin{align*}
l^\infty=\{\{x_n\}_{n\in\mathbb{N}}\subset\mathbb{C}:\sup_{n\in\mathbb{N}}|x_n|<\infty\},\quad \|\{x_n\}_{n\in\mathbb{N}}\|=\sup_{n\in\mathbb{N}}|x_n|.
\end{align*}
However, we have the following two positive results.

\begin{theorem}\label{APPFA-theorem-Hilbcomplemented}
Any closed subspace $U$ of a Hilbert space $H$ is complemented.
\end{theorem}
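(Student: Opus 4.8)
The plan is to use the orthogonal complement. Given a closed subspace $U$ of a Hilbert space $H$, I would set $V:=U^\perp=\{x\in H: \langle x,u\rangle=0 \text{ for all } u\in U\}$ and show that $V$ is a closed subspace with $H=U\oplus V$; by Definition \ref{APPFA-defi-complemented} this is exactly what is needed. That $V$ is a linear subspace is immediate from bilinearity of the inner product, and closedness follows because $V=\bigcap_{u\in U}\ker\langle\,\cdot\,,u\rangle$ is an intersection of closed sets (each functional $\langle\,\cdot\,,u\rangle$ being continuous). The two substantive points are $U\cap V=\{0\}$ and $U+V=H$.

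The first is easy: if $x\in U\cap V$ then $\langle x,x\rangle=0$, so $x=0$. The heart of the argument is $U+V=H$, i.e. that every $x\in H$ has an orthogonal decomposition $x=u+v$ with $u\in U$, $v\in U^\perp$. The standard route is the \emph{closest point} (projection) lemma: for fixed $x\in H$, consider $d:=\inf_{u\in U}\|x-u\|$ and pick a minimizing sequence $u_n\in U$ with $\|x-u_n\|\to d$. Using the parallelogram identity
\begin{align*}
\|u_n-u_m\|^2 = 2\|x-u_n\|^2 + 2\|x-u_m\|^2 - 4\left\|x-\tfrac{u_n+u_m}{2}\right\|^2
\end{align*}
together with $\tfrac{u_n+u_m}{2}\in U$ (so that the last term is $\geq d^2$), one sees that $(u_n)$ is Cauchy; since $H$ is complete and $U$ is closed, $u_n\to u\in U$ with $\|x-u\|=d$. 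Then a short variational argument — replacing $u$ by $u+tw$ for $w\in U$, $t\in\mathbb{C}$, and differentiating the resulting polynomial in $t$ at $t=0$, or simply expanding $\|x-u-tw\|^2\geq d^2$ — forces $\langle x-u,w\rangle=0$ for all $w\in U$, i.e. $v:=x-u\in U^\perp$. Hence $x=u+v$ as required.

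I expect the main obstacle — really the only non-formal step — to be establishing existence of the closest point, i.e. that the minimizing sequence converges; this is precisely where completeness of $H$, closedness of $U$, and the Hilbert (as opposed to merely Banach) structure via the parallelogram law all enter, and it is exactly the ingredient that fails in the $l^\infty$ counterexample mentioned above. Everything else — linearity and closedness of $U^\perp$, triviality of $U\cap U^\perp$, and the directness of the sum — is routine. As a final remark one could note that the associated projection $P:H\to H$ onto $U$ is then bounded by Lemma \ref{APPFA-lemma-proj}, since both $U$ and $V=U^\perp$ are closed; in fact $\|Px\|\leq\|x\|$ directly from $\|x\|^2=\|Px\|^2+\|(I-P)x\|^2$.
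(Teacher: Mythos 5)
Your proof is correct and is exactly the approach the paper has in mind: immediately after stating the theorem, the paper remarks that it is just the classical orthogonal decomposition $H=U\oplus U^\perp$, which you prove in full via the closest-point argument (parallelogram identity, minimizing sequence, variational step). The paper gives no further details, so your writeup simply supplies the standard proof the paper is alluding to.
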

\noindent
Note that the latter result is just the well known theorem on orthogonal projections and decompositions $H=U\oplus U^\perp$ for any closed subspace $U\subset H$.

\begin{lemma}\label{APPFA-lemma-finitecomplemented}
Any subspace $U$ of $E$ of finite dimension is complemented.
\end{lemma}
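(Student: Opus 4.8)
The plan is to construct an explicit bounded projection onto $U$ and then appeal to Lemma~\ref{APPFA-lemma-proj}. First I would record that $U$, being finite dimensional, is automatically closed: all norms on a finite dimensional vector space are equivalent, so $U$ is complete and hence a closed subset of $E$; in particular it makes sense to ask whether $U$ is complemented in the sense of Definition~\ref{APPFA-defi-complemented}. Fix a basis $e_1,\dots,e_n$ of $U$ and let $\varphi_1,\dots,\varphi_n$ be the associated coordinate functionals on $U$, i.e. $\varphi_i\left(\sum_{j=1}^n\lambda_j e_j\right)=\lambda_i$. Since $U$ is finite dimensional, each $\varphi_i$ is bounded on $U$.

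Next I would extend each $\varphi_i$ to a bounded functional $\widetilde{\varphi}_i\in E^\ast$ by the Hahn--Banach theorem and define $P\colon E\to E$ by $Px=\sum_{i=1}^n\widetilde{\varphi}_i(x)\,e_i$. Then $P$ is bounded and $\im P\subset U$; moreover $Pe_j=e_j$ for every $j$ because $\widetilde{\varphi}_i(e_j)=\varphi_i(e_j)=\delta_{ij}$, so $P^2=P$ and $\im P=U$. Consequently $V:=\ker P$ is closed, being the preimage of $\{0\}$ under a continuous map, and the identity $x=Px+(x-Px)$ together with $U\cap V=\{0\}$ (if $x\in U$ then $Px=x$) shows $E=U\oplus V$. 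This exhibits $U$ as complemented. One could equivalently avoid introducing $P$ altogether and simply take $V=\bigcap_{i=1}^n\ker\widetilde{\varphi}_i$, verifying $E=U\oplus V$ directly; the two arguments are the same in substance.

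I do not expect a genuine obstacle here. The one point that needs care is that Definition~\ref{APPFA-defi-complemented} requires a \emph{closed} complement, not merely an algebraic one, and this is exactly why the Hahn--Banach extension is needed: an arbitrary algebraic complement of $U$ need not be closed, whereas $V=\ker P$ is closed precisely because the extension produces a \emph{bounded} projection $P$.
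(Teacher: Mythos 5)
Your argument is correct and is the standard one. The paper itself states Lemma~\ref{APPFA-lemma-finitecomplemented} without proof, as one of several elementary facts the reader is expected to know, so there is no proof to compare against; your Hahn--Banach construction of a bounded projection onto $U$ is exactly the expected argument, and your closing remark correctly identifies why Hahn--Banach (rather than a bare algebraic complement) is what makes the complement closed, in line with Lemma~\ref{APPFA-lemma-proj}.
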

\noindent
Lemma \ref{APPFA-lemma-finitecomplemented} suggests the question if the same result holds if we require the codimension of $U$ instead of its dimension to be finite. That is, we assume to have a decomposition $E=U\oplus V$ where $V$ is of finite dimension. Since finite dimensional spaces are closed we obtain immediately that any \textit{closed} subspace of $E$ of finite codimension is complemented. However, by the following result, we cannot omit the closedness assumption.

\begin{lemma}
In any Banach space $E$ of infinite dimension, there exists a one-codimensional subspace $U$ which is not closed.
\end{lemma}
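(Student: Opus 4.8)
The plan is to realise $U$ as the kernel of a \emph{discontinuous} linear functional $f\colon E\to\mathbb{C}$. First I would construct such an $f$. Since $E$ is infinite dimensional it contains a linearly independent sequence $\{e_n\}_{n\in\mathbb{N}}$, which we may normalise so that $\|e_n\|=1$ for all $n$. By Zorn's lemma we extend $\{e_n\}_{n\in\mathbb{N}}$ to a Hamel (algebraic) basis $\mathcal{B}$ of $E$, and we define a linear map $f\colon E\to\mathbb{C}$ by setting $f(e_n)=n$ for every $n\in\mathbb{N}$, $f(b)=0$ for $b\in\mathcal{B}\setminus\{e_n:n\in\mathbb{N}\}$, and extending linearly. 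This $f$ is a well-defined linear functional, and it is unbounded because $|f(e_n)|=n\to\infty$ while $\|e_n\|=1$.

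Next I would set $U:=\ker f$. As $f$ is not identically zero, the induced map $E/U\to\mathbb{C}$ is a linear isomorphism, so $U$ has codimension one in $E$; choosing any $v\in E$ with $f(v)=1$ we get the algebraic decomposition $E=U\oplus\mathbb{C}v$.

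It remains to show that $U$ is not closed, and this is the step that genuinely uses the discontinuity of $f$. If $U$ were closed, then, since $\mathbb{C}v$ is finite dimensional and hence closed, Lemma \ref{APPFA-lemma-proj} would imply that the projection $P\colon E\to E$ onto $\mathbb{C}v$ along $U$ is bounded. But by construction $Pu=f(u)v$ for every $u\in E$, so $|f(u)|=\|Pu\|/\|v\|\leq(\|P\|/\|v\|)\,\|u\|$, contradicting the unboundedness of $f$. (Alternatively one can argue directly without Lemma \ref{APPFA-lemma-proj}: pick unit vectors $u_n$ with $f(u_n)\neq 0$ and $|f(u_n)|\to\infty$; then $w_n:=u_n/f(u_n)$ satisfies $f(w_n)=1$ and $\|w_n\|\to 0$, so for arbitrary $x\in E$ the vectors $x-f(x)w_n$ lie in $U$ and converge to $x$, whence $\overline{U}=E\neq U$.)

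The only non-elementary ingredient, and the point to treat with care, is the appeal to the axiom of choice to pass from a countable linearly independent set to a Hamel basis on which the prescribed unbounded values may be freely imposed; once $f$ is in hand, the rest is a short computation, and well-definedness of $f$ is immediate from the defining property of a Hamel basis. I therefore do not expect any serious obstacle beyond this set-theoretic input.
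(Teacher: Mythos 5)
Your proof is correct and follows essentially the same route as the paper: both realise $U$ as the kernel of an unbounded linear functional $f$, note the codimension is one, and derive a contradiction with Lemma \ref{APPFA-lemma-proj} (or directly) from the assumption that $\ker f$ is closed. You spell out the Hamel-basis construction of $f$ and offer an alternative density argument, but the underlying idea is identical to the paper's.
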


\begin{proof}
Since $E$ is of infinite dimension, there exists an unbounded linear functional $f:E\rightarrow\mathbb{R}$. Then $U:=\ker(f)$ has codimension $1$ and it is not closed. For, assume that $U$ is closed. Since $V$ is of finite dimension, we have $E=U\oplus V$, where $U$ and $V$ are closed. But $f\mid_U\equiv 0$ and $f\mid_V$ are continuous and consequently $f:E\rightarrow\mathbb{C}$ would be continuous.  
\end{proof}
\noindent
The following remarkable result implies that a finite codimensional subspace $U\subset F$ is complemented if it is the image of a bounded linear operator. 

\begin{lemma}\label{codimclosed}
If $A\in\mathcal{L}(E,F)$ is a bounded linear operator and $V\subset F$ is a closed subspace such that $F=\im(A)\oplus V$, then $\im(A)$ is closed.
\end{lemma}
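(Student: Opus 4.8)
The plan is to exhibit $\im(A)$ as the image of a closed subspace under a homeomorphism of Banach spaces, so that the open mapping theorem (Theorem \ref{openmapping}) does all the work. First I would pass to the quotient: since $A$ is bounded, $\ker(A)$ is closed, hence $E/\ker(A)$ is a Banach space, and $A$ factors through a bounded \emph{injective} operator $\widehat{A}\colon E/\ker(A)\rightarrow F$ with $\im(\widehat{A})=\im(A)$; indeed $\|\widehat{A}\,\bar u\|=\|Au\|\leq\|A\|\,\|\bar u\|$ for every coset $\bar u$, where the quotient norm is used on the right.

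Next I would assemble the operator
\begin{align*}
\Phi\colon(E/\ker(A))\oplus V\rightarrow F,\qquad \Phi(\bar u,v)=\widehat{A}\,\bar u+v,
\end{align*}
where the domain carries the norm $\|(\bar u,v)\|=\|\bar u\|+\|v\|$, under which it is a Banach space because $V$ is closed, hence complete. Then $\Phi$ is bounded, and it is a bijection: surjectivity is precisely the hypothesis $F=\im(A)+V$, and injectivity follows from the directness of the sum together with the injectivity of $\widehat{A}$, since $\widehat{A}\,\bar u+v=0$ forces $\widehat{A}\,\bar u=-v\in\im(A)\cap V=\{0\}$, so $v=0$ and $\bar u=0$.

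Finally I would invoke Theorem \ref{openmapping}: the bounded surjection $\Phi$ is an open map, so its inverse is continuous and $\Phi$ is a homeomorphism; in particular it carries closed sets to closed sets. Since $(E/\ker(A))\oplus\{0\}$ is a closed subspace of the domain and $\Phi\big((E/\ker(A))\oplus\{0\}\big)=\im(\widehat{A})=\im(A)$, we conclude that $\im(A)$ is closed in $F$.

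I do not anticipate a genuine obstacle; the only points needing a little care are verifying that $\Phi$ is well defined and injective (this is where the directness of $F=\im(A)\oplus V$ is actually used) and recalling that a continuous bijection of Banach spaces is automatically a homeomorphism, the standard corollary of Theorem \ref{openmapping}. An alternative route that avoids the quotient is to observe that $E\oplus V\rightarrow F$, $(u,v)\mapsto Au+v$, is a bounded surjection, hence a quotient map, and that $E\oplus\{0\}$ is a closed \emph{saturated} subset whose image is exactly $\im(A)$.
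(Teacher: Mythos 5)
Your proof is correct and follows essentially the same route as the paper: factor $A$ through the injective bounded operator on $E/\ker(A)$, extend to a bounded bijection from $(E/\ker(A))\oplus V$ onto $F$, invoke the open mapping theorem to see that this bijection is a homeomorphism, and then observe that $\im(A)$ is the image of the closed subspace $(E/\ker(A))\oplus\{0\}$. The only cosmetic difference is that the paper writes the auxiliary map as $\overline{S}$ rather than $\Phi$, and your concluding remark about the saturated-subset alternative is a nice aside but not part of the paper's argument.
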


\begin{proof}
Let $P:E\rightarrow E/\ker(A)$ denote the projection onto the quotient space. Then $A:E\rightarrow F$ factorises as

\begin{align*}
\xymatrix{
E\ar[dr]_P\ar[rr]^A& &\im(A)\subset F\\
&E/\ker(A)\ar[ur]_S&
}
\end{align*}
where $S:E/\ker(A)\rightarrow \im(A)$ is the bounded and bijective map defined by $S[u]=Au$, $u\in[u]\in X/\ker(A)$. We now define
\[\overline{S}:E/\ker(A)\times V\rightarrow F,\quad \overline{S}([u],v)=S[u]+w\]
and note that $E/\ker(A)\times V$ is a Banach space as $V$ and $\ker(A)$ are closed. Now $\overline{S}$ is a bijective bounded linear operator and hence $\overline{S}^{-1}$ is bounded by Theorem \ref{openmapping}. As $E/\ker(A)\times\{0\}$ is closed in $E/\ker(A)\times V$, we obtain that 

\[\im(A)=S(E/\ker(A))=\overline{S}(E/\ker(A)\times\{0\})\subset F\]
is closed.
\end{proof}

%We conclude this section by mentioning the following result which connects complemented subspaces and the existence of one-sided inverses.

%\begin{lemma}\label{APPFA-lemma-osinverses}
%Let $A:E\rightarrow F$ a bounded linear operator.

%\begin{enumerate}
%	\item[i)] Let $L$ be surjective. Then $L$ has a bounded right inverse if and only if $\ker L$ is complemented in $E$.
%	\item[ii)] Let $L$ be injective. Then $L$ has a bounded left inverse if and only if $\im L$ is closed and complemented. 
%\end{enumerate}
%\end{lemma}

%\begin{proof}
%\cite[Theorem XI.6.1]{GohbergClasses}
%\end{proof}

%%%%%%%%%%%%%%%%%%%%%%%%%%%%%%%%%%%%%%%%%%%%%%%%%%%%%%%%%%%%%%%%%%%%%%%%%%%%%%%%%%%%%%%%%%%%%%%%%%%%%%%%%%%%%%%%%%%%%%%%%%%%%%%%%%%%%%%%%%%%%%%%%%%%%%%%%%%%%%%%%%%%%%%%%%%%%%%%%%%%%%%%%%%%%%%%%%%%%%%%%%%%%%%%%%%%%%%%%%%%%%%%%%%%%%%%%%%%%%%%%%%%%%%%%%%%%%%%%%%%%%%%%%%%%%%%%%%%%%%%%%%%%%%%%%%%%%%%%%%%%%%%%%%%%%%%%%%%%%%%%%%%%%%%%%%%%%%%%%%%%%%%%%%%%%%%%%%%%%%%%%

\section{Closed Operators}

As before, we let $E$ and $F$ be non-trivial complex Banach spaces. A \textit{linear operator} acting between $E$ and $F$ is a linear map $T:\mathcal{D}(T)\rightarrow F$, where $\mathcal{D}(T)$ is a linear subspace of $E$. We usually write
\[T:\mathcal{D}(T)\subset E\rightarrow F\]
in order to emphasise the space $E$. The operator $T$ is called \textit{densely defined} if $\mathcal{D}(T)$ is dense in $E$ and it is called \textit{bounded} if there exists $c>0$ such that
\[\|Tu\|\leq c\|u\|,\quad u\in\mathcal{D}(T).\]
Given linear operators $T:\mathcal{D}(T)\subset E\rightarrow F$, $S:\mathcal{D}(S)\subset E\rightarrow F$ $R:\mathcal{D}(R)\subset F\rightarrow G$  and $\alpha\in\mathbb{C}$, we define

\begin{itemize}
	\item $\alpha\,T:\mathcal{D}(\alpha\,T)\subset E\rightarrow F$ by $(\alpha\, T)u=\alpha\,(Tu)$, $u\in\mathcal{D}(\alpha\,T)=\mathcal{D}(T)$,
	\item $T+S:\mathcal{D}(T+S)\subset E\rightarrow F$ by $(T+S)u=Tu+Su$, $u\in\mathcal{D}(T+S)=\mathcal{D}(T)\cap\mathcal{D}(S)$,
	\item $RS:\mathcal{D}(RS)\subset E\rightarrow G$ by $(RS)u=R(Su)$,
	\[u\in\mathcal{D}(RS)=S^{-1}(\mathcal{D}(R))=\{u\in E:\, u\in\mathcal{D}(S),\,Su\in\mathcal{D}(R)\}.\]
\end{itemize}
We write $T\subset S$ if $\mathcal{D}(T)\subset\mathcal{D}(S)$ and $Tu=Su$ for all $u\in\mathcal{D}(T)$. Moreover, we set $T=S$ if $T\subset S$ and $S\subset T$. Finally, we denote by $0$ the operator acting by $u\mapsto 0$ on the domain $\mathcal{D}(0)=E$.\\
For three operators $R,S,T$ acting between $E$ and $F$, one easily verifies that

\begin{itemize}
\item $0\,T\subset 0$, $0+T=T+0=T$,
\item $(R+S)+T=R+(S+T)$,
\item $S+T=T+S$,
\item $(S+T)-T\subset S$.
\end{itemize}
Moreover, if $T,T_1,T_2$ are operators from $E$ to $F$, $S,S_1,S_2$ map from $F$ to $G$ and $R$ has its range in $E$, then

\begin{itemize}
\item $(ST)R=S(TR)$,
\item $(\alpha\, S)T=S(\alpha\, T)=\alpha(ST)$ if $\alpha\neq 0$,
\item $(0\,S)T=0(ST)\subset T(0\,S)$,
\item $(S_1+S_2)T=S_1T+S_2T$,
\item $S(T_1+T_2)\supset ST_1+ST_2$,
\item $I_FT=TI_E=T$.
\end{itemize}

\begin{defi}\label{defi-invertible}
We call an operator $T:\mathcal{D}(T)\subset E\rightarrow F$ \textit{invertible}, if $T$ maps $\mathcal{D}(T)$ bijectively onto $F$. We say that $T$ has \textit{a bounded inverse}, if $T$ is invertible and $T^{-1}:F\rightarrow \mathcal{D}(T)\subset E$ is bounded.
\end{defi}
\noindent
Let us point out that there are various non-equivalent definitions of invertibility in the literature. For example, in \cite{Kato} an operator is called invertible if it is injective. On the contrary, in \cite{Hislop} an operator is said to be invertible if it has a bounded inverse according to Definition \ref{defi-invertible}.\\  
For an operator $T:\mathcal{D}(T)\subset E\rightarrow F$, we denote by

\[\gra(T)=\{(u,Tu)\in E\times F:\, u\in\mathcal{D}(T)\}\subset E\times F\]
the \textit{graph} of $T$, which is a linear subspace of the Banach space $E\times F$. Clearly, $T\subset S$ is equivalent to $\gra(T)\subset\gra(S)$.

\begin{defi}
The operator $T:\mathcal{D}(T)\subset E\rightarrow F$ is called \textit{closed} if $\gra(T)$ is a closed subspace of $E\times F$.
\end{defi}
\noindent
Note that $T$ is closed if and only if, for every sequence $\{u_n\}_{n\in\mathbb{N}}\subset\mathcal{D}(T)$ such that $(u_n,Tu_n)\rightarrow(u,v)\in E\times F$ for some $u\in E$ and $v\in F$ we have $u\in\mathcal{D}(T)$ and $Tu=v$.

\begin{example}\label{example-closed}
We consider the operator

\[T:C^1[0,1]\subset C[0,1]\rightarrow C[0,1],\quad Tu=u',\]
which is not bounded as $\|Tu_n\|_\infty=n=n\|u_n\|_\infty$ for $u_n(t)=t^n$, $n\in\mathbb{N}$. We claim that $T$ is closed. Let $\{u_n\}_{n\in\mathbb{N}}\subset C^1[0,1]$ be a sequence and $v\in C[0,1]$ such that $u_n\rightarrow u$ and $Tu_n=u'_n\rightarrow v$ uniformly. Then

\begin{align*}
\int^t_0{v(s)\,ds}=\int^t_0{\lim_{n\rightarrow 0}u'_n(s)\,ds}=\lim_{n\rightarrow\infty}\int^t_0{u'_n(s)\,ds}=u(t)-u(0),\,\, t\in[0,1],
\end{align*}
and hence
\[u(t)=u(0)+\int^t_0{v(s)\,ds},\quad t\in[0,1],\]
where $v\in C[0,1]$. Consequently, $u\in C^1[0,1]$, $Tu=v$ and so $T$ is closed.
\end{example}
\noindent
In what follows we denote the set of all closed operators acting between $E$ and $F$ by $\mathcal{C}(E,F)$. In contrast to $\mathcal{L}(E,F)$, the set $\mathcal{C}(E,F)$ is not a linear space:

\begin{example}
Assume that $D\subset E$ is dense and strictly contained in $E$ and let $T:\mathcal{D}(T)\subset E\rightarrow F$ be a closed operator defined on $\mathcal{D}(T)=D$ (e.g., $E=C[0,1]$, $Tu=u'$ on $\mathcal{D}(T)=C^1[0,1]$ as in Example \ref{example-closed}). Then $T-T=0\,T\subset 0$ is not closed because 

\[\gra(T-T)=\gra(0\,T)=D\times\{0\}\]
is not a closed subspace of $E\times F$.
\end{example}

\begin{lemma}\label{boundedclosed}
A bounded operator $T$ is closed if and only if $\mathcal{D}(T)$ is closed in $E$.
\end{lemma}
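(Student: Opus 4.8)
The plan is to prove both implications directly from the definition of closedness in terms of sequences, using that the graph is closed precisely when it contains the limit of every convergent sequence of its points, together with the boundedness estimate $\|Tu\|\leq c\|u\|$.

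First I would prove the easier direction: if $\mathcal{D}(T)$ is closed, then $T$ is closed. Take a sequence $\{u_n\}_{n\in\mathbb{N}}\subset\mathcal{D}(T)$ with $(u_n,Tu_n)\to(u,v)$ in $E\times F$. Then $u_n\to u$ in $E$, and since $\mathcal{D}(T)$ is closed we get $u\in\mathcal{D}(T)$, so $Tu$ is defined. From the estimate $\|Tu_n-Tu\|=\|T(u_n-u)\|\leq c\|u_n-u\|\to 0$ we conclude $Tu_n\to Tu$, and by uniqueness of limits in $F$ this forces $Tu=v$. Hence $(u,v)\in\gra(T)$ and $T$ is closed.

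For the converse, suppose $T$ is closed; I want to show $\mathcal{D}(T)$ is closed in $E$. Let $\{u_n\}_{n\in\mathbb{N}}\subset\mathcal{D}(T)$ with $u_n\to u$ in $E$. The point is to produce a limit for $\{Tu_n\}$ so that closedness of the graph applies. Since $\{u_n\}$ converges it is Cauchy in $E$, and the boundedness estimate gives $\|Tu_n-Tu_m\|\leq c\|u_n-u_m\|$, so $\{Tu_n\}$ is Cauchy in $F$; as $F$ is complete it converges to some $v\in F$. Thus $(u_n,Tu_n)\to(u,v)$, and closedness of $\gra(T)$ yields $u\in\mathcal{D}(T)$ (and incidentally $Tu=v$). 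Therefore $\mathcal{D}(T)$ is closed.

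I do not anticipate a genuine obstacle here; the only thing to be slightly careful about is that in the converse direction one must invoke completeness of $F$ to get a candidate limit $v$ before one can use the closedness of the graph — without that, the graph-closedness hypothesis has nothing to act on. Everything else is a routine application of the estimate $\|Tu\|\leq c\|u\|$ and the sequential characterisation of closed subspaces.
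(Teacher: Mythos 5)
Your proof is correct and takes essentially the same route as the paper: both directions are handled by the sequential characterisation of a closed graph, using the boundedness estimate and completeness of $F$ to manufacture the missing limit of $\{Tu_n\}$. In fact you are slightly more careful than the paper's own proof in spelling out the uniqueness-of-limits step that identifies $Tu$ with $v$.
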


\begin{proof}
Assume that $T$ is bounded and closed, and let $\{u_n\}_{n\in\mathbb{N}}\subset\mathcal{D}(T)$ be a sequence such that $u_n\rightarrow u$ in $E$. Since $T$ is bounded, $\{Tu_n\}_{n\in\mathbb{N}}$ is a Cauchy sequence in $F$ and hence converges to some $v\in F$. Consequently, $(u_n,Tu_n)\rightarrow (u,v)$, $n\rightarrow\infty$, and as $T$ is closed, we conclude that $u\in\mathcal{D}(T)$ (and $v=Tu$).\\
Conversely, assume that $T$ is bounded and $\mathcal{D}(T)$ is closed. If $\{u_n\}_{n\in\mathbb{N}}$ is a sequence converging to some $u\in E$, then $u\in\mathcal{D}(T)$. As $T$ is bounded, we obtain that $Tu_n\rightarrow Tu$ and hence $T$ is closed. 
\end{proof}
\noindent
We obtain as a consequence of the previous lemma that $\mathcal{L}(E,F)\subset\mathcal{C}(E,F)$.

\begin{lemma}\label{linearprop}
Let $T\in\mathcal{C}(E,F)$ be a closed operator.
\begin{enumerate}
	\item[(i)] $\alpha\,T$ is closed for any $0\neq\alpha\in\mathbb{C}$.
	\item[(ii)] If $B:\mathcal{D}(B)\subset E\rightarrow F$ is bounded and $\mathcal{D}(T)\subset\mathcal{D}(B)$, then $T+B\in\mathcal{C}(E,F)$.
	\item[(iii)] If $T$ is invertible, then $T^{-1}\in\mathcal{C}(F,E)$. 
\end{enumerate}
\end{lemma}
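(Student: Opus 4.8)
The plan is to verify all three statements through the sequential characterisation of closedness recalled after the definition, together with the observation that closedness is preserved under linear homeomorphisms of $E\times F$, since such maps carry closed subspaces to closed subspaces.

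For (i), note first that $\mathcal{D}(\alpha T)=\mathcal{D}(T)$, so it suffices to show that $\gra(\alpha T)$ is closed. The cleanest route is to consider the map $\Phi_\alpha\colon E\times F\to E\times F$, $\Phi_\alpha(u,w)=(u,\alpha w)$, which is a linear homeomorphism precisely because $\alpha\neq 0$ (its inverse is $\Phi_{1/\alpha}$), and to observe that $\Phi_\alpha(\gra(T))=\gra(\alpha T)$; closedness of $\gra(T)$ then gives closedness of $\gra(\alpha T)$. Equivalently, one argues directly with sequences: if $u_n\to u$ in $E$ and $\alpha Tu_n\to v$ in $F$, then $Tu_n\to\alpha^{-1}v$, so closedness of $T$ yields $u\in\mathcal{D}(T)$ and $Tu=\alpha^{-1}v$, whence $u\in\mathcal{D}(\alpha T)$ and $(\alpha T)u=v$.

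Statement (iii) is handled in the same spirit. Since $T$ is invertible we have $\mathcal{D}(T^{-1})=\im(T)=F$, and $\gra(T^{-1})=\{(Tu,u):u\in\mathcal{D}(T)\}$ is exactly the image of $\gra(T)$ under the flip map $E\times F\to F\times E$, $(u,w)\mapsto(w,u)$, which is a linear homeomorphism. Hence $\gra(T^{-1})$ is closed and $T^{-1}\in\mathcal{C}(F,E)$.

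The real content is (ii), and this is where the only subtlety lies. First observe that $\mathcal{D}(T+B)=\mathcal{D}(T)\cap\mathcal{D}(B)=\mathcal{D}(T)$ by hypothesis. Now take $\{u_n\}_{n\in\mathbb{N}}\subset\mathcal{D}(T)$ with $u_n\to u$ in $E$ and $(T+B)u_n\to v$ in $F$. Since $B$ is bounded, $\{Bu_n\}_{n\in\mathbb{N}}$ is a Cauchy sequence in $F$ and therefore converges to some $w\in F$; consequently $Tu_n=(T+B)u_n-Bu_n\to v-w$. Closedness of $T$ then gives $u\in\mathcal{D}(T)$ and $Tu=v-w$. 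The delicate point — and the main obstacle — is the order of these steps: one must extract convergence of $\{Bu_n\}$ \emph{before} invoking closedness of $T$, because a priori $u$ need not lie in $\mathcal{D}(B)$. Only after concluding $u\in\mathcal{D}(T)\subset\mathcal{D}(B)$ is $Bu$ defined, and then continuity of $B$ on its domain forces $Bu_n\to Bu$, hence $w=Bu$. Therefore $u\in\mathcal{D}(T+B)$ and $(T+B)u=Tu+Bu=v$, so $T+B$ is closed.
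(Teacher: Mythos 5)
Your proof is correct. For (i) and (iii) you use exactly the same homeomorphisms of $E\times F$ as the paper, so nothing to add there. For (ii) you take a genuinely different, and in fact more careful, route than the paper. The paper argues: ``As $B$ is bounded, $\mathcal{D}(B)$ is closed by Lemma \ref{boundedclosed} and $Bu_n\rightarrow Bu$,'' then feeds $Tu_n\to v-Bu$ into the closedness of $T$. But Lemma \ref{boundedclosed} states that a \emph{bounded and closed} operator has closed domain; the hypothesis in (ii) only gives that $B$ is bounded, so the paper is implicitly assuming $\mathcal{D}(B)$ closed (or $B\in\mathcal{L}(E,F)$), which is not part of the stated hypothesis. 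Your reorganisation sidesteps this: you first use boundedness of $B$ to see that $\{Bu_n\}$ is Cauchy and converges to some $w\in F$, deduce $Tu_n\to v-w$, apply closedness of $T$ to get $u\in\mathcal{D}(T)$ and $Tu=v-w$, and only \emph{then} use $\mathcal{D}(T)\subset\mathcal{D}(B)$ to make $Bu$ meaningful and continuity of $B$ on its domain to identify $w=Bu$. This yields the conclusion under precisely the stated hypotheses, without needing $\mathcal{D}(B)$ to be closed; the explicit remark about the order of steps is exactly the point that the paper's version glosses over.
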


\begin{proof}
For the first assertion we just note that the map
\[E\times F\rightarrow E\times F,\quad (u,v)\mapsto(u,\alpha\,v)\]
is a homeomorphism mapping $\gra(T)$ to $\gra(\alpha\,T)$. In order to show the second assertion let $\{u_n\}_{n\in\mathbb{N}}$ be a sequence in $\mathcal{D}(T+B)=\mathcal{D}(T)$ such that $u_n\rightarrow u\in E$ and assume that $(T+B)u_n=Tu_n+Bu_n\rightarrow v$, $n\rightarrow\infty$. As $B$ is bounded, $\mathcal{D}(B)$ is closed by Lemma \ref{boundedclosed} and $Bu_n\rightarrow Bu$. We obtain $Tu_n\rightarrow v-Bu$. Since $T$ is closed, this implies $u\in\mathcal{D}(T)$ and $Tu=v-Bu$. Consequently, $u\in\mathcal{D}(T+B)$ and $v=(T+B)u$. For the last assertion, we only have to observe that
\[\{(T^{-1}u,u)\in E\times F: u\in F\}=\{(u,Tu)\in E\times F:\,u\in\mathcal{D}(T)\}=\gra(T)\]
and that the left hand side is mapped to $\gra(T^{-1})$ under the homeomorphism $E\times F\rightarrow F\times E$, $(u,v)\mapsto (v,u)$. 
\end{proof}

\begin{example}
Let us consider the operator $T:\mathcal{D}(T)\subset C[0,1]\rightarrow C[0,1]$, $Tu=u'$, where $\mathcal{D}(T)=\{u\in C^1[0,1]:\, u(0)=0\}$. Then $T$ has a bounded inverse given by
\[T^{-1}:C[0,1]\rightarrow C[0,1],\quad (T^{-1}u)(t)=\int^t_0{u(s)\,ds}\]
and hence $T$ is closed by Lemma \ref{linearprop} and Lemma \ref{boundedclosed}.
\end{example} 
\noindent
Let $T:\mathcal{D}(T)\subset E\rightarrow F$ and $S:\mathcal{D}(S)\subset E\rightarrow F$ be two linear operators such that $\mathcal{D}(T)\subset\mathcal{D}(S)$. Then $S$ is said to be $T$-bounded if there exists $a,b\geq 0$ such that

\begin{align}\label{Tbound}
\|Su\|\leq a\|u\|+b\|Tu\|,\quad u\in \mathcal{D}(T).
\end{align}
The infimum of all $b\geq 0$ for which an $a\geq 0$ exists such that \eqref{Tbound} holds is called the $T$-bound of $S$. Note that the $T$-bound is $0$ if $S$ is bounded. The second part of the previous lemma can be improved as follows.

\begin{theorem}
Let $T:\mathcal{D}(T)\subset E\rightarrow F$ and $S:\mathcal{D}(S)\subset E\rightarrow F$ be two linear operators such that $S$ is $T$-bounded with $T$-bound less than $1$. Then $T+S$ is closed if and only if $T$ is closed.
\end{theorem}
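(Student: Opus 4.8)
The plan is to reduce the statement to the graph-closedness criterion and exploit the symmetry of the $T$-bound condition. Since $S$ is $T$-bounded, we have $\mathcal{D}(T)\subset\mathcal{D}(S)$, so $\mathcal{D}(T+S)=\mathcal{D}(T)$; and since the $T$-bound of $S$ is less than $1$, I first want to observe that the roles of $T$ and $T+S$ are interchangeable: writing $T=(T+S)+(-S)$ on $\mathcal{D}(T+S)=\mathcal{D}(T)$, I would check that $-S$ is $(T+S)$-bounded with $(T+S)$-bound less than $1$. Indeed, from \eqref{Tbound} with $b<1$ one gets $\|Tu\|\le\|(T+S)u\|+\|Su\|\le a\|u\|+\|(T+S)u\|+b\|Tu\|$, hence $\|Tu\|\le(1-b)^{-1}(a\|u\|+\|(T+S)u\|)$, and feeding this back into \eqref{Tbound} yields $\|Su\|\le a'\|u\|+b'\|(T+S)u\|$ with $b'=b/(1-b)$. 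This is $<1$ exactly when $b<1/2$, which is slightly annoying, so I may instead keep the argument one-directional and only prove ``$T$ closed $\Rightarrow$ $T+S$ closed,'' then note it also proves the converse once we know $b<1$ is enough for the forward direction — actually the cleaner route is to prove the forward implication directly and get the reverse by the symmetry trick after first shrinking: any $T$-bound $<1$ can be witnessed with constants, and one can check $b'=b/(1-b)$ is still $<1$ when $b<1/2$; for $1/2\le b<1$ one argues differently. On reflection I would just prove the forward direction and remark that the same proof applied to the pair $(T+S,-S)$ gives the converse, since the estimate $\|{-S}u\|\le a'\|u\|+b'\|(T+S)u\|$ with some $b'<1$ does hold — one can always rechoose $a,b$ so that $b$ is as close to the $T$-bound as desired, and if the $T$-bound is $<1$ then for $b$ slightly above it, $b/(1-b)<1$ fails in general, so the honest statement is that the theorem is symmetric in $T$ and $T+S$ only after verifying the $(T+S)$-bound of $-S$ is $<1$, which I will do by a direct estimate using that the $T$-bound, not merely some witnessing $b$, is $<1$.

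So the concrete steps: (1) Establish $\mathcal{D}(T+S)=\mathcal{D}(T)$. (2) Show that the $(T+S)$-bound of $-S$ is $\le \beta/(1-\beta)$ where $\beta<1$ is the $T$-bound of $S$ — wait, this is exactly the obstruction, since $\beta/(1-\beta)$ need not be $<1$. The correct fix, which I would use, is the standard one: instead of comparing norms, compare the graph norms $\|u\|_T:=\|u\|+\|Tu\|$ and $\|u\|_{T+S}:=\|u\|+\|(T+S)u\|$ on $\mathcal{D}(T)$ and show they are \emph{equivalent}. From \eqref{Tbound}, $\|(T+S)u\|\le\|Tu\|+\|Su\|\le a\|u\|+(1+b)\|Tu\|$, so $\|u\|_{T+S}\le C\|u\|_T$. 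Conversely, $\|Tu\|\le\|(T+S)u\|+\|Su\|\le\|(T+S)u\|+a\|u\|+b\|Tu\|$ gives $(1-b)\|Tu\|\le\|(T+S)u\|+a\|u\|$, i.e. $\|Tu\|\le (1-b)^{-1}(\|(T+S)u\|+a\|u\|)$, whence $\|u\|_T\le C'\|u\|_{T+S}$. Here we genuinely only need $b<1$, and no secondary smallness is required. (3) Now $T$ is closed iff $(\mathcal{D}(T),\|\cdot\|_T)$ is complete (as the graph of $T$ is isometric to $(\mathcal{D}(T),\|\cdot\|_T)$ inside $E\times F$, and a subspace is closed iff complete); likewise $T+S$ is closed iff $(\mathcal{D}(T),\|\cdot\|_{T+S})$ is complete. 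Since the two norms are equivalent on the same underlying vector space, one is complete iff the other is. This yields the equivalence immediately and symmetrically, sidestepping the bogus ``$\beta/(1-\beta)<1$'' issue entirely.

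The main obstacle, then, is precisely the temptation to run the argument through the operator-norm $T$-bound of $-S$ with respect to $T+S$ and getting stuck on $\beta/(1-\beta)$; the resolution is to work with the graph norm equivalence rather than trying to re-verify a $T$-bound-type estimate for the reversed pair. Once that shift is made, the proof is short: a two-line pair of inequalities establishing norm equivalence (using $b<1$ crucially in dividing by $1-b$), plus the elementary fact that closedness of an operator is equivalent to completeness of its domain in the graph norm, plus the observation that equivalent norms share completeness. I would also remark at the end that this recovers Lemma \ref{linearprop}(ii) as the special case where $S$ is bounded (hence has $T$-bound $0<1$), confirming consistency.

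\begin{proof}
Since $S$ is $T$-bounded we have $\mathcal{D}(T)\subset\mathcal{D}(S)$, hence $\mathcal{D}(T+S)=\mathcal{D}(T)\cap\mathcal{D}(S)=\mathcal{D}(T)$. Fix $a,b\geq 0$ with $b<1$ such that \eqref{Tbound} holds. On the common domain $\mathcal{D}:=\mathcal{D}(T)$ consider the two graph norms
\[
\|u\|_T=\|u\|+\|Tu\|,\qquad \|u\|_{T+S}=\|u\|+\|(T+S)u\|,\qquad u\in\mathcal{D}.
\]
From $\|(T+S)u\|\leq\|Tu\|+\|Su\|\leq a\|u\|+(1+b)\|Tu\|$ we get $\|u\|_{T+S}\leq C\|u\|_T$ with $C=1+a+(1+b)$. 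Conversely,
\[
\|Tu\|\leq\|(T+S)u\|+\|Su\|\leq\|(T+S)u\|+a\|u\|+b\|Tu\|,
\]
so $(1-b)\|Tu\|\leq\|(T+S)u\|+a\|u\|$, and since $b<1$ this gives $\|Tu\|\leq(1-b)^{-1}\bigl(\|(T+S)u\|+a\|u\|\bigr)$, hence $\|u\|_T\leq C'\|u\|_{T+S}$ with $C'=1+(1-b)^{-1}(1+a)$. Therefore $\|\cdot\|_T$ and $\|\cdot\|_{T+S}$ are equivalent norms on $\mathcal{D}$.

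Now observe that for any operator $R$ with domain $\mathcal{D}$ the map $\mathcal{D}\rightarrow\gra(R)$, $u\mapsto(u,Ru)$, is a linear bijection and the graph norm $\|u\|_R=\|u\|+\|Ru\|$ is equivalent to the norm induced on $\gra(R)$ from $E\times F$. Since a linear subspace of the Banach space $E\times F$ is closed if and only if it is complete, $R$ is closed if and only if $(\mathcal{D},\|\cdot\|_R)$ is a Banach space. Applying this to $R=T$ and to $R=T+S$ and using that $\|\cdot\|_T$ and $\|\cdot\|_{T+S}$ are equivalent on $\mathcal{D}$ (so that one is complete precisely when the other is), we conclude that $T+S$ is closed if and only if $T$ is closed.
\end{proof}
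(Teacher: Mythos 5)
Your proof is correct. The paper itself only cites \cite[Theorem 5.5]{Weidmann} without reproducing the argument, and the argument you give — establish equivalence of the two graph norms on $\mathcal{D}(T)=\mathcal{D}(T+S)$ using $b<1$ to divide by $1-b$, then invoke that closedness of an operator is equivalent to completeness of its domain under the graph norm (the paper's Lemma just after the definition of graph norm), and that equivalent norms share completeness — is precisely the standard proof found in Weidmann, so there is no divergence from the cited source.
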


\begin{proof}
\cite[Theorem 5.5]{Weidmann}
\end{proof}

\begin{lemma}
If $T\in\mathcal{C}(E,F)$, then $\ker(T)\subset E$ is closed.
\end{lemma}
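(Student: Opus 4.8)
The plan is to show directly that $\ker(T)$ is a closed subspace of $E$ by taking a convergent sequence in $\ker(T)$ and verifying that its limit again lies in $\ker(T)$. First I would take a sequence $\{u_n\}_{n\in\mathbb{N}}\subset\ker(T)$ with $u_n\rightarrow u$ in $E$. Since each $u_n\in\ker(T)\subset\mathcal{D}(T)$ and $Tu_n=0$ for all $n$, the sequence $\{Tu_n\}_{n\in\mathbb{N}}$ is the constant zero sequence and hence converges to $0\in F$. Therefore $(u_n,Tu_n)=(u_n,0)\rightarrow(u,0)$ in $E\times F$.

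Now I invoke closedness of $T$: since $\gra(T)$ is a closed subspace of $E\times F$ and $(u_n,Tu_n)\in\gra(T)$ converges to $(u,0)$, we conclude $(u,0)\in\gra(T)$, which by definition of the graph means $u\in\mathcal{D}(T)$ and $Tu=0$, i.e., $u\in\ker(T)$. Since $\ker(T)$ is manifestly a linear subspace (as the kernel of a linear map), this shows $\ker(T)$ is a closed linear subspace of $E$, completing the proof.

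There is essentially no obstacle here: the argument is a one-line consequence of the sequential characterisation of closed operators recorded just after the definition of closedness in the excerpt. The only point worth stating carefully is that the image sequence $\{Tu_n\}$ trivially converges (being identically zero), so that the pair converges in the product space and the closedness hypothesis applies. One could alternatively phrase this more abstractly: $\ker(T)\times\{0\}=\gra(T)\cap(E\times\{0\})$ is the intersection of two closed subspaces of $E\times F$, hence closed, and therefore its image under the projection $E\times F\rightarrow E$ — which restricted to $E\times\{0\}$ is a homeomorphism onto $E$ — is closed in $E$. I would present the elementary sequential version as it is the shortest and most transparent.
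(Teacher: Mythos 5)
Your proof is correct, and your ``alternative'' abstract phrasing at the end is exactly the proof the paper gives: $\ker(T)\times\{0\}=\gra(T)\cap(E\times\{0\})$ is closed as an intersection of closed sets. Your main sequential version is just an unwinding of that same identity, so the two are essentially the same argument.
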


\begin{proof}
This follows from 

\[\ker(T)\times\{0\}=\gra(T)\cap(E\times\{0\}),\]
as $\gra(T)$ and $E\times\{0\}$ are closed.
\end{proof}
\noindent
The following important result is usually called the \textit{closed graph theorem}.

\begin{theorem}\label{closedgraph}
If $T\in\mathcal{C}(E,F)$ and $\mathcal{D}(T)=E$, then $T\in\mathcal{L}(E,F)$.
\end{theorem}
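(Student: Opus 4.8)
The plan is to exhibit $T$ as a composition of bounded maps by exploiting that its graph is closed, hence itself a Banach space, and then invoking the open mapping theorem. First I would equip $E\times F$ with the norm $\|(u,v)\|=\|u\|+\|v\|$, under which it is a Banach space, and observe that since $T$ is closed, $\gra(T)$ is a \emph{closed} subspace of $E\times F$ and therefore a Banach space in its own right.

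Next I would introduce the two coordinate projections restricted to the graph,
\[\pi_E:\gra(T)\rightarrow E,\quad (u,Tu)\mapsto u,\qquad \pi_F:\gra(T)\rightarrow F,\quad (u,Tu)\mapsto Tu.\]
Both are linear and bounded, with norm at most $1$, directly from the definition of the norm on $E\times F$. The key observation is that $\pi_E$ is a bijection: it is surjective precisely because $\mathcal{D}(T)=E$, and it is injective because $(u,Tu)=(0,0)$ forces $u=0$ (and then $Tu=0$).

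Then comes the heart of the argument: since $\pi_E\in\mathcal{L}(\gra(T),E)$ is surjective, Theorem \ref{openmapping} shows it is an open map, and hence its inverse $\pi_E^{-1}:E\rightarrow\gra(T)$ is continuous, i.e.\ bounded. Finally I would write $T=\pi_F\circ\pi_E^{-1}$ as a composition of bounded operators, now defined on all of $E$, which yields $T\in\mathcal{L}(E,F)$.

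I do not anticipate a genuine obstacle here. The only points requiring any care are verifying that $\gra(T)$ is complete — which is immediate from the closedness hypothesis — and arranging the projections so that $\pi_E$ is a bounded bijection; everything after that is a direct appeal to the open mapping theorem. If there is a conceptual crux, it is simply recognizing that the closedness of $T$ is exactly the ingredient that turns $\gra(T)$ into a Banach space to which Theorem \ref{openmapping} can be applied.
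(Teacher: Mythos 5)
Your proof is correct and is essentially identical to the paper's: your $\pi_E$ is the paper's $P_1\mid_{\gra(T)}$, your $\pi_E^{-1}$ is the paper's map $S:u\mapsto(u,Tu)$, and both arguments conclude by composing with the second-coordinate projection after invoking the open mapping theorem.
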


\begin{proof}
By assumption, $\gra(T)\subset E\times F$ is closed and hence a Banach space in its own right. We define an operator
\[S:E\rightarrow\gra(T),\quad Su=(u,Tu)\]
and note that $S$ is bijective and its inverse is the restriction $P_1\mid_{\gra(T)}:\gra(T)\rightarrow E$, where $P_1$ denotes the projection onto the first component in $E\times F$. As $P_1\mid_{\gra(T)}$ is bounded and surjective, it is an open map by the open mapping theorem (Theorem \ref{openmapping}). Accordingly, 

\[(P_1\mid_{\gra(T)})^{-1}=S\]
is bounded. To show the assertion, we just have to observe that $T=P_2\circ S$, where $P_2$ denotes the projection on the second component in $E\times F$. 
\end{proof}
\noindent 
Note that Theorem \ref{closedgraph} implies that $\mathcal{D}(T)\neq E$ if $T\in\mathcal{C}(E,F)$ is unbounded. Moreover, if $T\in\mathcal{C}(E,F)$ is invertible, then $T^{-1}\in\mathcal{C}(F,E)$ by Lemma \ref{linearprop} and $\mathcal{D}(T^{-1})=F$. Accordingly, $T^{-1}\in\mathcal{L}(F,E)$, and so a closed operator is invertible if and only if it has a bounded inverse.

\begin{lemma}\label{boundedcomposition}
If $B\in\mathcal{L}(E,F)$ and $T\in\mathcal{C}(F,G)$ are such that $\im(B)\subset\mathcal{D}(T)$, then $TB\in\mathcal{L}(E,G)$.
\end{lemma}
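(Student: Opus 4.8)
The statement to prove is Lemma~\ref{boundedcomposition}: if $B\in\mathcal{L}(E,F)$ and $T\in\mathcal{C}(F,G)$ with $\im(B)\subset\mathcal{D}(T)$, then $TB\in\mathcal{L}(E,G)$. The plan is to show that $TB$ is a \emph{closed} operator defined on all of $E$, and then invoke the closed graph theorem (Theorem~\ref{closedgraph}) to conclude that $TB$ is bounded. Note first that the hypothesis $\im(B)\subset\mathcal{D}(T)$ together with the definition of composition of operators gives $\mathcal{D}(TB)=B^{-1}(\mathcal{D}(T))=E$, so $TB$ is everywhere defined; the only substantive point is closedness.

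First I would take a sequence $\{u_n\}_{n\in\mathbb{N}}\subset E$ with $(u_n,TBu_n)\to(u,v)\in E\times G$. Since $B$ is bounded, $Bu_n\to Bu$ in $F$. Thus $\{Bu_n\}_{n\in\mathbb{N}}\subset\mathcal{D}(T)$ satisfies $Bu_n\to Bu$ and $T(Bu_n)=TBu_n\to v$. Because $T$ is closed, this forces $Bu\in\mathcal{D}(T)$ (which we already know from $\im(B)\subset\mathcal{D}(T)$) and, more importantly, $T(Bu)=v$, i.e. $TBu=v$. Hence $(u,v)\in\gra(TB)$, so $\gra(TB)$ is closed and $TB\in\mathcal{C}(E,G)$.

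Finally, since $TB\in\mathcal{C}(E,G)$ and $\mathcal{D}(TB)=E$, Theorem~\ref{closedgraph} yields $TB\in\mathcal{L}(E,G)$, as claimed. There is essentially no obstacle here: the argument is a direct three-line reduction to the closed graph theorem, and the boundedness of $B$ is exactly what is needed to pass the convergence $u_n\to u$ through to $Bu_n\to Bu$ so that the closedness of $T$ can be applied. One could alternatively argue by composing graph maps as in the proof of Theorem~\ref{closedgraph}, but the sequential characterization of closed operators is the cleanest route.
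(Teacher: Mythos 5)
Your proof is correct and follows essentially the same route as the paper's: show $\mathcal{D}(TB)=E$, establish closedness of $TB$ via the sequential characterization (using boundedness of $B$ to pass $u_n\to u$ to $Bu_n\to Bu$ and then closedness of $T$), and apply the closed graph theorem. No differences worth noting.
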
  
 
\begin{proof}
By Theorem \ref{closedgraph} we only need to show that $TB$ is closed since $\mathcal{D}(TB)=E$ by assumption. Let $\{u_n\}_{n\in\mathbb{N}}$ be a sequence in $E$ such that $u_n\rightarrow u\in E$ and assume that $(TB)u_n\rightarrow v$, $n\rightarrow\infty$. As $B$ is bounded, we obtain $Bu_n\rightarrow Bu$. Moreover, since $T$ is closed, we get that $Bu\in\mathcal{D}(T)$ and $(TB)(u_n)\rightarrow T(Bu)$. Hence $v=TBu$ and so $TB$ is closed.
\end{proof}
\noindent 
Now let $T:\mathcal{D}(T)\subset E\rightarrow F$ be a linear operator. The \textit{graph norm} on $\mathcal{D}(T)$ is defined by
\[\|u\|_T=(\|u\|^2+\|Tu\|^2)^\frac{1}{2},\quad u\in\mathcal{D}(T).\]
Note that $T:\mathcal{D}(T)\rightarrow F$ is bounded when considered as an operator on the normed linear space $\mathcal{D}(T)$ with respect to the graph norm. The normed linear space $\mathcal{D}(T)$ with respect to $\|\cdot\|_T$ is isometric to the linear subspace $\gra(T)$ of $E\times F$. Hence we immediately obtain the following result.

\begin{lemma}
$T\in\mathcal{C}(E,F)$ if and only if $\mathcal{D}(T)$ is a Banach space with respect to the graph norm.
\end{lemma}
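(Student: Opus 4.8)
The plan is to exploit the stated isometry between $(\mathcal{D}(T),\|\cdot\|_T)$ and $\gra(T)\subset E\times F$, together with the characterisation of closedness as a topological property of $\gra(T)$. The key background facts I would invoke are: (i) the explicit isometry $\iota:\mathcal{D}(T)\to\gra(T)$, $\iota(u)=(u,Tu)$, which is surjective by construction and preserves norms since $\|\iota(u)\|_{E\times F}^2=\|u\|^2+\|Tu\|^2=\|u\|_T^2$; and (ii) the elementary fact that a metric space isometric to a complete one is itself complete, and conversely.

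First I would argue the forward direction. Suppose $T\in\mathcal{C}(E,F)$, so $\gra(T)$ is a closed subspace of the Banach space $E\times F$, hence complete. Since $\iota$ is a surjective isometry onto $\gra(T)$, the normed space $(\mathcal{D}(T),\|\cdot\|_T)$ is complete as well, i.e., a Banach space. For the converse, suppose $(\mathcal{D}(T),\|\cdot\|_T)$ is a Banach space. Then $\gra(T)$, being isometric to it via $\iota$, is complete. A complete subspace of a metric space is closed, so $\gra(T)$ is closed in $E\times F$; that is exactly the statement $T\in\mathcal{C}(E,F)$.

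The only point requiring a little care — and the closest thing to an obstacle here, though it is minor — is making sure the isometry claim is set up correctly and that "complete subspace implies closed" is applied in the right ambient space. Concretely: $\iota$ is linear and bijective onto $\gra(T)$ by definition of the graph; that it is isometric is the displayed identity $\|u\|_T^2=\|(u,Tu)\|_{E\times F}^2$ relating the graph norm to the product norm on $E\times F$ (with the product norm taken as $\|(x,y)\|=(\|x\|^2+\|y\|^2)^{1/2}$, which is the one compatible with this computation). Since the excerpt has already stated this isometry just before the lemma, I would simply cite it. The rest is the standard transfer of completeness under isometries and the standard fact that completeness of a subset of a complete (indeed, of any) metric space is equivalent to its being closed — both of which I would state in one or two sentences without proof.

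In short, the argument is a two-line consequence of the preceding remark: closed graph $\Leftrightarrow$ complete graph $\Leftrightarrow$ complete $(\mathcal{D}(T),\|\cdot\|_T)$, where the first equivalence is "closed subset of a Banach space $\Leftrightarrow$ complete" and the second is transfer of completeness along the isometry $\iota$.
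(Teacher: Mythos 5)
Your argument is exactly the one the paper has in mind: the text introduces the isometry $u\mapsto(u,Tu)$ just before the lemma and then declares the result immediate, and your two-line chain (closed graph $\Leftrightarrow$ complete graph $\Leftrightarrow$ complete $(\mathcal{D}(T),\|\cdot\|_T)$) is precisely that implicit proof spelled out. The approach and the supporting facts you invoke match the paper's intent, so there is nothing to add.
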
 
\noindent
The following result shows that the Banach space structures are essentially determined by the underlying domains of the operators.

\begin{lemma}
If $T_1,T_2\in\mathcal{C}(E,F)$ have the same domain $D=\mathcal{D}(T_1)=\mathcal{D}(T_2)$, then the graph norms of $T_1$ and $T_2$ on $D$ are equivalent. 
\end{lemma}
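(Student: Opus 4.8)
The plan is to apply the closed graph theorem (Theorem~\ref{closedgraph}) to the identity map, regarded as an operator between the two Banach spaces obtained by equipping $D$ with the respective graph norms. First I would invoke the lemma immediately preceding this one: since $T_1$ and $T_2$ are closed, both $(D,\|\cdot\|_{T_1})$ and $(D,\|\cdot\|_{T_2})$ are Banach spaces. Consider then the linear operator $\iota\colon(D,\|\cdot\|_{T_1})\to(D,\|\cdot\|_{T_2})$, $\iota u=u$, which is defined on all of the Banach space $(D,\|\cdot\|_{T_1})$.

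The second step is to check that $\iota$ is closed. Let $\{u_n\}_{n\in\mathbb{N}}\subset D$ with $u_n\to u$ in $\|\cdot\|_{T_1}$ and $\iota u_n=u_n\to v$ in $\|\cdot\|_{T_2}$. Because $\|w\|\le\|w\|_{T_i}$ for every $w\in D$ and $i=1,2$, both sequences converge, to $u$ and to $v$ respectively, in the norm of $E$ as well; hence $u=v$. Thus $\gra(\iota)$ is closed, and Theorem~\ref{closedgraph} gives $\iota\in\mathcal{L}\big((D,\|\cdot\|_{T_1}),(D,\|\cdot\|_{T_2})\big)$, i.e.\ there is a constant $c_1\ge 0$ with $\|u\|_{T_2}\le c_1\|u\|_{T_1}$ for all $u\in D$.

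Finally, exchanging the roles of $T_1$ and $T_2$ in exactly the same argument produces a constant $c_2\ge 0$ with $\|u\|_{T_1}\le c_2\|u\|_{T_2}$ for all $u\in D$. The two inequalities together are precisely the assertion that $\|\cdot\|_{T_1}$ and $\|\cdot\|_{T_2}$ are equivalent norms on $D$. I do not expect any genuine obstacle here; the only point deserving a moment's attention is the closedness of $\gra(\iota)$, which reduces to the trivial remark that graph-norm convergence dominates convergence in $E$. One could alternatively phrase the last step by noting that $\iota$ is a continuous bijection between Banach spaces, so that $\iota^{-1}$ is continuous by the open mapping theorem (Theorem~\ref{openmapping}).
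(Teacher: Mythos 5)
Your proof is correct, and it is the same basic strategy as the paper's: reduce the claim to the closed graph theorem via the fact (the preceding lemma) that $D$ with either graph norm is complete. The only difference is which map you feed into Theorem~\ref{closedgraph}. The paper applies it to $T_1\colon D_{T_2}\to F$, checks closedness using the closedness of $T_1$ as an operator from $E$ to $F$, and then massages the resulting bound $\|T_1u\|_F\le c(\|u\|_E+\|T_2u\|_F)$ into a comparison of graph norms by adding $\|u\|_E$ and passing between the $\ell^1$- and $\ell^2$-type norms. You instead apply the theorem to the identity $\iota\colon(D,\|\cdot\|_{T_1})\to(D,\|\cdot\|_{T_2})$, where closedness reduces to uniqueness of limits in $E$ (neither closedness of $T_1$ nor of $T_2$ is needed at that stage, only that each graph norm dominates $\|\cdot\|_E$), and the boundedness conclusion is already the desired inequality with no further manipulation. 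Your variant is slightly cleaner for exactly that reason; the paper's variant has the minor pedagogical advantage of exhibiting explicitly that $T_1$ is bounded with respect to the $T_2$-graph norm, which is often what one actually wants to use. Both are correct, and swapping the indices finishes either argument.
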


\begin{proof}
We denote by $D_{T_1}$ and $D_{T_2}$ the space $D$ with the graph norm of $T_1$ and $T_2$, respectively. Let us consider $T_1:D_{T_2}\rightarrow F$ and let $\{(u_n,T_1u_n)\}_{n\in\mathbb{N}}\subset\gra(T_1:D_{T_2}\rightarrow F)$ be any sequence which converges in $D_{T_2}\times F$ to an element $(u,v)\in D_{T_2}\times F$. Since $u_n\rightarrow u$ in $D_{T_2}$ implies that $u_n$ converges also to $u$ with respect to the norm of $E$, we get from the closedness of $T_1$ that $(u_n,T_1u_n)\rightarrow (u,T_1u)$ in $E\times F$. Hence $(u_n,T_1u_n)\rightarrow (u,T_1u)$ in $D_{T_2}\times F$ which implies that $T_1:D_{T_2}\rightarrow F$ is closed. As $D_{T_2}$ is complete, we obtain from Theorem \ref{closedgraph} that $T_1:D_{T_2}\rightarrow F$ is bounded. Accordingly, there exists a constant $c>0$ such that
\[\|T_1u\|_F\leq c(\|u\|_E+\|T_2u\|_F)\]
and thus

\[\|u\|_E+\|T_1u\|_F\leq (c+1)(\|u\|_E+\|T_2u\|_F).\]
By using elementary inequalities, we finally obtain

\[(\|u\|^2_E+\|T_1u\|^2_F)^{\frac{1}{2}}\leq\sqrt{2}(c+1)(\|u\|^2_E+\|T_2u\|^2_F)^{\frac{1}{2}}.\]
The assertion follows by swapping $T_1$ and $T_2$.
\end{proof}

\noindent
We now consider special classes of closed Operators.

\begin{defi}
An operator $T\in\mathcal{C}(E,F)$ is called \textit{Fredholm} if $\ker(T)$ is of finite dimension and $\im(T)$ of finite codimension. The \textit{Fredholm index} of a Fredholm operator is defined as

\begin{align*}
\ind(T)=\dim\ker(T)-\codim\im(T).
\end{align*}
\end{defi}
\noindent
Note that $T$ is assumed to be a closed operator. The following result is often required as an additional assumption in the definition of a Fredholm operator.

\begin{lemma}
If $T\in\mathcal{C}(E,F)$ is Fredholm, then $\im(T)\subset F$ is closed.
\end{lemma}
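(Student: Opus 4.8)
The plan is to reduce to the bounded situation treated in Lemma \ref{codimclosed} by passing to the graph norm. First I would equip $\mathcal{D}(T)$ with the graph norm $\|\cdot\|_T$; since $T$ is closed, the resulting normed space $X_T:=(\mathcal{D}(T),\|\cdot\|_T)$ is a Banach space, and $T$, regarded as an operator $X_T\rightarrow F$, is bounded. Since $\im(T)$ is unchanged by this reinterpretation, it suffices to show that the image of the bounded operator $T:X_T\rightarrow F$ is closed.

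Next I would split off the kernel. As $\ker(T)$ is finite dimensional, it is in particular a finite-dimensional subspace of the Banach space $X_T$, hence complemented in $X_T$ by Lemma \ref{APPFA-lemma-finitecomplemented}; fix a closed subspace $W\subset X_T$ with $X_T=\ker(T)\oplus W$. Being a closed subspace of a Banach space, $W$ is itself a Banach space, so the restriction $A:=T\mid_W$ belongs to $\mathcal{L}(W,F)$. Moreover $A$ is injective and, because $X_T=\ker(T)+W$, we have $\im(A)=\im(T)$.

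Finally I would bring in the finite codimension hypothesis. Since $\codim\im(T)<\infty$, there is a finite-dimensional, hence closed, subspace $V\subset F$ with $F=\im(T)\oplus V=\im(A)\oplus V$. Now $A\in\mathcal{L}(W,F)$ and $V$ is a closed subspace complementary to $\im(A)$, so Lemma \ref{codimclosed} applies and yields that $\im(A)=\im(T)$ is closed in $F$.

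The only genuine obstacle is the unboundedness of $T$, which blocks a direct appeal to Lemma \ref{codimclosed}; the graph-norm trick removes it, and the rest is bookkeeping with finite-dimensional complements. One should be slightly careful that the complement of $\ker(T)$ must be taken inside $X_T$, where $\ker(T)$ sits as a finite-dimensional subspace of a Banach space, rather than inside $E$, and that $W$ inherits completeness as a closed subspace of $X_T$.
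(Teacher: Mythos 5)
Your proof is correct and follows the same route as the paper: pass to the graph norm on $\mathcal{D}(T)$ to make $T$ bounded, then invoke Lemma \ref{codimclosed} using a finite-dimensional (hence closed) complement of $\im(T)$. The intermediate step of complementing $\ker(T)$ inside $X_T$ and restricting to $W$ is harmless but unnecessary: Lemma \ref{codimclosed} does not require injectivity, since its proof already factors the operator through the quotient $E/\ker(A)$, so the paper applies it directly to $T:X_T\rightarrow F$.
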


\begin{proof}
If we consider $T$ on its domain $\mathcal{D}(T)$ with respect to the graph norm, then $T$ is bounded. As a change of the norm in the domain does not affect $\im(T)$, the assertion follows from Lemma \ref{codimclosed}.
\end{proof}

\begin{example}
Let us consider once again the operator $T:C^1[0,1]\subset C[0,1]\rightarrow C[0,1]$ given by $Tu=u'$. We have already seen in Example \ref{example-closed} that $T$ is closed. The kernel of $T$ is the one-dimensional subspace of $C[0,1]$ consisting of the constant functions. Given $v\in C[0,1]$, then $u(t)=\int^s_0{v(s)\,ds}$, $t\in[0,1]$, defines an element in $C^1[0,1]$ such that $Tu=v$. Hence $T$ is surjective and we conclude that $T$ is a Fredholm operator of index $1$.
\end{example}

\noindent
Finally, we mention without proof the following two results on the stability of Fredholmness.

\begin{theorem}\label{stabFred}
Let $T\in\mathcal{C}(E,F)$ be Fredholm. Then there exists $\gamma>0$ such that for every $S:\mathcal{D}(S)\subset E\rightarrow F$ such that $\mathcal{D}(T)\subset\mathcal{D}(S)$ and
\[\|Su\|\leq\gamma(\|u\|+\|Tu\|),\quad u\in\mathcal{D}(T),\]
the operator $T+S$ is Fredholm and

\begin{enumerate}
	\item[(i)] $\dim\ker(T+S)\leq\dim\ker(T)$,
	\item[(ii)] $\codim\im(T+S)\leq\codim\im(T)$,
	\item[(iii)] $\ind(T+S)=\ind(T)$. 
\end{enumerate} 
\end{theorem}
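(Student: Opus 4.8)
The plan is to reduce the unbounded statement to the classical bounded Fredholm perturbation theorem by passing to the graph norm. Equip $\mathcal{D}(T)$ with the graph norm $\|\cdot\|_T$, so that $(\mathcal{D}(T),\|\cdot\|_T)$ is a Banach space (since $T$ is closed) and $T\colon(\mathcal{D}(T),\|\cdot\|_T)\to F$ becomes a \emph{bounded} operator; call it $\widetilde{T}\in\mathcal{L}(\mathcal{D}(T),F)$. Since $\ker(\widetilde{T})=\ker(T)$ and $\im(\widetilde{T})=\im(T)$ as sets, $\widetilde{T}$ is a bounded Fredholm operator of the same index. The hypothesis $\|Su\|\le\gamma(\|u\|+\|Tu\|)$ for $u\in\mathcal{D}(T)$ says precisely that $S$, restricted to $\mathcal{D}(T)$ and viewed as a map $\widetilde{S}\colon(\mathcal{D}(T),\|\cdot\|_T)\to F$, is bounded with $\|\widetilde{S}\|\le\sqrt{2}\,\gamma$ (using $\|u\|+\|Tu\|\le\sqrt 2\,\|u\|_T$). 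Then $\widetilde{T}+\widetilde{S}$ is the bounded operator $(T+S)\colon(\mathcal{D}(T),\|\cdot\|_T)\to F$, and $\mathcal{D}(T+S)=\mathcal{D}(T)$ since $\mathcal{D}(T)\subset\mathcal{D}(S)$.

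Next I would invoke the bounded-operator version of the stability theorem: there is $\gamma_0>0$ (depending only on $\widetilde{T}$) such that every $\widetilde{S}\in\mathcal{L}(\mathcal{D}(T),F)$ with $\|\widetilde{S}\|<\gamma_0$ makes $\widetilde{T}+\widetilde{S}$ Fredholm with $\dim\ker(\widetilde{T}+\widetilde{S})\le\dim\ker(\widetilde{T})$, $\codim\im(\widetilde{T}+\widetilde{S})\le\codim\im(\widetilde{T})$, and $\ind(\widetilde{T}+\widetilde{S})=\ind(\widetilde{T})$. Choosing $\gamma:=\gamma_0/\sqrt{2}$ then forces $\|\widetilde{S}\|<\gamma_0$, so all three conclusions hold for $\widetilde{T}+\widetilde{S}=(T+S)\colon(\mathcal{D}(T),\|\cdot\|_T)\to F$. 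Since kernel, image, codimension of image, and hence index are unaffected by replacing the norm on the domain by an equivalent (indeed, by the graph) norm, and since $T+S$ is closed by Theorem~\ref{stabFred}'s hypotheses forcing the $T$-bound of $S$ to be $\le\gamma<1$ (so the earlier perturbation theorem applies, or one argues directly that $\gra(T+S)$ is closed because $\|\cdot\|_{T+S}$ and $\|\cdot\|_T$ are equivalent on $\mathcal{D}(T)$), the statement transfers verbatim to $T+S\in\mathcal{C}(E,F)$.

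The construction of $\gamma_0$ for the bounded case — which I am allowed to cite but which is the analytic heart of the matter — goes by the standard route: pick a (bounded) parametrix, i.e.\ $R\in\mathcal{L}(F,\mathcal{D}(T))$ with $R\widetilde{T}=I-P$ and $\widetilde{T}R=I-Q$ where $P,Q$ are finite-rank projections onto $\ker(\widetilde{T})$ and a complement of $\im(\widetilde{T})$; such $R$ exists because $\widetilde{T}$ is bounded Fredholm and $\ker(\widetilde{T})$, $\im(\widetilde{T})$ are complemented (finite dimension / finite codimension with closed image). For $\|\widetilde{S}\|<\|R\|^{-1}$ the operators $I+R\widetilde{S}$ and $I+\widetilde{S}R$ are invertible via Neumann series, so $R$ is still a parametrix for $\widetilde{T}+\widetilde{S}$ modulo compacts, giving Fredholmness; the index is then locally constant because $\ind$ is continuous on the bounded Fredholm operators, and the semicontinuity of $\dim\ker$ and $\codim\im$ follows from the parametrix identities (an element of $\ker(\widetilde{T}+\widetilde{S})$ lies in the range of $P(I+R\widetilde{S})^{-1}$-type expressions, bounding its dimension by $\operatorname{rank}P$).

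The main obstacle is conceptual rather than computational: one must be careful that the perturbation hypothesis is stated \emph{intrinsically} in terms of $\|u\|+\|Tu\|$ rather than presupposing $S$ bounded on $E$, and that the resulting $\gamma$ depends only on $T$ (through $\|R\|$), not on $S$. Once the passage to the graph norm is made cleanly, everything else is either the classical bounded theory or the already-established fact (used repeatedly above) that $\ker$, $\im$, $\codim\im$, closedness of the image, and index are invariant under equivalent renormings of the domain; the only genuine subtlety is verifying that $T+S$ remains \emph{closed}, which I would settle by noting the $T$-bound of $S$ is at most $\gamma<1$ and appealing to the perturbation theorem stated just before Lemma~\ref{linearprop}'s successors, or equivalently by observing directly that the graph norm of $T+S$ is equivalent to that of $T$ on $\mathcal{D}(T)$.
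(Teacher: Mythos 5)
The paper itself does not prove Theorem~\ref{stabFred}; it simply refers to \cite[Theorem XVII.4.2]{GohbergClasses}. So there is no ``paper's own proof'' to compare against step by step --- you have supplied an argument where the text leaves one to the reference. Your reduction to the bounded case via the graph norm is the standard route and is entirely in the spirit of the paper, which uses exactly this device in the adjacent lemma that a closed Fredholm operator has closed range. The substance is correct: $T\colon(\mathcal{D}(T),\|\cdot\|_T)\to F$ is a bounded Fredholm operator with the same kernel, range, and index as $T\in\mathcal{C}(E,F)$; the hypothesis $\|Su\|\le\gamma(\|u\|+\|Tu\|)$ is precisely the statement that $S\mid_{\mathcal{D}(T)}$ has operator norm at most $\sqrt{2}\gamma$ in $\mathcal{L}((\mathcal{D}(T),\|\cdot\|_T),F)$; and the classical bounded stability theorem (parametrix plus Neumann series, index locally constant, rank of $P$ bounding $\dim\ker$) transfers back because kernel, image, and $\codim\im$ are algebraic invariants that do not see the choice of norm on the domain.

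One point deserves to be made explicit rather than left to a parenthetical. Your choice $\gamma:=\gamma_0/\sqrt 2$ does not by itself guarantee $\gamma<1$, and both closedness arguments you invoke --- the $T$-bound perturbation theorem cited in the paper and the direct observation that $\|\cdot\|_{T+S}$ and $\|\cdot\|_T$ are equivalent on $\mathcal{D}(T)$ --- genuinely require the $T$-bound of $S$ to be strictly below $1$ (indeed, $\|u\|+\|Tu\|\le(1-\gamma)^{-1}(\|u\|+\|(T+S)u\|)$ only once $\gamma<1$). Since the paper's definition of Fredholm presupposes membership in $\mathcal{C}(E,F)$, the conclusion $T+S$ Fredholm is vacuous until this is settled. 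The fix is trivial --- take $\gamma:=\min\{\gamma_0/\sqrt 2,\,1/2\}$, say, which also upgrades your $\|\widetilde{S}\|\le\gamma_0$ to a strict inequality --- but it should be stated, since $\gamma_0$ coming out of the parametrix bound $\|R\|^{-1}$ may well exceed $\sqrt 2$. With that adjustment the proof is complete and, as far as one can tell, essentially the argument carried out in the source the paper cites.
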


\begin{proof}
\cite[Theorem XVII.4.2]{GohbergClasses}
\end{proof}

\begin{theorem}\label{stabFredcomp}
Let $T\in\mathcal{C}(E,F)$ be Fredholm and $S:\mathcal{D}(S)\subset E\rightarrow F$ such that $\mathcal{D}(T)\subset\mathcal{D}(S)$ and $S\mid_{\mathcal{D}(T)}:\mathcal{D}(T)\rightarrow F$ is compact with respect to the graph norm of $T$ on $\mathcal{D}(T)$. Then $T+S$ is Fredholm and

\[\ind(T+S)=\ind(T).\]
\end{theorem}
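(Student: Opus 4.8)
The plan is to reduce the statement to the classical fact that a compact perturbation of a \emph{bounded} Fredholm operator is again Fredholm of the same index, and to deduce that fact from the Riesz--Schauder theorem together with the stability result already at hand, Theorem~\ref{stabFred}.

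First I would pass to the graph norm. Since $T$ is closed, the domain $W:=\mathcal{D}(T)$ equipped with $\|\cdot\|_T$ is a Banach space and $\widehat{T}:=T\colon W\to F$ is bounded; a change of the norm on the domain affects neither the kernel nor the image, so $\widehat{T}$ is Fredholm with $\ind\widehat{T}=\ind T$, and by hypothesis $\widehat{S}:=S\mid_{\mathcal{D}(T)}\colon W\to F$ is compact. As $\mathcal{D}(T+S)=\mathcal{D}(T)$ and $T+S$ agrees with $\widehat{T}+\widehat{S}$ as a map, it suffices to show that $\widehat{T}+\widehat{S}\colon W\to F$ is Fredholm with $\ind(\widehat{T}+\widehat{S})=\ind\widehat{T}$ and that $T+S$ is closed. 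The closedness then comes for free: writing $W=\ker(\widehat{T}+\widehat{S})\oplus W_0$ with $W_0$ closed (Lemma~\ref{APPFA-lemma-finitecomplemented}), the bounded bijection $\widehat{T}+\widehat{S}\colon W_0\to\im(\widehat{T}+\widehat{S})$ of Banach spaces has a bounded inverse by Theorem~\ref{openmapping}, and combining this with the equivalence of all norms on the finite-dimensional kernel one checks that the graph norm of $T+S$ is equivalent to $\|\cdot\|_T$ on $\mathcal{D}(T)$; hence $\mathcal{D}(T+S)$ is complete in its graph norm and $T+S$ is closed. Since $\ker(T+S)$ and $\im(T+S)$ coincide as sets with $\ker(\widehat{T}+\widehat{S})$ and $\im(\widehat{T}+\widehat{S})$, all remaining claims about $T+S$ reduce to the corresponding ones about $\widehat{T}+\widehat{S}$.

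Next I would settle the bounded case. For a Banach space $B$ and $K'\in\mathcal{K}(B,B)$ the operator $I_B+K'$ is Fredholm: its kernel is finite-dimensional because its unit ball lies in the relatively compact set $\overline{K'(\{u:\|u\|\le 1\})}$; writing $B=\ker(I_B+K')\oplus B_1$ with $B_1$ closed, the operator is bounded below on $B_1$ (if $u_n\in B_1$, $\|u_n\|=1$, $(I_B+K')u_n\to 0$, a subsequence has $K'u_n\to v$, whence $u_n\to -v\in B_1$ with $(I_B+K')v=0$ and $\|v\|=1$, contradicting $\ker(I_B+K')\cap B_1=\{0\}$), so $\im(I_B+K')=(I_B+K')(B_1)$ is closed; and it has finite codimension by the classical ascending/descending chain analysis of Riesz --- which in fact yields $\dim\ker(I_B+K')=\codim\im(I_B+K')$, hence index $0$ --- or, more quickly, by Schauder's theorem that $(K')^\ast$ is compact together with the identification of the codimension of a closed subspace with the dimension of its annihilator. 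Given now a Fredholm $A\in\mathcal{L}(W,F)$ and $K\in\mathcal{K}(W,F)$, I would construct in the usual way a bounded parametrix $R\in\mathcal{L}(F,W)$ with $RA=I_W-P$ and $AR=I_F-Q$, where $P$ and $Q$ are of finite rank (using that $\ker A$ is complemented, that $\im A$ is closed of finite codimension, and Theorem~\ref{openmapping}). Then $R(A+K)=I_W+K_1$ and $(A+K)R=I_F+K_2$ with $K_1,K_2$ compact, both Fredholm by the previous step; hence $\ker(A+K)\subseteq\ker(R(A+K))$ is finite-dimensional, $\im(A+K)\supseteq\im\big((A+K)R\big)$ has finite codimension, and $\im(A+K)$ is closed by Lemma~\ref{codimclosed}, so $A+K$ is Fredholm. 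The same reasoning applied to $tK$ shows that $A+tK$ is Fredholm for every $t\in[0,1]$, and since $\|(t-t_0)Ku\|\le|t-t_0|\,\|K\|\,(\|u\|_W+\|(A+t_0K)u\|_F)$, Theorem~\ref{stabFred} forces $t\mapsto\ind(A+tK)$ to be locally constant, hence constant, on $[0,1]$; thus $\ind(A+K)=\ind A$. Taking $A=\widehat{T}$, $K=\widehat{S}$ and combining with the first step completes the proof.

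The step I expect to be the real obstacle is the Riesz--Schauder assertion, and within it the finite-codimension (equivalently, index $0$) statement for $I_B+K'$: this is the only point that does not reduce to formal manipulation and must invoke either the full chain argument of classical Riesz theory or the duality theory of compact operators. The graph-norm reduction, the parametrix bookkeeping, and the homotopy invariance of the index are all routine once Theorem~\ref{openmapping} and Theorem~\ref{stabFred} are available.
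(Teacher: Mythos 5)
The paper does not actually prove Theorem~\ref{stabFredcomp}: its ``proof'' is only a citation of \cite[Theorem XVII.4.3]{GohbergClasses}, so there is no argument in the text to compare yours against. Your proposal supplies a complete, self-contained proof along the standard lines, and it is correct.

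Two small remarks on the write-up. First, the logical order is slightly awkward: you present the closedness of $T+S$ before establishing that $\widehat{T}+\widehat{S}$ is Fredholm, even though the closedness argument already uses the finite-dimensionality of $\ker(\widehat{T}+\widehat{S})$ and the closedness of its range. Your wording (``the closedness then comes for free'') makes clear you intend the Fredholmness of $\widehat{T}+\widehat{S}$ to be established first, so this is a matter of exposition, not a gap; still, for a final version it would be cleaner to prove the bounded statement first and derive the closedness of $T+S$ afterwards. Second, when you conclude that $\im(A+K)$ is closed from Lemma~\ref{codimclosed}, it is worth saying explicitly that a finite-codimensional subspace always admits a finite-dimensional, hence closed, algebraic complement --- that is the hypothesis the lemma needs. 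The rest (graph-norm reduction, the parametrix $R$ with $RA=I_W-P$ and $AR=I_F-Q$, the Riesz--Schauder step for $I_B+K'$, and the homotopy $t\mapsto A+tK$ combined with Theorem~\ref{stabFred} to pin down the index) is exactly the classical route, and you correctly flag the Riesz--Schauder finite-codimension/index-zero step as the one place where genuine analysis, rather than bookkeeping, is required.
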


\begin{proof}
\cite[Theorem XVII.4.3]{GohbergClasses}
\end{proof}

%%%%%%%%%%%%%%%%%%%%%%%%%%%%%%%%%%%%%%%%%%%%%%%%%%%%%%%%%%%%%%%%%%%%%%%%%%%%%%%%%%%%%%%%%%%%%%%%%%%%%%%%%%%%%%%%%%%%%%%%%%%%%%%%%%%%%%%%%%%%%%%%%%%%%%%%%%%%%%%%%%%%%%%%%%%%%%%%%%%%%%%%%%%%%%%%%%%%%%%%%%%%%%%%%%%%%%%%%%%%%%%%%%%%%%%%%%%%%%%%%%%%%%%%%%%%%%%%%%%%%%%%%%%%%%%%%%%%%%%%%%%%%%%%%%%%%%%%%%%%%%%%%%%%%%%%%%%%%%%%%%%%%%%%%%%%%%%%%%%%%%%%%%%%%%%%%%%%%%%%%%

\section{Spectral Theory}
Let $T:\mathcal{D}(T)\subset E\rightarrow E$ be a linear operator. We call $\lambda\in\mathbb{C}$ an \textit{eigenvalue} of $T$ if there exists $u\in\mathcal{D}(T)$, $u\neq 0$, such that $Tu=\lambda u$. If $\lambda$ is not an eigenvalue, then $\lambda-T$ is injective and hence the \textit{resolvent operator}

\[R(\lambda,T)=(\lambda-T)^{-1},\quad\mathcal{D}(R(\lambda,T))=\im(T)\subset E,\]
is well defined. We define the \textit{resolvent set} of $T$ by

\[\rho(T)=\{\lambda\in\mathbb{C}:\lambda-T\,\,\,\text{bijective},\,\, R(\lambda,T)\in\mathcal{L}(E)\}.\]
A first important observation reads as follows:

\begin{lemma}
If $T\notin\mathcal{C}(E)$, then $\rho(T)=\emptyset$.
\end{lemma}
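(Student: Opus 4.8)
The plan is to argue by contradiction, showing that a non-empty resolvent set forces $T$ to be closed. So suppose $\lambda\in\rho(T)$ and put $S:=\lambda I_E-T$, which has domain $\mathcal{D}(S)=\mathcal{D}(T)$. By the definition of $\rho(T)$, the operator $S$ maps $\mathcal{D}(S)$ bijectively onto $E$ and its inverse $S^{-1}=R(\lambda,T)$ belongs to $\mathcal{L}(E)$.

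First I would note that $S^{-1}$, being a bounded operator defined on all of $E$, is closed: its domain is the (trivially closed) space $E$, so Lemma \ref{boundedclosed} applies and $S^{-1}\in\mathcal{C}(E)$. Since $S^{-1}$ is an invertible closed operator whose inverse is precisely $S$, part (iii) of Lemma \ref{linearprop} yields $S\in\mathcal{C}(E)$. (Equivalently, one could observe directly that $\gra(S^{-1})$ is the image of $\gra(S)$ under the coordinate swap $E\times E\to E\times E$, $(u,v)\mapsto(v,u)$, which is a homeomorphism; this is exactly the reasoning behind Lemma \ref{linearprop}(iii).)

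It then remains to transfer closedness from $S$ back to $T$. By Lemma \ref{linearprop}(i), $-S=T-\lambda I_E$ is closed, and writing $T=(T-\lambda I_E)+\lambda I_E$ with $\lambda I_E$ bounded and everywhere defined (so that $\mathcal{D}(T-\lambda I_E)=\mathcal{D}(T)\subset E=\mathcal{D}(\lambda I_E)$), Lemma \ref{linearprop}(ii) gives $T\in\mathcal{C}(E)$, contradicting the hypothesis $T\notin\mathcal{C}(E)$. Hence no $\lambda\in\rho(T)$ can exist, i.e. $\rho(T)=\emptyset$.

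There is no genuine obstacle here, since all the substantive content is already packaged into Lemmas \ref{boundedclosed} and \ref{linearprop}; the only point requiring a little care is the bookkeeping of domains — that $S^{-1}$ really has full domain $E$ (so Lemma \ref{boundedclosed} forces it to be closed) and that adding $\lambda I_E$ back in does not shrink the domain of $T-\lambda I_E$.
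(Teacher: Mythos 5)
Your proof follows exactly the same line of reasoning as the paper's: show that $R(\lambda,T)$ is bounded hence closed by Lemma \ref{boundedclosed}, transfer closedness to $\lambda-T$ via Lemma \ref{linearprop}(iii), and then back to $T$ via parts (i) and (ii) of the same lemma. You have merely spelled out in full the applications of Lemma \ref{linearprop} that the paper's terser proof leaves implicit; the argument is correct.
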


\begin{proof}
If $\lambda\in\rho(T)\neq\emptyset$, then $R(\lambda,T)=(\lambda-T)^{-1}$ is bounded and hence closed by Lemma \ref{boundedclosed}. Then $\lambda-T$ is closed by Lemma \ref{linearprop} and so $T$ is closed as well. 
\end{proof}

\noindent
Accordingly, we assume in what follows that $T\in\mathcal{C}(E)$. Note that in this case we obtain from Theorem \ref{closedgraph} that

\begin{align*}
\rho(T)=\{\lambda\in\mathbb{C}:\lambda-T\,\,\,\text{is bijective}\}.
\end{align*}
We define the \textit{spectrum} $\sigma(T)$ of $T$ to be $\mathbb{C}\setminus\rho(T)$ and the \textit{point spectrum} $\sigma_p(T)\subset\sigma(T)$ as the set of all eigenvalues of $T$. 

\begin{lemma}
The resolvent set $\rho(T)\subset\mathbb{C}$ is open and, accordingly, $\sigma(T)\subset\mathbb{C}$ is closed. Moreover, if $\lambda_0\in\rho(T)\neq \emptyset$ and $|\lambda-\lambda_0|<\|R(\lambda_0,T)\|^{-1}$, then $\lambda\in\rho(T)$ and

\[R(\lambda,T)=\sum^{\infty}_{k=0}{(\lambda-\lambda_0)^kR(\lambda_0,T)^{k+1}},\]
where the series converges in the norm of $\mathcal{L}(E)$. In particular, $R(\cdot,T)$ is analytic on $\rho(T)$.
\end{lemma}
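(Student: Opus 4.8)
The plan is to reduce the entire statement to the classical Neumann (geometric) series for bounded operators by means of a single algebraic factorisation. First I would fix $\lambda_0\in\rho(T)$ (nonempty by hypothesis) and $\lambda\in\mathbb{C}$ with $|\lambda-\lambda_0|<\|R(\lambda_0,T)\|^{-1}$, and record the identity
\[
\lambda-T=\bigl[\,I+(\lambda-\lambda_0)R(\lambda_0,T)\,\bigr]\,(\lambda_0-T),
\]
to be read as an identity of operators with domain $\mathcal{D}(T)$: since $R(\lambda_0,T)(\lambda_0-T)u=u$ for $u\in\mathcal{D}(T)$, the right-hand side sends such a $u$ to $(\lambda_0-T)u+(\lambda-\lambda_0)u=(\lambda-T)u$. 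Here $R(\lambda_0,T)\in\mathcal{L}(E)$ by definition of $\rho(T)$, and $\lambda-T$, $\lambda_0-T$ are closed by Lemma~\ref{linearprop} since $T\in\mathcal{C}(E)$.

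The only analytic ingredient is then the standard fact that, because $\|(\lambda-\lambda_0)R(\lambda_0,T)\|<1$, the operator $I+(\lambda-\lambda_0)R(\lambda_0,T)\in\mathcal{L}(E)$ is boundedly invertible, with inverse the series $\sum_{k=0}^{\infty}(-1)^{k}(\lambda-\lambda_0)^{k}R(\lambda_0,T)^{k}$, convergent in the norm of $\mathcal{L}(E)$ by comparison with the scalar geometric series $\sum_{k}|\lambda-\lambda_0|^{k}\|R(\lambda_0,T)\|^{k}$ together with completeness of $\mathcal{L}(E)$ (Lemma~\ref{boundedclosed} is not needed here, only that $\mathcal{L}(E)$ is a Banach space). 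Feeding this back into the factorisation, $\lambda-T$ is a bijection from $\mathcal{D}(T)$ onto $E$, being the composition of the bijection $\lambda_0-T\colon\mathcal{D}(T)\to E$ with the bijection $I+(\lambda-\lambda_0)R(\lambda_0,T)\colon E\to E$; its inverse is
\[
R(\lambda,T)=R(\lambda_0,T)\,\bigl[\,I+(\lambda-\lambda_0)R(\lambda_0,T)\,\bigr]^{-1},
\]
a product of operators in $\mathcal{L}(E)$, hence itself in $\mathcal{L}(E)$. Expanding the Neumann series and multiplying through by $R(\lambda_0,T)$, whose left action on $\mathcal{L}(E)$ is norm-continuous, yields the displayed power-series expansion of $R(\lambda,T)$ in powers of $\lambda-\lambda_0$, convergent in $\mathcal{L}(E)$. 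In particular $\lambda\in\rho(T)$, using the characterisation $\rho(T)=\{\mu\in\mathbb{C}:\mu-T\text{ bijective}\}$ valid for closed $T$ (obtained earlier from Theorem~\ref{closedgraph}).

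From this, openness of $\rho(T)$ is immediate: each $\lambda_0\in\rho(T)$ carries the open ball of radius $\|R(\lambda_0,T)\|^{-1}$ about it inside $\rho(T)$, so $\sigma(T)=\mathbb{C}\setminus\rho(T)$ is closed. Finally, the displayed expansion exhibits $R(\cdot,T)$ near every point of $\rho(T)$ as a convergent power series with coefficients in the Banach space $\mathcal{L}(E)$, which is exactly the definition of analyticity, so $R(\cdot,T)$ is analytic on all of $\rho(T)$.

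I expect the only point demanding genuine care to be the bookkeeping with the unbounded domain: checking that the factorisation above is a bona fide operator identity on $\mathcal{D}(T)$ rather than a purely formal manipulation, and that the composition of two bijections, one of them unbounded with domain $\mathcal{D}(T)$, really is a bijection $\mathcal{D}(T)\to E$ whose inverse is the product (in the opposite order) of the two inverses, so that this inverse lands in $\mathcal{L}(E)$. Once that is settled, the remainder is the scalar geometric-series estimate.
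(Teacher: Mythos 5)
Your argument is correct and takes a genuinely different route from the paper. The paper writes down the candidate series $S(\lambda)=\sum_{k}(\lambda-\lambda_0)^kR(\lambda_0,T)^{k+1}$ and verifies by hand that it is both a left and a right inverse of $\lambda-T$: the left-inverse identity $S(\lambda)(T-\lambda)=I_{\mathcal D(T)}$ is a telescoping computation, while the right-inverse identity $(T-\lambda)S(\lambda)=I_E$ is obtained by computing $(T-\lambda)u_n$ on the partial sums $u_n$ and invoking the \emph{closedness} of $T-\lambda$ to pass to the limit. You instead isolate the single algebraic factorisation $\lambda-T=\bigl[I+(\lambda-\lambda_0)R(\lambda_0,T)\bigr](\lambda_0-T)$ on $\mathcal D(T)$ and reduce everything to inverting a bounded operator by the Neumann series. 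That buys a conceptually cleaner proof: the bijectivity of $\lambda-T$ and the boundedness of its inverse drop out at once as a composition of bijections, and you never need the closedness of $T$ in the argument (neither the closed graph theorem nor a limiting argument with partial sums), because $R(\lambda,T)=R(\lambda_0,T)\bigl[I+(\lambda-\lambda_0)R(\lambda_0,T)\bigr]^{-1}$ is visibly a product of two elements of $\mathcal L(E)$. The paper's route avoids having to check the domain bookkeeping in the factorisation, but pays with the $u_n\to S(\lambda)v$ closedness argument.

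One small point worth flagging: carried out honestly with the paper's stated definition $R(\lambda,T)=(\lambda-T)^{-1}$, your expansion produces $\sum_{k}(-1)^k(\lambda-\lambda_0)^kR(\lambda_0,T)^{k+1}=\sum_{k}(\lambda_0-\lambda)^kR(\lambda_0,T)^{k+1}$, i.e.\ there is a $(-1)^k$ that your write-up silently drops when claiming to recover the displayed formula. The displayed formula without the alternating sign is what one gets from the opposite convention $R(\lambda,T)=(T-\lambda)^{-1}$, which is in fact the convention the paper's own proof quietly adopts (it computes $R(\lambda_0,T)(T-\lambda)u$ as if $R(\lambda_0,T)=(T-\lambda_0)^{-1}$). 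So the discrepancy is a sign-convention inconsistency inherited from the source rather than a defect of your argument; with the factorisation $T-\lambda=\bigl[I-(\lambda-\lambda_0)R(\lambda_0,T)\bigr](T-\lambda_0)$ and $R(\lambda_0,T)=(T-\lambda_0)^{-1}$ your method reproduces the displayed series verbatim.
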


\begin{proof}
We assume that $\rho(T)\neq\emptyset$ and take some $\lambda_0\in\rho(T)$. If $|\lambda-\lambda_0|<\|R(\lambda_0,T)\|^{-1}$ then the series

\begin{align*}
S(\lambda):=\sum^{\infty}_{k=0}{(\lambda-\lambda_0)^kR(\lambda_0,T)^{k+1}}
\end{align*}
converges absolutely and we claim that $S(\lambda)=R(\lambda,T)=(T-\lambda)^{-1}$. If $u\in \mathcal{D}(T)$, then

\begin{align}\label{equ:resolvent}
R(\lambda_0,T)(T-\lambda)u&=(T-\lambda_0)^{-1}(T-\lambda_0-(\lambda-\lambda_0))u=u-(\lambda-\lambda_0)R(\lambda_0,T)u
\end{align} 
and we obtain

\begin{align*}
S(\lambda)(T-\lambda)u&=\sum^\infty_{k=0}{(\lambda-\lambda_0)^kR(\lambda_0,T)^k(u-(\lambda-\lambda_0)R(\lambda_0,T)u)}\\
&=\sum^\infty_{k=0}{(\lambda-\lambda_0)^kR(\lambda_0,T)^ku}-\sum^\infty_{k=0}{(\lambda-\lambda_0)^{k+1}R(\lambda_0,T)^{k+1}u}=u.
\end{align*}
Consequently, $S(\lambda)(T-\lambda)=I_{\mathcal{D}(T)}$. Now let $v\in E$ be given. We define
\[u_n:=\sum^n_{k=0}{(\lambda-\lambda_0)^kR(\lambda_0)^{k+1}v},\quad n\in\mathbb{N},\]
which is a sequence in $\mathcal{D}(T)$ converging to $u:=S(\lambda)v\in E$. By using a computation as in \eqref{equ:resolvent}, we obtain

\begin{align*}
(T-\lambda)u_n&=\sum^n_{k=0}{(\lambda-\lambda_0)^k(T-\lambda)R(\lambda_0,T)^{k+1}v}\\
&=\sum^n_{k=0}{(\lambda-\lambda_0)^k(I_H-(\lambda-\lambda_0)R(\lambda_0,T))R(\lambda_0,T)^{k}v}\\
&=\sum^n_{k=0}{(\lambda-\lambda_0)^kR(\lambda_0,T)^{k}v}-\sum^n_{k=0}{(\lambda-\lambda_0)^{k+1}R(\lambda_0,T)^{k+1}v}\\&=v-(\lambda-\lambda_0)^{n+1}R(\lambda_0,T)^{n+1}v
\end{align*} 
and so we see from our assumption on $\lambda$ that $(T-\lambda)u_n\rightarrow v\in E$, $n\rightarrow\infty$. As $T-\lambda$ is closed, we conclude that $u\in\mathcal{D}(T)$ and $(T-\lambda)S(\lambda)v=(T-\lambda)u=v$. Consequently, $(T-\lambda)S(\lambda)=I_E$ and we finally obtain that $S(\lambda)=(T-\lambda)^{-1}=R(\lambda,T)$.
\end{proof}

\noindent
Let us point out that in contrast to operators in $\mathcal{L}(E)$, the spectrum of elements in $\mathcal{C}(E)$ can be unbounded and even empty:

\begin{example}
We consider again the closed operators

\[T:\mathcal{D}(T)\subset C[0,1]\rightarrow C[0,1],\quad T_0:\mathcal{D}(T_0)\subset C[0,1]\rightarrow C[0,1],\]
where 

\[\mathcal{D}(T)=C^1[0,1],\qquad \mathcal{D}(T_0)=\{u\in C^1[0,1]:\, u(0)=0\}\]
and both operators map elements $u$ to their first derivative.\\
For the operator $T$, we see that $e^{\lambda\,t}\in\ker(\lambda-T)$ for all $\lambda\in\mathbb{C}$ and hence $\sigma(T)=\mathbb{C}$. For $T_0$ it is readily seen that $\ker(\lambda-T_0)=\{0\}$ for all $\lambda\in\mathbb{C}$. Moreover, if $g\in C[0,1]$, then $u(t)=-e^{\lambda t}\int^t_0{e^{-\lambda s}\,g(s)\,ds}$, $t\in[0,1]$, is a solution of the equation $(\lambda-T_0)u=g$ and so

\[\|(\lambda-T_0)^{-1}g\|=\|u\|\leq e^{|\lambda|}\|g\|.\]
Hence $\sigma(T_0)=\emptyset$.
\end{example}

\noindent
There are various definitions of subsets of $\sigma(T)$ other than $\sigma_p(T)$. Here we just want to introduce the \textit{essential spectrum} $\sigma_{ess}(T)$ which consists of all $\lambda\in\mathbb{C}$ such that $\lambda-T$ is not a Fredholm operator. In contrast to the case of bounded operators, it is possible that $\sigma_{ess}(T)=\emptyset$. Note that in general we neither have $\sigma(T)=\sigma_p(T)\cup\sigma_{ess}(T)$ nor $\sigma_p(T)\cap\sigma_{ess}(T)=\emptyset$.\\
As next step we introduce \textit{Riesz projections}. Let $T\in\mathcal{C}(E)$ and assume that we have a disjoint union $\sigma(T)=\sigma\cup\tau$, where $\sigma$ is contained in a bounded Cauchy domain\footnote{The definition of Cauchy domain can be found in \cite[Sec. I.1]{GohbergClasses}. For example, any open and connected subset of $\mathbb{C}$ whose boundary is a closed rectifiable Jordan curve is a Cauchy domain.} $\Delta$ such that $\tau\cap\overline{\Delta}=\emptyset$. Let $\Gamma$ denote the oriented boundary of $\Delta$. For $N\in\mathbb{N}$ sufficiently large we approximate $\Gamma$ by a union of straight-line segments $\Delta_i$, $i=1,\ldots N$, such that $\bigcup^N_{i=1}\Delta_i$ is a closed polygonal contour containing $\sigma$. We choose $\lambda_i\in\Delta_i$, $i=1,\ldots,N$ and set

\begin{align*}
P^N_\sigma=\frac{1}{2\pi i}\sum^N_{i=1}{R(\lambda_i,T)|\Delta_i|},
\end{align*}
where $|\Delta_i|$ denotes the length of the segment $\Delta_i$. In this way we can construct a sequence $\{P^N_\sigma\}\subset\mathcal{L}(H)$ such that the lengths of the straight-line segments $\Lambda_i$ converge to $0$. One can show that the limit $\lim_{N\rightarrow\infty}P^N_\sigma$ exists in $\mathcal{L}(E)$ and does not depend on the choices made in the construction of $\{P^N_\sigma\}$. We denote the obtained operator by

\begin{align*}
\frac{1}{2\pi i}\int_{\Gamma}{(\lambda-T)^{-1}\,d\lambda}.
\end{align*}
Given an operator $T$ acting on a linear space $E$, a subspace $M\subset E$ is called $T$-invariant if $T(M\cap\mathcal{D}(T))\subset M$. In this case $T\mid_M$ denotes the operator $T$ with domain $M\cap\mathcal{D}(T)$ and range in $M$.

\begin{theorem}\label{spectralprojections}
Let $T\in\mathcal{C}(E)$ with spectrum $\sigma(T)=\sigma\cup\tau$, where $\sigma$ is contained in a bounded Cauchy domain $\bigtriangleup$ such that $\overline{\bigtriangleup}\cap\tau=\emptyset$. Let $\Gamma$ be the oriented boundary of $\bigtriangleup$. Then

\begin{enumerate}
	\item[(i)] $P_\sigma:=\frac{1}{2\pi i}\int_{\Gamma}{(\lambda-T)^{-1}\,d\lambda}$ does not depend on the choice of $\bigtriangleup$,
	\item[(ii)] $P_\sigma$ is a projection, i.e., $P^2_\sigma=P_\sigma$,
	\item[(iii)] the subspaces $M=\im(P_\sigma)$ and $N=\ker(P_\sigma)$ are $T$-invariant,
	\item[(iv)] the subspace $M$ is contained in $\mathcal{D}(T)$ and $T\mid_M$ is bounded,
	\item[(v)] $\sigma(T\mid_M)=\sigma$ and $\sigma(T\mid_N)=\tau$. 
\end{enumerate}
\end{theorem}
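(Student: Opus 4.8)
The plan is to build everything on one structural fact: the operator $P_\sigma$ \emph{commutes with $T$} in the sense appropriate to unbounded operators, commutes with every resolvent $R(\lambda,T)$ for $\lambda\in\rho(T)$, and has range already contained in $\mathcal{D}(T)$ with $TP_\sigma$ bounded. To see this, recall that $P_\sigma$ is a limit in $\mathcal{L}(E)$ of Riemann sums $\frac{1}{2\pi i}\sum_i R(\lambda_i,T)\,\Delta\lambda_i$; each $R(\lambda_i,T)$ maps $E$ into $\mathcal{D}(T)$, so these sums lie in $\mathcal{D}(T)$, and applying $T$ and using $TR(\lambda,T)u=\lambda R(\lambda,T)u-u$ turns them into Riemann sums for $\frac{1}{2\pi i}\int_\Gamma\lambda R(\lambda,T)u\,d\lambda$ (the term $-\frac{u}{2\pi i}\int_\Gamma d\lambda$ vanishes since $\Gamma$ is closed). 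As $T$ is closed, this gives $\im(P_\sigma)\subset\mathcal{D}(T)$ and $TP_\sigma=\frac{1}{2\pi i}\int_\Gamma\lambda R(\lambda,T)\,d\lambda\in\mathcal{L}(E)$, and running the same argument on $u\in\mathcal{D}(T)$ with $R(\lambda,T)Tu=TR(\lambda,T)u$ yields $TP_\sigma u=P_\sigma Tu$. The resolvent identity $R(\lambda,T)R(\mu,T)=(\mu-\lambda)^{-1}(R(\lambda,T)-R(\mu,T))$ shows that all resolvents commute, hence $R(\lambda,T)P_\sigma=P_\sigma R(\lambda,T)$ for $\lambda\in\rho(T)$ (deforming $\Gamma$ off $\lambda$ if necessary, which is allowed by (i)).

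With this lemma the first four items are short. Item (i) is Cauchy's integral theorem for the $\mathcal{L}(E)$-valued analytic function $R(\cdot,T)$ on the open set $\rho(T)$, since any two admissible contours are homologous there. For (ii), choose a second Cauchy domain $\Delta_1$ with $\sigma\subset\Delta_1\subset\overline{\Delta_1}\subset\Delta$ and $\overline{\Delta_1}\cap\tau=\emptyset$; write $P_\sigma^2$ as a double contour integral over $\Gamma_1=\partial\Delta_1$ and $\Gamma=\partial\Delta$, insert the resolvent identity, and evaluate the inner integrals, which are $\int_\Gamma(\mu-\lambda)^{-1}\,d\mu=2\pi i$ for $\lambda$ on $\Gamma_1$ (inside $\Gamma$) and $\int_{\Gamma_1}(\mu-\lambda)^{-1}\,d\lambda=0$ for $\mu$ on $\Gamma$ (outside $\Gamma_1$); what is left is exactly $P_\sigma$. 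For (iii)--(iv): as $P_\sigma\in\mathcal{L}(E)$ is a projection, $M=\im(P_\sigma)$ and $N=\ker(P_\sigma)$ are closed and complementary, and $M\subset\mathcal{D}(T)$ by the lemma; if $v\in M$ then $v=P_\sigma v$, so $Tv=(TP_\sigma)v$ with $\|Tv\|\le\|TP_\sigma\|\,\|v\|$ and $Tv=P_\sigma(Tv)\in M$, giving both $T|_M\in\mathcal{L}(M)$ and $T$-invariance of $M$, while for $v\in N\cap\mathcal{D}(T)$ one has $P_\sigma(Tv)=T(P_\sigma v)=0$, so $Tv\in N$.

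Item (v) is the substantive part. From the lemma one gets $E=M\oplus N$ with $\mathcal{D}(T)=M\oplus(N\cap\mathcal{D}(T))$, and $\lambda-T$ respects this splitting, acting as the everywhere-defined bounded operator $\lambda-T|_M$ on $M$ and as the closed operator $\lambda-T|_N$ on $N$; hence $\lambda-T$ is bijective with bounded inverse iff both pieces are (the inverse being $R(\lambda,T|_M)P_\sigma+R(\lambda,T|_N)(I_E-P_\sigma)$), so $\sigma(T)=\sigma(T|_M)\cup\sigma(T|_N)$ and in particular $\rho(T)\subset\rho(T|_M)\cap\rho(T|_N)$. It remains to prove $\sigma(T|_M)\subset\sigma$ and $\sigma(T|_N)\subset\tau$: together with $\sigma(T|_M)\cup\sigma(T|_N)=\sigma\cup\tau$ and $\sigma\cap\tau=\emptyset$ these inclusions are forced to be equalities. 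For $\lambda\notin\overline{\Delta}$ consider $B_\lambda=\frac{1}{2\pi i}\int_\Gamma(\lambda-\mu)^{-1}R(\mu,T)\,d\mu\in\mathcal{L}(E)$; it commutes with $P_\sigma$ (hence preserves $M$ and $N$) and maps into $\mathcal{D}(T)$, and bringing $T$ under the integral sign via $(\lambda-T)R(\mu,T)=(\lambda-\mu)R(\mu,T)+I_E$ together with $\int_\Gamma(\lambda-\mu)^{-1}\,d\mu=0$ collapses both $(\lambda-T)B_\lambda$ and $B_\lambda(\lambda-T)$ to $P_\sigma$; thus $B_\lambda|_M=(\lambda-T|_M)^{-1}$ and $\lambda\in\rho(T|_M)$, and combined with $\rho(T)\subset\rho(T|_M)$ this gives $\sigma(T|_M)\subset\sigma$. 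Dually, for $\lambda\in\Delta$ consider $G_\lambda=\frac{1}{2\pi i}\int_\Gamma(\mu-\lambda)^{-1}R(\mu,T)\,d\mu$; the same manipulation, now with $\int_\Gamma(\mu-\lambda)^{-1}\,d\mu=2\pi i$, gives $(\lambda-T)G_\lambda=I_E-P_\sigma=G_\lambda(\lambda-T)$ on $\mathcal{D}(T)$, so $G_\lambda|_N=(\lambda-T|_N)^{-1}$ and $\lambda\in\rho(T|_N)$, whence $\sigma(T|_N)\subset\tau$.

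The hard part is really just the repeated interchange of the unbounded (but closed) operator $T$ with the $\mathcal{L}(E)$-valued contour integrals --- for $P_\sigma$ in the preliminary lemma and again for $B_\lambda$ and $G_\lambda$ in (v) --- which is exactly where closedness of $T$ and the identity $TR(\lambda,T)=\lambda R(\lambda,T)-I_E$ do the work. The other point requiring care is the inside/outside bookkeeping for the contours in (ii) and in the construction of $B_\lambda$ and $G_\lambda$, which is why one passes to a nested pair of Cauchy domains and uses the deformation freedom supplied by (i).
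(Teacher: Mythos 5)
The paper does not actually prove this theorem --- it simply cites \cite[Theorem XV.2.1]{GohbergClasses} --- so there is no internal argument to compare with. Your proof is correct, and it is in substance the standard Riesz functional calculus argument for closed (possibly unbounded) operators, which is also the one given in the cited reference and in Dunford--Schwartz. The key points you identify are exactly the ones that carry the weight: using closedness of $T$ together with $TR(\lambda,T)=\lambda R(\lambda,T)-I_E$ to pull $T$ through the contour integral (giving $\im(P_\sigma)\subset\mathcal{D}(T)$ and $TP_\sigma\in\mathcal{L}(E)$), the nested-contour bookkeeping via the resolvent identity for $P_\sigma^2=P_\sigma$, and then the auxiliary operators $B_\lambda$ and $G_\lambda$ to pin down $\sigma(T|_M)$ and $\sigma(T|_N)$. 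The only thing worth stating a bit more explicitly is the step $\mathcal{D}(T)=M\oplus(N\cap\mathcal{D}(T))$: it follows because for $u\in\mathcal{D}(T)$ one has $P_\sigma u\in M\subset\mathcal{D}(T)$ and hence $(I_E-P_\sigma)u=u-P_\sigma u\in\mathcal{D}(T)\cap N$, which is what makes $\lambda-T$ respect the splitting. Everything else is in order, including the final combinatorial step turning $\sigma(T|_M)\subset\sigma$, $\sigma(T|_N)\subset\tau$, disjointness of $\sigma$ and $\tau$, and $\sigma(T|_M)\cup\sigma(T|_N)=\sigma\cup\tau$ into the asserted equalities.
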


\begin{proof}
\cite[Theorem XV.2.1]{GohbergClasses}
\end{proof}

\noindent
$P_\sigma$ is called the \textit{Riesz projection} of the operator $T$ with respect to $\sigma$.\\
We call a point $\lambda\in\sigma(T)$ an \textit{eigenvalue of finite type} if $\lambda$ is isolated in $\sigma(T)$ and the associated projection $P_{\{\lambda\}}$ has finite rank. Since by Theorem \ref{spectralprojections}, $\{\lambda\}=\sigma(T\mid_{\im(P_{\{\lambda\}})})$ and $\im(P_{\{\lambda\}})$ is finite dimensional, it follows that $\lambda\in\sigma_p(T)$. Moreover, note that $\im(P_{\{\lambda\}})\supset\ker(\lambda-T)$ but equality does not hold in general. However, we will see in the next chapter that $\im(P_{\{\lambda\}})=\ker(\lambda-T)$ if $T$ is selfadjoint.\\
A quite often appearing situation is considered in the following theorem.

\begin{theorem}\label{compres}
Let $T\in\mathcal{C}(E)$ be such that $R(\lambda_0,T)=(\lambda_0-T)^{-1}$ is compact for some $\lambda_0\in\rho(T)$. Then $R(\lambda,T)$ is compact for any $\lambda\in\rho(T)$, $\sigma(T)$ does not have a limit point in $\mathbb{C}$ and every point in $\sigma(T)$ is an eigenvalue of finite type. Moreover, for any $\lambda\in\mathbb{C}$ we have

\begin{align*}
\dim\ker(\lambda-T)=\codim\im(\lambda-T)<\infty
\end{align*}
so that each operator $\lambda-T$ is Fredholm of index $0$.
\end{theorem}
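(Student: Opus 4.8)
The plan is to reduce everything to the compactness of a single resolvent and then exploit the structure theory for compact operators together with the Riesz projection machinery of Theorem~\ref{spectralprojections}. First I would prove the first claim: if $R(\lambda_0,T)$ is compact for one $\lambda_0\in\rho(T)$, then $R(\lambda,T)$ is compact for every $\lambda\in\rho(T)$. The key is the \emph{resolvent identity}
\[
R(\lambda,T)-R(\lambda_0,T)=(\lambda_0-\lambda)R(\lambda,T)R(\lambda_0,T),
\]
which one derives exactly as in \eqref{equ:resolvent} by writing $(\lambda-T)=(\lambda_0-T)-(\lambda_0-\lambda)$ and composing with the two resolvents. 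This gives $R(\lambda,T)=R(\lambda_0,T)\bigl(I_E+(\lambda_0-\lambda)R(\lambda,T)\bigr)$, i.e.\ $R(\lambda,T)$ is the product of the compact operator $R(\lambda_0,T)$ with a bounded operator, hence compact by Lemma on products of compact and bounded operators.

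Next I would translate compactness of the resolvent into the spectral statement. Since $R(\lambda_0,T)$ is compact, the Riesz--Schauder theory for the \emph{bounded} operator $R(\lambda_0,T)$ applies: its spectrum is a (possibly finite) sequence accumulating only at $0$, and every nonzero spectral point is an eigenvalue of finite type. The correspondence $\mu\mapsto\lambda_0-\mu^{-1}$ is a homeomorphism between $\sigma(R(\lambda_0,T))\setminus\{0\}$ and $\sigma(T)$ (one checks that $\mu\ne0$ is an eigenvalue of $R(\lambda_0,T)$ with eigenvector $u$ iff $\lambda:=\lambda_0-\mu^{-1}$ is an eigenvalue of $T$ with the same eigenvector, and more generally that $\lambda\in\rho(T)$ iff $\mu:=(\lambda_0-\lambda)^{-1}\in\rho(R(\lambda_0,T))$, using the identity from the previous paragraph to write one resolvent in terms of the other). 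Under this homeomorphism, accumulation of $\sigma(R(\lambda_0,T))$ only at $0$ becomes: $\sigma(T)$ has no limit point in $\mathbb{C}$. Moreover a point $\lambda\in\sigma(T)$ is isolated, and the Riesz projection $P_{\{\lambda\}}$ of $T$ coincides with the Riesz projection of $R(\lambda_0,T)$ at the corresponding eigenvalue $\mu$ — this follows because both are given by the same contour integral after the change of variables $\lambda=\lambda_0-\mu^{-1}$ in $\frac{1}{2\pi i}\int_\Gamma(\lambda-T)^{-1}\,d\lambda$, or alternatively by Theorem~\ref{spectralprojections}(v) applied to the bounded operator $R(\lambda_0,T)$. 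Since the eigenvalue $\mu$ of the compact operator $R(\lambda_0,T)$ is of finite type, $\im(P_{\{\lambda\}})$ is finite dimensional, so $\lambda$ is an eigenvalue of $T$ of finite type.

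For the last assertion — that $\dim\ker(\lambda-T)=\codim\im(\lambda-T)<\infty$ for \emph{every} $\lambda\in\mathbb{C}$, so $\lambda-T$ is Fredholm of index $0$ — I would argue as follows. If $\lambda\in\rho(T)$ there is nothing to prove, since then $\lambda-T$ is bijective. If $\lambda\in\sigma(T)$, decompose $E=M\oplus N$ with $M=\im(P_{\{\lambda\}})$ finite dimensional and $N=\ker(P_{\{\lambda\}})$, both $T$-invariant by Theorem~\ref{spectralprojections}(iii). On $N$ we have $\sigma(T\mid_N)=\sigma(T)\setminus\{\lambda\}$ by Theorem~\ref{spectralprojections}(v), so $\lambda\notin\sigma(T\mid_N)$ and $\lambda-T\mid_N$ is bijective on $N$; in particular it is Fredholm of index $0$ on $N$. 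On the finite-dimensional space $M$, the operator $\lambda-T\mid_M$ is an endomorphism of a finite dimensional space, hence automatically Fredholm of index $0$ by rank–nullity. Since $\lambda-T$ respects the direct sum $E=M\oplus N$, its kernel and cokernel split accordingly, giving $\dim\ker(\lambda-T)=\dim\ker(\lambda-T\mid_M)<\infty$ and $\codim\im(\lambda-T)=\codim\im(\lambda-T\mid_M)$, and the two are equal; thus $\ind(\lambda-T)=0$.

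The main obstacle I anticipate is making the spectral correspondence $\mu\leftrightarrow\lambda_0-\mu^{-1}$ fully rigorous — specifically, checking carefully that $\mu=(\lambda_0-\lambda)^{-1}$ lies in $\rho(R(\lambda_0,T))$ if and only if $\lambda\in\rho(T)$, and that the corresponding Riesz projections genuinely agree (domains and invariance of $M\subset\mathcal{D}(T)$ must be tracked, using Theorem~\ref{spectralprojections}(iv)). Everything else is either the resolvent identity, the classical Riesz--Schauder theorem for compact operators, or finite-dimensional linear algebra.
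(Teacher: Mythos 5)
The paper gives no in-text proof of this theorem but simply cites \cite[Theorem XV.2.3]{GohbergClasses}; the route you describe — transfer everything to the compact bounded operator $R(\lambda_0,T)$ via the spectral correspondence $\lambda \leftrightarrow \mu := (\lambda_0-\lambda)^{-1}$, invoke Riesz--Schauder theory, and compare Riesz projections — is precisely the standard argument carried out in that source, and it is correct. The ``obstacle'' you flag is resolved cleanly by the identity
\begin{align*}
\mu I_E - R(\lambda_0,T) = \mu\,(\lambda - T)\,R(\lambda_0,T), \qquad \mu\neq 0,\ \lambda=\lambda_0-\mu^{-1},
\end{align*}
obtained by multiplying $\mu I_E - (\lambda_0-T)^{-1}$ on the right by $\lambda_0-T$; since $R(\lambda_0,T)$ is a bijection of $E$ onto $\mathcal{D}(T)$, this shows at once that $\ker(\lambda-T)\cong\ker(\mu I_E - R(\lambda_0,T))$, $\im(\lambda-T)=\im(\mu I_E - R(\lambda_0,T))$, and $\lambda\in\rho(T)\Leftrightarrow\mu\in\rho(R(\lambda_0,T))$. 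In particular the Fredholm alternative for compact operators already yields $\ind(\lambda-T)=0$ for every $\lambda\neq\lambda_0$, so the $M\oplus N$ decomposition in your last step, while correct, is not strictly necessary; the only small detail you should add explicitly is that $\lambda_0$ itself is not a limit point of $\sigma(T)$, which is immediate from the openness of $\rho(T)$.
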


\begin{proof}
\cite[Theorem XV.2.3]{GohbergClasses}
\end{proof}

\noindent
We call $T\in\mathcal{C}(E)$ an \textit{operator with compact resolvent} if there exists $\lambda_0\in\mathbb{C}$ such that $R(\lambda_0,T)$ is compact. Note that $\sigma_{ess}(T)=\emptyset$ in this case.

\begin{lemma}\label{Fredholmperturbation}
If $T\in\mathcal{C}(E)$ has a compact resolvent and $B\in\mathcal{L}(E)$ is bounded, then $T+B$ is a Fredholm operator of index $0$.
\end{lemma}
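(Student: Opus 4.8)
The plan is to reduce the statement to the two facts already available about $T$ and about stability of Fredholmness: we first identify $T$ itself as a Fredholm operator of index $0$, and then absorb the bounded perturbation $B$ by means of the stability of the Fredholm property under perturbations that are compact relative to the graph norm (Theorem~\ref{stabFredcomp}).

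First I would record that $T$ is Fredholm of index $0$. Since $T$ has a compact resolvent, Theorem~\ref{compres} applies; taking $\lambda=0$ in its last assertion gives that $-T$ is Fredholm with $\dim\ker(-T)=\codim\im(-T)<\infty$. As $\ker(-T)=\ker(T)$, $\im(-T)=\im(T)$, and $T\in\mathcal{C}(E)$ is closed, it follows that $T$ is Fredholm of index $0$. (Equivalently, one may simply invoke that a compact resolvent forces $\sigma_{ess}(T)=\emptyset$.)

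The key step is to show that $B\mid_{\mathcal{D}(T)}\colon\mathcal{D}(T)\to E$ is compact when $\mathcal{D}(T)$ carries the graph norm of $T$. Here one exploits the resolvent: fix $\lambda_0\in\rho(T)$ with $R(\lambda_0,T)$ compact, and observe that for every $u\in\mathcal{D}(T)$ one has $u=R(\lambda_0,T)(\lambda_0-T)u$, so that
\[ B\mid_{\mathcal{D}(T)} \;=\; B\circ R(\lambda_0,T)\circ(\lambda_0-T)\mid_{\mathcal{D}(T)}. \]
In this factorisation $(\lambda_0-T)\colon(\mathcal{D}(T),\|\cdot\|_T)\to E$ is bounded, because $\|(\lambda_0-T)u\|\le|\lambda_0|\,\|u\|+\|Tu\|\le(|\lambda_0|+1)\|u\|_T$; the middle map $R(\lambda_0,T)\colon E\to E$ is compact; and $B\colon E\to E$ is bounded. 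Since products of compact operators with bounded operators are compact, the composition is compact, as claimed.

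Finally, $\mathcal{D}(T)\subset E=\mathcal{D}(B)$ and $B\mid_{\mathcal{D}(T)}$ is compact with respect to the graph norm of $T$, so Theorem~\ref{stabFredcomp} gives that $T+B$ is Fredholm with $\ind(T+B)=\ind(T)=0$. The only genuinely non-routine point is the factorisation in the key step: rewriting the merely bounded operator $B$ as a product that passes through the compact resolvent is precisely what converts ``bounded on $E$'' into ``compact with respect to the graph topology of $T$'', which is the hypothesis Theorem~\ref{stabFredcomp} needs.
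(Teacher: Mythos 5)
Your proof is correct and follows essentially the same route as the paper: you deduce from Theorem~\ref{compres} that $T$ is Fredholm of index $0$, factor $B=\bigl(B(\lambda_0-T)^{-1}\bigr)(\lambda_0-T)$ to see that $B\mid_{\mathcal{D}(T)}$ is compact with respect to the graph norm, and then apply Theorem~\ref{stabFredcomp}. You spell out a few details the paper leaves implicit (the boundedness of $\lambda_0-T$ in the graph norm, and the reduction $\lambda=0$ in Theorem~\ref{compres}), but the key factorisation through the resolvent is exactly the paper's argument.
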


\begin{proof}
We fix $\lambda\in\rho(T)$ and obtain a compact operator $(\lambda-T)^{-1}:E\rightarrow E$. Then $B(\lambda-T)^{-1}:E\rightarrow E$ is compact as well since the composition of a bounded and a compact operator is still compact. Consequently,

\begin{align*}
B=(B(\lambda-T)^{-1})(\lambda-T):\mathcal{D}(T)\rightarrow E
\end{align*}
is compact with respect to the graph norm of $T$ on $\mathcal{D}(T)$. Now the assertion follows from Theorem \ref{stabFredcomp} and Theorem \ref{compres}.
\end{proof}

%%%%%%%%%%%%%%%%%%%%%%%%%%%%%%%%%%%%%%%%%%%%%%%%%%%%%%%%%%%%%%%%%%%%%%%%%%%%%%%%%%%%%%%%%%%%%%%%%%%%%%%%%%%%%%%%%%%%%%%%%%%%%%%%%%%%%%%%%%%%%%%%%%%%%%%%%%%%%%%%%%%%%%%%%%%%%%%%%%%%%%%%%%%%%%%%%%%%%%%%%%%%%%%%%%%%%%%%%%%%%%%%%%%%%%%%%%%%%%%%%%%%%%%%%%%%%%%%%%%%%%%%%%%%%%%%%%%%%%%%%%%%%%%%%%%%%%%%%%%%%%%%%%%%%%%%%%%%%%%%%%%%%%%%%%%%%%%%%%%%%%%%%%%%%%%%%%%%%%%%%%%%%%%%%%%%%%%%%%%%%%%%%%%%%%%%%%%%%%%%%%%%%%%%%%%%%%%%%%%%%%%%%%%%%%%%%%%%%%%%%%%%%%%%%%%%%%%%%%%%%%%%%%%%%%%%%%%%%%%%%%%%%%%%%%%%%%%%%%%%%%%%%%%%%%%%%%%%%%%%%%%%%%%%%%%%%%%%%%%%%%%%%%%%%%%%%%%%%%%%%%%%%%%%%%%%%%%%%%%%%%%%%%%%%%%%%%%%%%%%%%%%%%%%%%%%%%%%%%%%%%%%%%%%%%%%%%%%%%%%%%%%%%%%%%%%%%%%%%%%%%%%%%%%%%%%%%%%%%%%%%%%%%%%%%%%%%%%%%%%%%%%%%%%%%%%%%%%%%%%%%%%%%%%%%%%%%%%%%%%%%%%%%%%%%%%%%%%%%%%%%%%%%%%%%%%%%%%%%%%%%%%%%%%%%%%%%%%%%%%%%%%%%%%%%%%%%%%%%%%%%%%%%%%%%%%%%%%%%%%%%%%%%

\chapter{Selfadjoint Operators}

\section{Definitions and Basic Properties}
Let $H$ be a complex non-trivial Hilbert space. The following result is known as the Riesz representation theorem.

\begin{theorem}
Let $f:H\rightarrow\mathbb{C}$ be a continuous linear functional. Then there exists a unique $v\in H$ such that
\[f(u)=\langle u,v\rangle,\quad u\in H.\]
\end{theorem}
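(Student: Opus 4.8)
The plan is to reduce the problem to the one-dimensional orthogonal complement of $\ker(f)$, using the orthogonal decomposition theorem (Theorem \ref{APPFA-theorem-Hilbcomplemented}). First I would dispose of the trivial case: if $f\equiv 0$, then $v=0$ does the job. So assume $f\neq 0$. Since $f$ is continuous, $\ker(f)$ is a closed subspace of $H$, and it is a \emph{proper} subspace precisely because $f$ does not vanish identically. Hence, by Theorem \ref{APPFA-theorem-Hilbcomplemented} and the ensuing remark, $H=\ker(f)\oplus\ker(f)^\perp$ with $\ker(f)^\perp\neq\{0\}$, so we may fix a vector $w\in\ker(f)^\perp$ with $\|w\|=1$.

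The key step is then to observe that, for an arbitrary $u\in H$, the vector $f(u)\,w-f(w)\,u$ lies in $\ker(f)$, since applying $f$ gives $f(u)f(w)-f(w)f(u)=0$. Taking the inner product of this vector with $w$ and using $w\perp\ker(f)$ together with $\|w\|=1$, I obtain $f(u)=f(w)\,\langle u,w\rangle=\langle u,\overline{f(w)}\,w\rangle$. Thus $v:=\overline{f(w)}\,w$ represents $f$ in the required sense. (One small bookkeeping matter here is to keep track of which slot of $\langle\cdot,\cdot\rangle$ is conjugate-linear, so that the scalar in front of $w$ comes out as $\overline{f(w)}$ and not $f(w)$; this is forced anyway by the fact that $u\mapsto\langle u,v\rangle$ must be linear.)

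For uniqueness, suppose $\langle u,v_1\rangle=\langle u,v_2\rangle$ for all $u\in H$. Then $\langle u,v_1-v_2\rangle=0$ for every $u$, and choosing $u=v_1-v_2$ yields $\|v_1-v_2\|^2=0$, hence $v_1=v_2$.

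I do not expect a genuine obstacle in this argument: the only place where the Hilbert-space structure is essential is the existence of the orthogonal complement $\ker(f)^\perp$ and the fact that it is nonzero, which is exactly what Theorem \ref{APPFA-theorem-Hilbcomplemented} provides once we know $\ker(f)$ is a proper closed subspace. Everything else is a short computation.
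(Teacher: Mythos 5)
The paper states this result (the Riesz representation theorem) without supplying a proof, treating it as known background material from the references such as \cite{Rudin} and \cite{Weidmann}; so there is no in-text proof to compare yours against. That said, your argument is correct and is the standard proof: the trivial case $f\equiv 0$ is handled separately; when $f\neq 0$, continuity makes $\ker(f)$ a proper closed subspace, Theorem \ref{APPFA-theorem-Hilbcomplemented} yields a nonzero orthogonal complement and hence a unit vector $w\perp\ker(f)$, and the observation that $f(u)\,w-f(w)\,u\in\ker(f)$ for every $u$ forces $f(u)=f(w)\langle u,w\rangle=\langle u,\overline{f(w)}\,w\rangle$. Your remark about the conjugate on $f(w)$ is exactly the right sanity check given that the inner product here is linear in the first slot and conjugate-linear in the second, and the uniqueness argument via $\|v_1-v_2\|^2=0$ is complete. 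The only stylistic observation is that you do not need the full orthogonal decomposition $H=\ker(f)\oplus\ker(f)^{\perp}$; you only use that $\ker(f)^{\perp}\neq\{0\}$, which already follows from $\ker(f)$ being a proper closed subspace of the Hilbert space.
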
 
\noindent
Let $T:\mathcal{D}(T)\subset H\rightarrow H$ be a densely defined operator acting on $H$. We define

\begin{align*}
\mathcal{D}(T^\ast)=\{v\in H: u\mapsto\langle Tu,v\rangle_H\,\text{is bounded on}\,\mathcal{D}(T)\},
\end{align*}
and note that, as $\mathcal{D}(T)$ is assumed to be dense in $H$, each functional $u\mapsto\langle Tu,v\rangle_H$, $v\in\mathcal{D}(T^\ast)$, has a continuous extension on all of $H$. Hence, by the Riesz Representation Theorem, we can associate to any $v\in\mathcal{D}(T^\ast)$ an element $T^\ast v\in H$ such that

\begin{align*}
\langle Tu,v\rangle_H=\langle u,T^\ast v\rangle_H,\quad u\in\mathcal{D}(T),\, v\in\mathcal{D}(T^\ast).
\end{align*}
The resulting operator $T^\ast$ on $H$ with domain $\mathcal{D}(T^\ast)$ is linear and it is called the \textit{adjoint} of $T$. We make at first a rather simple observation, which is however often used when dealing with adjoints of operators.

\begin{lemma}
Assume that $T:\mathcal{D}(T)\subset H\rightarrow H$ is densely defined and $S:\mathcal{D}(S)\subset H\rightarrow H$ is any linear operator. If

\begin{align}\label{obs}
\langle Tu,v\rangle=\langle u,Sv\rangle,\quad u\in\mathcal{D}(T),\,v\in\mathcal{D}(S),
\end{align}
then $S\subset T^\ast$.
\end{lemma}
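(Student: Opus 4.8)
The plan is to unwind the definitions directly, since the statement is essentially a tautology once one parses what $S\subset T^\ast$ means. Recall $S\subset T^\ast$ requires two things: that $\mathcal{D}(S)\subset\mathcal{D}(T^\ast)$, and that $Sv=T^\ast v$ for every $v\in\mathcal{D}(S)$. So I would split the argument into these two verifications.

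\textbf{Step 1: showing $\mathcal{D}(S)\subset\mathcal{D}(T^\ast)$.} Fix $v\in\mathcal{D}(S)$. By hypothesis \eqref{obs}, the functional $u\mapsto\langle Tu,v\rangle$ equals $u\mapsto\langle u,Sv\rangle$ on $\mathcal{D}(T)$. The latter is the restriction to $\mathcal{D}(T)$ of the bounded functional $u\mapsto\langle u,Sv\rangle$ on all of $H$, with norm at most $\|Sv\|$ by Cauchy--Schwarz. Hence $u\mapsto\langle Tu,v\rangle$ is bounded on $\mathcal{D}(T)$, which is exactly the condition defining membership in $\mathcal{D}(T^\ast)$.

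\textbf{Step 2: showing $Sv=T^\ast v$ for $v\in\mathcal{D}(S)$.} Now that $v\in\mathcal{D}(T^\ast)$, the element $T^\ast v$ is characterised by $\langle Tu,v\rangle=\langle u,T^\ast v\rangle$ for all $u\in\mathcal{D}(T)$. Combining this with \eqref{obs} gives $\langle u,Sv\rangle=\langle u,T^\ast v\rangle$, i.e. $\langle u,Sv-T^\ast v\rangle=0$ for all $u\in\mathcal{D}(T)$. Since $\mathcal{D}(T)$ is dense in $H$, the vector $Sv-T^\ast v$ is orthogonal to a dense subspace, hence to all of $H$, hence zero; so $Sv=T^\ast v$. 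This establishes $S\subset T^\ast$.

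There is no real obstacle here — the only point requiring any care is the use of density of $\mathcal{D}(T)$ in the last step (without it one could only conclude equality after projecting), and the observation that Cauchy--Schwarz is what supplies the boundedness needed to land in $\mathcal{D}(T^\ast)$. Everything else is bookkeeping with the defining identities. I would present the two steps in the order above, since Step 1 is a prerequisite for even writing $T^\ast v$ in Step 2.
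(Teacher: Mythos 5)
Your proof is correct and follows essentially the same route as the paper's: first observe that the hypothesis forces the functional $u\mapsto\langle Tu,v\rangle$ to be bounded, so $\mathcal{D}(S)\subset\mathcal{D}(T^\ast)$, then use density of $\mathcal{D}(T)$ to conclude $Sv=T^\ast v$. The paper compresses these two steps but the underlying argument is identical; you have merely spelled out the Cauchy--Schwarz estimate that the paper leaves implicit.
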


\begin{proof}
We obtain from \eqref{obs} and the definition of the adjoint that $\mathcal{D}(S)\subset\mathcal{D}(T^\ast)$. Hence we conclude from \eqref{obs} that $\langle u,T^\ast v\rangle=\langle u,Sv\rangle$ for all $u\in\mathcal{D}(T)$ and $v\in\mathcal{D}(S)$. This implies that $Sv=T^\ast v$, as $\mathcal{D}(T)$ is dense by assumption.
\end{proof}

\begin{lemma}\label{APPFA-lemma-adjointclosed}
If $T$ is densely defined, then $T^\ast\in\mathcal{C}(H)$.
\end{lemma}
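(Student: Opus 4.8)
The plan is to show that $\gra(T^\ast)$ is a closed subspace of $H\times H$ by identifying it with the orthogonal complement of a suitable closed subspace, which is automatically closed. The key observation is the ``flip trick'': introduce the homeomorphism (in fact unitary) $J:H\times H\rightarrow H\times H$ given by $J(u,v)=(-v,u)$, and note that the defining relation $\langle Tu,v\rangle=\langle u,T^\ast v\rangle$ for all $u\in\mathcal{D}(T)$ can be rewritten as $\langle -Tu,w\rangle+\langle u,z\rangle=0$ for the pair $(z,w)=(T^\ast v,v)$, i.e.\ as the statement that $(v,T^\ast v)\perp (−Tu,u)$ in $H\times H$ with the product inner product. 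Thus $\gra(T^\ast)=\big(J(\gra(T))\big)^\perp$.

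First I would record the reformulation carefully: for $v\in H$, the functional $u\mapsto\langle Tu,v\rangle$ is bounded on the dense domain $\mathcal{D}(T)$ if and only if there exists $z\in H$ with $\langle Tu,v\rangle=\langle u,z\rangle$ for all $u\in\mathcal{D}(T)$ (this is where density and the Riesz Representation Theorem enter, exactly as in the construction of $T^\ast$ preceding the lemma), and in that case $z=T^\ast v$ is unique. Hence $(v,z)\in\gra(T^\ast)$ if and only if $\langle u,z\rangle-\langle Tu,v\rangle=0$ for all $u\in\mathcal{D}(T)$, which is precisely $\langle (v,z),(-Tu,u)\rangle_{H\times H}=0$ for all $u\in\mathcal{D}(T)$, i.e.\ $(v,z)\perp J(\gra(T))$. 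Since this holds for every $u\in\mathcal{D}(T)$, it is equivalent to $(v,z)\in\big(J(\gra(T))\big)^\perp$.

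Next I would invoke Theorem \ref{APPFA-theorem-Hilbcomplemented}, or rather the underlying fact that the orthogonal complement of any subset of a Hilbert space is a closed subspace: $\big(J(\gra(T))\big)^\perp$ is closed in $H\times H$ regardless of whether $\gra(T)$ itself is closed. Therefore $\gra(T^\ast)$ is closed, and since it is also a linear subspace (the adjoint is a linear operator, as noted before the lemma), we conclude $T^\ast\in\mathcal{C}(H)$.

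I do not expect a serious obstacle here; the only point requiring a little care is the equivalence between ``$u\mapsto\langle Tu,v\rangle$ is bounded on $\mathcal{D}(T)$'' and ``there is $z\in H$ representing it'', which rests on $\mathcal{D}(T)$ being dense so that the bounded functional extends uniquely to $H$ and then Riesz applies; this is exactly the content already used to define $T^\ast$, so it may simply be cited. One should also note that $J$ is a linear homeomorphism of $H\times H$ (indeed an isometry for the product norm), so it carries closed sets to closed sets and respects orthogonal complements in the sense used above.
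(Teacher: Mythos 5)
Your proof is correct, but it takes a genuinely different route from the paper's. The paper uses the sequential characterization of closedness directly: it takes $\{u_n\}\subset\mathcal{D}(T^\ast)$ with $u_n\to u$ and $T^\ast u_n\to v$, passes to the limit in $\langle w, T^\ast u_n\rangle = \langle Tw, u_n\rangle$ for $w\in\mathcal{D}(T)$, and concludes $\langle w,v\rangle=\langle Tw,u\rangle$ for all $w\in\mathcal{D}(T)$, whence $u\in\mathcal{D}(T^\ast)$ and $T^\ast u=v$. Your argument is the classical von Neumann ``flip trick'': identifying $\gra(T^\ast)=\bigl(J(\gra(T))\bigr)^\perp$ with $J(u,w)=(-w,u)$, and then using that orthogonal complements are automatically closed. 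Both are standard; the paper's proof is more elementary (it needs only the sequential definition of a closed operator and the defining property of $T^\ast$), while yours is more structural and has the advantage that the same identity $\gra(T^\ast)=\bigl(J(\gra(T))\bigr)^\perp$ is exactly what one needs later to prove $T^{\ast\ast}=T$ for closed $T$ (Lemma~\ref{APPFA-lemma-adjoint}, which the paper outsources to Weidmann). One small point: the equivalence you state between ``the functional $u\mapsto\langle Tu,v\rangle$ is bounded'' and ``it is represented by some $z\in H$'' does require density of $\mathcal{D}(T)$ in one direction (to extend and apply Riesz) but not in the other (boundedness follows trivially from the representation); you handle this correctly and, importantly, density is also what makes the representing $z$ unique, so that $\gra(T^\ast)$ really is a graph.
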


\begin{proof}
Let us assume that $\{u_n\}_{n\in\mathbb{N}}\subset\mathcal{D}(T^\ast)$ is such that $u_n\rightarrow u$ and $T^\ast u_n\rightarrow v$, $n\rightarrow\infty$, for some $u,v\in H$. We obtain for all $w\in\mathcal{D}(T)$ that $\langle w, T^\ast u_n\rangle=\langle Tw,u_n\rangle\rightarrow\langle Tw,u\rangle$ and $\langle w,T^\ast u_n\rangle\rightarrow \langle w,v\rangle$. Hence $\langle w,v\rangle=\langle Tw,u\rangle$ for all $w\in\mathcal{D}(T)$ which shows that $u\in\mathcal{D}(T^\ast)$ by the definition of $T^\ast$. Moreover, we obtain $\langle w,v\rangle=\langle w,T^\ast u\rangle$ for all $w\in\mathcal{D}(T)$ and conclude that $v=T^\ast u$. This shows that $T^\ast$ is closed.
\end{proof}
\noindent
Let us point out that it is possible that $\mathcal{D}(T^\ast)=\{0\}$ for a densely defined operator $T$ (cf.\cite[p.291]{GohbergClasses}). However, since we are solely interested in closed operators, such singular phenomena cannot occur:

\begin{lemma}\label{APPFA-lemma-adjoint}
If $T\in\mathcal{C}(H)$ is densely defined, then $T^\ast$ is densely defined as well and $T^{\ast\ast}=T$.
\end{lemma}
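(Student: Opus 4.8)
The plan is to use the standard characterization of the adjoint via graphs in $H\times H$. Define the unitary $J:H\times H\to H\times H$ by $J(u,v)=(-v,u)$. The key algebraic identity is that $\gra(T^\ast)=\big(J(\gra(T))\big)^\perp$, where the orthogonal complement is taken in the Hilbert space $H\times H$ with its natural inner product $\langle(u_1,v_1),(u_2,v_2)\rangle=\langle u_1,u_2\rangle+\langle v_1,v_2\rangle$. Indeed, $(w,z)\perp J(\gra(T))$ means $\langle(w,z),(-Tu,u)\rangle=0$ for all $u\in\mathcal{D}(T)$, i.e. $\langle Tu,w\rangle=\langle u,z\rangle$ for all $u\in\mathcal{D}(T)$; by the previous lemma (the simple observation about adjoints) and the definition of $T^\ast$ this says precisely $w\in\mathcal{D}(T^\ast)$ and $z=T^\ast w$. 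I would record this identity first, since everything else follows from it.

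Next I would prove that $T^\ast$ is densely defined. Suppose not: then there is a nonzero $z\in H$ with $z\perp\mathcal{D}(T^\ast)$, i.e. $(z,0)\perp\gra(T^\ast)$ in $H\times H$. Since $\gra(T^\ast)=\big(J(\gra(T))\big)^\perp$ and $T$ is closed, $\gra(T)$ is a closed subspace, hence so is $J(\gra(T))$, and therefore $\big(\gra(T^\ast)\big)^\perp=\big(J(\gra(T))\big)^{\perp\perp}=J(\gra(T))$. Thus $(z,0)\in J(\gra(T))$, which means $(z,0)=(-Tu,u)$ for some $u\in\mathcal{D}(T)$; the second coordinate gives $u=0$, whence $z=-Tu=0$, a contradiction. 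So $\mathcal{D}(T^\ast)$ is dense and $T^{\ast\ast}$ is well defined; by Lemma \ref{APPFA-lemma-adjointclosed} it is closed.

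Finally I would establish $T^{\ast\ast}=T$. Applying the graph identity to the densely defined operator $T^\ast$ gives $\gra(T^{\ast\ast})=\big(J(\gra(T^\ast))\big)^\perp$. Now compute $J(\gra(T^\ast))=J\big((J(\gra(T)))^\perp\big)$; since $J$ is unitary it commutes with taking orthogonal complements, so this equals $\big(J^2(\gra(T))\big)^\perp=\big((-\gra(T)))\big)^\perp=\big(\gra(T)\big)^\perp$, using $J^2=-I$ and that $\gra(T)$ is a linear subspace (so $-\gra(T)=\gra(T)$). Therefore $\gra(T^{\ast\ast})=\big(\gra(T)\big)^{\perp\perp}=\overline{\gra(T)}=\gra(T)$, the last equality again because $T$ is closed. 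Hence $T^{\ast\ast}=T$.

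The routine points are just bookkeeping with the inner product on $H\times H$ and the behaviour of $J$ under $(\cdot)^\perp$. The one genuine subtlety — and the step I would be most careful about — is the density argument: it is exactly the place where closedness of $T$ is indispensable (so that $\big(J(\gra(T))\big)^{\perp\perp}=J(\gra(T))$ rather than its closure), mirroring the remark in the text that for merely densely defined $T$ one can have $\mathcal{D}(T^\ast)=\{0\}$. Everything else is formal manipulation of the single identity $\gra(T^\ast)=\big(J(\gra(T))\big)^\perp$.
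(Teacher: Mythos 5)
The paper does not supply its own proof of this lemma; it simply refers to \cite[Theorem 5.3]{Weidmann}, so there is nothing internal to compare your argument against. Your proof is correct, and it is the classical von Neumann graph-theoretic argument that the cited reference (and most standard texts) uses, so it is essentially the ``official'' proof. Concretely: the identity $\gra(T^\ast)=\bigl(J(\gra(T))\bigr)^\perp$ is established correctly from the definition of $T^\ast$ (the preceding lemma of the paper gives exactly the implication ``$\langle Tu,w\rangle=\langle u,z\rangle$ for all $u$ implies $w\in\mathcal{D}(T^\ast)$, $T^\ast w=z$''); the density of $\mathcal{D}(T^\ast)$ is deduced via the double orthogonal complement, and you correctly flag that this is exactly where the closedness of $T$ enters, since $\bigl(J(\gra(T))\bigr)^{\perp\perp}=J(\gra(T))$ requires $\gra(T)$ closed; and $T^{\ast\ast}=T$ follows from applying the graph identity twice together with $J$ unitary (hence $J(M^\perp)=(J(M))^\perp$) and $J^2=-I_{H\times H}$. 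All the computations with the inner product on $H\times H$ check out. No gaps.
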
 

\begin{proof}
\cite[Theorem 5.3]{Weidmann}
\end{proof}

\noindent
A densely defined operator $T$ acting on a Hilbert space $H$ is called \textit{symmetric} if $T\subset T^\ast$ and \textit{selfadjoint} if $T=T^\ast$. Note that a selfadjoint operator is closed by Lemma \ref{APPFA-lemma-adjointclosed}. 

\begin{lemma}
Let $T$ be a densely defined operator acting on $H$.

\begin{enumerate}
	\item[(i)] $T$ is symmetric if and only if $T\subset T^{\ast\ast}\subset T^\ast=T^{\ast\ast\ast}$. In this case $T^{\ast\ast}$ is symmetric as well.
	\item[(ii)] $T\in\mathcal{C}(H)$ is symmetric if and only if $T=T^{\ast\ast}\subset T^\ast$.
	\item[(iii)] $T$ is selfadjoint if and only if $T=T^{\ast\ast}=T^\ast$. 
\end{enumerate}
\end{lemma}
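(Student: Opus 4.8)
\noindent
The plan is to deduce everything from Lemmas \ref{APPFA-lemma-adjointclosed} and \ref{APPFA-lemma-adjoint} together with two elementary bookkeeping facts about adjoints that I would establish first. Fact~A (anti-monotonicity): if $S,R$ are densely defined with $S\subset R$, then $R^\ast\subset S^\ast$; this is immediate from the definition of the adjoint, since boundedness of $u\mapsto\langle Ru,v\rangle$ on $\mathcal{D}(R)$ forces boundedness of $u\mapsto\langle Su,v\rangle$ on the smaller dense domain $\mathcal{D}(S)$, where the two functionals agree, so the Riesz representatives coincide. Fact~B: if $T$ is densely defined and $T^\ast$ is \emph{also} densely defined, then $T\subset T^{\ast\ast}$; to see this I would apply the observation lemma above to the densely defined operator $T^\ast$ and the operator $T$, using that for $u\in\mathcal{D}(T^\ast)$, $v\in\mathcal{D}(T)$ one has $\langle T^\ast u,v\rangle=\overline{\langle v,T^\ast u\rangle}=\overline{\langle Tv,u\rangle}=\langle u,Tv\rangle$, whence $T\subset (T^\ast)^\ast=T^{\ast\ast}$. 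A preliminary remark used everywhere below: a symmetric $T$ automatically has $T^\ast$ densely defined, because $T\subset T^\ast$ gives $\mathcal{D}(T^\ast)\supset\mathcal{D}(T)$, which is dense; hence the iterated adjoints $T^{\ast\ast}$ and then $T^{\ast\ast\ast}$ genuinely make sense in each assertion.

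\noindent
For (i), suppose $T$ is symmetric. By the preliminary remark $T^\ast$ is densely defined, so Fact~B gives $T\subset T^{\ast\ast}$, and Fact~A applied to $T\subset T^\ast$ gives $T^{\ast\ast}=(T^\ast)^\ast\subset T^\ast$, which is the chain $T\subset T^{\ast\ast}\subset T^\ast$. Since $T\subset T^{\ast\ast}$ with $\mathcal{D}(T)$ dense, $T^{\ast\ast}$ is densely defined, so $T^{\ast\ast\ast}$ is defined; Fact~A applied to $T\subset T^{\ast\ast}$ yields $T^{\ast\ast\ast}\subset T^\ast$, while Fact~B applied to the densely defined operator $T^\ast$ (whose adjoint $T^{\ast\ast}$ is densely defined) yields $T^\ast\subset (T^\ast)^{\ast\ast}=T^{\ast\ast\ast}$; hence $T^\ast=T^{\ast\ast\ast}$. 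Finally $T^{\ast\ast}\subset T^\ast=T^{\ast\ast\ast}=(T^{\ast\ast})^\ast$ shows $T^{\ast\ast}$ is symmetric. The converse in (i) is trivial, since the stated chain already contains $T\subset T^\ast$.

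\noindent
For (ii), if $T\in\mathcal{C}(H)$ is symmetric then it is densely defined and closed, so Lemma \ref{APPFA-lemma-adjoint} gives $T^{\ast\ast}=T$, and combining with (i) yields $T=T^{\ast\ast}\subset T^\ast$. Conversely, if $T=T^{\ast\ast}\subset T^\ast$, then $T\subset T^\ast$ shows $T$ is symmetric, and $T^\ast$ is densely defined (since $T=T^{\ast\ast}$ is), so $T=T^{\ast\ast}=(T^\ast)^\ast\in\mathcal{C}(H)$ by Lemma \ref{APPFA-lemma-adjointclosed}. For (iii), if $T$ is selfadjoint then $T=T^\ast$, so $T$ is closed by Lemma \ref{APPFA-lemma-adjointclosed} and Lemma \ref{APPFA-lemma-adjoint} gives $T^{\ast\ast}=T$, i.e.\ $T=T^{\ast\ast}=T^\ast$; conversely $T=T^{\ast\ast}=T^\ast$ contains $T=T^\ast$. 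The only genuine subtlety — the step I would be most careful about — is not the inclusion-chasing itself but verifying at each stage that the iterated adjoints are actually defined, i.e.\ that the relevant domains remain dense; as the remark preceding the lemma stresses, this can fail for arbitrary densely defined operators, and here it is rescued precisely by symmetry.
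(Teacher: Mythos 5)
Your argument is correct. The paper itself offers no proof here, merely citing Werner's \emph{Funktionalanalysis}; your two bookkeeping facts (anti-monotonicity of the adjoint under operator inclusion, and $T\subset T^{\ast\ast}$ whenever both $T$ and $T^\ast$ are densely defined) together with Lemmas \ref{APPFA-lemma-adjointclosed} and \ref{APPFA-lemma-adjoint} carry all three parts, and you are careful at each stage to verify that the iterated adjoints are actually defined, which is the only place a blind inclusion-chase could silently go wrong. One small wording slip in (ii): the parenthetical ``since $T=T^{\ast\ast}$ is'' does not quite give the reason $T^\ast$ is densely defined — the clean justification is either that the hypothesis $T=T^{\ast\ast}$ already presupposes $T^{\ast\ast}$ exists, hence $T^\ast$ must be densely defined, or simply that $T\subset T^\ast$ makes $T$ symmetric and your preliminary remark applies — but this is a matter of phrasing, not of substance.
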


\begin{proof}
\cite[VII.2.5]{Werner}
\end{proof}

\noindent
The next result is the \textit{Hellinger-Toeplitz} theorem:

\begin{theorem}
If $T$ is symmetric and $\mathcal{D}(T)=H$, then $T$ is selfadjoint and $T\in\mathcal{L}(H)$.
\end{theorem}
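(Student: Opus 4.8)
The plan is to deduce selfadjointness almost for free from the hypotheses and then invoke the closed graph theorem for boundedness. First I would observe that since $\mathcal{D}(T)=H$, the operator $T$ is in particular densely defined, so the adjoint $T^\ast$ is well defined and, by Lemma \ref{APPFA-lemma-adjointclosed}, closed. This is the structural fact that makes everything work: $T^\ast\in\mathcal{C}(H)$ regardless of whether $T$ itself is a priori known to be closed.

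Next I would exploit symmetry. By definition $T$ symmetric means $T\subset T^\ast$, which in particular gives the inclusion of domains $\mathcal{D}(T)\subset\mathcal{D}(T^\ast)$. But $\mathcal{D}(T)=H$ and $\mathcal{D}(T^\ast)\subset H$, so $\mathcal{D}(T^\ast)=H=\mathcal{D}(T)$; combined with $Tu=T^\ast u$ for $u\in\mathcal{D}(T)$ this yields $T=T^\ast$. Hence $T$ is selfadjoint, which also shows (via Lemma \ref{APPFA-lemma-adjointclosed} applied to $T=T^\ast$) that $T$ is a closed operator.

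Finally, $T\in\mathcal{C}(H)$ with $\mathcal{D}(T)=H$, so Theorem \ref{closedgraph} (the closed graph theorem) immediately gives $T\in\mathcal{L}(H)$. I do not expect any real obstacle here: the content of the theorem is essentially a bookkeeping argument on domains plus two results already established in the excerpt. The only point worth stating carefully is the order of the argument — one must first observe that the adjoint is automatically closed and everywhere defined before appealing to the closed graph theorem, rather than attempting a direct estimate on $\|Tu\|$ (which would amount to reproving the closed graph theorem by hand).
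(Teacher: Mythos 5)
Your proposal is correct and follows essentially the same route as the paper: deduce $T=T^\ast$ from the domain equality forced by $T\subset T^\ast$ and $\mathcal{D}(T)=H$, conclude closedness from Lemma \ref{APPFA-lemma-adjointclosed}, and finish with the closed graph theorem. You merely spell out the domain bookkeeping that the paper compresses into ``it is clear that $T$ is selfadjoint.''
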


\begin{proof}
Since $T\subset T^\ast$ and $\mathcal{D}(T)=H$, it is clear that $T$ is selfadjoint. In particular, $T$ is closed by Lemma \ref{APPFA-lemma-adjointclosed} and hence bounded by the closed graph theorem (Theorem \ref{closedgraph}).
\end{proof}
\noindent
The following lemma holds analogously to the bounded case:

\begin{lemma}\label{selfadjointperp}
Let $T:\mathcal{D}(T)\subset H\rightarrow H$ be densely defined. Then

\begin{enumerate}
	\item[(i)] $(\im T)^\perp=\ker(T^\ast)$,
	\item[(ii)] $\overline{\im T}=(\ker T^\ast)^\perp$. 
\end{enumerate}
\end{lemma}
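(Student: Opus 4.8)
The two identities are the unbounded-operator analogue of the standard fact for bounded operators, and the proof follows the same pattern: part (i) is established directly from the definition of the adjoint, and part (ii) is then deduced from (i) by taking orthogonal complements. Concretely, for part (i) I would argue that $v \in (\im T)^\perp$ means $\langle Tu, v\rangle = 0$ for all $u \in \mathcal{D}(T)$. The functional $u \mapsto \langle Tu, v\rangle$ is then identically zero, hence certainly bounded on $\mathcal{D}(T)$, so $v \in \mathcal{D}(T^\ast)$ and $\langle u, T^\ast v\rangle = \langle Tu, v\rangle = 0$ for all $u \in \mathcal{D}(T)$; since $\mathcal{D}(T)$ is dense in $H$, this forces $T^\ast v = 0$, i.e. $v \in \ker(T^\ast)$. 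Conversely, if $v \in \ker(T^\ast)$, then in particular $v \in \mathcal{D}(T^\ast)$, and for every $u \in \mathcal{D}(T)$ we have $\langle Tu, v\rangle = \langle u, T^\ast v\rangle = \langle u, 0\rangle = 0$, so $v \perp \im T$. This gives the set equality in (i).

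For part (ii), I would invoke the general Hilbert space fact that for any subset $S \subset H$ one has $\overline{\spa(S)} = (S^\perp)^\perp$, and in particular $\overline{M} = (M^\perp)^\perp$ for any linear subspace $M \subset H$. Applying this with $M = \im T$ and then substituting the identity from part (i), we get
\[
\overline{\im T} = \bigl((\im T)^\perp\bigr)^\perp = \bigl(\ker T^\ast\bigr)^\perp,
\]
which is exactly (ii).

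There is essentially no serious obstacle here: the only point that requires the density hypothesis is the step in (i) where we conclude $T^\ast v = 0$ from $\langle u, T^\ast v\rangle = 0$ for all $u \in \mathcal{D}(T)$, and density is precisely what is assumed. One should also note that $T^\ast$ is well-defined as an operator on its domain $\mathcal{D}(T^\ast)$ because $T$ is densely defined, so all the expressions written down make sense. No closedness of $T$ is needed for either statement, and indeed it is not assumed in the lemma.
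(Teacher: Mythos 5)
Your proof is correct and follows essentially the same route as the paper: part (i) is verified directly from the definition of the adjoint and the density of $\mathcal{D}(T)$, and part (ii) is deduced from (i) by taking orthogonal complements via $(M^\perp)^\perp = \overline{M}$. Your write-up is a bit more explicit about why boundedness of the functional and density of $\mathcal{D}(T)$ are used, but the structure and content match the paper's argument.
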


\begin{proof}
Since $(U^\perp)^\perp=\overline{U}$ for all $U\subset H$, the second assertion follows from the first one. In order to show (i) let us assume at first that $u\in(\im T)^\perp$. Then $\langle Tv,u\rangle=0$ for all $v\in\mathcal{D}(T)$ and we conclude that $u\in\mathcal{D}(T^\ast)$ and $T^\ast u=0$. Consequently, $u\in\ker(T^\ast)$. If, conversely, $v\in\ker(T^\ast)$, then $\langle u,T^\ast v\rangle=0$ for all $u\in H$. Hence $\langle Tu,v\rangle=0$ for all $u\in\mathcal{D}(T)$ and so $v\in(\im T)^\perp$.
\end{proof}

\begin{lemma}\label{adjointinvertible}
If $T\in\mathcal{C}(H)$ is densely defined and invertible, then $T^\ast$ is invertible as well and $(T^\ast)^{-1}=(T^{-1})^\ast$.
\end{lemma}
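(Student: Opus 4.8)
The plan is to prove the two claims separately but with the same underlying mechanism: everything reduces to the fact that the adjoint reverses the direction of maps and interacts well with inverses via the defining identity $\langle Tu, v\rangle = \langle u, T^\ast v\rangle$. Since $T \in \mathcal{C}(H)$ is densely defined and invertible, we know from the discussion after Theorem \ref{closedgraph} that $T^{-1} \in \mathcal{L}(H)$, i.e. $T^{-1}$ is bounded and everywhere defined on $H$. In particular $\mathcal{D}(T^{-1}) = H$, and $(T^{-1})^\ast \in \mathcal{L}(H)$ exists as a genuine bounded operator on all of $H$.

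First I would show that $(T^{-1})^\ast$ is a two-sided inverse for $T^\ast$. For one direction, take $v \in \mathcal{D}(T^\ast)$ and any $u \in H$; writing $u = T(T^{-1}u)$ with $T^{-1}u \in \mathcal{D}(T)$, compute
\begin{align*}
\langle u, (T^{-1})^\ast T^\ast v\rangle = \langle T^{-1}u, T^\ast v\rangle = \langle T(T^{-1}u), v\rangle = \langle u, v\rangle,
\end{align*}
so $(T^{-1})^\ast T^\ast v = v$ for all $v \in \mathcal{D}(T^\ast)$. For the other direction I want to check that $(T^{-1})^\ast$ maps $H$ into $\mathcal{D}(T^\ast)$ and that $T^\ast (T^{-1})^\ast = I_H$. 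Given $w \in H$, I must verify that $u \mapsto \langle Tu, (T^{-1})^\ast w\rangle$ is bounded on $\mathcal{D}(T)$; indeed $\langle Tu, (T^{-1})^\ast w\rangle = \langle T^{-1}(Tu), w\rangle = \langle u, w\rangle$, which is bounded in $u$, so $(T^{-1})^\ast w \in \mathcal{D}(T^\ast)$ and $T^\ast (T^{-1})^\ast w = w$. Combining the two directions, $T^\ast$ is a bijection from $\mathcal{D}(T^\ast)$ onto $H$ with inverse $(T^{-1})^\ast$, which gives both that $T^\ast$ is invertible and the formula $(T^\ast)^{-1} = (T^{-1})^\ast$.

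One should note that $T^\ast$ being densely defined is guaranteed by Lemma \ref{APPFA-lemma-adjoint} (since $T \in \mathcal{C}(H)$ is densely defined), which is what makes $(T^\ast)^{-1}$ and the whole adjoint machinery legitimate; I would mention this at the outset. The only subtle point is making sure that in the computation $\langle u, (T^{-1})^\ast T^\ast v\rangle$ I am entitled to pull $T^{-1}$ across the inner product — this is exactly the defining property of $(T^{-1})^\ast$, valid because $T^{-1}$ is bounded and everywhere defined, so no domain restrictions intervene.

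The main obstacle, such as it is, is purely bookkeeping: keeping straight which operators are everywhere defined (namely $T^{-1}$ and $(T^{-1})^\ast$) versus which carry a proper domain (namely $T$ and $T^\ast$), and making sure the substitution $u = T(T^{-1}u)$ is only applied to elements actually in the relevant domain. No analytic difficulty arises; the argument is a short formal manipulation once the boundedness of $T^{-1}$ is invoked.
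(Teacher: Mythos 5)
Your argument is correct, and it reaches the same conclusion as the paper's proof but closes the argument in a somewhat more self-contained way. Both proofs rest on the identical core computation that for $w \in H$ the map $u \mapsto \langle Tu, (T^{-1})^\ast w\rangle = \langle u,w\rangle$ is bounded, so $(T^{-1})^\ast w \in \mathcal{D}(T^\ast)$ and $T^\ast(T^{-1})^\ast = I_H$, i.e.\ $T^\ast$ is surjective. They diverge at the injectivity step: the paper invokes Lemma~\ref{selfadjointperp} to get $\ker T^\ast = (\im T)^\perp = H^\perp = \{0\}$, and then appeals to the closed graph theorem to see that $T^\ast$ has a bounded inverse before reading off the formula; you instead run the mirror computation $\langle u, (T^{-1})^\ast T^\ast v\rangle = \langle T^{-1}u, T^\ast v\rangle = \langle TT^{-1}u, v\rangle = \langle u,v\rangle$ for $v \in \mathcal{D}(T^\ast)$ and all $u \in H$, concluding $(T^{-1})^\ast T^\ast = I_{\mathcal{D}(T^\ast)}$, which gives injectivity directly. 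Your route has the small advantage of not needing Lemma~\ref{selfadjointperp} or the closed graph theorem at all: since $(T^{-1})^\ast \in \mathcal{L}(H)$ is exhibited explicitly as a two-sided inverse, boundedness of $(T^\ast)^{-1}$ comes for free. One minor stylistic point: the remark that $T^\ast$ is densely defined (via Lemma~\ref{APPFA-lemma-adjoint}) is not actually needed here, since the statement only asserts bijectivity of $T^\ast$ and the existence of the adjoint $(T^{-1})^\ast$ uses density of $\mathcal{D}(T^{-1}) = H$, which is automatic.
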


\begin{proof}
As $T^{-1}:H\rightarrow H$, the domain of the adjoint $(T^{-1})^\ast$ is $H$ and we obtain for all $v\in H$ and $u\in\mathcal{D}(T)$

\[\langle Tu,(T^{-1})^\ast v\rangle=\langle T^{-1}Tu,v\rangle=\langle u,v\rangle.\]
Hence

\begin{align}\label{equ:adjointinvertible}
(T^{-1})^\ast v\in\mathcal{D}(T^\ast),\quad T^\ast(T^{-1})^\ast v=v,\quad v\in H, 
\end{align}
and we note that $T^\ast$ is surjective. As $T$ is invertible, $T^\ast$ is injective as well by Lemma \ref{selfadjointperp}, and so $T^\ast$ has a bounded inverse by the closed graph theorem. Finally, from \eqref{equ:adjointinvertible} we deduce that $(T^\ast)^{-1}=(T^{-1})^\ast$. 
\end{proof}

\begin{lemma}\label{adjointscommute}
Let $T,S\in\mathcal{C}(H)$ be densely defined. If $ST$ is densely defined, then
\[T^\ast S^\ast\subset (ST)^\ast.\]
Moreover, if $S\in\mathcal{L}(H)$ then
\[(ST)^\ast=T^\ast S^\ast.\]
\end{lemma}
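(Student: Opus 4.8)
The plan is to prove the two assertions in turn, starting from the defining property of the adjoint and the simple observation recorded in the lemma after the definition of $T^\ast$ (the one stating that $\langle Tu,v\rangle=\langle u,Sv\rangle$ on the respective domains forces $S\subset T^\ast$). For the inclusion $T^\ast S^\ast\subset(ST)^\ast$, I would take $v\in\mathcal{D}(T^\ast S^\ast)$, which by definition means $v\in\mathcal{D}(S^\ast)$ and $S^\ast v\in\mathcal{D}(T^\ast)$, and compute for arbitrary $u\in\mathcal{D}(ST)$ that
\[
\langle (ST)u,v\rangle=\langle S(Tu),v\rangle=\langle Tu,S^\ast v\rangle=\langle u,T^\ast S^\ast v\rangle,
\]
where the first inner-product manipulation uses $u\in\mathcal{D}(ST)$ so that $Tu\in\mathcal{D}(S)$, and the second uses $S^\ast v\in\mathcal{D}(T^\ast)$. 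Since $ST$ is densely defined by hypothesis, the aforementioned lemma (with the roles $T\rightsquigarrow ST$ and $S\rightsquigarrow T^\ast S^\ast$) immediately yields $T^\ast S^\ast\subset(ST)^\ast$.

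For the reverse inclusion when $S\in\mathcal{L}(H)$, I would take $v\in\mathcal{D}((ST)^\ast)$ and show $v\in\mathcal{D}(T^\ast S^\ast)$ with $(ST)^\ast v=T^\ast S^\ast v$. Here $S\in\mathcal{L}(H)$ means $\mathcal{D}(S)=H$, hence $\mathcal{D}(ST)=\mathcal{D}(T)$; also $S^\ast\in\mathcal{L}(H)$ by the Hellinger--Toeplitz theorem applied to $S$ (or directly, since the adjoint of a bounded everywhere-defined operator is bounded and everywhere defined), so $\mathcal{D}(S^\ast)=H$ and in particular $v\in\mathcal{D}(S^\ast)$ automatically. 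It remains to see $S^\ast v\in\mathcal{D}(T^\ast)$: for every $u\in\mathcal{D}(T)=\mathcal{D}(ST)$ we have
\[
\langle Tu,S^\ast v\rangle=\langle S(Tu),v\rangle=\langle (ST)u,v\rangle=\langle u,(ST)^\ast v\rangle,
\]
so $u\mapsto\langle Tu,S^\ast v\rangle$ is bounded on $\mathcal{D}(T)$, whence $S^\ast v\in\mathcal{D}(T^\ast)$ and $T^\ast(S^\ast v)=(ST)^\ast v$. This gives $(ST)^\ast\subset T^\ast S^\ast$, and together with the first part, $(ST)^\ast=T^\ast S^\ast$.

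I do not expect a genuine obstacle here; the proof is a bookkeeping exercise in tracking domains. The one point that warrants care is the chain of domain memberships in the first part: the inclusion $v\in\mathcal{D}(T^\ast S^\ast)$ must be unpacked correctly as "$v\in\mathcal{D}(S^\ast)$ and $S^\ast v\in\mathcal{D}(T^\ast)$", and one must check that each inner-product identity above is legitimate on the stated domain before invoking it. A second small subtlety is that the hypothesis "$ST$ densely defined" is exactly what is needed to apply the uniqueness lemma in the first part, so it should not be dropped; in the bounded case it is automatic since $\mathcal{D}(ST)=\mathcal{D}(T)$ is dense as $T$ is densely defined.
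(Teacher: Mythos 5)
Your proposal is correct and follows essentially the same route as the paper's proof: in both directions you compute $\langle STu,v\rangle=\langle u,T^\ast S^\ast v\rangle$ on the appropriate domains, invoke the earlier ``observation'' lemma for the inclusion $T^\ast S^\ast\subset(ST)^\ast$, and use $\mathcal{D}(S^\ast)=H$ to get the reverse inclusion when $S$ is bounded. The only difference is that you spell out a few domain bookkeeping points (e.g.\ $\mathcal{D}(ST)=\mathcal{D}(T)$ when $S\in\mathcal{L}(H)$) that the paper leaves implicit.
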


\begin{proof}
If $u\in\mathcal{D}(ST)$ and $v\in\mathcal{D}(T^\ast S^\ast)$, then

\[\langle Tu,S^\ast v\rangle=\langle u,T^\ast S^\ast v\rangle\]
and

\[\langle STu,v\rangle=\langle Tu,S^\ast v\rangle.\]
Consequently, $\langle STu,v\rangle=\langle u,T^\ast S^\ast v\rangle$ and the first assertion is shown.\\
Now assume that $S\in\mathcal{L}(H)$ and $v\in\mathcal{D}((ST)^\ast)$. As $S^\ast\in\mathcal{L}(H)$, we have $\mathcal{D}(S^\ast)=H$ and so we obtain for all $u\in\mathcal{D}(ST)$

\[\langle Tu,S^\ast v\rangle=\langle STu,v\rangle=\langle u,(ST)^\ast v\rangle.\]
We conclude that $S^\ast v\in\mathcal{D}(T^\ast)$ and so $v\in\mathcal{D}(T^\ast S^\ast)$. Consequently, $(ST)^\ast= T^\ast S^\ast$ by the first assertion of the lemma.
\end{proof}
\noindent
The following example can be found, e.g., in \cite{Rudin}.

\begin{example}\label{example:T}
We consider $H=L^2[0,1]$ and define

\begin{align*}
\mathcal{D}(T_1)=H^1[0,1]=\{u:[0,1]\rightarrow\mathbb{C}:\,u\,\text{ absolutely continuous, }\, u'\in L^2[0,1]\},
\end{align*}
as well as

\begin{align*}
\mathcal{D}(T_2)&=\{u\in\mathcal{D}(T_1):\,u(0)=u(1)\}\\
\mathcal{D}(T_3)&=\{u\in\mathcal{D}(T_1):\, u(0)=u(1)=0\}.
\end{align*}
If we set $T_ku=i\,u'$ for $k=1,2,3$, then

\begin{align}\label{selfadj:equ}
T^\ast_1=T_3,\quad T^\ast_2=T_2,\quad T^\ast_3=T_1.
\end{align}
As $T_3\subset T_2\subset T_1$ we note in particular that $T_3$ is symmetric and $T_2$ is selfadjoint.\\
\noindent
In order to show \eqref{selfadj:equ} we compute for $u\in\mathcal{D}(T_k)$, $v\in\mathcal{D}(T_m)$, $m+k=4$,

\begin{align*}
\langle T_ku,v\rangle&=\int^1_0{(iu')\overline{v}\,dt}=\underbrace{iu(1)\overline{v(1)}-iu(0)\overline{v(0)}}_{=0}-i\int^1_0{u\overline{v}'\,dt}=\langle u,T_mv\rangle
\end{align*}
and see that

\[T_1\subset T^\ast_3,\quad T_2\subset T^\ast_2,\quad T_3\subset T^\ast_1.\]
Now we assume that $v\in\mathcal{D}(T^\ast_k)$ and we set $w(t):=\int^t_0{T^\ast_kv\,ds}$, $t\in[0,1]$. We obtain for $u\in\mathcal{D}(T_k)$

\begin{align}\label{selfadj:exampleformulaI}
\int^1_0{iu'\overline{v}\,dt}=\langle T_ku,v\rangle=\langle u,T^\ast_kv\rangle=u(1)\overline{w(1)}-\underbrace{u(0)\overline{w(0)}}_{=0}-\int^1_0{u'\overline{w}\,dt}.
\end{align}
If now $k=1$ or $k=2$, then $Y:=\{u\in L^2[0,1]:\, u\equiv const.\}\subset\mathcal{D}(T_k)$ and we see from \eqref{selfadj:exampleformulaI} that

\begin{align}\label{selfadj:exampleformulaII}
w(1)=0,\quad k=1,2.
\end{align}
If $k=3$, we have $u(1)=0$ for all $u\in\mathcal{D}(T_3)$ and we conclude that in all cases

\begin{align}\label{selfadj:exampleformulaIII}
iv-w\in(\im T_k)^\perp,\quad k=1,2,3.
\end{align}
Now let us consider at first $T_1$. As $T_1$ is surjective, we obtain from \eqref{selfadj:exampleformulaIII} that $iv=w$. By \eqref{selfadj:exampleformulaII}, this implies that $v(0)=v(1)=0$ and hence $v\in\mathcal{D}(T_3)$ which shows that $T^\ast_1\subset T_3$.\\
For $k=2$ and $k=3$, we note at first that $\im(T_k)=Y^\perp$ and so we obtain from \eqref{selfadj:exampleformulaIII} that $iv-w$ is a constant function. By the definition of $w$, we conclude that $v$ is absolutely continuous and $v'=T^\ast_kv\in L^2[0,1]$. Hence $v\in\mathcal{D}(T_1)$, and for $k=3$ we see that $T^\ast_3\subset T_1$. For $k=2$ we have in addition $w(1)=0$ by \eqref{selfadj:exampleformulaII} which shows $v(0)=v(1)$. Hence $v\in\mathcal{D}(T_2)$ and $T^\ast_2\subset T_2$.  
\end{example}

\begin{lemma}\label{lemma:symsurj}
If $T:\mathcal{D}(T)\subset H\rightarrow H$ is symmetric and surjective, then $T$ is selfadjoint.
\end{lemma}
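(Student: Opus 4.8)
The plan is to establish the one missing inclusion $T^\ast\subset T$; together with the symmetry hypothesis $T\subset T^\ast$ this immediately gives $T=T^\ast$, i.e.\ selfadjointness. The pivotal point is that surjectivity of $T$ is exactly what makes $\ker(T^\ast)$ trivial: since $T$ is densely defined (by definition of symmetric), Lemma \ref{selfadjointperp}(i) yields $\ker(T^\ast)=(\im T)^\perp$, and $\im T=H$ forces $(\im T)^\perp=\{0\}$.

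Granting this, the argument is short. I would take an arbitrary $v\in\mathcal{D}(T^\ast)$. Since $T^\ast v\in H=\im T$, surjectivity provides $w\in\mathcal{D}(T)$ with $Tw=T^\ast v$. Symmetry gives $\mathcal{D}(T)\subset\mathcal{D}(T^\ast)$ with $T^\ast w=Tw$, so $T^\ast(v-w)=T^\ast v-Tw=0$, i.e.\ $v-w\in\ker(T^\ast)=\{0\}$. Hence $v=w\in\mathcal{D}(T)$ and $T^\ast v=Tw=Tv$. This proves $T^\ast\subset T$ and completes the proof.

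There is essentially no obstacle here; the only thing to be careful about is to use the symmetry relation $T\subset T^\ast$ in its full strength (namely that $T^\ast$ genuinely restricts to $T$ on $\mathcal{D}(T)$) when comparing $T^\ast w$ with $Tw$. As an alternative route, one could first note that a symmetric surjective $T$ is automatically injective (if $Tu=0$ then $u\perp\im T=H$), so $T$ is bijective, its inverse $T^{-1}$ is everywhere defined and symmetric, hence selfadjoint and bounded by the Hellinger--Toeplitz theorem; then $T=(T^{-1})^{-1}$ is closed and invertible, and Lemma \ref{adjointinvertible} gives $(T^\ast)^{-1}=(T^{-1})^\ast=T^{-1}$, whence $T^\ast=T$. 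I would present the first, self-contained argument and perhaps mention the second only in passing.
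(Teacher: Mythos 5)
Your main argument is correct and is essentially the paper's own proof, merely packaged a bit differently: the paper shows directly that $\langle Tw,\,v-u\rangle=0$ for all $w\in\mathcal{D}(T)$ and invokes surjectivity to conclude $v=u$, whereas you observe that $T^\ast(v-w)=0$ and then quote Lemma \ref{selfadjointperp}(i) to get $v-w\in\ker(T^\ast)=(\im T)^\perp=\{0\}$; these are the same computation. Your alternative route via $T^{-1}$, Hellinger--Toeplitz, and Lemma \ref{adjointinvertible} is also valid but heavier than needed.
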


\begin{proof}
As $T$ is symmetric, we only need to show that $\mathcal{D}(T^\ast)\subset\mathcal{D}(T)$. Let $v\in\mathcal{D}(T^\ast)$. Since $T$ is surjective, there exists $u\in\mathcal{D}(T)$ such that $Tu=T^\ast v$. So for every $w\in\mathcal{D}(T)$

\[\langle Tw,v\rangle=\langle w,T^\ast v\rangle=\langle w,Tu\rangle=\langle Tw,u\rangle,\]
and consequently $v=u\in\mathcal{D}(T)$, where we use again the surjectivity of $T$. 
\end{proof}

\begin{example}\label{example:selfadjointFredholm}
We revisit the differential operator $T_2$ on $L^2[0,1]$ from Example \ref{example:T}, where we modify the domain slightly. We consider for $\lambda\in[-\pi,\pi]$ the operator $T_\lambda u=i\,u'$ on

\[\mathcal{D}(T_\lambda)=\{u\in H^1[0,1]:\, u(0)=e^{i\lambda} u(1)\}.\]
Note that $T_\lambda=T_2$ for $\lambda=0$, so that in this case $T_\lambda$ is selfadjoint by Example \ref{example:T}. If, however, $\lambda\neq 0$ and $v\in L^2[0,1]$, then a straightforward computation shows that

\[u(t)=-i\int^t_0{v(s)\,ds}-\frac{i e^{i\lambda}}{1-e^{i\lambda}}\int^1_0{v(s)\,ds},\quad t\in[0,1],\]
belongs to $\mathcal{D}(T_\lambda)$ and $T_\lambda u=v$. Hence $T_\lambda$ is surjective for every $\lambda\neq 0$. Moreover, if $u,v\in\mathcal{D}(T_\lambda)$

\begin{align*}
\langle T_\lambda u,v\rangle&=\int^1_0{(iu')\overline{v}\,dt}=\underbrace{ie^{i\lambda}u(0)\overline{e^{i\lambda}v(0)}-iu(0)\overline{v(0)}}_{=0}-i\int^1_0{u\overline{v}'\,dt}=\langle u,T_\lambda v\rangle,
\end{align*}
and so $T_\lambda$ is symmetric. Consequently, by Lemma \ref{lemma:symsurj}, $T_\lambda$ is selfadjoint for all $\lambda\in[-\pi,\pi]$. 
\end{example}

\noindent
Note that the sum of two selfadjoint operators is in general not selfadjoint. The \textit{Kato-Rellich theorem} is a classical result giving conditions on how much a selfadjoint operator can be perturbed without losing its selfadjointness. Here we just prove a much weaker assertion which, however, is often sufficient for applications and is a rather direct consequence of the definitions.

\begin{theorem}
Let $T:\mathcal{D}(T)\subset H\rightarrow H$ be selfadjoint and let $S:H\rightarrow H$ be symmetric. Then $T+S$ is selfadjoint on $\mathcal{D}(T)$.
\end{theorem}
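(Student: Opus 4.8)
The plan is to show that $T+S$ is densely defined, symmetric, and then leverage the Hellinger-Toeplitz-type structure together with selfadjointness of $T$ to conclude $T=T^\ast$ forces $(T+S)^\ast\subset T+S$. Since $S\in\mathcal{L}(H)$ with $\mathcal{D}(S)=H$, the domain of $T+S$ is $\mathcal{D}(T)\cap H=\mathcal{D}(T)$, which is dense because $T$ is selfadjoint, hence densely defined; so the adjoint $(T+S)^\ast$ is well-defined and closed by Lemma~\ref{APPFA-lemma-adjointclosed}. The first real computation is to check that $T+S$ is symmetric: for $u,v\in\mathcal{D}(T)$,
\[
\langle (T+S)u,v\rangle=\langle Tu,v\rangle+\langle Su,v\rangle=\langle u,Tv\rangle+\langle u,Sv\rangle=\langle u,(T+S)v\rangle,
\]
using $T=T^\ast$ and the symmetry of $S$ (which, since $\mathcal{D}(S)=H$, means $\langle Su,v\rangle=\langle u,Sv\rangle$ for all $u,v$). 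Thus $T+S\subset(T+S)^\ast$.

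For the reverse inclusion $(T+S)^\ast\subset T+S$, the idea is to identify $(T+S)^\ast$ explicitly. I would first establish that $(T+S)^\ast=T^\ast+S^\ast=T+S$. The key algebraic input is an additivity property of adjoints when one summand is bounded: for densely defined $T$ and bounded $S$ with $\mathcal{D}(S)=H$, one has $(T+S)^\ast=T^\ast+S^\ast$. This is not stated verbatim in the excerpt, but it follows by the same Riesz-representation argument used to define the adjoint: if $v\in\mathcal{D}((T+S)^\ast)$ then $u\mapsto\langle(T+S)u,v\rangle=\langle Tu,v\rangle+\langle Su,v\rangle$ is bounded on $\mathcal{D}(T)$; since $u\mapsto\langle Su,v\rangle=\langle u,S^\ast v\rangle$ is already bounded on all of $H$, subtracting shows $u\mapsto\langle Tu,v\rangle$ is bounded on $\mathcal{D}(T)$, i.e. $v\in\mathcal{D}(T^\ast)$, and then $(T+S)^\ast v=T^\ast v+S^\ast v$. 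The reverse inclusion $\mathcal{D}(T^\ast)\subset\mathcal{D}((T+S)^\ast)$ is the mirror of the same argument. Hence $(T+S)^\ast=T^\ast+S^\ast$, and since $T^\ast=T$ and $S^\ast=S$ (as $S$ is symmetric and everywhere defined, it is selfadjoint by Hellinger-Toeplitz), we get $(T+S)^\ast=T+S$.

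The main obstacle, such as it is, is being careful that the bounded perturbation argument really does give equality of domains and not merely an inclusion: one must verify both $\mathcal{D}((T+S)^\ast)\subseteq\mathcal{D}(T^\ast)$ and $\mathcal{D}(T^\ast)\subseteq\mathcal{D}((T+S)^\ast)$, each by the cancellation trick that $\langle Su,v\rangle=\langle u,S^\ast v\rangle$ is bounded in $u$ over all of $H$ since $S^\ast\in\mathcal{L}(H)$. Everything else is routine: that $\mathcal{D}(T+S)=\mathcal{D}(T)$ is dense, that $T+S$ is closed (alternatively via Lemma~\ref{linearprop}(ii), though it also follows a posteriori from selfadjointness), and the symmetry computation above. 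No deep structure is needed beyond the Riesz representation theorem, Hellinger-Toeplitz for the everywhere-defined symmetric $S$, and the elementary behaviour of adjoints under addition of a bounded operator; I expect the proof to be only a few lines, essentially: reduce to showing $(T+S)^\ast=T^\ast+S^\ast$, then substitute $T^\ast=T$, $S^\ast=S$.
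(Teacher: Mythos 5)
Your proof is correct and follows essentially the same approach as the paper: both establish symmetry of $T+S$ by direct computation and then show $\mathcal{D}((T+S)^\ast)\subset\mathcal{D}(T^\ast)=\mathcal{D}(T)$ by observing that the bounded operator $S$ contributes only a bounded functional to $u\mapsto\langle(T+S)u,v\rangle$. The only cosmetic difference is that you argue this last step directly by subtracting the bounded functional $u\mapsto\langle Su,v\rangle=\langle u,S^\ast v\rangle$, whereas the paper runs the same estimate as a proof by contradiction; your packaging of it as the identity $(T+S)^\ast=T^\ast+S^\ast$ for bounded $S$ is clean but not a different idea.
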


\begin{proof}
Let $v\in\mathcal{D}(T+S)=\mathcal{D}(T)=\mathcal{D}(T^\ast)$. Then

\begin{align*}
\langle(T+S)u,v\rangle=\langle Tu,v\rangle+\langle Su,v\rangle=\langle u,Tv\rangle+\langle u,Sv\rangle=\langle u,(T+S)v\rangle
\end{align*}
and the functional $u\mapsto\langle(T+S)u,v\rangle$ is bounded on $\mathcal{D}(T+S)=\mathcal{D}(T)$. Hence $v\in\mathcal{D}((T+S)^\ast)$ and we have shown that $T+S\subset(T+S)^\ast$.\\
Now we assume that $v\in\mathcal{D}((T+S)^\ast)$. Then the functional 

\begin{align*}
u\mapsto\langle(T+S)u,v\rangle=\langle Tu,v\rangle+\langle Su,v\rangle,\quad u\in\mathcal{D}(T+S)=\mathcal{D}(T),
\end{align*}
is bounded. If we now assume that $u\mapsto\langle Tu,v\rangle$ is unbounded on $\mathcal{D}(T)$, then there exists a sequence $\{u_n\}_{n\in\mathbb{N}}\subset\mathcal{D}(T)$, $\|u_n\|=1$, $n\in\mathbb{N}$, such that $\langle Tu_n,v\rangle\rightarrow\infty$, $n\rightarrow\infty$. As $|\langle Su_n,v\rangle|\leq\|S\|\|v\|$, we would obtain that $\langle (T+S)u_n,v\rangle\rightarrow\infty$ which is a contradiction. Hence $u\mapsto\langle Tu,v\rangle$ is bounded on $\mathcal{D}(T)$ and so $v\in\mathcal{D}(T^\ast)=\mathcal{D}(T)=\mathcal{D}(T+S)$. Consequently, $(T+S)^\ast= T+S$.
\end{proof}

%%%%%%%%%%%%%%%%%%%%%%%%%%%%%%%%%%%%%%%%%%%%%%%%%%%%%%%%%%%%%%%%%%%%%%%%%%%%%%%%%%%%%%%%%%%%%%%%%%%%%%%%%%%%%%%%%%%%%%%%%%%%%%%%%%%%%%%%%%%%%%%%%%%%%%%%%%%%%%%%%%%%%%%%%%%%%%%%%%%%%%%%%%%%%%%%%%%%%%%%%%%%%%%%%%%%%%%%%%%%%%%%%%%%%%%%%%%%%%%%%%%%%%%%%%%%%%%%%%%%%%%%%%%%%%%%%%%%%%%%%%%%%%%%%%%%%%%%%%%%%%%%%%%%%%%%%%%%%%%%%%%%%%%%%%%%%%%%%%%%%%%%%%%%%%%%%%%%%%%%%%

\section{Spectral Theory of Selfadjoint Operators}

\begin{lemma}\label{selfadjoint-closedrange}
Let $E$ be a Banach space. If $T\in\mathcal{C}(E)$ and there exists $\beta\geq 0$ such that

\begin{align}\label{equ:selfadjoint-closedrange}
\|Tu\|\geq\beta\|u\|,\quad u\in\mathcal{D}(T),
\end{align}
then $\im(T)$ is closed.
\end{lemma}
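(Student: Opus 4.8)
The plan is to exploit the a priori estimate \eqref{equ:selfadjoint-closedrange} in order to make $T$ bounded below: passing to preimages then converts a convergent sequence in $\im(T)$ into a Cauchy sequence in $E$, and closedness of the graph of $T$ finishes the argument. (We tacitly take $\beta>0$, since \eqref{equ:selfadjoint-closedrange} imposes no restriction otherwise.)

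First I would fix an arbitrary sequence $\{v_n\}_{n\in\mathbb{N}}\subset\im(T)$ with $v_n\to v$ in $E$, and choose for each $n$ an element $u_n\in\mathcal{D}(T)$ with $Tu_n=v_n$; the aim is to produce some $u\in\mathcal{D}(T)$ with $Tu=v$. Applying \eqref{equ:selfadjoint-closedrange} to the difference $u_n-u_m\in\mathcal{D}(T)$ gives
\[\|u_n-u_m\|\leq\frac{1}{\beta}\,\|Tu_n-Tu_m\|=\frac{1}{\beta}\,\|v_n-v_m\|,\]
so $\{u_n\}_{n\in\mathbb{N}}$ is a Cauchy sequence in $E$ because $\{v_n\}_{n\in\mathbb{N}}$ is (being convergent). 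By completeness of $E$ there is $u\in E$ with $u_n\to u$.

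It remains to identify $u$ as a preimage of $v$. Since $(u_n,Tu_n)=(u_n,v_n)\to(u,v)$ in $E\times E$ and $\gra(T)$ is closed by hypothesis, we obtain $(u,v)\in\gra(T)$, i.e.\ $u\in\mathcal{D}(T)$ and $Tu=v$; hence $v\in\im(T)$, and $\im(T)$ is closed. I do not expect a genuine obstacle here: the only point to be careful about is that $T$ is not assumed bounded, so one may \emph{not} pass directly from $u_n\to u$ to $Tu_n\to Tu$ — rather, the convergence of $\{Tu_n\}=\{v_n\}$ is an input, and it is closedness of the graph that links the two limits. (The same argument in fact shows that $T^{-1}\colon\im(T)\to E$ is bounded, with bound $\beta^{-1}$.)
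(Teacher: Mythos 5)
Your proof is correct and follows essentially the same route as the paper's: use the lower bound to show that preimages of a convergent sequence in $\im(T)$ form a Cauchy sequence, pass to the limit in $E$, and invoke closedness of $\gra(T)$ to conclude. You were also right to note that one must tacitly take $\beta>0$ (the lemma is vacuous, indeed false, for $\beta=0$), a point the paper's statement glosses over.
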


\begin{proof}
Let $\{Tu_n\}_{n\in\mathbb{N}}$ be a sequence in $\im(T)$ such that $Tu_n\rightarrow v\in E$. We obtain from \eqref{equ:selfadjoint-closedrange} that $\{u_n\}_{n\in\mathbb{N}}$ is a Cauchy sequence in $E$ and so $u_n\rightarrow u$ for some $u\in E$. As $T$ is closed, we conclude that $u\in\mathcal{D}(T)$ and $v=Tu\in\im(T)$. Consequently, $\im(T)$ is closed.  
\end{proof}

\begin{lemma}
If $T:\mathcal{D}(T)\subset H\rightarrow H$ is selfadjoint, then $\sigma(T)\subset\mathbb{R}$ and 

\begin{align}\label{equ:selfadjoint-resolvent}
\|(\lambda-T)^{-1}\|\leq|\beta|^{-1},\quad \lambda=\alpha+i\,\beta,\,\alpha,\beta\in\mathbb{R},\, \beta\neq 0.
\end{align}
\end{lemma}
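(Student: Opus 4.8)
The plan is to establish the key estimate first and then deduce both the resolvent bound and the inclusion $\sigma(T)\subset\mathbb{R}$ from it together with the selfadjointness. Fix $\lambda=\alpha+i\beta$ with $\beta\neq 0$. For $u\in\mathcal{D}(T)$ I would compute
\begin{align*}
\|(\lambda-T)u\|^2=\|(\alpha-T)u\|^2+\beta^2\|u\|^2+2\operatorname{Re}\langle(\alpha-T)u,i\beta u\rangle,
\end{align*}
and the cross term vanishes because $\langle(\alpha-T)u,i\beta u\rangle=-i\beta\langle(\alpha-T)u,u\rangle$ is purely imaginary: indeed $\langle Tu,u\rangle=\langle u,Tu\rangle=\overline{\langle Tu,u\rangle}$ is real since $T$ is symmetric, and $\langle\alpha u,u\rangle=\alpha\|u\|^2$ is real as well. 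Hence $\|(\lambda-T)u\|^2\geq\beta^2\|u\|^2$, i.e. $\|(\lambda-T)u\|\geq|\beta|\,\|u\|$ for all $u\in\mathcal{D}(T)$.

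Next I would extract the structural consequences of this lower bound. It shows immediately that $\lambda-T$ is injective. Moreover $\lambda-T\in\mathcal{C}(H)$ by Lemma \ref{linearprop}(i)--(ii) (scaling $T$ by $-1$ and adding the bounded operator $\lambda I$), so Lemma \ref{selfadjoint-closedrange} applies with $\beta'=|\beta|$ and gives that $\im(\lambda-T)$ is closed. To upgrade this to surjectivity I would invoke Lemma \ref{selfadjointperp}(ii): $\overline{\im(\lambda-T)}=(\ker(\lambda-T)^\ast)^\perp$, and since $T$ is selfadjoint one checks $(\lambda-T)^\ast=\bar\lambda-T$ (using $T^\ast=T$ and the behaviour of adjoints under scalars and bounded perturbations). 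But $\bar\lambda=\alpha-i\beta$ also has nonzero imaginary part, so the same lower-bound estimate applied to $\bar\lambda$ shows $\ker(\bar\lambda-T)=\{0\}$. Therefore $\overline{\im(\lambda-T)}=H$, and combined with closedness of the range we get $\im(\lambda-T)=H$. Thus $\lambda-T$ is bijective, hence $\lambda\in\rho(T)$ by the characterization $\rho(T)=\{\lambda:\lambda-T\text{ bijective}\}$ valid for closed operators, which proves $\sigma(T)\subset\mathbb{R}$. Applying the lower bound to $v=(\lambda-T)^{-1}u$ for arbitrary $u\in H$ gives $\|u\|\geq|\beta|\,\|(\lambda-T)^{-1}u\|$, i.e. $\|(\lambda-T)^{-1}\|\leq|\beta|^{-1}$, which is \eqref{equ:selfadjoint-resolvent}.

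The only slightly delicate point is the identification $(\lambda-T)^\ast=\bar\lambda-T$. I would justify it by noting that for a bounded operator $B$ one has $(S+B)^\ast=S^\ast+B^\ast$ for densely defined $S$ (a direct computation from the definition of the adjoint, analogous to Lemma \ref{adjointscommute}), together with $(\alpha' T)^\ast=\overline{\alpha'}\,T^\ast$ for scalars $\alpha'\neq 0$; taking $B=\lambda I$ (so $B^\ast=\bar\lambda I$) and $\alpha'=-1$ yields $(\lambda I-T)^\ast=\bar\lambda I-T^\ast=\bar\lambda I-T$. Everything else is routine, so I expect no real obstacle; the main thing to get right is that the cross term in the expansion of $\|(\lambda-T)u\|^2$ genuinely vanishes, which is exactly where symmetry of $T$ enters.
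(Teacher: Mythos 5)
Your proof is correct and follows essentially the same route as the paper: establish the lower bound $\|(\lambda-T)u\|\geq|\beta|\|u\|$ from symmetry, invoke Lemma \ref{selfadjoint-closedrange} for closedness of the range, and use Lemma \ref{selfadjointperp} to upgrade dense range to surjectivity. One small remark: the paper's displayed identity $H=\overline{\im(\lambda-T)}\oplus\ker(\lambda-T)$ is imprecise as written, since Lemma \ref{selfadjointperp} actually gives $\overline{\im(\lambda-T)}=\ker((\lambda-T)^\ast)^\perp=\ker(\bar\lambda-T)^\perp$; you correctly identify $(\lambda-T)^\ast=\bar\lambda-T$ and apply the same lower bound to $\bar\lambda$ to see that this kernel vanishes, which is the careful version of the step the paper elides.
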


\begin{proof}
Assume that $\lambda=\alpha+i\,\beta$, $\alpha,\beta\in\mathbb{R}$ and $\beta\neq 0$. We obtain for $u\in\mathcal{D}(T)$

\begin{align*}
\langle (\alpha+i\beta)u-Tu,(\alpha+i\beta)u-Tu\rangle&=\langle(\alpha u-Tu)+i\beta u,(\alpha u-Tu)+i\beta u\rangle\\
&=\|\alpha u-Tu\|^2+\beta^2\|u\|^2-i\beta\langle (\alpha-T)u,u\rangle+i\beta\langle u,(\alpha-T)u\rangle
\end{align*}
and conclude that

\begin{align}\label{equpro:selfadjoint-resolvent}
\|(\lambda-T)u\|^2\geq\beta^2\|u\|^2,\quad u\in\mathcal{D}(T),
\end{align}
where we use that $T$ is symmetric. Hence $\im(\lambda-T)$ is closed by Lemma \ref{selfadjoint-closedrange}. As $\ker(\lambda-T)=0$ by \eqref{equpro:selfadjoint-resolvent}, we obtain from Lemma \ref{selfadjointperp}

\begin{align*}
H=\overline{\im(\lambda-T)}\oplus\ker(\lambda-T)=\im(\lambda-T)
\end{align*} 
and so $\lambda\in\rho(T)$. Finally, \eqref{equ:selfadjoint-resolvent} follows from \eqref{equpro:selfadjoint-resolvent}.
\end{proof}
\noindent
The first part of the previous lemma can be improved as follows:

\begin{lemma}
Let $T\in\mathcal{C}(H)$ be densely defined and symmetric. Then precisely one of the following assertions hold:

\begin{enumerate}
	\item[(i)] $\sigma(T)=\mathbb{C}$;
	\item[(ii)] $\sigma(T)=\{\alpha+i\beta\in\mathbb{C}:\beta\geq0\}$;
	\item[(iii)] $\sigma(T)=\{\alpha+i\beta\in\mathbb{C}:\beta\leq 0\}$;
	\item[(iv)] $\sigma(T)\subset\mathbb{R}$. 
\end{enumerate}
Moreover, $\sigma(T)\subset\mathbb{R}$ if and only if $T$ is selfadjoint.
\end{lemma}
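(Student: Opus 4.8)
The plan is to reduce everything to the resolvent estimate already established, namely that for $\lambda = \alpha + i\beta$ with $\beta \neq 0$ one has $\|(\lambda - T)u\|^2 \geq \beta^2 \|u\|^2$ for all $u \in \mathcal{D}(T)$, which holds for any symmetric $T$. From this inequality and Lemma \ref{selfadjoint-closedrange}, $\lambda - T$ is injective with closed range whenever $\beta \neq 0$; hence $\lambda \in \sigma(T)$ if and only if $\im(\lambda - T) \neq H$, and in that case $\im(\lambda - T)$ is a proper closed subspace. Since $T \in \mathcal{C}(H)$, Lemma \ref{selfadjointperp}(ii) gives $\overline{\im(\lambda - T)} = (\ker((\lambda - T)^\ast))^\perp = (\ker(\bar\lambda - T^\ast))^\perp$, so $\lambda \in \sigma(T)$ (with $\beta \neq 0$) precisely when $\bar\lambda \in \sigma_p(T^\ast)$. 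The key structural input is that the function
\[
\lambda \longmapsto \dim\ker(\bar\lambda - T^\ast)
\]
is constant on each of the two open half-planes $\{\beta > 0\}$ and $\{\beta < 0\}$. Granting this, the possible configurations of $\sigma(T) \setminus \mathbb{R}$ are: the whole upper half-plane, the whole lower half-plane, both, or neither, and intersecting with the fact that $\sigma(T)$ is closed (so it also absorbs the real boundary in cases (i)–(iii)) yields exactly the four listed alternatives.

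First I would establish the constancy of $\lambda \mapsto \dim\ker(\bar\lambda - T^\ast)$ on each open half-plane. The cleanest route is via the deficiency-index theory: for $\lambda$ in the open upper half-plane, $\ker(\bar\lambda - T^\ast) = \im(\lambda - T)^\perp$, and a standard argument (using that $\lambda - T$ is injective with closed range, so that $(\lambda - T)^{-1} : \im(\lambda - T) \to \mathcal{D}(T)$ is bounded, together with a Neumann-series perturbation for $|\lambda - \lambda_0|$ small) shows that the orthogonal projection onto $\ker(\bar\lambda - T^\ast)$ depends continuously on $\lambda$; an integer-valued continuous function on the connected set $\{\beta > 0\}$ is constant, and likewise on $\{\beta < 0\}$. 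Next I would observe that these two half-plane dimensions $d_+$ and $d_-$ cannot be "mixed": the spectrum is either all of $\{\beta > 0\} \cup \{\beta < 0\}$ (if both $d_\pm > 0$), or exactly one closed half-plane, or contained in $\mathbb{R}$. Then I would add the real axis back where forced: if $\sigma(T) \supset \{\beta > 0\}$, then since $\sigma(T)$ is closed it contains $\overline{\{\beta > 0\}} = \{\beta \geq 0\}$, giving cases (ii) and (iii), while if $d_+ = d_- > 0$ one similarly gets case (i). Finally, if both $d_\pm = 0$, then $\lambda - T$ is bijective for every non-real $\lambda$, so $\sigma(T) \subset \mathbb{R}$, which is case (iv). For the last sentence, the implication "$T$ selfadjoint $\Rightarrow \sigma(T) \subset \mathbb{R}$" is exactly the previous lemma; conversely, if $\sigma(T) \subset \mathbb{R}$ then in particular $i \in \rho(T)$, i.e.\ $\ker(-i - T^\ast) = \im(i - T)^\perp = 0$ and symmetrically $\ker(i - T^\ast) = 0$, which are the two deficiency indices; both vanishing forces $T = T^\ast$ by the basic criterion for selfadjointness of a symmetric operator.

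The main obstacle I anticipate is the constancy claim for $\dim\ker(\bar\lambda - T^\ast)$ on a half-plane — this is the one genuinely non-formal ingredient, and it is really von Neumann's deficiency-index stability theorem. If one wishes to avoid invoking that theorem by name, the honest work is in the continuity of $\lambda \mapsto P_{\ker(\bar\lambda - T^\ast)}$: one writes, for $\lambda_0$ with $\beta_0 > 0$ and $\lambda$ nearby, the identity $\lambda - T = (\lambda_0 - T)\bigl(I + (\lambda - \lambda_0)(\lambda_0 - T)^{-1}\bigr)$ on $\mathcal{D}(T)$, where $(\lambda_0 - T)^{-1}$ is the bounded inverse defined on the closed subspace $\im(\lambda_0 - T)$; for $|\lambda - \lambda_0| < \|(\lambda_0 - T)^{-1}\|^{-1}$ the factor in parentheses is an isomorphism of $\mathcal{D}(T)$ onto itself (graph norm), so $\im(\lambda - T) = \im(\lambda_0 - T)$ composed with that isomorphism — hence $\im(\lambda - T)$ and $\im(\lambda_0 - T)$ have the same (finite or infinite) codimension, which is already enough to conclude local constancy of $\dim\ker(\bar\lambda - T^\ast) = \codim\,\overline{\im(\lambda - T)} = \codim\,\im(\lambda - T)$. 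Everything else is bookkeeping with closed/open half-planes and the closedness of $\sigma(T)$.
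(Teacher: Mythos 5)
The paper offers no proof of this lemma---it simply cites Kato, Sec.~V.3.4. Your outline is the standard deficiency-index argument, which is essentially Kato's route, so the overall plan is sound and appropriate: reduce to the estimate $\|(\lambda-T)u\|\geq|\beta|\,\|u\|$ for non-real $\lambda$, deduce that $\lambda-T$ is injective with closed range off the real axis, identify the defect via $\overline{\im(\lambda-T)}^{\perp}=\ker(\bar\lambda-T^\ast)$, invoke local constancy of $d(\lambda):=\dim\ker(\bar\lambda-T^\ast)$ on each open half-plane, and then do the bookkeeping with closures. The ``moreover'' part is also fine: $\sigma(T)\subset\mathbb{R}$ forces $\pm i\in\rho(T)$, hence both deficiency indices vanish, and the resulting selfadjointness follows from the surjectivity of $i-T$ together with $\ker(i-T^\ast)=0$ in the spirit of Lemma~\ref{lemma:symsurj}.

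There is, however, a genuine flaw in the one step you flag as ``the honest work.'' The identity you write,
\[
\lambda-T=(\lambda_0-T)\bigl(I+(\lambda-\lambda_0)(\lambda_0-T)^{-1}\bigr)\quad\text{on }\mathcal{D}(T),
\]
does not parse, because $(\lambda_0-T)^{-1}$ is only defined on $\im(\lambda_0-T)$ and a generic $u\in\mathcal{D}(T)$ need not lie there (indeed, when the deficiency index at $\lambda_0$ is positive, $\im(\lambda_0-T)\ne H$ is exactly the phenomenon under study). The factorisation must be taken in the opposite order,
\[
\lambda-T=\bigl(I+(\lambda-\lambda_0)(\lambda_0-T)^{-1}\bigr)(\lambda_0-T),
\]
so the operator in parentheses acts on $\im(\lambda_0-T)$, not on $\mathcal{D}(T)$. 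Consequently the claim that it is ``an isomorphism of $\mathcal{D}(T)$ onto itself (graph norm)'' is not the right statement: for $|\lambda-\lambda_0|\,\|(\lambda_0-T)^{-1}\|<1$ it is a bounded injection with closed range of the closed subspace $M:=\im(\lambda_0-T)$ into $H$, i.e.\ a small perturbation of the inclusion $M\hookrightarrow H$, and what one actually needs is that such a perturbation does not change $\dim M^\perp$. That dimension comparison is a separate fact; it can be obtained by showing that the orthogonal projections onto $\im(\lambda-T)$ and onto $M$ are close in norm and then invoking Lemma~\ref{projectionsclose}, or directly via von Neumann's stability of deficiency indices. As written, your sketch would not go through; once the factorisation is corrected and the projection-comparison step supplied, the rest of the argument works as you describe.
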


\begin{proof}
\cite[Sec. V.3.4]{Kato}
\end{proof}
\noindent
Note that in particular a closed symmetric operator is selfadjoint if $\rho(T)\cap\mathbb{R}\neq\emptyset$.

\begin{lemma}\label{spectrumselfadjoint}
If $T\in\mathcal{C}(H)$ is densely defined and selfadjoint, then

\begin{align*}
\sigma(T)=\sigma_p(T)\cup\sigma_{ess}(T).
\end{align*}

\end{lemma}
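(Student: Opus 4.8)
The plan is to show both inclusions. The inclusion $\sigma_p(T)\cup\sigma_{ess}(T)\subset\sigma(T)$ is immediate: eigenvalues lie in $\sigma(T)$ by definition, and if $\lambda-T$ fails to be Fredholm then in particular it cannot be bijective with bounded inverse (a bijective closed operator is Fredholm of index $0$ by the closed graph theorem discussion following Theorem~\ref{closedgraph}), so $\lambda\in\sigma(T)$. The substance is the reverse inclusion: if $\lambda\in\sigma(T)$ and $\lambda\notin\sigma_{ess}(T)$, then $\lambda$ must be an eigenvalue.

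So fix $\lambda\in\sigma(T)$ with $\lambda-T$ Fredholm. Since $T$ is selfadjoint we know $\sigma(T)\subset\mathbb{R}$, hence $\lambda\in\mathbb{R}$ and $\lambda-T$ is again selfadjoint (it is symmetric with full domain-of-adjoint equal to $\mathcal{D}(T)$, by the perturbation theorem for $T+S$ with $S=\lambda I$, or directly). Now I would use selfadjointness to pin down the cokernel: by Lemma~\ref{selfadjointperp}, $\overline{\im(\lambda-T)}=(\ker((\lambda-T)^\ast))^\perp=(\ker(\lambda-T))^\perp$, and since $\lambda-T$ is Fredholm its image is closed (the lemma just before Example~\ref{example:selfadjointFredholm}... more precisely the ``If $T\in\mathcal{C}(E,F)$ is Fredholm, then $\im(T)$ is closed'' lemma), so $\im(\lambda-T)=(\ker(\lambda-T))^\perp$. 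Consequently $H=\ker(\lambda-T)\oplus\im(\lambda-T)$ and $\codim\im(\lambda-T)=\dim\ker(\lambda-T)$; in particular $\ind(\lambda-T)=0$.

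Now the dichotomy: if $\ker(\lambda-T)=\{0\}$, then $\im(\lambda-T)=(\ker(\lambda-T))^\perp=H$, so $\lambda-T$ is bijective, hence boundedly invertible by the closed graph theorem, contradicting $\lambda\in\sigma(T)$. Therefore $\ker(\lambda-T)\neq\{0\}$, i.e. $\lambda$ is an eigenvalue, so $\lambda\in\sigma_p(T)$. This completes $\sigma(T)\subset\sigma_p(T)\cup\sigma_{ess}(T)$ and hence the claimed equality.

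The only mild subtlety — the step I would be most careful about — is the identification $\im(\lambda-T)=(\ker(\lambda-T))^\perp$: one needs both that the range is closed (which is the Fredholm property, but it is worth invoking the explicit lemma for that) and that $(\lambda-T)^\ast=\lambda-T$, which rests on selfadjointness of $T$ together with the behaviour of adjoints under bounded perturbation (Lemma~\ref{adjointscommute} applied to $\lambda I - T$, or the perturbation theorem ``$T+S$ selfadjoint when $S$ bounded symmetric''). Everything else is a short chain of already-established facts, so I do not anticipate a genuine obstacle.
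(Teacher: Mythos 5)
Your proof is correct and takes essentially the same route as the paper, built around Lemma~\ref{selfadjointperp} (so that $\ker(\lambda-T)=(\im(\lambda-T))^\perp$) together with the closed graph theorem. The paper merely phrases the dichotomy in the other direction: it assumes injectivity and derives that the image is a proper dense subspace, hence not closed, hence not Fredholm, whereas you assume Fredholmness (closed range) and derive non-injectivity; these are logically equivalent.
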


\begin{proof}
Since $\sigma(T)\subset\mathbb{R}$, any $\lambda-T$, $\lambda\in\sigma(T)$, is selfadjoint as well. Hence we can assume without loss of generality that $0\in\sigma(T)$ and it suffices to consider the case $\lambda=0$.\\ 
As $\ker(T)=(\im T)^\perp$ by Lemma \ref{selfadjointperp}, we see that either $T$ is not injective or its image is dense in $H$. If $T$ is not injective, then $0\in\sigma_p(T)$. If on the other hand $T$ is injective, then $T$ has a dense image but $\im(T)\neq H$ as otherwise $0\in\rho(T)$. Hence $\im(T)$ is not closed and so $T$ is not a Fredholm operator which implies that $0\in\sigma_{ess}(T)$. 
\end{proof}
\noindent
Finally, we consider selfadjoint Fredholm operators.

\begin{lemma}\label{spectralFredSelf}
Let $T\in\mathcal{C}(H)$ be selfadjoint and Fredholm. Then $0$ is either in the resolvent set $\rho(T)$ or it is an isolated eigenvalue of finite multiplicity.
\end{lemma}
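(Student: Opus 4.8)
The plan is to separate the two cases: if $0\in\rho(T)$ there is nothing to prove, so the real work is to show that $0$, if it lies in $\sigma(T)$, is an isolated eigenvalue of finite multiplicity. First I would note that $0$ being an eigenvalue of finite multiplicity is essentially immediate: since $T$ is Fredholm, $\ker(T)$ is finite-dimensional, $\im(T)$ is closed, and $0\notin\sigma_{ess}(T)$ by definition of the essential spectrum, so Lemma \ref{spectrumselfadjoint} (which gives $\sigma(T)=\sigma_p(T)\cup\sigma_{ess}(T)$) forces $0\in\sigma_p(T)$, with multiplicity $\dim\ker(T)<\infty$. It then remains to prove that $0$ is isolated in $\sigma(T)$.

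The key step is to exploit the orthogonal decomposition $H=\ker(T)\oplus\im(T)$, which is available because $T$ is selfadjoint, so $(\im T)^\perp=\ker(T^\ast)=\ker(T)$ by Lemma \ref{selfadjointperp}, and $\im(T)$ is closed. Let $P$ be the orthogonal projection onto $\ker(T)$. Since $\ker(T)\subset\mathcal{D}(T)$, for $u\in\mathcal{D}(T)$ both $Pu$ and $(I-P)u$ lie in $\mathcal{D}(T)$, so $\mathcal{D}(T)=\ker(T)\oplus\bigl(\im(T)\cap\mathcal{D}(T)\bigr)$ and $T$ respects this splitting: it vanishes on $\ker(T)$, and $S:=T\mid_{\im(T)\cap\mathcal{D}(T)}$ maps into $\im(T)$. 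I would then verify that $S$, regarded as an operator on the Hilbert space $\im(T)$, is closed — its graph is $\gra(T)\cap(\im(T)\times\im(T))$, an intersection of closed subspaces of $H\times H$ — and bijective: injectivity follows from $\ker(T)\cap\im(T)=\{0\}$, and surjectivity because any $v\in\im(T)$ equals $Tw$ for some $w\in\mathcal{D}(T)$, whence $v=T\bigl((I-P)w\bigr)$ with $(I-P)w\in\im(T)\cap\mathcal{D}(T)$. By the remark following the closed graph theorem (Theorem \ref{closedgraph}), a closed invertible operator has a bounded inverse, so $0\in\rho(S)$.

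Finally, $\rho(S)$ is open, so there is $\varepsilon>0$ with $\{\lambda\in\mathbb{C}:|\lambda|<\varepsilon\}\subset\rho(S)$. For $0<|\lambda|<\varepsilon$ the operator $\lambda-T$ is bijective: given $f\in H$, the element $u=\lambda^{-1}Pf+(\lambda-S)^{-1}(I-P)f$ lies in $\mathcal{D}(T)$ and satisfies $(\lambda-T)u=f$, and it is the only such solution because $\lambda-T$ acts as multiplication by $\lambda$ on $\ker(T)$ and as $\lambda-S$ on $\im(T)$, two complementary subspaces. Since $\lambda-T$ is closed, bijectivity yields $\lambda\in\rho(T)$; hence $\sigma(T)\cap\{\lambda:|\lambda|<\varepsilon\}=\{0\}$ and $0$ is isolated. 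I expect the only point requiring genuine care to be the reduction of $T$ along $H=\ker(T)\oplus\im(T)$ to the pair consisting of the zero operator on $\ker(T)$ and $S$ on $\im(T)$: because $T$ is unbounded one must check that $P$ does not move vectors out of $\mathcal{D}(T)$ and that $S$ really is a closed operator on $\im(T)$ that is onto; once this is in place, the rest is routine.
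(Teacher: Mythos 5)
Your proof is correct, and the core strategy is the same as the paper's: decompose $H=\ker(T)\oplus\im(T)$, restrict $T$ to $\im(T)$, show the restriction is closed and bijective so that $0$ lies in its resolvent set, and then transfer the invertibility of $\lambda-T$ for small $\lambda\neq 0$ back to $T$. However, you improve on two details. First, your argument that $S$ is closed, namely $\gra(S)=\gra(T)\cap\bigl(\im(T)\times\im(T)\bigr)$ is an intersection of closed subspaces, is cleaner than the paper's sequence argument, which verifies closedness through inner products with elements of $\mathcal{D}(T)$ and the adjoint relation. Second, and more substantively, the paper first invokes the stability of Fredholmness (Theorem \ref{stabFred}) to get an $\varepsilon$ with $\lambda-T$ Fredholm for $|\lambda|<\varepsilon$, and at the final step only verifies that $\lambda-T$ is injective for small $\lambda\neq 0$; that $\lambda\in\rho(T)$ then follows tacitly because a selfadjoint Fredholm operator that is injective is surjective. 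You instead write down the inverse of $\lambda-T$ explicitly as $\lambda^{-1}P+(\lambda-S)^{-1}(I-P)$ on the decomposition $H=\ker(T)\oplus\im(T)$, which makes bijectivity manifest, dispenses with the appeal to Fredholm perturbation theory, and thereby closes a small gap that the paper leaves to the reader. The trade-off is that your route demands the slightly delicate verification, which you correctly flag, that $P$ preserves $\mathcal{D}(T)$ and that $T$ really does reduce to the pair $(0,S)$ along the splitting; the paper's phrasing sidesteps some of this bookkeeping at the cost of relying on the external stability theorem.
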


\begin{proof}
As $T$ is Fredholm, we get from Lemma \ref{stabFred} that there is $\varepsilon>0$ such that $\lambda-T$ is also Fredholm for all $\lambda\in(-\varepsilon,\varepsilon)$. Consequently, $(-\varepsilon,\varepsilon)\cap\sigma_{ess}(T)=\emptyset$ and by Lemma \ref{spectrumselfadjoint} we only need to show that $0$ is isolated in $\sigma(T)$ if it is not in the resolvent set.\\
We set $X:=(\ker T)^\perp=\im(T)$ which is a Hilbert space as it is a closed subspace of $H$. We claim that 

\begin{align*}
T':=T\mid_{X}:\mathcal{D}(T')=X\cap\mathcal{D}(T)\subset X\rightarrow X
\end{align*}
is closed. Indeed, if $\{u_n\}_{n\in\mathbb{N}}\subset\mathcal{D}(T')$ is a sequence such that $u_n\rightarrow u$ and $T'u_n\rightarrow v$ for some $u,v\in X$, then we obtain for all $w\in\mathcal{D}(T)$

\begin{align*}
\langle T' u_n,w\rangle=\langle u_n,Tw\rangle \rightarrow\langle u,Tw\rangle.
\end{align*}
Since $\langle T'u_n,w\rangle\rightarrow\langle v,w\rangle$, we get that $\langle u,Tw\rangle=\langle v,w\rangle$ for all $w\in\mathcal{D}(T)$ and so $u\in\mathcal{D}(T^\ast)=\mathcal{D}(T)$. As $u_n\in(\ker T)^\perp$ for all $n\in\mathbb{N}$, we also have that $u\in(\ker T)^\perp$ and so $u\in\mathcal{D}(T')$. Moreover, we obtain

\begin{align*}
\langle v,w\rangle=\langle T^\ast u,w\rangle=\langle Tu,w\rangle=\langle T'u,w\rangle,\quad w\in\mathcal{D}(T),
\end{align*}
which shows that $T'u=v$, and so $T'$ is closed.\\
Since $T'$ is moreover bijective, we obtain from the closed graph theorem that $(T')^{-1}:X\rightarrow X$ is bounded and hence $0\in\rho(T')$. Accordingly, there exists a neighbourhood of $0$ in $\mathbb{C}$ belonging entirely to the resolvent set of $T'$.\\
Now let us assume that there is some $\lambda\in(-\varepsilon,\varepsilon)\setminus\{0\}$ such that $\lambda\in\rho(T')$ and that there exists $u=u_1+u_2\in\mathcal{D}(T)=(\ker(T)\oplus\im(T))\cap\mathcal{D}(T)$ such that $\lambda u-Tu=\lambda u_1+\lambda u_2-Tu_2=0$. Then $\lambda u_1=(T-\lambda)u_2=(T'-\lambda)u_2$ and since the right hand side is in $\im(T)$ and the left hand side in $\ker(T)$, we see that both sides vanish. As $0\neq\lambda\in\rho(T')$, we get that $u_1=u_2=0$ and hence $u=0$ which shows that $\lambda\in\rho(T)$.    
\end{proof}
\noindent
Finally, let us note the following result on spectral projections for later reference.

\begin{lemma}
Let $T\in\mathcal{C}(H)$ be selfadjoint and let $\lambda_0$ be an isolated point in the spectrum $\sigma(T)$. Then $\lambda_0\in\sigma_p(T)$ and

\[\im(P_{\{\lambda_0\}})=\ker(\lambda_0-T).\]
\end{lemma}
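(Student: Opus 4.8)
The plan is to use the Riesz projection machinery from Theorem \ref{spectralprojections} together with selfadjointness. Since $\lambda_0$ is isolated in $\sigma(T)$, we may write $\sigma(T)=\{\lambda_0\}\cup\tau$ with $\tau$ closed and $\overline{\Delta}\cap\tau=\emptyset$ for a small disc $\Delta$ around $\lambda_0$, so the Riesz projection $P:=P_{\{\lambda_0\}}$ is defined. By Theorem \ref{spectralprojections}(iv)--(v), $M:=\im(P)\subset\mathcal{D}(T)$, the restriction $T\mid_M$ is bounded, and $\sigma(T\mid_M)=\{\lambda_0\}$. Also $M$ and $N:=\ker(P)$ are $T$-invariant and $H=M\oplus N$. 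First I would argue that $M$ is finite dimensional: this is where selfadjointness enters. Indeed $T\mid_M$ is a bounded operator on the Hilbert space $M$ whose only spectral point is $\lambda_0\in\mathbb{R}$; I would show $T\mid_M$ is symmetric (hence selfadjoint and bounded) and then that a bounded selfadjoint operator with one-point spectrum $\{\lambda_0\}$ equals $\lambda_0 I_M$, so that $\lambda_0-T\mid_M=0$, i.e. $M\subset\ker(\lambda_0-T)$; in particular $\lambda_0\in\sigma_p(T)$.

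To get the symmetry of $T\mid_M$ I would observe that $P$ commutes with $T$ in the appropriate sense and, more usefully, that because $T$ is selfadjoint and $\Gamma$ is a contour in $\rho(T)$, one has $R(\lambda,T)^\ast=R(\overline{\lambda},T)$; choosing $\Gamma$ to be a circle centred at the real point $\lambda_0$ (so that $\Gamma$ is symmetric under conjugation, traversed in the reversed sense), a change of variables in the contour integral gives $P^\ast=P$. Hence $P$ is an orthogonal projection and $N=M^\perp$. Then for $u,v\in M$ we have $\langle (T\mid_M)u,v\rangle=\langle Tu,v\rangle=\langle u,Tv\rangle=\langle u,(T\mid_M)v\rangle$ using $M\subset\mathcal{D}(T)=\mathcal{D}(T^\ast)$, so $T\mid_M$ is symmetric on the Hilbert space $M$, hence selfadjoint and bounded. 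A bounded selfadjoint operator $S$ on a Hilbert space with $\sigma(S)=\{\lambda_0\}$ satisfies $\|S-\lambda_0 I\|=\sup\{|\mu-\lambda_0|:\mu\in\sigma(S-\lambda_0 I)\}=0$ by the spectral radius formula for selfadjoint operators, so $S=\lambda_0 I$. This yields $M\subset\ker(\lambda_0-T)$, and since $M\neq\{0\}$ (because $\lambda_0\in\sigma(T)=\sigma(T\mid_M)\cup\sigma(T\mid_N)$ forces $\lambda_0\in\sigma(T\mid_M)$), we get $\lambda_0\in\sigma_p(T)$.

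It remains to prove the reverse inclusion $\ker(\lambda_0-T)\subset M$, which is the easier half. Suppose $u\in\mathcal{D}(T)$ with $Tu=\lambda_0 u$. Write $u=u_M+u_N$ with $u_M\in M$, $u_N\in N$. Since $M$ and $N$ are $T$-invariant and $u\in\mathcal{D}(T)$, both components lie in $\mathcal{D}(T)$ (because $u_M\in M\subset\mathcal{D}(T)$, hence $u_N=u-u_M\in\mathcal{D}(T)$), and $Tu_M=\lambda_0 u_M$ from the previous paragraph while $Tu_N=(T\mid_N)u_N$. Then $0=(\lambda_0-T)u=(\lambda_0-T)u_N=(\lambda_0-T\mid_N)u_N$. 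But $\lambda_0\notin\sigma(T\mid_N)=\tau$ by construction, so $\lambda_0-T\mid_N$ is injective on $N$, whence $u_N=0$ and $u=u_M\in M$. Combining the two inclusions gives $\im(P_{\{\lambda_0\}})=\ker(\lambda_0-T)$.

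The main obstacle is the symmetry ($P^\ast=P$) step: it requires carefully exploiting $R(\lambda,T)^\ast=R(\overline\lambda,T)$ for selfadjoint $T$ together with a judicious choice of contour symmetric under complex conjugation, and checking that the orientation reversal introduced by conjugating the contour exactly cancels the sign, so that the integral defining $P^\ast$ reproduces the integral defining $P$. Everything downstream — finite dimensionality of $M$, the identity $T\mid_M=\lambda_0 I_M$, and the reverse inclusion — is then routine given the results quoted from the excerpt.
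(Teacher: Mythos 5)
Your proof is correct, and it is a complete argument for a result that the paper itself does not prove but merely cites as \cite[Prop.\ 6.3]{Hislop}. Your route is the standard one and matches what one finds in that reference and in Kato: first, the identity $R(\lambda,T)^\ast=R(\overline{\lambda},T)$ for selfadjoint $T$ (a consequence of Lemma \ref{adjointinvertible}) combined with the conjugation-symmetric contour $\Gamma$ centred at the real point $\lambda_0$ gives $P_{\{\lambda_0\}}^\ast=P_{\{\lambda_0\}}$, so the Riesz projection is orthogonal; second, $T\mid_M$ is then a bounded symmetric, hence selfadjoint, operator on the Hilbert space $M=\im(P_{\{\lambda_0\}})$ with $\sigma(T\mid_M)=\{\lambda_0\}$ by Theorem \ref{spectralprojections}(v), and the norm-equals-spectral-radius identity for bounded selfadjoint operators forces $T\mid_M=\lambda_0 I_M$; third, the reverse inclusion $\ker(\lambda_0-T)\subset M$ follows by decomposing an eigenvector along $M\oplus N$ and using $\lambda_0\notin\sigma(T\mid_N)$. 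Two small things worth making explicit when you write this up: the non-vanishing of $M$ should be attributed directly to Theorem \ref{spectralprojections}(v) (an empty-spectrum restriction on the zero space would contradict $\sigma(T\mid_M)=\{\lambda_0\}$), rather than to a circular appeal to the union $\sigma(T)=\sigma(T\mid_M)\cup\sigma(T\mid_N)$; and in the last step you should state explicitly that $u_N\in\mathcal{D}(T)\cap N=\mathcal{D}(T\mid_N)$ before applying the injectivity of $\lambda_0-T\mid_N$, which you in fact do. The contour-conjugation computation is exactly as delicate as you flag it to be, and your account of the orientation reversal cancelling the conjugated $\frac{1}{2\pi i}$ is right.
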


\begin{proof}
\cite[Prop. 6.3]{Hislop}
\end{proof}

%%%%%%%%%%%%%%%%%%%%%%%%%%%%%%%%%%%%%%%%%%%%%%%%%%%%%%%%%%%%%%%%%%%%%%%%%%%%%%%%%%%%%%%%%%%%%%%%%%%%%%%%%%%%%%%%%%%%%%%%%%%%%%%%%%%%%%%%%%%%%%%%%%%%%%%%%%%%%%%%%%%%%%%%%%%%%%%%%%%%%%%%%%%%%%%%%%%%%%%%%%%%%%%%%%%%%%%%%%%%%%%%%%%%%%%%%%%%%%%%%%%%%%%%%%%%%%%%%%%%%%%%%%%%%%%%%%%%%%%%%%%%%%%%%%%%%%%%%%%%%%%%%%%%%%%%%%%%%%%%%%%%%%%%%%%%%%%%%%%%%%%%%%%%%%%%%%%%%%%%%%%%%%%%%%%%%%%%%%%%%%%%%%%%%%%%%%%%%%%%%%%%%%%%%%%%%%%%%%%%%%%%%%%%%%%%%%%%%%%%%%%%%%%%%%%%%%%%%%%%%%%%%%%%%%%%%%%%%%%%%%%%%%%%%%%%%%%%%%%%%%%%%%%%%%%%%%%%%%%%%%%%%%%%%%%%%%%%%%%%%%%%%%%%%%%%%%%%%%%%%%%%%%%%%%%%%%%%%%%%%%%%%%%%%%%%%%%%%%%%%%%%%%%%%%%%%%%%%%%%%%%%%%%%%%%%%%%%%%%%%%%%%%%%%%%%%%%%%%%%%%%%%%%%%%%%%%%%%%%%%%%%%%%%%%%%%%%%%%%%%%%%%%%%%%%%%%%%%%%%%%%%%%%%%%%%%%%%%%%%%%%%%%%%%%%%%%%%%%%%%%%%%%%%%%%%%%%%%%%%%%%%%%%%%%%%%%%%%%%%%%%%%%%%%%%%%%%%%%%%%%%%%%%%%%%%%%%%%%%%%%%%%%

\chapter{The Gap Topology}

\section{Definition and Properties}
As before we let $H\neq\{0\}$ be a complex Hilbert space. We denote by $\mathcal{C}^\textup{sa}(H)$ the set of all densely defined, selfadjoint operators $T:\mathcal{D}(T)\subset H\rightarrow H$ and the aim of this section is to introduce a metric on the space $\mathcal{C}^\textup{sa}(H)$.\\
If $T\in\mathcal{C}^\textup{sa}(H)$, then $\pm i\notin\sigma(T)$ and we obtain in particular that $T+i$ has a bounded inverse $(T+i)^{-1}:H\rightarrow H$. As $T-i\in\mathcal{C}(H)$ and $\im((T+i)^{-1})=\mathcal{D}(T)=\mathcal{D}(T-i)$, we conclude by Lemma \ref{boundedcomposition} that 
\[\kappa(T):=(T-i)(T+i)^{-1}\in\mathcal{L}(H).\] 
The operator $\kappa(T)$ is called the \textit{Cayley-transform} of $T\in\mathcal{C}^{sa}(H)$ and we note that

\begin{align}\label{kappaidentity}
\kappa(T)=(T+i-2i)(T+i)^{-1}=I_H-2i\,(T+i)^{-1}.
\end{align}
Consequently, if $T_1,T_2\in\mathcal{C}^{sa}(H)$, then

\begin{align}\label{equivmetric}
\|\kappa(T_1)-\kappa(T_2)\|=2\|(T_1+i)^{-1}-(T_2+i)^{-1}\|
\end{align}
and $\kappa:\mathcal{C}^\textup{sa}(H)\rightarrow\mathcal{L}(H)$ is injective. 

\begin{defi}
The \textit{gap metric} on $\mathcal{C}^\textup{sa}(H)$ is defined by

\begin{align*}
d_G(T_1,T_2)=\|\kappa(T_1)-\kappa(T_2)\|.
\end{align*}
\end{defi}
\noindent
Note that $d_G$ indeed defines a metric because of the injectivity of $\kappa:\mathcal{C}^\textup{sa}(H)\rightarrow\mathcal{L}(H)$. Moreover, by \eqref{equivmetric} we obtain an equivalent metric $\delta$ by

\begin{align}\label{tildedelta}
\delta(T_1,T_2)=\|(T_1+i)^{-1}-(T_2+i)^{-1}\|.
\end{align}
Let us recall that a linear operator $U:H\rightarrow H$ is called \textit{unitary} if $U^\ast=U^{-1}$, and moreover every surjective isometry is unitary. We prove the following results along the lines of \S 1.1 of Booss-Bavnbek, Lesch and Phillips' article  \cite{UnbSpecFlow}.

\begin{theorem}\label{thm:Cayley}
If $U$ is unitary and $U-I_H$ injective, then $T:=i(I_H+U)(I_H-U)^{-1}$ is selfadjoint on $\mathcal{D}(T)=\im(I_H-U)$. Moreover, $T=i(I_H-U)^{-1}(I_H+U)$.
\end{theorem}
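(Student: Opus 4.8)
The statement is the inverse of the Cayley transform, so the natural approach is to verify directly that $T = i(I_H+U)(I_H-U)^{-1}$ has all the properties claimed: it is well-defined on the dense domain $\mathcal{D}(T) = \im(I_H - U)$, it is symmetric, and it is actually selfadjoint. The last identity $T = i(I_H-U)^{-1}(I_H+U)$ should fall out from the fact that $I_H+U$ and $(I_H-U)^{-1}$ commute (both being functions of $U$, or more elementarily because $(I_H-U)(I_H+U) = I_H - U^2 = (I_H+U)(I_H-U)$).

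First I would check that $T$ is densely defined. Since $U$ is unitary and $I_H - U$ is injective, Lemma \ref{selfadjointperp} applied to the selfadjoint-adjacent situation gives $\overline{\im(I_H-U)} = (\ker(I_H-U)^\ast)^\perp = (\ker(I_H-U^\ast))^\perp$; and $\ker(I_H-U^\ast) = \ker(I_H - U^{-1}) = \ker(U - I_H) = \{0\}$ because $U^{-1}(U-I_H) = I_H - U^{-1}$ and injectivity of $I_H - U$ is equivalent to injectivity of $I_H - U^{-1}$. Hence $\mathcal{D}(T)$ is dense. Note $T$ makes sense as an operator: for $v = (I_H-U)w \in \mathcal{D}(T)$, the inverse $(I_H-U)^{-1}$ is unambiguous since $I_H-U$ is injective, and then we apply the bounded operator $i(I_H+U)$.

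Next, symmetry. For $u = (I_H-U)x$ and $v = (I_H-U)y$ in $\mathcal{D}(T)$, I would compute $\langle Tu, v\rangle = \langle i(I_H+U)x,\ (I_H-U)y\rangle$ and $\langle u, Tv\rangle = \langle (I_H-U)x,\ i(I_H+U)y\rangle = -i\langle (I_H-U)x, (I_H+U)y\rangle$; expanding both using $\langle Ux, Uy\rangle = \langle x,y\rangle$ and $\langle x, Uy\rangle = \langle U^\ast x, y\rangle = \langle U^{-1}x, y\rangle$, the cross terms should match so that $\langle Tu,v\rangle = \langle u, Tv\rangle$. This shows $T \subset T^\ast$.

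The main obstacle is upgrading symmetry to selfadjointness, i.e. showing $\mathcal{D}(T^\ast) \subset \mathcal{D}(T)$. The cleanest route I see is via Lemma \ref{lemma:symsurj}: if I can show $T + i$ (or $T - i$) is surjective onto $H$, then together with symmetry that operator — hence $T$ — is selfadjoint. Concretely, $T + i = i(I_H+U)(I_H-U)^{-1} + i = i[(I_H+U) + (I_H-U)](I_H-U)^{-1} = 2i(I_H-U)^{-1}$, which maps $\mathcal{D}(T) = \im(I_H-U)$ bijectively onto $H$. So $T+i$ is surjective. But $T+i$ symmetric isn't quite the hypothesis of Lemma \ref{lemma:symsurj} for $T$ itself; instead I would argue: $T+i$ is symmetric shifted — rather, note $(T+i)^{-1} = \tfrac{1}{2i}(I_H - U) \in \mathcal{L}(H)$ is bounded and defined on all of $H$, so $T+i$ is a closed operator (inverse of a bounded injective operator with closed—indeed full—range is closed by Lemma \ref{linearprop}(iii) reasoning), hence $T$ is closed. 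Then $T$ is a closed symmetric operator with $-i \in \rho(T)$ (since $(T+i)^{-1}$ is bounded and $T+i$ bijective), so by the remark following the classification lemma (a closed symmetric operator with $\rho(T) \cap \text{(something)} \neq \emptyset$ forcing case (iv)), $\sigma(T) \subset \mathbb{R}$ and $T$ is selfadjoint. Finally I would record $\mathcal{D}(T) = \im(I_H-U)$ explicitly and derive the alternative formula $T = i(I_H-U)^{-1}(I_H+U)$ from $(I_H+U)(I_H-U) = (I_H-U)(I_H+U)$, which gives $(I_H+U)(I_H-U)^{-1} = (I_H-U)^{-1}(I_H+U)$ on the appropriate domain.
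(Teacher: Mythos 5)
Your density argument and your symmetry computation match the paper's. Where you diverge is in upgrading symmetry to selfadjointness, and there the route you choose has a genuine gap. The paper's proof is algebraic: using Lemma \ref{adjointscommute} and Lemma \ref{adjointinvertible} it computes $T^\ast = -i(I_H-U^\ast)^{-1}(I_H+U^\ast)$ explicitly, observes that this has the same form as $T$ with $U$ replaced by $U^\ast$, hence $T^\ast$ is itself symmetric, so $T^\ast \subset T^{\ast\ast} = T$ and equality follows. You instead go spectral: you show $T$ is closed, show $T+i = 2i(I_H-U)^{-1}$ is a bijection from $\mathcal{D}(T)$ onto $H$, conclude $-i \in \rho(T)$, and then invoke the classification lemma (cases (i)--(iv)) to get $\sigma(T) \subset \mathbb{R}$. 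But a single resolvent point in the lower half-plane does not force case (iv). Knowing $-i \in \rho(T)$ rules out case (i) ($\sigma(T)=\mathbb{C}$) and case (iii) ($\sigma(T)$ the closed lower half-plane), but it is perfectly consistent with case (ii), $\sigma(T)=\{\alpha + i\beta : \beta \geq 0\}$, whose resolvent set is exactly the open lower half-plane containing $-i$. Indeed there are closed symmetric operators with deficiency indices $(0,1)$ or $(1,0)$ (e.g.\ $i\,d/dt$ on a half-line with a boundary condition at $0$), for which exactly one of $\pm i$ lies in $\rho(T)$ but the operator is not selfadjoint.

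The fix is short and stays entirely within your framework: compute also
\begin{align*}
T - i \;=\; i\bigl[(I_H+U)-(I_H-U)\bigr](I_H-U)^{-1} \;=\; 2iU(I_H-U)^{-1},
\end{align*}
which is a bijection from $\mathcal{D}(T) = \im(I_H-U)$ onto $H$ since $U$ is unitary (hence invertible) and $(I_H-U)^{-1}$ is a bijection onto $H$. Thus $+i \in \rho(T)$ as well, and now all of cases (i)--(iii) are excluded, leaving case (iv) and hence selfadjointness. With this addition your argument is correct and genuinely different from the paper's; it is more in the spirit of the von Neumann deficiency-index criterion, at the price of invoking the unproved classification lemma from Kato, whereas the paper's double-adjoint computation is self-contained given the earlier adjoint lemmas. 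One further small remark: for the alternative formula $T = i(I_H-U)^{-1}(I_H+U)$, the nontrivial direction is showing $\mathcal{D}((I_H-U)^{-1}(I_H+U)) \subset \im(I_H-U)$; the paper does this via the identity $u = \tfrac{1}{2}(I_H-U)(u+v)$ when $(I_H+U)u = (I_H-U)v$, and your phrase ``on the appropriate domain'' glosses over exactly this point, which should be spelled out.
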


\begin{proof}
Since $U$ is in particular normal we see that $\ker(I_H-U^\ast)=\ker(I_H-U)$ and so

\[\overline{\im(I_H-U)}=\ker(I_H-U^\ast)^\perp=\ker(I_H-U)^\perp=H,\]
as $I_H-U$ is injective. Consequently, $\mathcal{D}(T)=\im(I_H-U)$ is dense. From

\begin{align}\label{unitarycommute}
(I_H-U)(I_H+U)=I_H-U^2=(I_H+U)(I_H-U)
\end{align} 
we obtain

\begin{align}\label{inclusionI}
\begin{split}
(I_H+U)(I_H-U)^{-1}&=(I_H-U)^{-1}(I_H-U)(I_H+U)(I_H-U)^{-1}\\
&=(I_H-U)^{-1}(I_H+U)\mid_{\im(I_H-U)}\subset(I_H-U)^{-1}(I_H+U).
\end{split}
\end{align}
On the other hand, if $u\in\mathcal{D}((I_H-U)^{-1}(I_H+U))$, then $(I_H+U)u\in\mathcal{D}((I_H-U)^{-1})=\im(I_H-U)$ and accordingly there exists $v\in H$, such that $(I_H+U)u=(I_H-U)v$. We conclude that $u=(I_H-U)v+(I_H-U)u-u$ and hence

\[u=\frac{1}{2}(I_H-U)(u+v)\in\mathcal{D}((I_H+U)(I_H-U)^{-1}).\]
We obtain from \eqref{inclusionI} that

\begin{align*}
T=i(I_H+U)(I_H-U)^{-1}=i(I_H-U)^{-1}(I_H+U).
\end{align*}
As next step, we want to show that $T$ is symmetric. If $u,v\in\mathcal{D}(T)=\im(I_H-U)$, then there exist $y,z\in H$ such that $v=y-Uy$ and $Tu=i(z+Uz)$ and we get

\begin{align*}
\langle Tu,v\rangle&=i\langle z+Uz,y-Uy\rangle=i(\langle z,y\rangle-\langle z,Uy\rangle+\langle Uz,y\rangle-\langle Uz,Uy\rangle)\\
&=i\langle Uz,y\rangle-i\langle z,Uy\rangle=\langle z-Uz,i(y+Uy)\rangle=\langle u,Tv\rangle.
\end{align*}  
Hence $T$ is symmetric and we obtain from Lemma \ref{adjointscommute}

\begin{align}\label{TinTast}
T\subset T^\ast=-i(I_H-U^\ast)^{-1}(I_H+U^\ast).
\end{align}
By arguing verbatim for $T^\ast$ as for $T$, we have

\[T^\ast=-i(I_H-U^\ast)^{-1}(I_H+U^\ast)=-i(I_H+U^\ast)(I_H-U^\ast)^{-1}\]
and so also $T^\ast$ is symmetric. Hence

\begin{align*}
T^\ast\subset T^{\ast\ast}=i(I_H-U)^{-1}(I_H+U)=T
\end{align*}
and we conclude from \eqref{TinTast} that $T=T^\ast$.
\end{proof}

\noindent
We obtain two important corollaries from the previous theorem.

\begin{cor}
If $U$ and $T$ are as in Theorem \ref{thm:Cayley}, then $\kappa(T)=U$.
\end{cor}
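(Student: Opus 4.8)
The plan is to compute the Cayley transform $\kappa(T)=(T-i)(T+i)^{-1}$ explicitly and simply read off that it equals $U$. Since $T$ is selfadjoint by Theorem \ref{thm:Cayley}, we have $\pm i\notin\sigma(T)$, so $(T+i)^{-1}\in\mathcal{L}(H)$ and $\kappa(T)$ is well defined. The starting point is the description $\mathcal{D}(T)=\im(I_H-U)$ together with $T=i(I_H+U)(I_H-U)^{-1}$ from Theorem \ref{thm:Cayley}.

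First I would fix $v\in H$ and put $u:=(I_H-U)v\in\mathcal{D}(T)$, which is the typical element of the domain (and $v$ is uniquely determined by $u$, since $I_H-U$ is injective). Then $Tu=i(I_H+U)v$, and a one-line computation gives
\begin{align*}
(T+i)u&=i\bigl[(I_H+U)+(I_H-U)\bigr]v=2i\,v,\\
(T-i)u&=i\bigl[(I_H+U)-(I_H-U)\bigr]v=2i\,Uv.
\end{align*}
As $v$ runs through all of $H$, the element $w:=2i\,v$ runs through all of $H$, so the first identity says precisely that $(T+i)^{-1}w=\tfrac{1}{2i}(I_H-U)w$ for every $w\in H$.

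It then remains to feed this into the definition of $\kappa(T)$: composing with $T-i$ and using the second displayed identity yields $\kappa(T)w=(T-i)(T+i)^{-1}w=2i\,Uv=Uw$ for all $w\in H$, hence $\kappa(T)=U$. Equivalently, one may substitute $(T+i)^{-1}=\tfrac{1}{2i}(I_H-U)$ into the identity $\kappa(T)=I_H-2i\,(T+i)^{-1}$ recorded in \eqref{kappaidentity}, obtaining $\kappa(T)=I_H-(I_H-U)=U$ at once. There is no genuine obstacle here; the only points needing a word are that every $w\in H$ has the form $2i\,v$ (immediate) and that $(T+i)^{-1}$ is everywhere defined and bounded, which is exactly the selfadjointness of $T$ proved in Theorem \ref{thm:Cayley}.
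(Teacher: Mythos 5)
Your proof is correct and follows essentially the same route as the paper: both boil down to showing $(T+i)^{-1}=\tfrac{1}{2i}(I_H-U)$ and then computing $\kappa(T)$ from there. The only cosmetic difference is that you work pointwise on generic elements $u=(I_H-U)v$ of the domain, while the paper manipulates the operator identities $T\pm i=2i(I_H-U)^{-1}$ and $T-i=2i(I_H-U)^{-1}U$ directly; your suggested shortcut of plugging $(T+i)^{-1}=\tfrac{1}{2i}(I_H-U)$ into \eqref{kappaidentity} is a slightly cleaner way to finish than the paper's final line, but the substance is identical.
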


\begin{proof}
By Theorem \ref{thm:Cayley} we have $T=i(I_H-U)^{-1}(I_H+U)$. Hence

\begin{align*}
T+i=i(I_H-U)^{-1}(I_H+U)+i(I_H-U)^{-1}(I_H-U)=2i(I_H-U)^{-1},
\end{align*}
and so

\[(T+i)^{-1}=\frac{1}{2i}(I_H-U).\]
Analogously,

\begin{align*}
T-i=i(I_H-U)^{-1}(I_H+U)-i(I_H-U)^{-1}(I_H-U)=2i(I_H-U)^{-1}U
\end{align*}
and we obtain

\begin{align*}
\kappa(T)=(T-i)(T+i)^{-1}=(I_H-U)^{-1}U(I_H-U)=U.
\end{align*}
\end{proof}

\begin{cor}
The Cayley transform $\kappa$ induces a homeomorphism 

\[\kappa:\mathcal{C}^\textup{sa}(H)\rightarrow\{U\in\mathcal{L}(H):\, U^\ast=U^{-1}\,, \ker(U-I_H)=\{0\}\}\subset\mathcal{L}(H).\]
\end{cor}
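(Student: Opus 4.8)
The plan is to establish four facts: that $\kappa(T)$ always lies in the indicated set, that every element of that set is attained, that $\kappa$ is injective, and that $\kappa$ is an isometry for the relevant metrics. The homeomorphism statement then follows for free, since an isometric bijection onto a subspace is automatically a homeomorphism onto that subspace, and the gap metric $d_G$ is by definition the pull-back of the operator norm under $\kappa$. Injectivity of $\kappa$ has already been recorded just after \eqref{equivmetric}, so the real work lies in the first two points.

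For the first point, fix $T\in\mathcal C^{\textup{sa}}(H)$. Because $\sigma(T)\subset\mathbb R$, both $T+i$ and $T-i$ are bijections of $\mathcal D(T)$ onto $H$, so $\kappa(T)=(T-i)(T+i)^{-1}$ is a well-defined bounded operator. I would first check that $\kappa(T)$ is a \emph{surjective isometry}, hence unitary. For $w\in\mathcal D(T)$, expanding $\|(T\pm i)w\|^2=\langle Tw\pm iw,Tw\pm iw\rangle$ and using that $T$ is symmetric (so $\langle Tw,w\rangle=\langle w,Tw\rangle$) makes the cross terms cancel, giving $\|(T\pm i)w\|^2=\|Tw\|^2+\|w\|^2$; in particular $\|(T-i)w\|=\|(T+i)w\|$. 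Writing an arbitrary $v\in H$ as $v=(T+i)w$ then yields $\|\kappa(T)v\|=\|(T-i)w\|=\|(T+i)w\|=\|v\|$, so $\kappa(T)$ is isometric; writing instead $v=(T-i)w$ and applying $\kappa(T)$ to $(T+i)w$ shows $\kappa(T)$ is onto. Since a surjective isometry is unitary, $\kappa(T)^\ast=\kappa(T)^{-1}$. Finally, by \eqref{kappaidentity} we have $\kappa(T)-I_H=-2i(T+i)^{-1}$, which is injective because $(T+i)^{-1}$ is a bijection; thus $\ker(\kappa(T)-I_H)=\{0\}$ and $\kappa(T)$ belongs to the target set.

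For the second point, let $U\in\mathcal L(H)$ be unitary with $\ker(U-I_H)=\{0\}$. Theorem \ref{thm:Cayley} then produces the selfadjoint operator $T=i(I_H+U)(I_H-U)^{-1}$, whose domain $\im(I_H-U)$ was shown in that proof to be dense, so $T\in\mathcal C^{\textup{sa}}(H)$; and the corollary immediately following Theorem \ref{thm:Cayley} gives $\kappa(T)=U$. Hence $\kappa$ maps $\mathcal C^{\textup{sa}}(H)$ \emph{onto} the stated set, and combined with injectivity and the isometry property we conclude that $\kappa$ is a homeomorphism onto it. The only genuine computation here is the cross-term cancellation and, slightly more delicately, the surjectivity of $\kappa(T)$, which is where one must use that both $T+i$ and $T-i$ (not just one of them) are onto; that is the step I would treat most carefully.
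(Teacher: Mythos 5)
Your proof is correct and follows essentially the same route as the paper: the isometry computation $\|(T\pm i)u\|^2=\|Tu\|^2+\|u\|^2$ for unitarity, the identity $I_H-\kappa(T)=2i(T+i)^{-1}$ for injectivity of $I_H-\kappa(T)$, Theorem~\ref{thm:Cayley} together with its corollary for surjectivity onto the target set, and the observation that $\kappa$ is by definition an isometry from $(\mathcal{C}^{\textup{sa}}(H),d_G)$ into $\mathcal{L}(H)$. Your remark spelling out the surjectivity of $\kappa(T)$ (using that both $T+i$ and $T-i$ map $\mathcal{D}(T)$ onto $H$) fills in a step the paper only asserts as clear, but it is the same argument.
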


\begin{proof}
By Theorem \ref{thm:Cayley}, we only have to show that $U:=\kappa(T)$ is unitary and $I_H-\kappa(T)$ injective for all $T\in\mathcal{C}^\textup{sa}(H)$.\\
It is clear that $U$ is surjective. For $u\in\mathcal{D}(T)$ we have

\begin{align*}
\|Tu+iu\|^2&=\langle Tu+iu,Tu+iu\rangle=\|Tu\|^2+\|u\|^2+i\langle u,Tu\rangle-i\langle Tu,u\rangle\\
&=\|Tu\|^2+\|u\|^2=\|Tu-iu\|^2
\end{align*}
and since $U(Tu+iu)=Tu-iu$, we conclude that $\|Uv\|=\|v\|$ for all $v\in H$. Hence $U$ is a surjective isometry defined on all of $H$, and consequently it is a unitary operator.\\
Now we assume that $u\in H$ is such that $\kappa(T)u=u$. Then we obtain from \eqref{kappaidentity}

\[u=\kappa(T)u=u-2i(T+i)^{-1}u\]
and hence $(T+i)^{-1}u=0$ which implies that $u=0$.
\end{proof}

\noindent
Let us recall that 

\[\kappa:\mathbb{R}\rightarrow S^1,\quad \kappa(t)=\frac{t-i}{t+i}\]
induces a homeomorphism onto $S^1\setminus\{1\}\subset\mathbb{C}$.

\begin{lemma}\label{cayleyspecformula}
If $T\in\mathcal{C}^{sa}(H)$, then 

\begin{align*}
\lambda-T=(\lambda+i)(\kappa(\lambda)-\kappa(T))(I_H-\kappa(T))^{-1}.
\end{align*}
\end{lemma}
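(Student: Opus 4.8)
The plan is to verify the identity directly by algebraic manipulation, using the two ways of writing the Cayley transform established in \eqref{kappaidentity} and the previous corollary. First I would recall the explicit formulas
\[
\kappa(T)=I_H-2i(T+i)^{-1},\qquad \kappa(\lambda)=\frac{\lambda-i}{\lambda+i},
\]
the latter being the scalar Cayley transform of $\lambda\in\mathbb{R}$ (note $\sigma(T)\subset\mathbb{R}$, so $\lambda\in\rho(T)$ only matters at the level of checking $(I_H-\kappa(T))^{-1}$ exists, which it does since $1\notin\sigma(\kappa(T))$ precisely when $\lambda\neq $ nothing problematic, i.e.\ always on $\mathcal{D}(T)$).

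The key computation is to evaluate $\kappa(\lambda)-\kappa(T)$. Using \eqref{kappaidentity},
\[
\kappa(\lambda)-\kappa(T)=\frac{\lambda-i}{\lambda+i}I_H-I_H+2i(T+i)^{-1}=\frac{-2i}{\lambda+i}I_H+2i(T+i)^{-1}.
\]
Factoring out $\dfrac{2i}{\lambda+i}$ on the left (or right — it is a scalar) gives
\[
\kappa(\lambda)-\kappa(T)=\frac{2i}{\lambda+i}\bigl((\lambda+i)(T+i)^{-1}-I_H\bigr)=\frac{2i}{\lambda+i}\bigl((\lambda+i)-(T+i)\bigr)(T+i)^{-1}=\frac{2i}{\lambda+i}(\lambda-T)(T+i)^{-1}.
\]
Next I would observe, exactly as in the proof of the corollary to Theorem \ref{thm:Cayley}, that $(T+i)^{-1}=\frac{1}{2i}(I_H-\kappa(T))$; equivalently this is a restatement of \eqref{kappaidentity}. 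Substituting,
\[
\kappa(\lambda)-\kappa(T)=\frac{2i}{\lambda+i}(\lambda-T)\cdot\frac{1}{2i}(I_H-\kappa(T))=\frac{1}{\lambda+i}(\lambda-T)(I_H-\kappa(T)).
\]
Finally, since $I_H-\kappa(T)=2i(T+i)^{-1}$ is an isomorphism of $H$ onto $\mathcal{D}(T)$, we may multiply on the right by $(I_H-\kappa(T))^{-1}$ and by the scalar $(\lambda+i)$ to obtain
\[
(\lambda+i)(\kappa(\lambda)-\kappa(T))(I_H-\kappa(T))^{-1}=\lambda-T,
\]
which is the claim. One should take care that the domains match: the right-hand side $(\kappa(\lambda)-\kappa(T))(I_H-\kappa(T))^{-1}$ is bounded on $H$, and composing with nothing changes domains, while $\lambda-T$ has domain $\mathcal{D}(T)=\im(I_H-\kappa(T))$; the identity $\lambda-T=(\lambda+i)(\lambda-T)(T+i)^{-1}(T+i)/(2i)\cdot$ etc.\ is to be read as an equality of operators with the common domain $\mathcal{D}(T)$, which is automatic once all the intermediate steps are carried out on $\mathcal{D}(T)$.

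The computation above is entirely routine; there is no real obstacle, only bookkeeping. The one point that deserves a sentence of care is the domain issue: the operator on the right of the asserted identity is a priori an operator on all of $H$ only if one reads $(I_H-\kappa(T))^{-1}$ as the (unbounded) inverse defined on $\im(I_H-\kappa(T))=\mathcal{D}(T)$, so the equality is literally an equality of operators with domain $\mathcal{D}(T)$, consistent with $\lambda-T$. I expect the write-up to be four or five lines of display math following exactly the chain above, with a parenthetical remark that $I_H-\kappa(T)$ maps $H$ bijectively onto $\mathcal{D}(T)$ so that all compositions make sense.
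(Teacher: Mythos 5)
Your proof is correct, and the algebra checks out: the factorisation
\[
\kappa(\lambda)-\kappa(T)=\tfrac{2i}{\lambda+i}\bigl((\lambda+i)(T+i)^{-1}-I_H\bigr)=\tfrac{2i}{\lambda+i}(\lambda-T)(T+i)^{-1}
\]
is valid on all of $H$, and the substitution $(T+i)^{-1}=\tfrac{1}{2i}(I_H-\kappa(T))$ followed by right-multiplication with $(I_H-\kappa(T))^{-1}$ correctly recovers the claim on $\mathcal{D}(T)$. Your route differs from the paper's in which half of the Cayley machinery it invokes: you work purely from the forward identity \eqref{kappaidentity}, $\kappa(T)=I_H-2i(T+i)^{-1}$, computing $\kappa(\lambda)-\kappa(T)$ and then peeling off the factor $(I_H-\kappa(T))$. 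The paper instead substitutes the \emph{inverse} Cayley formula $T=i(I_H+\kappa(T))(I_H-\kappa(T))^{-1}$ (from Theorem \ref{thm:Cayley} together with the corollary $\kappa(T)=U$) into $\lambda-T$, pulls out the common right factor $(I_H-\kappa(T))^{-1}$, and regroups. Both are five-line computations; yours is slightly more self-contained in that it leans only on \eqref{kappaidentity} rather than on Theorem \ref{thm:Cayley}, while the paper's has the small advantage of never introducing $(T+i)^{-1}$ into the intermediate steps and thus keeps everything in terms of $\kappa(T)$ throughout. Your remark on domains is right and worth the sentence: the equality is one of operators with common domain $\mathcal{D}(T)=\im(I_H-\kappa(T))$, and the closing step $(I_H-\kappa(T))(I_H-\kappa(T))^{-1}=I_{\mathcal{D}(T)}$ is what makes the two sides literally agree.
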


\begin{proof}
This follows from 

\begin{align*}
\lambda-T&=\lambda-i(I_H+\kappa(T))(I_H-\kappa(T))^{-1}=(\lambda(I_H-\kappa(T))-i(I_H+\kappa(T)))(I_H-\kappa(T))^{-1}\\
&=(\lambda-\lambda\kappa(T)-i-i\kappa(T))(I_H-\kappa(T))^{-1}=((\lambda-i)-(\lambda+i)\kappa(T))(I_H-\kappa(T))^{-1}\\
&=(\lambda+i)((\lambda-i)(\lambda+i)^{-1}-\kappa(T))(I_H-\kappa(T))^{-1}=(\lambda+i)(\kappa(\lambda)-\kappa(T))(I_H-\kappa(T))^{-1}.
\end{align*}
\end{proof}

\noindent
As a consequence we obtain the following important corollary illustrating the relation between the spectrum of an operator in $\mathcal{C}^\textup{sa}(H)$ and the spectrum of its Cayley transform.

\begin{cor}\label{cor:spectralmain}
If $T\in\mathcal{C}^\textup{sa}(H)$, then

\begin{enumerate}
	\item[(i)] $\ker(\lambda-T)\neq \{0\}$ if and only if $\ker(\kappa(\lambda)-\kappa(T))\neq\{0\}$. Moreover the dimensions of both spaces coincide.
	\item[(ii)] $\im(\lambda-T)=\im(\kappa(\lambda)-\kappa(T))$. 
\end{enumerate}
Moreover, 

\begin{itemize}
	\item $\lambda\in\rho(T)\Longleftrightarrow\kappa(\lambda)\in\rho(\kappa(T))$, 
	\item $\lambda\in\sigma(T)\Longleftrightarrow\kappa(\lambda)\in\sigma(\kappa(T))$,
	\item $\lambda\in\sigma_p(T)\Longleftrightarrow\kappa(\lambda)\in\sigma_p(\kappa(T))$, 
	\item $\lambda\in\sigma_{ess}(T)\Longleftrightarrow\kappa(\lambda)\in\sigma_{ess}(\kappa(T))$.
\end{itemize}
\end{cor}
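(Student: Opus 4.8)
The plan is to derive everything from Lemma \ref{cayleyspecformula}, which exhibits $\lambda - T$ as a product of $\kappa(\lambda) - \kappa(T)$ with two bounded, boundedly invertible operators. Specifically, the factor $(\lambda+i)$ is a nonzero scalar, and $(I_H - \kappa(T))^{-1}$ is a well-defined bijection of $H$ onto $\mathcal{D}(T)$ by the preceding corollary (which established that $I_H - \kappa(T)$ is injective with dense range $\mathcal{D}(T)$, and indeed $(T+i)^{-1} = \tfrac{1}{2i}(I_H - \kappa(T))$ is bounded). So multiplication by these factors is an isomorphism on the relevant spaces, and kernels and images transform predictably.

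First I would prove (i): since $(I_H - \kappa(T))^{-1}$ maps $H$ bijectively onto $\mathcal{D}(T)$, the formula $\lambda - T = (\lambda+i)(\kappa(\lambda)-\kappa(T))(I_H-\kappa(T))^{-1}$ shows that $u \in \ker(\lambda - T)$ (with $u \in \mathcal{D}(T)$) if and only if $(I_H - \kappa(T))^{-1}u$, ranging over all of $H$ as $u$ ranges over $\mathcal{D}(T)$, lies in $\ker(\kappa(\lambda) - \kappa(T))$. Hence $w \mapsto (I_H - \kappa(T))w$ restricts to a linear isomorphism $\ker(\kappa(\lambda)-\kappa(T)) \to \ker(\lambda - T)$, giving both the equivalence and the equality of dimensions. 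For (ii), note that $(I_H - \kappa(T))^{-1}$ is onto $\mathcal{D}(T)$, so $\im(\lambda - T) = (\lambda+i)\,\im\!\big((\kappa(\lambda)-\kappa(T))\big|_{H}\big) = \im(\kappa(\lambda)-\kappa(T))$, the scalar $(\lambda+i) \neq 0$ being irrelevant to the image.

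Next I would read off the four spectral equivalences. For the resolvent set: $\lambda \in \rho(T)$ means $\lambda - T$ is bijective (Theorem \ref{closedgraph} guarantees the inverse is then automatically bounded); by (i) and (ii) this is equivalent to $\kappa(\lambda) - \kappa(T)$ being injective and surjective, i.e. bijective, and since $\kappa(T) \in \mathcal{L}(H)$ and $\kappa(\lambda) - \kappa(T) \in \mathcal{L}(H)$, bijectivity is equivalent to $\kappa(\lambda) \in \rho(\kappa(T))$ by the open mapping theorem. Taking complements gives the $\sigma$ equivalence. The $\sigma_p$ equivalence is immediate from (i), since $\lambda$ being an eigenvalue of $T$ means $\ker(\lambda - T) \neq \{0\}$. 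For the essential spectrum, $\lambda \in \sigma_{ess}(T)$ means $\lambda - T$ is not Fredholm; since $\lambda - T$ differs from $\kappa(\lambda) - \kappa(T)$ by composition with the isomorphisms $(\lambda+i)\,I_H$ and $(I_H - \kappa(T))^{-1}$ (the latter a bijection between $H$ and $\mathcal{D}(T)$ equipped with the graph norm, an isomorphism of Banach spaces by the closed graph theorem), the dimensions $\dim\ker$ and $\codim\im$ agree for the two operators by (i) and (ii), so one is Fredholm iff the other is.

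I do not expect any genuine obstacle here — the content is entirely front-loaded into Lemma \ref{cayleyspecformula}. The one point demanding a little care is the essential-spectrum equivalence, where I must be slightly careful about domains: $\lambda - T$ is an unbounded operator with domain $\mathcal{D}(T)$ whereas $\kappa(\lambda) - \kappa(T)$ is everywhere-defined and bounded, so "Fredholm" is being used in the sense of Chapter 1 for closed operators in one case and for bounded operators in the other. The bridge is that $(I_H - \kappa(T))^{-1} \colon H \to \mathcal{D}(T)$ is a topological isomorphism onto $\mathcal{D}(T)$ with its graph norm (indeed $\mathcal{D}(T) = \im((T+i)^{-1})$ and $(T+i)^{-1}$ is bounded with bounded inverse $T+i$ on this space), so Fredholmness, together with the two integers $\dim\ker$ and $\codim\im$, is preserved under this change of source space. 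Once this identification is spelled out, all four equivalences fall out uniformly.
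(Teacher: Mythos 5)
Your proposal is correct and follows the same route as the paper: derive everything from Lemma \ref{cayleyspecformula} and the fact that $(I_H-\kappa(T))^{-1}$ is a bijection between $\mathcal{D}(T)$ and $H$, so kernels and images transfer cleanly. The one thing to fix is a recurring slip in how you describe the direction of that bijection. Since $I_H-\kappa(T)=2i(T+i)^{-1}$ is a bounded bijection from $H$ onto $\mathcal{D}(T)$, its inverse $(I_H-\kappa(T))^{-1}$ maps $\mathcal{D}(T)$ bijectively onto $H$ (as the paper states) — not ``$H$ bijectively onto $\mathcal{D}(T)$'' as you write at the start of (i) and again in your ``bridge'' paragraph. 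Your actual argument has the direction right (you say ``$(I_H-\kappa(T))^{-1}u$, ranging over all of $H$ as $u$ ranges over $\mathcal{D}(T)$'', and in (ii) you conclude $\im(\lambda-T)=\im(\kappa(\lambda)-\kappa(T))$ by applying $\kappa(\lambda)-\kappa(T)$ to all of $H$), so the substance is sound; only the narration contradicts itself. One further minor remark: for the essential spectrum you invoke a graph-norm topological isomorphism to transfer Fredholmness, but this is more machinery than needed — since the paper's definition of Fredholm for a closed operator is purely in terms of $\dim\ker$ and $\codim\im$, parts (i) and (ii) already hand you the equivalence without any appeal to the graph-norm structure.
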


\begin{proof}
By the previous Lemma \ref{cayleyspecformula} we know that

\begin{align*}
\lambda-T=(\lambda+i)(\kappa(\lambda)-\kappa(T))(I_H-\kappa(T))^{-1}
\end{align*}
and moreover $(I_H-\kappa(T))^{-1}$ maps $\mathcal{D}(T)$ bijectively onto $H$. This implies the assertions on $\ker(\lambda-T)$ and $\im(\lambda-T)$. The remaining part of the corollary is an immediate consequence of them.
\end{proof}

\begin{lemma}\label{1inspec}
Let $T\in\mathcal{C}^\textup{sa}(H)$. Then

\begin{enumerate}
	\item[(i)] $1\in\rho(\kappa(T))\Longleftrightarrow\mathcal{D}(T)=H$, and this is true if and only if $T$ is bounded.
	\item[(ii)] $1\in\sigma_{ess}(\kappa(T))\Longleftrightarrow\mathcal{D}(T)\neq H$, and this is true if and only if $T$ is unbounded.  
\end{enumerate}
\end{lemma}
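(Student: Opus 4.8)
The plan is to reduce everything to the identity $I_H-\kappa(T)=2i\,(T+i)^{-1}$ recorded in \eqref{kappaidentity}, which presents $1-\kappa(T)$ as a nonzero scalar multiple of the bijection $(T+i)^{-1}\colon H\to\mathcal{D}(T)$. Two observations are then immediate and will be used throughout: $\im(1-\kappa(T))=\mathcal{D}(T)$, and $1-\kappa(T)$ is injective (if $(1-\kappa(T))u=0$ then $(T+i)^{-1}u=0$, hence $u=0$), so that $\ker(1-\kappa(T))=\{0\}$ always. I would also record at the outset the elementary fact that $\mathcal{D}(T)=H$ if and only if $T$ is bounded: a selfadjoint operator is closed, so $\mathcal{D}(T)=H$ forces $T\in\mathcal{L}(H)$ by the closed graph theorem (Theorem \ref{closedgraph}); conversely a bounded closed operator has closed domain by Lemma \ref{boundedclosed}, which together with density of $\mathcal{D}(T)$ gives $\mathcal{D}(T)=H$.

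For (i): since $\kappa(T)\in\mathcal{L}(H)$, the bounded operator $1-\kappa(T)$ has a bounded inverse as soon as it is bijective (Theorem \ref{openmapping}), so $1\in\rho(\kappa(T))$ is equivalent to $1-\kappa(T)$ being bijective. As it is already injective, this happens precisely when it is surjective, i.e.\ when $\mathcal{D}(T)=\im(1-\kappa(T))=H$. Combined with the preliminary remark, this gives (i).

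For (ii): the equivalence $\mathcal{D}(T)\neq H\Leftrightarrow T$ unbounded is just the negation of the one in (i), so the real point is $1\in\sigma_{ess}(\kappa(T))\Leftrightarrow\mathcal{D}(T)\neq H$. If $\mathcal{D}(T)=H$, then $1\in\rho(\kappa(T))$ by (i), hence $1-\kappa(T)$ is invertible and in particular Fredholm, so $1\notin\sigma_{ess}(\kappa(T))$. Conversely, assume $\mathcal{D}(T)\neq H$; I must rule out $1-\kappa(T)$ being Fredholm. If it were, its image $\mathcal{D}(T)$ would have finite codimension in $H$, say $H=\mathcal{D}(T)\oplus V$ with $\dim V<\infty$. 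But $\mathcal{D}(T)=\im((T+i)^{-1})$ with $(T+i)^{-1}\in\mathcal{L}(H)$, and $V$ is closed since it is finite-dimensional, so Lemma \ref{codimclosed} would force $\mathcal{D}(T)$ to be closed; being dense, it would equal $H$, contradicting our assumption. Hence $1-\kappa(T)$ is not Fredholm, i.e.\ $1\in\sigma_{ess}(\kappa(T))$.

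The main obstacle is exactly this last step: showing that a proper dense subspace of the special form $\im((T+i)^{-1})$ cannot have finite codimension. Everything else is bookkeeping with the Cayley transform and the closed graph theorem, but here one genuinely needs the input of Lemma \ref{codimclosed} — that the image of a bounded operator admitting a closed algebraic complement is closed — which upgrades "finite codimension and dense" to "everything".
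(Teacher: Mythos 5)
Your proof is correct and follows essentially the same route as the paper: the identity $I_H-\kappa(T)=2i(T+i)^{-1}$, the observation that $\im(I_H-\kappa(T))=\mathcal{D}(T)$, and the closed graph theorem handle part (i); for part (ii) the paper likewise observes that $\mathcal{D}(T)$ is a proper dense and hence non-closed subspace, so $I_H-\kappa(T)$ cannot be Fredholm. The only cosmetic difference is that where you invoke Lemma \ref{codimclosed} directly (applied to the bounded operator $(T+i)^{-1}$) to derive a contradiction, the paper appeals to the already-proved fact that Fredholm operators have closed range — which is itself established from Lemma \ref{codimclosed} — so the underlying argument is the same.
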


\begin{proof}
The assertions regarding the boundedness and unboundedness of $T$ follow from Lemma \ref{boundedclosed} and the assumption that $T$ is densely defined.\\
By \eqref{kappaidentity} we have

\begin{align*}
I_H-\kappa(T)=2i(T+i)^{-1}\in\mathcal{L}(H)
\end{align*}
mapping $H$ bijectively onto $\mathcal{D}(T)$. Accordingly, if $1\in\rho(\kappa(T))$, we infer $H=\im(I_H-\kappa(T))=\mathcal{D}(T)$. Conversely, if $\mathcal{D}(T)=H$, then $I_H-\kappa(T)$ maps $H$ bijectively onto $H$ showing that $1\in\rho(\kappa(T))$ by the closed graph theorem (Theorem \ref{closedgraph}). Hence assertion $(i)$ is proved.\\
In order to show $(ii)$ we note at first that by $(i)$, $1\in\sigma(\kappa(T))$ if and only if $\mathcal{D}(T)\neq H$. Now it remains to show that if $1\in\sigma(\kappa(T))$, then we actually have $1\in\sigma_{ess}(\kappa(T))$. But, if $\mathcal{D}(T)\neq H$, we see that $\im(I_H-\kappa(T))=\mathcal{D}(T)$ is a proper dense subspace of $H$ and hence in particular not closed. Accordingly, $I_H-\kappa(T)$ is not a Fredholm operator.
\end{proof}
\noindent
We obtain from Corollary \ref{cor:spectralmain}:

\begin{cor}\label{cor:spectralmainII}
If $T\in\mathcal{C}^\textup{sa}(H)$, then

\begin{itemize}
	\item[(i)] $\kappa(\sigma(T))=\sigma(\kappa(T))$ if $T$ is bounded.
	\item[(ii)] $\kappa(\sigma(T))\cup\{1\}=\sigma(\kappa(T))$ if $T$ is unbounded. 
\end{itemize}
\end{cor}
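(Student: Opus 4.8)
The plan is to combine the already-established spectral correspondence of Corollary \ref{cor:spectralmain} with the description of how the point $1$ fits into the spectrum of $\kappa(T)$ from Lemma \ref{1inspec}. The key observation is that $\kappa:\mathbb{R}\rightarrow S^1\setminus\{1\}$ is a homeomorphism, so the point $1\in S^1$ is exactly the ``point at infinity'' that is missed by the parametrisation; all other points of $\sigma(\kappa(T))$ are in bijection with genuine real numbers via $\kappa^{-1}$.

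First I would prove part (i). Assume $T$ is bounded, so $\mathcal{D}(T)=H$. By Lemma \ref{1inspec}(i) we then have $1\in\rho(\kappa(T))$, hence $1\notin\sigma(\kappa(T))$, so $\sigma(\kappa(T))\subset S^1\setminus\{1\}$. Now for each $\mu\in\sigma(\kappa(T))$ there is a unique $\lambda\in\mathbb{R}$ with $\kappa(\lambda)=\mu$, and by the third bullet of Corollary \ref{cor:spectralmain} (or more directly the second bullet together with $\sigma(T)\subset\mathbb{R}$), $\mu\in\sigma(\kappa(T))$ if and only if $\lambda\in\sigma(T)$. This gives the set equality $\kappa(\sigma(T))=\sigma(\kappa(T))$. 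Conversely, every $\lambda\in\sigma(T)\subset\mathbb{R}$ produces $\kappa(\lambda)\in\sigma(\kappa(T))$ by the same bullet, so both inclusions hold.

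Next I would prove part (ii). Assume $T$ is unbounded, so $\mathcal{D}(T)\neq H$. By Lemma \ref{1inspec}(ii), $1\in\sigma_{ess}(\kappa(T))\subset\sigma(\kappa(T))$. For points $\mu\in\sigma(\kappa(T))$ with $\mu\neq 1$ the argument is exactly as in part (i): such $\mu$ lie in $S^1\setminus\{1\}$, write $\mu=\kappa(\lambda)$ with $\lambda\in\mathbb{R}$ unique, and $\mu\in\sigma(\kappa(T))\Longleftrightarrow\lambda\in\sigma(T)$ by Corollary \ref{cor:spectralmain}. Therefore $\sigma(\kappa(T))\setminus\{1\}=\kappa(\sigma(T))$, and since $1\notin\kappa(\sigma(T))$ (as $1\notin\operatorname{im}\kappa$ on $\mathbb{R}$) while $1\in\sigma(\kappa(T))$, we get $\sigma(\kappa(T))=\kappa(\sigma(T))\cup\{1\}$ with the union disjoint.

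I do not anticipate a genuine obstacle here: this corollary is essentially a bookkeeping consequence of the two preceding results. The only point requiring a little care is to be sure that $\sigma(T)\subset\mathbb{R}$ (which holds since $T$ is selfadjoint, hence the earlier lemma on the spectrum of selfadjoint operators applies) so that every element of $\sigma(T)$ is actually in the domain of the real Cayley map $\kappa(t)=(t-i)/(t+i)$, and to invoke the injectivity of $\kappa$ on $\mathbb{R}$ so that ``$\kappa(\sigma(T))$'' is an honest image set with no collisions. Beyond that it is just assembling the equivalences from Corollary \ref{cor:spectralmain} with the status of the exceptional point $1$ from Lemma \ref{1inspec}.
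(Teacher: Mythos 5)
Your proposal is correct and follows exactly the route the paper intends: the corollary is presented as an immediate consequence of Corollary \ref{cor:spectralmain} together with Lemma \ref{1inspec}, and your argument is precisely the careful unpacking of those two results, using that $\kappa$ restricts to a bijection $\mathbb{R}\to S^1\setminus\{1\}$ and that $\sigma(T)\subset\mathbb{R}$. (One small slip: the equivalence you need from Corollary \ref{cor:spectralmain} is its \emph{second} bullet, about $\sigma$, not the third, which concerns $\sigma_p$; your parenthetical already indicates you meant the $\sigma$-statement.)
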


%%%%%%%%%%%%%%%%%%%%%%%%%%%%%%%%%%%%%%%%%%%%%%%%%%%%%%%%%%%%%%%%%%%%%%%%%%%%%%%%%%%%%%%%%%%%%%%%%%%%%%%%%%%%%%%%%%%%%%%%%%%%%%%%%%%%%%%%%%%%%%%%%%%%%%%%%%%%%%%%%%%%%%%%%%%%%%%%%%%%%%%%%%%%%%%%%%%%%%%%%%%%%%%%%%%%%%%%%%%%%%%%%%%%%%%%%%%%%%%%%%%%%%%%%%%%%%%%%%%%%%%%%%%%%%%%%%%%%%%%%%%%%%%%%%%%%%%%%%%%%%%%%%%%%%%%%%%%%%%%%%%%%%%%%%%%%%%%%%%%%%%%%%%%%%%%%%%%%%%%%%

\section{Stability of Spectra}\label{sec:stability}

We recall at first the spectral stability for bounded operators:

\begin{theorem}\label{stabspecbounded}
Let $E$ be a Banach space, $A\in\mathcal{L}(E)$ and $\Omega\subset\mathbb{C}$ an open neighbourhood of $\sigma(A)$. Then there exists $\varepsilon>0$ such that $\sigma(B)\subset\Omega$ for any $B\in\mathcal{L}(E)$ with $\|A-B\|<\varepsilon$. Moreover, the same conclusion holds true if we replace $\sigma$ by $\sigma_{ess}$.
\end{theorem}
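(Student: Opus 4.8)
The plan is to reduce the statement to a standard fact: the set of invertible operators in $\mathcal{L}(E)$ is open, and more precisely, if $C\in\mathcal{L}(E)$ is invertible and $\|D\|<\|C^{-1}\|^{-1}$, then $C+D$ is invertible with $\|(C+D)^{-1}\|\leq\|C^{-1}\|/(1-\|C^{-1}\|\|D\|)$, via the Neumann series $(C+D)^{-1}=\sum_{k=0}^\infty(-1)^k(C^{-1}D)^kC^{-1}$. First I would consider the compact set $K:=\mathbb{C}\setminus\Omega$ intersected with a large closed disc; since $\sigma(A)$ is compact and contained in $\Omega$, I may assume $K$ is compact (for $|\lambda|$ large, $\lambda-A$ is invertible with controlled inverse, so those $\lambda$ cause no trouble). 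For each $\lambda\in K$ the operator $\lambda-A$ is invertible, and $\lambda\mapsto\|(\lambda-A)^{-1}\|$ is continuous, hence bounded on $K$ by some $M>0$. Setting $\varepsilon:=M^{-1}$, for any $B$ with $\|A-B\|<\varepsilon$ and any $\lambda\in K$ we write $\lambda-B=(\lambda-A)-(B-A)$ and apply the Neumann series estimate with $C=\lambda-A$, $D=-(B-A)$: since $\|D\|<\varepsilon\leq\|(\lambda-A)^{-1}\|^{-1}$, the operator $\lambda-B$ is invertible, so $\lambda\in\rho(B)$. Thus $K\subset\rho(B)$, i.e. $\sigma(B)\subset\Omega$.

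For the claim about $\sigma_{ess}$, the same argument works verbatim once I replace "invertible" by "Fredholm". Concretely, for $\lambda\in K:=(\mathbb{C}\setminus\Omega)\cap\{|\lambda|\leq R\}$ with $R$ large, $\lambda-A$ is Fredholm (indeed for $\lambda\notin\sigma_{ess}(A)$, and for $|\lambda|>\|A\|$ it is even invertible). I would invoke the stability of Fredholmness, Theorem \ref{stabFred}: for each fixed $\lambda$ there is $\gamma(\lambda)>0$ such that $\lambda-B$ is Fredholm whenever $\|(\lambda-A)-(\lambda-B)\|=\|A-B\|<\gamma(\lambda)$. The point is to make $\gamma$ uniform in $\lambda\in K$; this follows because the threshold $\gamma$ in Theorem \ref{stabFred} can be taken to depend only on a bound for $\|(\lambda-A)^{-1}Q\|$-type quantities which vary continuously in $\lambda$ — alternatively, one uses that $\lambda\mapsto\lambda-A$ is continuous and Fredholmness is an open condition on the (connected, compact-image) curve $\{\lambda-A:\lambda\in K\}$, so a compactness/Lebesgue-number argument over $K$ produces a single $\varepsilon>0$ that works for all $\lambda\in K$ simultaneously. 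Then $\lambda-B$ Fredholm for all $\lambda\in K$ gives $K\cap\sigma_{ess}(B)=\emptyset$, i.e. $\sigma_{ess}(B)\subset\Omega$.

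The main obstacle is exactly this uniformity: producing a single $\varepsilon$ that handles every $\lambda\in K$ at once rather than a $\lambda$-dependent one. In the invertible case this is painless because the Neumann-series radius is $\|(\lambda-A)^{-1}\|^{-1}$, a continuous function of $\lambda$ bounded below on the compact set $K$. In the essential-spectrum case one must either quote that the perturbation bound $\gamma$ in Theorem \ref{stabFred} varies lower-semicontinuously along continuous families (so it is bounded below on compacta), or cover $K$ by finitely many balls on each of which a fixed perturbation bound works and take the minimum. Either way the conclusion is the same, and I would present the bounded-invertible case in detail and remark that the essential case is obtained by the identical argument with Theorem \ref{stabFred} in place of the Neumann series.
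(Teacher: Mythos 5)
Your first half is an unpacked version of the same argument the paper cites: the Neumann-series estimate $\|(\lambda-B)^{-1}\|$ together with continuity of $\lambda\mapsto\|(\lambda-A)^{-1}\|$ on the compact set $K=(\mathbb{C}\setminus\Omega)\cap\{|\lambda|\le R\}$ is precisely the standard proof of spectral upper-semicontinuity in a unital Banach algebra, which the paper simply quotes from \cite{HeuserFunkAna} applied to $R=\mathcal{L}(E)$. That part is fine and essentially coincides with the paper.

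For the $\sigma_{\mathrm{ess}}$ half, however, you take a genuinely different and more laborious route. The paper's trick is to note that, by Atkinson's theorem, $\lambda\in\sigma_{\mathrm{ess}}(A)$ iff $q(A)-\lambda$ is non-invertible in the Calkin algebra $\mathrm{Cal}(E)=\mathcal{L}(E)/\mathcal{K}(E)$; hence $\sigma_{\mathrm{ess}}(A)=\sigma_{\mathrm{Cal}(E)}(q(A))$, and since $q$ is continuous (in fact norm-decreasing), the second assertion is literally the first assertion applied inside the Banach algebra $\mathrm{Cal}(E)$. No new uniformity argument is needed. You instead invoke Theorem~\ref{stabFred} pointwise in $\lambda$ and then have to manufacture a uniform perturbation threshold. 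This can be made rigorous — for each $\lambda_0\in K$ take $\gamma(\lambda_0)$ from Theorem~\ref{stabFred}, observe that for $|\lambda-\lambda_0|<\gamma(\lambda_0)/2$ and $\|A-B\|<\gamma(\lambda_0)/2$ one has $\lambda-B=(\lambda_0-A)+\bigl((\lambda-\lambda_0)I_E-(B-A)\bigr)$ with the perturbation of norm $<\gamma(\lambda_0)$, cover $K$ by finitely many such discs, and take the minimum — so your sketch closes, but you leave this covering step at the level of a remark rather than carrying it out. The trade-off is clear: your argument is more elementary and self-contained (no Calkin algebra, no Atkinson), while the paper's reduction is shorter and avoids the uniformity issue entirely by pushing it into the already-proved Banach-algebra statement. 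If you keep your route, spell out the Lebesgue-number/covering step explicitly; alternatively, adopting the Calkin-algebra reduction would make the second half a one-line corollary of the first.
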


\begin{proof}
The spectral stability holds for any complex unital Banach algebra $R$ (cf. e.g. \cite[96.5]{HeuserFunkAna}), and so the first assertion follows by setting $R=\mathcal{L}(E)$. For the stability of the essential spectrum, we let $R$ be the Calkin algebra $\calkin(E)=\mathcal{L}(E)/\mathcal{K}(E)$ of $E$ and use that the quotient map $q:\mathcal{L}(E)\rightarrow\calkin(E)$ is continuous.
\end{proof}
\noindent
The next two results can be found in \cite[\S 2.2]{UnbSpecFlow}.

\begin{lemma}\label{gapstabspec}
Let $K\subset\mathbb{C}$ be compact. Then 

\[\{T\in\mathcal{C}^\textup{sa}(H):\,K\subset\rho(T)\}\]
and 

\[\{T\in\mathcal{C}^\textup{sa}(H):\,K\subset\rho_{ess}(T)\}\]
are open in $\mathcal{C}^\textup{sa}(H)$, where $\rho_{ess}(T):=\mathbb{C}\setminus\sigma_{ess}(T)$. 
\end{lemma}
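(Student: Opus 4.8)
The plan is to reduce the statement about unbounded selfadjoint operators to the spectral stability for bounded operators (Theorem \ref{stabspecbounded}) via the Cayley transform $\kappa$, using that $\kappa:\mathcal{C}^\textup{sa}(H)\rightarrow\mathcal{L}(H)$ is a homeomorphism onto its image when the target carries the operator norm (equivalently, the gap metric is the pull-back of the norm metric). The key translation is Corollary \ref{cor:spectralmain}: for $\lambda\in\mathbb{C}$ and $T\in\mathcal{C}^\textup{sa}(H)$ one has $\lambda\in\rho(T)$ iff $\kappa(\lambda)\in\rho(\kappa(T))$, and $\lambda\in\sigma_{ess}(T)$ iff $\kappa(\lambda)\in\sigma_{ess}(\kappa(T))$; also, by Lemma \ref{1inspec}, $1\in\rho(\kappa(T))$ iff $T$ is bounded and $1\in\sigma_{ess}(\kappa(T))$ whenever $T$ is unbounded. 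So $K\subset\rho(T)$ (resp. $K\subset\rho_{ess}(T)$) is equivalent to $\kappa(K)\subset\rho(\kappa(T))$ (resp. $\kappa(K)\subset\rho_{ess}(\kappa(T))$), where $\kappa$ on scalars maps $\mathbb{R}$ homeomorphically onto $S^1\setminus\{1\}$.

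The first step is to fix $T_0\in\mathcal{C}^\textup{sa}(H)$ with $K\subset\rho(T_0)$ and form $U_0:=\kappa(T_0)\in\mathcal{L}(H)$. Since $K\subset\mathbb{R}$ is compact and $\kappa$ is a homeomorphism on $S^1\setminus\{1\}$, the image $\kappa(K)$ is a compact subset of $S^1\setminus\{1\}$; together with $\sigma(U_0)\subset S^1$ this gives $\kappa(K)\subset\rho(U_0)$, and $\rho(U_0)$ is open. Now I would invoke Theorem \ref{stabspecbounded} (applied with $\Omega$ an open neighbourhood of $\sigma(U_0)$ contained in $\mathbb{C}\setminus\kappa(K)$, which exists by compactness of $\sigma(U_0)$ and $\kappa(K)$ and their disjointness): there is $\varepsilon>0$ such that $\|U-U_0\|<\varepsilon$ implies $\sigma(U)\subset\Omega$, hence $\kappa(K)\cap\sigma(U)=\emptyset$, i.e. $\kappa(K)\subset\rho(U)$. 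Translating back: if $d_G(T,T_0)=\|\kappa(T)-\kappa(T_0)\|<\varepsilon$ then $\kappa(K)\subset\rho(\kappa(T))$, which by Corollary \ref{cor:spectralmain} gives $K\subset\rho(T)$. This shows $\{T:\,K\subset\rho(T)\}$ is open. For the essential-spectrum version the argument is identical, using the part of Theorem \ref{stabspecbounded} about $\sigma_{ess}$ and the last bullet of Corollary \ref{cor:spectralmain}.

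The one point requiring a little care—and the main (mild) obstacle—is the behaviour at the puncture $1\in S^1$. When $T_0$ is unbounded, $1\in\sigma_{ess}(U_0)$, so $\sigma(U_0)$ is not bounded away from $1$; but this is harmless because $\kappa(K)$ is bounded away from $1$ (as $K\subset\mathbb{R}$ is compact, so $\kappa(K)\subset S^1\setminus\{1\}$ is a compact set not containing $1$), and all we need is an open $\Omega\supset\sigma(U_0)$ disjoint from $\kappa(K)$, which exists regardless of whether $1\in\sigma(U_0)$. The perturbed operator $U=\kappa(T)$ may have $1\in\sigma(U)$ (i.e. $T$ unbounded) or $1\in\rho(U)$ (i.e. $T$ bounded)—either way $\kappa(K)\subset\rho(U)\subset\rho_{ess}(U)$ is unaffected, and $1\notin K$ means no condition on $T$ near $\infty$ is imposed. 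Thus the proof goes through uniformly in the bounded/unbounded dichotomy, and no separate case analysis is actually needed beyond observing $1\notin\kappa(K)$.
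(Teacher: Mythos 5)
Your proof is correct and follows essentially the same route as the paper's: both reduce the statement to the spectral stability of bounded operators (Theorem \ref{stabspecbounded}) via the Cayley transform, using Corollary \ref{cor:spectralmain} for the translation of spectra. The only difference is presentational: the paper writes $\{T:K\subset\rho(T)\}$ as $\kappa^{-1}$ of $\{U\in\mathcal{U}(H):\sigma(U)\subset\kappa((\mathbb{C}\setminus K)\cap\mathbb{R})\cup\{1\}\}$, with $\{1\}$ thrown in once and for all so the set is literally the preimage of an open set under a homeomorphism, whereas you argue locally around a fixed $T_0$ and observe that the point $1$ causes no trouble because $\kappa(K)$ stays away from it. One small thing to flag: you write ``Since $K\subset\mathbb{R}$ is compact,'' but the hypothesis only says $K\subset\mathbb{C}$. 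You should note that $\sigma(T)\subset\mathbb{R}$ for any $T\in\mathcal{C}^\textup{sa}(H)$, so $K\subset\rho(T)$ iff $K\cap\mathbb{R}\subset\rho(T)$, and one may replace $K$ by $K\cap\mathbb{R}$ without loss of generality (this is exactly what the paper's first displayed equality accomplishes). Also, the final ``$1\notin K$'' should read ``$1\notin\kappa(K)$''.
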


\begin{proof}
We denote by $\mathcal{U}(H)$ the set of all unitary operators acting on $H$. By using Corollary \ref{cor:spectralmain} and Corollary \ref{cor:spectralmainII}, we have

\begin{align*}
\{T\in\mathcal{C}^\textup{sa}(H):\, K\subset\rho(T)\}&=\{T\in\mathcal{C}^\textup{sa}(H):\,\sigma(T)\subset(\mathbb{C}\setminus K)\cap\mathbb{R}\}\\
&=\{T\in\mathcal{C}^\textup{sa}(H):\sigma(\kappa(T))\subset\kappa((\mathbb{C}\setminus K)\cap\mathbb{R})\cup\{1\}\}. 
\end{align*}
We conclude from Theorem \ref{thm:Cayley} that 

\[\{T\in\mathcal{C}^\textup{sa}(H):\, K\subset\rho(T)\}=\kappa^{-1}(\{U\in\mathcal{U}(H):\,\sigma(U)\subset\kappa((\mathbb{C}\setminus K)\cap\mathbb{R})\cup\{1\}\}).\]
As $K$ is compact, the set $\kappa((\mathbb{C}\setminus K)\cap\mathbb{R})\cup\{1\}$ is open and we see from Theorem \ref{stabspecbounded} that
\[\{U\in\mathcal{U}(H):\,\sigma(U)\subset \kappa((\mathbb{C}\setminus K)\cap\mathbb{R})\cup\{1\}\}\]
is open in $\mathcal{L}(H)$. Since $\kappa$ is a homeomorphism by Theorem \ref{thm:Cayley} , we obtain the assertion. The proof for $\rho_{ess}(T)$ proceeds along the same lines.   
\end{proof}

\begin{theorem}\label{thm:rescont}
Let $\emptyset\neq K\subset\mathbb{C}$ and $\Omega_K=\{T\in\mathcal{C}^\textup{sa}(H):\,K\subset\rho(T)\}$. Then the map

\begin{align*}
R:K\times\Omega_K\rightarrow\mathcal{L}(H),\quad(\lambda,T)\mapsto(T-\lambda)^{-1}
\end{align*}
is continuous.
\end{theorem}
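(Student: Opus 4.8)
The plan is to reduce the statement to the continuity of the resolvent map for \emph{bounded} operators by means of the Cayley transform and the identity of Lemma \ref{cayleyspecformula}, and to treat separately the single point $\lambda=-i$ at which that transform degenerates.

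I would first record three preliminary facts. (a) Since $d_G(T_1,T_2)=\|\kappa(T_1)-\kappa(T_2)\|$ and $\delta(T_1,T_2)=\|(T_1+i)^{-1}-(T_2+i)^{-1}\|$ are (equivalent) metrics on $\mathcal{C}^\textup{sa}(H)$, the maps $T\mapsto\kappa(T)$ and $T\mapsto(T+i)^{-1}=\tfrac{1}{2i}(I_H-\kappa(T))$ are continuous (indeed isometric) into $\mathcal{L}(H)$. (b) The set $D:=\{(z,A)\in\mathbb{C}\times\mathcal{L}(H):z\in\rho(A)\}$ is open and $(z,A)\mapsto(z-A)^{-1}$ is continuous on it: if $z_0\in\rho(A_0)$ and $\|(z_0-A_0)^{-1}\|\,(|z-z_0|+\|A-A_0\|)<1$, then $z-A=(z_0-A_0)(I_H-C)$ with $C=(z_0-A_0)^{-1}\big((z_0-z)+(A-A_0)\big)$, so $z-A$ is invertible with $(z-A)^{-1}=\sum_{n\geq0}C^n(z_0-A_0)^{-1}$, from which both openness and local continuity follow by the Neumann series estimate. (c) By Corollary \ref{cor:spectralmain}, for $T\in\mathcal{C}^\textup{sa}(H)$ and $\lambda\neq-i$ one has $\lambda\in\rho(T)$ if and only if $\kappa(\lambda)\in\rho(\kappa(T))$, and $\lambda-T$ is invertible exactly when $\kappa(\lambda)-\kappa(T)$ is.

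Now fix $(\lambda_0,T_0)\in K\times\Omega_K$ and suppose first that $\lambda_0\neq-i$. By Lemma \ref{cayleyspecformula} and (c), for $(\lambda,T)\in K\times\Omega_K$ with $\lambda\neq-i$,
\begin{align*}
(T-\lambda)^{-1}=-\frac{1}{\lambda+i}\,\big(I_H-\kappa(T)\big)\,\big(\kappa(\lambda)-\kappa(T)\big)^{-1}.
\end{align*}
On a neighbourhood of $(\lambda_0,T_0)$ each factor is continuous in $(\lambda,T)$: the scalar $\tfrac{1}{\lambda+i}$ because $\lambda_0\neq-i$; the term $I_H-\kappa(T)$ by (a); and $(\kappa(\lambda)-\kappa(T))^{-1}$ because $\lambda\mapsto\kappa(\lambda)=\tfrac{\lambda-i}{\lambda+i}$ is continuous near $\lambda_0$, $T\mapsto\kappa(T)$ is continuous, and $(\kappa(\lambda_0),\kappa(T_0))\in D$ (since $\lambda_0\in K\subset\rho(T_0)$), so by openness of $D$ the pair $(\kappa(\lambda),\kappa(T))$ stays in $D$ for $(\lambda,T)$ close to $(\lambda_0,T_0)$ and (b) applies. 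Joint continuity of multiplication in $\mathcal{L}(H)$ then yields continuity of $R$ at $(\lambda_0,T_0)$.

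It remains to treat $\lambda_0=-i$. As $-i\notin\mathbb{R}$, for $(\lambda,T)$ near $(-i,T_0)$ we have $\lambda\in\rho(T)$ for every $T\in\mathcal{C}^\textup{sa}(H)$ and, by \eqref{equ:selfadjoint-resolvent}, $\|(T-\lambda)^{-1}\|\leq|\beta|^{-1}\leq2$ with $\lambda=\alpha+i\beta$. The resolvent identity gives
\begin{align*}
(T-\lambda)^{-1}-(T_0+i)^{-1}=(\lambda+i)\,(T-\lambda)^{-1}(T+i)^{-1}+\big((T+i)^{-1}-(T_0+i)^{-1}\big),
\end{align*}
so $\|(T-\lambda)^{-1}-(T_0+i)^{-1}\|\leq2\,|\lambda+i|+\delta(T,T_0)\to0$ as $(\lambda,T)\to(-i,T_0)$, using $\|(T+i)^{-1}\|\leq1$ from \eqref{equ:selfadjoint-resolvent}. (Alternatively, one can rerun the previous paragraph with the dual Cayley transform $\widetilde\kappa(T)=\kappa(T)^{-1}$, which is singular only at $\lambda=+i$.) The main obstacle is precisely this bookkeeping around the singular point $\lambda=-i$ of $\kappa$, together with checking that the openness in (b) lets the factorisation survive passage to a neighbourhood; everything else is composing continuous maps.
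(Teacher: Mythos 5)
Your proof is correct, and it takes a genuinely different route from the paper's. The paper fixes a point $z_0\in K$, factorises
\begin{align*}
R(\lambda,T)=(I_H-(\lambda-z_0)(T-z_0)^{-1})^{-1}(T-z_0)^{-1}=F(G(\lambda,T)),
\end{align*}
with $G(\lambda,T)=(\lambda,(T-z_0)^{-1})$ and $F(\lambda,S)=(I_H-(\lambda-z_0)S)^{-1}S$, and then appeals to the equivalence of $d_G$ with $\delta$ of \eqref{tildedelta} for the continuity of $G$ and to continuity of inversion on $GL(H)$ for $F$. You instead apply the Cayley-transform identity of Lemma \ref{cayleyspecformula} and reduce directly to the joint continuity of the \emph{bounded} resolvent $(z,A)\mapsto(z-A)^{-1}$, which you prove cleanly via a Neumann series in your preliminary step (b); the price is the separate (but elementary) treatment of the single degenerate point $\lambda_0=-i$, which you handle with the resolvent identity and the bound \eqref{equ:selfadjoint-resolvent}. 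Each approach buys something: the paper avoids any exceptional point by choosing $z_0\in K$ from the outset, while your version keeps all the continuity inputs explicit. In fact, the paper's claim that $G$ is continuous ``as $d_G$ is equivalent to \eqref{tildedelta}'' is a slight shortcut, since \eqref{tildedelta} measures the resolvent difference at $+i$, not at an arbitrary $z_0\in K$; filling that in requires essentially your step (b) applied to $(T+i)^{-1}$, so your proof is, if anything, more self-contained.
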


\begin{proof}
We fix $z_0\in K$ and note that we have for $(\lambda,T)\in K\times\Omega_K$

\begin{align}\label{ClosedOps-align-theoremrescontres}
\begin{split}
R(\lambda,T)&=(T-\lambda)^{-1}=((T-z_0)-(\lambda-z_0))^{-1}\\
&=((T-z_0)(I_H-(\lambda-z_0)(T-z_0)^{-1}))^{-1}\\
&=(I_H-(\lambda-z_0)(T-z_0)^{-1})^{-1}(T-z_0)^{-1}=F(G(\lambda,T)),
\end{split}
\end{align}
where the maps $F$ and $G$ are defined by\footnote{For $A\subset\mathbb{C}\setminus\{0\}$, we denote $A^{-1}=\{\frac{1}{z}\in\mathbb{C}: z\in A\}$.}

\begin{align*}
G:K&\times\Omega_K\rightarrow K\times\{S\in\mathcal{L}(H):(K-z_0)^{-1}\subset\rho(S)\},\\
&(\lambda,T)\mapsto(\lambda,(T-z_0)^{-1})
\end{align*}
and

\begin{align*}
F:&K\times\{S\in\mathcal{L}(H):(K-z_0)^{-1}\subset\rho(S)\}\rightarrow\mathcal{L}(H),\\
&(\lambda,S)\mapsto(I_H-(\lambda-z_0)S)^{-1}S,
\end{align*}
respectively. $G$ is continuous as $d_G$ is equivalent to the metric \eqref{tildedelta}. Furthermore, the continuity of $F$ follows by a simple computation using the continuity of the inversion on $GL(H)$ (cf. \cite[I.(4.24),III.3.1]{Kato}).
\end{proof}

\noindent
As an important corollary of the previous theorem, we obtain the continuity of the spectral projections which we introduced in Theorem \ref{spectralprojections}.

\begin{cor}\label{ClosedOps-cor-contproj}
Let $\bigtriangleup\subset\mathbb{C}$ be a Cauchy domain with boundary $\Gamma$ and

\begin{align*}
\Omega_\Gamma=\{T\in\mathcal{C}^\textup{sa}(H):\Gamma\subset\rho(T)\}\subset\mathcal{C}^\textup{sa}(H).
\end{align*}
Then the map

\begin{align*}
\Omega&_\Gamma\rightarrow\mathcal{L}(H),\quad T\mapsto P_\Gamma(T):=\frac{1}{2\pi i}\int_\Gamma{(\lambda-T)^{-1}\,d\lambda}
\end{align*}
is continuous.
\end{cor}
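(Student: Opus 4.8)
The plan is to realize $P_\Gamma(T)$ as a composition of the resolvent map from Theorem~\ref{thm:rescont} with a continuous averaging operation over the compact contour $\Gamma$. First I would observe that $\Omega_\Gamma$ is exactly the set $\Omega_K$ of that theorem for the choice $K=\Gamma$, which is compact since $\Gamma$ is the boundary of a Cauchy domain (in particular a closed rectifiable curve). Hence Theorem~\ref{thm:rescont} gives us that
\begin{align*}
R:\Gamma\times\Omega_\Gamma\rightarrow\mathcal{L}(H),\quad (\lambda,T)\mapsto(T-\lambda)^{-1}
\end{align*}
is continuous. The definition $P_\Gamma(T)=\frac{1}{2\pi i}\int_\Gamma(\lambda-T)^{-1}\,d\lambda$ is, up to the sign $(\lambda-T)^{-1}=-(T-\lambda)^{-1}$, just the integral of $\lambda\mapsto R(\lambda,T)$ against arclength along $\Gamma$, so the task reduces to showing that integrating a jointly continuous family of bounded operators over a fixed compact rectifiable curve depends continuously on the parameter $T$.

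The key estimate is a uniform one. Fix $T_0\in\Omega_\Gamma$. I would use compactness of $\Gamma\times\{T_0\}$ together with continuity of $R$ to get, for any $\varepsilon>0$, a gap-neighbourhood $V$ of $T_0$ and an open cover argument (or a Lebesgue-number argument on $\Gamma\times\overline{V}$ after first shrinking to a compact neighbourhood) producing the bound
\begin{align*}
\|R(\lambda,T)-R(\lambda,T_0)\|<\varepsilon\quad\text{for all }\lambda\in\Gamma,\ T\in V.
\end{align*}
Concretely: for each $\lambda\in\Gamma$ continuity of $R$ at $(\lambda,T_0)$ gives a product neighbourhood $U_\lambda\times V_\lambda$ on which $\|R-R(\lambda,T_0)\|<\varepsilon/2$; the $U_\lambda$ cover $\Gamma$, extract a finite subcover $U_{\lambda_1},\dots,U_{\lambda_n}$, and set $V=\bigcap_{j=1}^n V_{\lambda_j}$, adjusting with the triangle inequality so that the comparison is to $R(\lambda,T_0)$ for the correct $\lambda$. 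Then for $T\in V$,
\begin{align*}
\|P_\Gamma(T)-P_\Gamma(T_0)\|\leq\frac{1}{2\pi}\int_\Gamma\|R(\lambda,T)-R(\lambda,T_0)\|\,|d\lambda|\leq\frac{\varepsilon}{2\pi}\,\mathrm{length}(\Gamma),
\end{align*}
which can be made arbitrarily small; continuity at $T_0$ follows, and $T_0$ was arbitrary.

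The one genuine subtlety — and the step I expect to be the main obstacle — is making the finite-subcover/triangle-inequality bookkeeping clean, since the comparison operator $R(\lambda,T_0)$ varies with $\lambda$: one must be careful that the estimate $\|R(\lambda,T)-R(\lambda,T_0)\|<\varepsilon$ holds with the \emph{same} $\lambda$ on both sides for every $\lambda$, not just at the centres $\lambda_j$ of the cover. This is handled either by the Lebesgue number lemma applied to $\Gamma\times\overline{V'}$ for a fixed compact neighbourhood $\overline{V'}$ of $T_0$ (which exists because $\Omega_\Gamma$ is open by Lemma~\ref{gapstabspec}, so $T_0$ has a compact gap-neighbourhood inside it), or by a slightly more careful choice of the $U_\lambda$ so that on $U_\lambda$ one also has $\|R(\mu,T_0)-R(\lambda,T_0)\|<\varepsilon/4$ for $\mu\in U_\lambda$. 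Everything else — the existence and finiteness of the arclength integral, the measurability of $\lambda\mapsto R(\lambda,T)$ (it is continuous), and the final $\mathrm{length}(\Gamma)$ bound — is routine. It is also worth noting that this argument reproves, in passing, that $P_\Gamma(T)$ agrees with the Riesz projection $P_\sigma$ of Theorem~\ref{spectralprojections} when $\Gamma$ separates the spectrum, since the polygonal Riemann sums $P^N_\sigma$ defined there are precisely Riemann sums for this arclength integral and converge to it.
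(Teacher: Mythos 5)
Your proposal is correct and follows essentially the same route as the paper: both invoke Theorem~\ref{thm:rescont} for joint continuity of the resolvent on $\Gamma\times\Omega_\Gamma$, use compactness of $\Gamma$ to extract a finite subcover, handle the bookkeeping with the triangle inequality so the comparison is at the same $\lambda$ on both sides, and conclude via the arclength estimate $\|P_\Gamma(T)-P_\Gamma(S)\|\leq\frac{1}{2\pi}|\Gamma|\max_{\lambda\in\Gamma}\|(\lambda-T)^{-1}-(\lambda-S)^{-1}\|$. The subtlety you flag is indeed the one the paper addresses, and its resolution is exactly the triangle-inequality version you describe.
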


\begin{proof}
For any $T,S\in\Omega_{\Gamma}$ we have by \cite[(97.4)]{HeuserFunkAna}

\begin{align}\label{ClosedOps-align-contproj}
\|P_\Gamma(T)-P_\Gamma(S)\|\leq\frac{1}{2\pi}|\Gamma|\max_{\lambda\in\Gamma}\|(\lambda-T)^{-1}-(\lambda-S)^{-1}\|,
\end{align}
where $|\Gamma|$ denotes the length of the contour $\Gamma$. The rest of the proof is a standard argument in analysis:\\
Let $T\in\Omega_\Gamma$ and $\varepsilon>0$. By Theorem \ref{thm:rescont} there exists $\delta(\lambda')>0$ for any $\lambda'\in\Gamma$ such that 

\begin{align*}
\|(\lambda-S)^{-1}-(\lambda'-T)^{-1}\|<\frac{\pi\varepsilon}{|\Gamma|}
\end{align*}
if 
 
\begin{align*}
\lambda\in U(\lambda',\delta(\lambda')):=\{\lambda\in\Gamma:|\lambda-\lambda'|<\delta(\lambda')\}\quad\text{and}\quad d_G(S,T)<\delta(\lambda').
\end{align*}
As $\Gamma$ is compact we can find $\lambda_1,\ldots,\lambda_n\in\Gamma$ such that $\bigcup^n_{i=1}{U(\lambda_i,\delta(\lambda_i))}=\Gamma$, and we set $\delta:=\min_{1\leq i\leq n}\delta(\lambda_i)$.\\
Now, for any $\lambda\in\Gamma$ there exists $1\leq i\leq n$ such that $\lambda\in U(\lambda_i,\delta(\lambda_i))$ and hence we obtain for $S\in\Omega_\Gamma$ such that $d_G(S,T)<\delta$,

\begin{align*}
\|(\lambda-T)^{-1}-(\lambda-S)^{-1}\|&\leq \|(\lambda-T)^{-1}-(\lambda_i-T)^{-1}\|+\|(\lambda_i-T)^{-1}-(\lambda-S)^{-1}\|<\frac{2\pi\varepsilon}{|\Gamma|}.
\end{align*}
We get by \eqref{ClosedOps-align-contproj}

\begin{align*}
\|P_\Gamma(T)-P_\Gamma(S)\|\leq\frac{1}{2\pi}|\Gamma|\max_{\lambda\in\Gamma}\|(\lambda-T)^{-1}-(\lambda-S)^{-1}\|<\varepsilon
\end{align*}
for all $S\in\Omega_\Gamma$ such that $d_G(S,T)<\delta$.
\end{proof}

%We begin with the following result:

%\begin{lemma}
%For $\lambda\in\mathbb{R}$ the sets

%\begin{align*}
%\{T\in\mathcal{C}^\textup{sa}(H):\lambda\notin\sigma(T)\}\quad\text{and}\quad\{T\in\mathcal{C}^\textup{sa}(H)%:\lambda\notin\sigma_{ess}(T)\}
%\end{align*}
%%are open in $\mathcal{C}^\textup{sa}(H)$.
%\end{lemma}

%\begin{proof}
%By Corollary \ref{cor:spectralmain} we have

%\begin{align*}
%\{T\in\mathcal{C}^\textup{sa}(H):\lambda\notin\sigma(T)\}&=\kappa^{-1}(\{U\in\mathcal{U}(H):\kappa(\lambda)\notin\sigma(U)\}),\\
%\{T\in\mathcal{C}^\textup{sa}(H):\lambda\notin\sigma_{ess}(T)\}&=\kappa^{-1}(\{U\in\mathcal{U}(H):\kappa(\lambda)\notin\sigma_{ess}(U)\}).
%\end{align*}
%The spaces of unitary operators on the right hand sides are open by the openess of the spaces of invertible and Fredholm operators in $\mathcal{L}(H)$. Now the assertion follows from Theorem \ref{thm:Cayley}.
%\end{proof}

%%%%%%%%%%%%%%%%%%%%%%%%%%%%%%%%%%%%%%%%%%%%%%%%%%%%%%%%%%%%%%%%%%%%%%%%%%%%%%%%%%%%%%%%%%%%%%%%%%%%%%%%%%%%%%%%%%%%%%%%%%%%%%%%%%%%%%%%%%%%%%%%%%%%%%%%%%%%%%%%%%%%%%%%%%%%%%%%%%%%%%%%%%%%%%%%%%%%%%%%%%%%%%%%%%%%%%%%%%%%%%%%%%%%%%%%%%%%%%%%%%%%%%%%%%%%%%%%%%%%%%%%%%%%%%%%%%%%%%%%%%%%%%%%%%%%%%%%%%%%%%%%%%%%%%%%%%%%%%%%%%%%%%%%%%%%%%%%%%%%%%%%%%%%%%%%%%%%%%%%%%

\section{Spaces of Selfadjoint Fredholm Operators}\label{section-topology}

In this section, we briefly consider different topologies on spaces of selfadjoint Fredholm operators following mainly Lesch's work \cite{LeschSpecFlowUniqu}. We set

\[\mathcal{CF}^\textup{sa}(H)=\{ T\in\mathcal{C}^\textup{sa}(H):\, T\,\text{ Fredholm}\}\]
and recall that $\mathcal{CF}^\textup{sa}(H)$ is a metric space with respect to the \textit{gap metric} 

\[d_G(T_1,T_2)=\|\kappa(T_1)-\kappa(T_2)\|,\quad T_1,T_2\in\mathcal{CF}^\textup{sa}(H).\] 
For $T\in\mathcal{C}^\textup{sa}(H)$ one can show that $I_H+T^2$ is invertible and selfadjoint on $\mathcal{D}(T^2)\subset\mathcal{D}(T)$. We obtain from Lemma \ref{boundedcomposition} that $F(T):=T(I_H+T^2)^{-\frac{1}{2}}\in\mathcal{L}(H)$. The \textit{Riesz metric} on $\mathcal{CF}^\textup{sa}(H)$ is defined by

\[d_R(T_1,T_2)=\|F(T_1)-F(T_2)\|,\quad T_1,T_2\in\mathcal{CF}^\textup{sa}(H).\]
We set
\[\mathcal{BF}^\textup{sa}(H)=\{T\in\mathcal{CF}^{sa}(H):\,\mathcal{D}(T)=H\}\]
and note that we have a metric on $\mathcal{BF}^\textup{sa}(H)$ given by

\[d_N(T_1,T_2)=\|T_1-T_2\|,\quad T_1,T_2\in\mathcal{BF}^\textup{sa}(H).\]
The following result is due to Nicolaescu \cite{Nicolaescu}.

\begin{theorem}
The topology induced by the Riesz metric on $\mathcal{CF}^\textup{sa}(H)$ is strictly finer than the topology induced by the gap metric.
\end{theorem}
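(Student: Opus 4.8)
The plan is to prove the two statements — that the Riesz topology is finer than the gap topology, and that it is strictly finer — separately, since the arguments are of different character.

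For the inclusion of topologies, I would show that the identity map $(\mathcal{CF}^\textup{sa}(H), d_R) \to (\mathcal{CF}^\textup{sa}(H), d_G)$ is continuous, i.e. that $d_R$-convergence implies $d_G$-convergence. The natural route is to express both bounded transforms $\kappa(T) = (T-i)(T+i)^{-1}$ and $F(T) = T(I_H+T^2)^{-1/2}$ through a common functional calculus: writing $g(x) = \frac{x-i}{x+i}$ and $f(x) = \frac{x}{\sqrt{1+x^2}}$ as functions $\mathbb{R}\to\mathbb{C}$, one has $\kappa(T) = g(T)$ and $F(T) = f(T)$ in the sense of the spectral theorem for the (unbounded) selfadjoint $T$. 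Since $f$ is a homeomorphism of $\mathbb{R}$ onto $(-1,1)$ and $g\circ f^{-1}$ extends to a continuous function on the closed interval $[-1,1]$ (with $g\circ f^{-1}(\pm 1) = $ the limit $1$), we get $\kappa(T) = (g\circ f^{-1})(F(T)) = h(F(T))$ for a fixed continuous $h$ on $[-1,1] \supset \sigma(F(T))$. Now the map $A \mapsto h(A)$ on the set of bounded selfadjoint operators with spectrum in $[-1,1]$ is norm-continuous (approximate $h$ uniformly by polynomials on $[-1,1]$ and use that polynomial functional calculus is norm-continuous), hence $d_R(T_n,T)\to 0 \Rightarrow \|F(T_n)-F(T)\|\to 0 \Rightarrow \|\kappa(T_n)-\kappa(T)\|\to 0 \Rightarrow d_G(T_n,T)\to 0$. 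This gives the ``finer'' half. The main obstacle in this part is being careful that the functional calculus identities $\kappa(T)=g(T)$, $F(T)=f(T)$ hold as stated for unbounded selfadjoint $T$ and that $h$ is genuinely continuous at the endpoints $\pm 1$; both are routine once the spectral theorem is invoked, but they must be checked rather than asserted.

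For strictness, I would exhibit a sequence $\{T_n\}$ in $\mathcal{CF}^\textup{sa}(H)$ converging in the gap metric to some $T\in\mathcal{CF}^\textup{sa}(H)$ but not in the Riesz metric. The key difference between the two metrics is that $d_G$ is insensitive to behaviour ``at infinity'' in the spectrum (since $g(\pm\infty)=1$ and the point $1$ is collapsed), whereas $d_R$ sees it (since $f(\pm\infty)=\pm 1$ are genuinely distinct limits). A clean construction: take $H = \ell^2(\mathbb{N})$ with orthonormal basis $\{e_k\}$, fix a nonzero $T$ that is, say, a diagonal operator bounded away from $0$ (so it is invertible, hence Fredholm and selfadjoint), and let $T_n$ agree with $T$ except that on the one-dimensional subspace $\operatorname{span}\{e_n\}$ we replace the eigenvalue by a large number $\mu_n\to\infty$. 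Because the perturbation affects only a direction $e_n$ that ``escapes'' and because $\kappa$ is a bounded function of the operator, $\|\kappa(T_n)-\kappa(T)\| = |g(\mu_n) - g(t_n)| \to |1-1| = 0$ where $t_n$ is the original eigenvalue (assuming $t_n$ stays bounded), so $d_G(T_n,T)\to 0$; but $\|F(T_n)-F(T)\| = |f(\mu_n)-f(t_n)| \to |1 - f(t_{\infty})|$, which is bounded away from $0$ since $f(\mu_n)\to 1$ while $f(t_n)$ stays in a compact subset of $(-1,1)$. Thus $d_R(T_n,T)\not\to 0$. One must check that each $T_n$ is indeed in $\mathcal{CF}^\textup{sa}(H)$ (diagonal with entries bounded away from $0$, so invertible, hence trivially Fredholm and selfadjoint — here the domain $\mathcal{D}(T_n)$ is genuinely $n$-dependent and properly contained in $H$ once $\mu_n\to\infty$, which is exactly what makes the example live in the unbounded world and not in $\mathcal{BF}^\textup{sa}(H)$), and that $T$ is their gap-limit; both are direct computations with diagonal operators. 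I expect the bookkeeping for the gap-convergence of this diagonal family to be the only slightly delicate point — one reduces it, via \eqref{equivmetric}, to showing $\|(T_n+i)^{-1} - (T+i)^{-1}\|\to 0$, and for diagonal operators this norm is the supremum over $k$ of $\left|\frac{1}{\mu_n^{(k)}+i} - \frac{1}{t_k+i}\right|$, which differs from zero only at $k=n$ where it equals $\left|\frac{1}{\mu_n+i}-\frac{1}{t_n+i}\right|\to 0$.

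A remark on strategy: rather than an explicit sequence one could instead argue abstractly — by the first part the identity $(\mathcal{CF}^\textup{sa}(H),d_R)\to(\mathcal{CF}^\textup{sa}(H),d_G)$ is a continuous bijection, and if the topologies coincided it would be a homeomorphism, forcing $T\mapsto F(T)$ to be $d_G$-continuous; one then derives a contradiction from the fact that $d_G$-limits can produce unbounded operators from bounded ones while $F$ would have to stay norm-continuous across that transition (the image $F(\mathcal{BF}^\textup{sa}(H))$ sits inside operators with spectrum in a fixed compact subinterval bounded away from $\pm 1$ only when the operator norms are bounded, which $d_G$ does not control). I would, however, prefer the concrete diagonal example above, since it is self-contained and makes transparent exactly which phenomenon — eigenvalues running to infinity — separates the two metrics.
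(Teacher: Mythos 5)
The paper itself gives no proof of this theorem; it simply cites Nicolaescu, so there is nothing to compare your argument against directly, and it must be judged on its own merits.

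Your argument for the \emph{finer} half is sound. Writing $\kappa(T)=g(T)$ and $F(T)=f(T)$ via the spectral theorem with $g(x)=\frac{x-i}{x+i}$, $f(x)=\frac{x}{\sqrt{1+x^2}}$, observing that $h:=g\circ f^{-1}$ extends continuously to $[-1,1]$ with $h(\pm 1)=1$, and then using uniform polynomial approximation of $h$ on $[-1,1]$ to get norm-continuity of $A\mapsto h(A)$ on selfadjoint $A$ with $\sigma(A)\subset[-1,1]$ is a clean route to $\kappa(T)=h(F(T))$ and hence to $d_R$-convergence implying $d_G$-convergence. The details you flag as ``to be checked'' are indeed routine.

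The \emph{strictness} half, however, contains a genuine error. In your diagonal example you send the $n$-th eigenvalue $t_n$ of $T$ to a large $\mu_n\to+\infty$ in $T_n$. For gap convergence you need
\[
\Bigl|\tfrac{1}{\mu_n+i}-\tfrac{1}{t_n+i}\Bigr|\longrightarrow 0 ,
\]
and for Riesz non-convergence you need $|f(\mu_n)-f(t_n)|\not\to 0$. If the $t_n$ stay bounded (as ``bounded away from $0$'' with a bounded $T$ suggests, and as your phrase ``assuming $t_n$ stays bounded'' makes explicit), then $\tfrac{1}{t_n+i}$ stays bounded away from $0$ while $\tfrac{1}{\mu_n+i}\to 0$, so the gap distance does \emph{not} go to zero; your claim $|g(\mu_n)-g(t_n)|\to|1-1|=0$ is false here because $g(t_n)\not\to 1$. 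If instead $t_n\to+\infty$ so as to rescue gap convergence, then $f(t_n)\to 1$ as well, and $|f(\mu_n)-f(t_n)|\to 0$, so the Riesz distance also vanishes. Either way the example fails. Your parenthetical remark that $\mathcal{D}(T_n)$ is a proper subspace of $H$ is also incorrect: each $T_n$ perturbs a bounded diagonal operator in a single coordinate by a finite number, so $T_n$ remains bounded and $\mathcal{D}(T_n)=H$ for every fixed $n$.

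The missing idea is precisely the one you identify in words but do not use in the construction: $g$ collapses $+\infty$ and $-\infty$ to the \emph{same} point $1\in S^1$, whereas $f$ sends them to the \emph{distinct} endpoints $\pm 1$. To exploit this, the escaping eigenvalue must change sign. A correct example: let $H=\ell^2(\mathbb{N})$, $T\,e_k=k\,e_k$ (so $T$ is unbounded, selfadjoint, invertible, hence Fredholm), and let $T_n$ agree with $T$ except $T_n e_n=-n\,e_n$. Each $T_n$ is selfadjoint and invertible, hence lies in $\mathcal{CF}^\textup{sa}(H)$. Then, using \eqref{tildedelta},
\[
\delta(T_n,T)=\Bigl|\tfrac{1}{-n+i}-\tfrac{1}{n+i}\Bigr|\le\tfrac{2}{n}\longrightarrow 0,
\]
so $T_n\to T$ in the gap metric, while
\[
d_R(T_n,T)=|f(-n)-f(n)|=\frac{2n}{\sqrt{1+n^2}}\longrightarrow 2\neq 0.
\]
This gives strictness. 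Your concluding ``abstract'' remark gestures at the right phenomenon but is too vague to constitute a proof as written; the concrete sign-flipping sequence is the cleanest way to finish.

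Minor: after fixing the example, you should still note (as you do) that the family is not norm-continuous and indeed lives genuinely in the unbounded setting; here that is true because $T$ itself is unbounded, not because the $T_n$ have shrinking domains.
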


\noindent
Note that the theorem implies in particular that every path in $\mathcal{CF}^\textup{sa}(H)$ which is continuous with respect to the Riesz metric is also continuous with respect to the gap metric. Let us now consider $\mathcal{BF}^\textup{sa}(H)$ and let us note a classical result by Cordes and Labrousse \cite{Cordes}:

\begin{theorem}
On $\mathcal{BF}^\textup{sa}(H)$ the topologies induced by $d_N,d_R$ and $d_G$ coincide.
\end{theorem}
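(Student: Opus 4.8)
The plan is to show that on $\mathcal{BF}^\textup{sa}(H)$ all three metrics induce the same topology by establishing a chain of continuity statements. Since we already know from the excerpt that $d_R$ is finer than $d_G$ (Nicolaescu's theorem), it suffices to prove that $d_N$ is finer than $d_R$ and that $d_G$ is finer than $d_N$; combined with the trivial fact that convergence in $d_N$ implies convergence in $d_R$ and $d_G$ (which actually already follows once we have the two implications just stated), this closes the loop. The cleanest route is: (1) show $d_N$-convergence implies $d_R$-convergence, (2) show $d_G$-convergence implies $d_N$-convergence, and then observe that all three are sandwiched.

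For step (1), I would fix $T\in\mathcal{BF}^\textup{sa}(H)$ and a sequence $T_n\to T$ in operator norm. Since $F(S)=S(I_H+S^2)^{-\frac12}$ is built from $S$ by bounded functional calculus, one can use the integral representation $(I_H+S^2)^{-\frac12}=\frac{1}{\pi}\int_0^\infty (1+\mu)^{-\frac12}(I_H+\mu+S^2)^{-1}\,d\mu$, together with the resolvent identity, to estimate $\|F(T_n)-F(T)\|$ in terms of $\|T_n-T\|$; alternatively, one can invoke the fact that $x\mapsto x(1+x^2)^{-1/2}$ is an operator-Lipschitz function on $\mathbb{R}$, or simply note that $F$ restricted to any norm-ball of bounded selfadjoint operators is norm-continuous by a standard functional-calculus argument. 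The key point is that $\|T_n\|$ stays bounded, so all operators involved lie in a fixed bounded set where the functional calculus behaves uniformly.

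For step (2), suppose $T_n\to T$ in the gap metric, i.e.\ $(T_n+i)^{-1}\to(T+i)^{-1}$ in norm by \eqref{equivmetric}. Here the crucial input is that $T\in\mathcal{BF}^\textup{sa}(H)$ means $\mathcal{D}(T)=H$, so $T$ is bounded and $(T+i)^{-1}$ is a bijection of $H$ with bounded inverse $T+i$. By the stability of invertibility in $\mathcal{L}(H)$, for $n$ large $(T_n+i)^{-1}$ is invertible in $\mathcal{L}(H)$ with inverse converging in norm to $T+i$; but that inverse is $T_n+i$ restricted appropriately — more precisely, from $1\in\rho(\kappa(T_n))$ (Lemma \ref{1inspec}) we get $\mathcal{D}(T_n)=H$ for large $n$, hence $T_n\in\mathcal{BF}^\textup{sa}(H)$ with $\mathcal{D}(T_n)=H$, and then $T_n+i=((T_n+i)^{-1})^{-1}\to((T+i)^{-1})^{-1}=T+i$ in $\mathcal{L}(H)$, i.e.\ $T_n\to T$ in $d_N$.

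The main obstacle is the continuity of the functional calculus map $F$ in step (1) at the level of operator norm estimates: while $x\mapsto x(1+x^2)^{-1/2}$ is smooth and bounded with bounded derivatives on $\mathbb{R}$, turning this into a clean bound $\|F(T_n)-F(T)\|\le C\|T_n-T\|$ requires either the operator-Lipschitz machinery or a careful resolvent-integral estimate; I would present the resolvent-integral argument since it is self-contained and only uses the second resolvent identity together with the uniform boundedness of $\|(I_H+\mu+T_n^2)^{-1}\|\le (1+\mu)^{-1}$. Once that estimate is in hand, everything else is a routine assembly of the implications above, and the equality of the three topologies on $\mathcal{BF}^\textup{sa}(H)$ follows.
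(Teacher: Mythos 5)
The paper does not prove this theorem -- it merely cites the 1963 article of Cordes and Labrousse -- so there is no proof in the manuscript to compare yours against; I will therefore assess your sketch on its own terms. Your cyclic chain $d_N\Rightarrow d_R$, $d_R\Rightarrow d_G$ (Nicolaescu), $d_G\Rightarrow d_N$ is logically sound, and the crucial step (2) is exactly right: for $T\in\mathcal{BF}^\textup{sa}(H)$ the operator $(T+i)^{-1}$ is a boundedly invertible element of $\mathcal{L}(H)$ with inverse $T+i$, so $\|(T_n+i)^{-1}-(T+i)^{-1}\|\to 0$ together with the continuity of inversion on the open set $GL(\mathcal{L}(H))$ immediately gives $T_n+i\to T+i$ in norm. (The detour through Lemma \ref{1inspec} is redundant here since $\mathcal{D}(T_n)=H$ is already part of the hypothesis $T_n\in\mathcal{BF}^\textup{sa}(H)$; it would only be needed to show additionally that $\mathcal{BF}^\textup{sa}(H)$ is gap-open in $\mathcal{CF}^\textup{sa}(H)$.) Step (1) is correctly handled by the uniform functional-calculus argument you mention: $\|T_n\|$ stays bounded, so all spectra lie in a fixed compact interval, on which $f(x)=x(1+x^2)^{-1/2}$ is a uniform limit of polynomials, giving $\|F(T_n)-F(T)\|\to 0$. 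One small correction: the kernel in your alternative integral representation should be $\mu^{-1/2}$, not $(1+\mu)^{-1/2}$, i.e.\ $(I_H+S^2)^{-1/2}=\frac{1}{\pi}\int_0^\infty\mu^{-1/2}(I_H+\mu+S^2)^{-1}\,d\mu$. Finally, you can avoid leaning on Nicolaescu's theorem (which is itself unproved in this manuscript and is anyway chronologically later than Cordes--Labrousse): the easy implication $d_N\Rightarrow d_G$ follows directly from $\kappa(T)=I_H-2i(T+i)^{-1}$ and continuity of inversion, and the remaining $d_R\Rightarrow d_N$ can be proved by the same device as your step (2) using the reconstruction $T=F(T)(I_H-F(T)^2)^{-1/2}$, which makes sense on $\mathcal{BF}^\textup{sa}(H)$ because $I_H-F(T)^2=(I_H+T^2)^{-1}$ is boundedly invertible when $T$ is bounded.
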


\noindent
The space $\mathcal{BF}^\textup{sa}(H)$ was investigated by Atiyah and Singer in \cite{AtiyahSinger} and it plays a fundamental role in topology.

\begin{theorem}
The space $\mathcal{BF}^\textup{sa}(H)$ consists of three connected components

\begin{align*}
\mathcal{BF}^\textup{sa}_+(H)&=\{T\in\mathcal{BF}^\textup{sa}(H):\,\sigma_{ess}(T)\subset(0,\infty)\}\\
\mathcal{BF}^\textup{sa}_-(H)&=\{T\in\mathcal{BF}^\textup{sa}(H):\,\sigma_{ess}(T)\subset(-\infty,0)\}
\end{align*}
and
\[\mathcal{BF}^\textup{sa}_\ast(H)=\mathcal{BF}^\textup{sa}(H)\setminus(\mathcal{BF}^\textup{sa}_+(H)\cup\mathcal{BF}^\textup{sa}_-(H)).\]
The spaces $\mathcal{BF}^\textup{sa}_+(H)$ and $\mathcal{BF}^\textup{sa}_-(H)$ are contractible, whereas for $k\in\mathbb{N}$

\begin{align*}
\pi_k(\mathcal{BF}^\textup{sa}_\ast(H))=
\begin{cases}
0,\quad\text{if}\, k\,\text{even}\\
\mathbb{Z},\quad\text{if}\, k\,\text{odd}
\end{cases}.
\end{align*}
\end{theorem}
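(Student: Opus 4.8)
The plan is to reduce everything to the Cayley-transform picture developed in Theorem~\ref{thm:Cayley} and its corollaries, combined with the classical Atiyah--Singer analysis of the space of bounded selfadjoint Fredholm operators. The first observation is that, via Lemma~\ref{1inspec}, for $T\in\mathcal{BF}^\textup{sa}(H)$ we have $1\in\rho(\kappa(T))$, so the Cayley transform identifies $\mathcal{BF}^\textup{sa}(H)$ homeomorphically (by the theorem of Cordes--Labrousse, all three metrics agree here, and $\kappa$ is a homeomorphism for $d_G$) with a subspace of the unitary group $\mathcal{U}(H)$; but in fact it is cleaner to work directly with bounded operators. The three candidate components are disjoint by construction, and they are each open: by Theorem~\ref{stabspecbounded} (stability of the essential spectrum), the conditions $\sigma_{ess}(T)\subset(0,\infty)$ and $\sigma_{ess}(T)\subset(-\infty,0)$ are open in the norm topology on $\mathcal{BF}^\textup{sa}(H)$, hence so is their union, hence $\mathcal{BF}^\textup{sa}_\ast(H)$ is the complement of two open sets that are themselves open once one checks that membership in $\mathcal{BF}^\textup{sa}_\ast$ is stable, i.e. that an operator with essential spectrum meeting both half-lines (equivalently, with $0\in\sigma_{ess}$, using selfadjointness and Lemma~\ref{spectrumselfadjoint}) stays that way under small perturbations --- this follows because $\sigma_{ess}$ varies upper-semicontinuously and a small perturbation of a selfadjoint operator with $0$ in its essential spectrum still has essential spectrum close to both sides.

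Next I would establish contractibility of $\mathcal{BF}^\textup{sa}_+(H)$ and $\mathcal{BF}^\textup{sa}_-(H)$. For the $+$ component: if $\sigma_{ess}(T)\subset(0,\infty)$ then $T$ has at most finitely many non-positive eigenvalues (each of finite multiplicity, by Lemma~\ref{spectralFredSelf} applied near each of them together with the compactness of $\sigma(T)\cap(-\infty,\varepsilon]$ for suitable $\varepsilon$), so the spectral projection $P_{(-\infty,0]}(T)$ onto the corresponding finite-dimensional space is well-defined and, by Corollary~\ref{ClosedOps-cor-contproj}, depends continuously on $T$; the straight-line homotopy $t\mapsto (1-t)T + t(T + c\,P_{(-\infty,0]}(T))$ for large $c$, or more simply $t\mapsto T|_{\ker P}\oplus (\text{positive deformation on }\im P)$, deforms $T$ within $\mathcal{BF}^\textup{sa}_+(H)$ to a strictly positive operator; then $t\mapsto (1-t)T' + t\,I_H$ contracts the strictly positive operators to $I_H$. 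The $-$ component is handled symmetrically with $-I_H$. The subtle point here is the continuity of the finite-rank spectral projection as $T$ varies: one surrounds $\sigma(T)\cap(-\infty,0]$ by a fixed small contour $\Gamma$ lying in the resolvent set, uses openness of $\Omega_\Gamma$ and Corollary~\ref{ClosedOps-cor-contproj}, and exploits that for nearby operators the eigenvalues cannot escape across $\Gamma$.

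For the homotopy type of $\mathcal{BF}^\textup{sa}_\ast(H)$ I would not reprove the Atiyah--Singer computation from scratch; rather I would indicate the standard argument. One shows $\mathcal{BF}^\textup{sa}_\ast(H)$ is a classifying space for $K^1$: the key input is Kuiper's theorem that $\mathcal{U}(H)$ is contractible, from which one deduces via a clutching/suspension argument that $\mathcal{BF}^\textup{sa}_\ast(H)$ represents the functor $K^{-1}$, whence $\pi_k(\mathcal{BF}^\textup{sa}_\ast(H)) = K^{-1}(S^k) = \widetilde{K}^0(S^{k+1})$, which is $\mathbb{Z}$ for $k$ odd and $0$ for $k$ even by Bott periodicity. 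Concretely, one uses the map $T\mapsto \chi(T)$, where $\chi$ is a continuous odd function with $\chi(\pm\infty)=\pm 1$, sending $T$ to a (bounded, selfadjoint) operator $\chi(T)$ with $\chi(T)^2 - I_H$ compact; modulo compacts this is an involution whose $\pm 1$ eigenspaces are both infinite-dimensional, and the space of such objects is exactly the relevant classifying space.

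The main obstacle is the last paragraph: identifying the homotopy type of $\mathcal{BF}^\textup{sa}_\ast(H)$ genuinely requires Kuiper's theorem and Bott periodicity (or an equivalent amount of $K$-theory), neither of which is developed in these notes, so in a self-contained treatment this part must either be cited or carried out at some length. The openness and contractibility statements, by contrast, follow relatively directly from the spectral-projection continuity (Corollary~\ref{ClosedOps-cor-contproj}) and the Fredholm stability results (Theorem~\ref{stabFred}, Lemma~\ref{spectralFredSelf}) already established above; those I would write out in full, and for the $\pi_k$ computation I would give the $K$-theoretic sketch and refer to \cite{AtiyahSinger}.
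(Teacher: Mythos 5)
The paper itself does not prove this theorem: it is stated as a classical result and referenced to Atiyah and Singer \cite{AtiyahSinger}, so there is no in-text argument to compare against. Your sketch is a reasonable account of how that proof goes, and you correctly identify the genuinely deep part, namely that computing $\pi_k(\mathcal{BF}^\textup{sa}_\ast(H))$ requires Kuiper's theorem and Bott periodicity (or an equivalent amount of $K$-theory), none of which is developed in these notes.

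Two comments on the parts you do spell out. First, there is a factual error in your openness argument for $\mathcal{BF}^\textup{sa}_\ast(H)$: you write that ``essential spectrum meeting both half-lines'' is equivalent to ``$0\in\sigma_{ess}(T)$''. This is false. Every $T\in\mathcal{BF}^\textup{sa}(H)$ is Fredholm, so by definition $0\notin\sigma_{ess}(T)$; for $T\in\mathcal{BF}^\textup{sa}_\ast(H)$ the essential spectrum is a compact subset of $\mathbb{R}\setminus\{0\}$ with points on \emph{both} sides of $0$. Moreover, Theorem \ref{stabspecbounded} gives only upper semicontinuity of $\sigma_{ess}$, which by itself does not prevent the essential spectrum of a nearby operator from slipping entirely to one side of $0$. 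To see that $\mathcal{BF}^\textup{sa}_\ast(H)$ is open (hence clopen, hence a union of components), you need that the spectrum of a \emph{selfadjoint} element of a $C^\ast$-algebra, here the Calkin algebra $\calkin(H)$, is actually continuous in the Hausdorff distance --- some form of lower semicontinuity, which holds for normal elements but not in general, and which is not recorded in these notes. Second, the contraction of $\mathcal{BF}^\textup{sa}_+(H)$ can be done in a single step and without any spectral projection: the affine homotopy $H(t,T)=(1-t)T+tI_H$ satisfies $\sigma_{ess}\bigl(H(t,T)\bigr)=(1-t)\sigma_{ess}(T)+t\subset(0,\infty)$ for all $t\in[0,1]$, so it never leaves $\mathcal{BF}^\textup{sa}_+(H)$ and contracts it to $I_H$. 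The two-step deformation you describe works too, but the uniform choice of contour for $P_{(-\infty,0]}(T)$ that you flag as the subtle point is precisely the delicacy that the direct homotopy avoids. Finally, for $+,-,\ast$ to be \emph{the} connected components you also need $\mathcal{BF}^\textup{sa}_\ast(H)$ to be path-connected, i.e.\ $\pi_0=0$; your displayed formula is only for $k\in\mathbb{N}$, so this should be stated separately (it does follow from the classifying-space identification, since $K^1(\mathrm{pt})=0$).
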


\noindent
Here $\pi_k(\mathcal{BF}^\textup{sa}_\ast(H))$, $k\in\mathbb{N}$, denote the homotopy groups of the space $\mathcal{BF}^\textup{sa}_\ast(H)$. It was shown by Lesch in \cite{LeschSpecFlowUniqu} that the assertions of the previous theorem also hold for $\mathcal{CF}^\textup{sa}(H)$ with respect to the Riesz metric. He also proved the following surprising theorem, which is in strong contrast to the previously mentioned results:

\begin{theorem}
The space $\mathcal{CF}^\textup{sa}(H)$ is connected with respect to the gap metric.
\end{theorem}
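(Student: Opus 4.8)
The goal is to connect any two operators $T_0, T_1 \in \mathcal{CF}^\textup{sa}(H)$ by a gap-continuous path inside $\mathcal{CF}^\textup{sa}(H)$. The natural strategy is to transport the problem to the unitary picture via the Cayley transform $\kappa$, which is a homeomorphism onto the unitaries $U$ with $\ker(U - I_H) = \{0\}$ (Theorem \ref{thm:Cayley} and its corollaries), and where the gap metric corresponds to the operator norm. Under $\kappa$, the Fredholm condition on $T$ translates — via Corollary \ref{cor:spectralmain} — into the condition that $\kappa(\lambda)=1$ is not an essential spectral value of $\kappa(T)$, i.e. $1 \notin \sigma_{ess}(\kappa(T))$ (noting $1 \in \sigma_{ess}(\kappa(T))$ exactly when $T$ is unbounded, by Lemma \ref{1inspec}, but $1$ may still be an isolated eigenvalue of finite multiplicity when $T$ is Fredholm and unbounded). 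So it suffices to show that
\[
\mathcal{V} := \{U \in \mathcal{U}(H) : \ker(U - I_H) = \{0\},\ 1 \notin \sigma_{ess}(U)\}
\]
is path-connected in the norm topology.

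\textbf{Key steps.} First I would reduce to a normal form near $1$. If $U \in \mathcal{V}$, then $1$ is either in $\rho(U)$ or an isolated eigenvalue of finite multiplicity; in the latter case the Riesz projection $P_{\{1\}}$ has finite rank, say $n$, and $H$ splits $U$-invariantly as $\im P_{\{1\}} \oplus \ker P_{\{1\}}$, where on the first summand $U$ acts as $I_n$ but — since $\ker(U-I_H)=\{0\}$ — actually this forces $n=0$; so in fact $1 \in \rho(U)$ always. This is the crucial simplification: $\mathcal{V}$ is just the set of unitaries with $1 \notin \sigma(U)$, equivalently $\sigma(U)$ a compact subset of $S^1 \setminus \{1\}$. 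Second, given such a $U$, by spectral stability (Theorem \ref{stabspecbounded}) there is an arc $S^1 \setminus \{e^{i\theta_0}\}$ containing $\sigma(U)$ for some $\theta_0$; then one can write $U = e^{iA}$ for a bounded selfadjoint $A$ with $\sigma(A) \subset (\theta_0 - 2\pi, \theta_0)$, using the continuous branch of logarithm on that slit. The path $t \mapsto e^{i t A}$, $t \in [0,1]$, is norm-continuous, starts at $I_H$, ends at $U$, and stays in $\mathcal{V}$ because $\sigma(e^{itA}) = \{e^{it\alpha} : \alpha \in \sigma(A)\}$ avoids $e^{i\theta_0}$ for all $t \in [0,1]$ — wait, at $t$ small this shrinks toward $1$, so I would instead route to a fixed reference point: contract $\sigma(A)$ radially toward an interior point $\alpha_0 \neq 0 \pmod{2\pi}$ and then rotate. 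Concretely, first deform $A$ to the scalar $\alpha_0 I_H$ through $(1-s)A + s\alpha_0 I_H$ (spectra stay in the slit), reaching $U_0 = e^{i\alpha_0} I_H$, a fixed point independent of $U$; this shows every $U \in \mathcal{V}$ is connected to $U_0$, hence $\mathcal{V}$ is path-connected. Pulling back through $\kappa^{-1}$ yields the result.

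\textbf{The main obstacle.} The delicate point is handling the slit: different $U$'s need different arcs $S^1 \setminus \{e^{i\theta_0}\}$, and one must verify that the homotopy $(1-s)A + s\alpha_0 I_H$ keeps the spectrum inside the chosen open arc for all $s$ — this needs $\alpha_0$ to lie in the interval $(\theta_0 - 2\pi, \theta_0)$ and convexity of that interval, which is automatic, so the real content is just bookkeeping with branches of the logarithm. The other point requiring care is the argument that $\ker(U - I_H) = \{0\}$ together with $1 \notin \sigma_{ess}(U)$ forces $1 \in \rho(U)$: one invokes that an isolated point of $\sigma(U)$ for a normal operator is an eigenvalue with $\im P_{\{1\}} = \ker(U - I_H)$ (the selfadjoint/normal analogue of the last Lemma in Chapter 2), so finite-rank-and-trivial-kernel means rank zero, i.e. $1$ is not in the spectrum at all. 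Once these two observations are in place, the construction of the path is routine functional calculus, and the continuity is immediate from the norm-continuity of $s \mapsto e^{i B_s}$ for a norm-continuous family of bounded selfadjoint $B_s$.
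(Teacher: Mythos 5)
Your reduction through the Cayley transform starts on the right foot, but there is a fatal mistranslation of the Fredholm condition, and it collapses the whole argument. Fredholmness of $T$ means $0\notin\sigma_{ess}(T)$, and $\kappa(0)=-1$, not $1$. By Corollary \ref{cor:spectralmain}, the Fredholm condition on $T$ therefore becomes $-1\notin\sigma_{ess}(\kappa(T))$. The point $1$ is \emph{not} of the form $\kappa(\lambda)$ for any $\lambda\in\mathbb{R}$ --- it is the ``point at infinity'' of the Cayley transform --- and Lemma \ref{1inspec} says $1\in\sigma_{ess}(\kappa(T))$ precisely when $T$ is \emph{unbounded}. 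So for a generic unbounded $T\in\mathcal{CF}^\textup{sa}(H)$ (say one with compact resolvent), you have $\sigma_{ess}(T)=\emptyset$ and hence $T$ Fredholm, yet $1\in\sigma_{ess}(\kappa(T))$. Your proposed set $\mathcal{V}$ is therefore not $\kappa(\mathcal{CF}^\textup{sa}(H))$; it is $\kappa$ of the bounded selfadjoint operators. You even record the correct fact in a parenthetical (``$1\in\sigma_{ess}(\kappa(T))$ exactly when $T$ is unbounded'') and then contradict it two sentences later by concluding ``$1\in\rho(U)$ always.''

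The consequence is not merely a bookkeeping slip: it destroys the phenomenon the theorem is about. Once you conclude $1\in\rho(U)$, you are working exclusively with bounded $T$, and indeed what your path construction proves is that the set of unitaries with $1\notin\sigma(U)$ is connected --- equivalently, that bounded selfadjoint operators form a connected set, which is trivial by convexity. But $\mathcal{CF}^\textup{sa}(H)$ contains unbounded operators, and $\mathcal{BF}^\textup{sa}(H)$ is famously \emph{not} connected (it has three components, as stated in Section \ref{section-topology}). The whole surprise of Lesch's theorem is that allowing $1\in\sigma_{ess}(\kappa(T))$ --- i.e.\ allowing unbounded operators, for which the essential spectrum of the Cayley transform occupies the point $1$ --- provides the extra room needed to move spectrum around $-1$ from one side to the other, thereby merging the three components of $\mathcal{BF}^\textup{sa}(H)$ into one. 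The correct target set to prove path-connected is
\[
\mathcal{W}=\{U\in\mathcal{U}(H):\ker(U-I_H)=\{0\},\ -1\notin\sigma_{ess}(U)\},
\]
and a proof must exploit, rather than rule out, the possibility $1\in\sigma_{ess}(U)$. Your slit-logarithm homotopy, which requires an entire missing arc of $S^1$, cannot be run on $\mathcal{W}$ in general, since an element of $\mathcal{W}$ may have spectrum accumulating at $1$ from both sides. The paper does not reproduce Lesch's proof, but it signals the point explicitly by contrasting the theorem with the three-component structure of $\mathcal{BF}^\textup{sa}(H)$; your argument, if it worked, would also ``prove'' that $\mathcal{BF}^\textup{sa}(H)$ is connected, which is false.
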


%%%%%%%%%%%%%%%%%%%%%%%%%%%%%%%%%%%%%%%%%%%%%%%%%%%%%%%%%%%%%%%%%%%%%%%%%%%%%%%%%%%%%%%%%%%%%%%%%%%%%%%%%%%%%%%%%%%%%%%%%%%%%%%%%%%%%%%%%%%%%%%%%%%%%%%%%%%%%%%%%%%%%%%%%%%%%%%%%%%%%%%%%%%%%%%%%%%%%%%%%%%%%%%%%%%%%%%%%%%%%%%%%%%%%%%%%%%%%%%%%%%%%%%%%%%%%%%%%%%%%%%%%%%%%%%%%%%%%%%%%%%%%%%%%%%%%%%%%%%%%%%%%%%%%%%%%%%%%%%%%%%%%%%%%%%%%%%%%%%%%%%%%%%%%%%%%%%%%%%%%%%%%%%%%%%%%%%%%%%%%%%%%%%%%%%%%%%%%%%%%%%%%%%%%%%%%%%%%%%%%%%%%%%%%%%%%%%%%%%%%%%%%%%%%%%%%%%%%%%%%%%%%%%%%%%%%%%%%%%%%%%%%%%%%%%%%%%%%%%%%%%%%%%%%%%%%%%%%%%%%%%%%%%%%%%%%%%%%%%%%%%%%%%%%%%%%%%%%%%%%%%%%%%%%%%%%%%%%%%%%%%%%%%%%%%%%%%%%%%%%%%%%%%%%%%%%%%%%%%%%%%%%%%%%%%%%%%%%%%%%%%%%%%%%%%%%%%%%%%%%%%%%%%%%%%%%%%%%%%%%%%%%%%%%%%%%%%%%%%%%%%%%%%%%%%%%%%%%%%%%%%%%%%%%%%%%%%%%%%%%%%%%%%%%%%%%%%%%%%%%%%%%%%%%%%%%%%%%%%%%%%%%%%%%%%%%%%%%%%%%%%%%%%%%%%%%%%%%%%%%%%%%%%%%%%%%%%%%%%%%%%%%%

\chapter{The Spectral Flow} 

\section{Definition of the Spectral Flow}
The aim of this section is to introduce the spectral flow along the lines of \cite[\S 2.2]{UnbSpecFlow}.
Before we begin the construction, we rephrase the results about the spectral stability from Section \ref{sec:stability}. In what follows, we denote for $T\in\mathcal{CF}^\textup{sa}(H)$ such that $a,b\notin\sigma(T)$ by

\[\chi_{[a,b]}(T)=\frac{1}{2\pi i}\int_\Gamma{(\lambda-T)^{-1}\,d\lambda}\]
the spectral projection with respect to the interval $[a,b]$, where $\Gamma$ is the circle of radius $\frac{b-a}{2}$ around $\frac{a+b}{2}$. Moreover, we will need the following well known fact.

\begin{lemma}\label{projectionsclose}
Let $P,Q\in\mathcal{L}(E)$ be two projections on the Banach space $E$. If $\|P-Q\|<1$, then 

\[\dim\im(P)=\dim\im(Q).\]
\end{lemma}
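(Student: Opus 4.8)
The plan is to exhibit an invertible operator $W\in\mathcal{L}(E)$ that intertwines $P$ and $Q$ in the sense that $WP=QW$. Such a $W$ automatically maps $\im(P)$ isomorphically onto $\im(Q)$, and then $\dim\im(P)=\dim\im(Q)$ follows at once (including the degenerate case where one, hence both, of the spaces is infinite dimensional).

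Concretely, I would set $W:=QP+(I_E-Q)(I_E-P)$ together with the operator $W'':=PQ+(I_E-P)(I_E-Q)$ obtained by exchanging the roles of $P$ and $Q$. Using only $P^2=P$ and $Q^2=Q$, a one-line computation gives $WP=QP=QW$, so $W$ carries $\im(P)$ into $\im(Q)$, and symmetrically $W''Q=PQ=PW''$, so $W''$ carries $\im(Q)$ into $\im(P)$.

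The only step that requires a genuine idea, rather than bookkeeping, is the invertibility of $W$; this is where I expect the main obstacle. The naive approach fails, because $W-I_E=-(P-Q)(2P-I_E)$ and $\|2P-I_E\|$ need not be small for a projection on a Banach space, so there is no direct Neumann series for $W$. Instead I would verify by straightforward expansion the identities
\begin{align*}
WW''=W''W=I_E-(P-Q)^2.
\end{align*}
Writing $A:=I_E-P-Q$, one has $W=A+2QP$ and $W''=A+2PQ$; in the product $WW''$ the cross terms $2A\cdot PQ+2QP\cdot A$ equal $-4QPQ$ and cancel the term $4QP\cdot PQ=4QPQ$, leaving $A^2=I_E-(P-Q)^2$, and symmetrically for $W''W$. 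Since $\|(P-Q)^2\|\le\|P-Q\|^2<1$, the operator $I_E-(P-Q)^2$ is invertible (Neumann series). Hence $W$ admits a left and a right inverse, so $W$ is bijective, and by the open mapping theorem (Theorem \ref{openmapping}) $W^{-1}\in\mathcal{L}(E)$.

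Finally, from $WP=QW$ and the invertibility of $W$ I would conclude
\[
W(\im P)=WP(E)=QW(E)=Q(E)=\im Q,
\]
so that $W|_{\im P}\colon\im(P)\to\im(Q)$ is a bounded bijection with bounded inverse $W^{-1}|_{\im Q}$. In particular $\im(P)$ and $\im(Q)$ are isomorphic as vector spaces, hence $\dim\im(P)=\dim\im(Q)$. Everything besides the invertibility of $W$ is routine; that one verification is the heart of the argument.
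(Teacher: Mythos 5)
Your argument is correct. Note that the paper does not actually give a proof of this lemma; it simply cites \cite[Lemma II.4.3]{GohbergClasses}. What you have written out is the classical argument (essentially Kato's, \cite[\S I.4.6]{Kato}, and in substance the same as the one in the cited reference): the operators $W=QP+(I_E-Q)(I_E-P)$ and $W''=PQ+(I_E-P)(I_E-Q)$ satisfy the intertwining relations $WP=QP=QW$ and $W''Q=PQ=PW''$, and the identity $WW''=W''W=I_E-(P-Q)^2$ together with $\|(P-Q)^2\|\le\|P-Q\|^2<1$ and a Neumann series shows that $W$ is invertible. Your expansion via $A:=I_E-P-Q$, $W=A+2QP$, $W''=A+2PQ$ and the cancellation $2A\,PQ+2QP\,A+4QP\,PQ=-4QPQ+4QPQ=0$ checks out, as does $A^2=I_E-(P-Q)^2$. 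The conclusion $W(\im P)=\im Q$ then follows exactly as you say, so $\dim\im P=\dim\im Q$ in all cases, finite or infinite. Since the paper leaves this to the literature, your write-up supplies the missing self-contained proof; there is no competing argument in the paper to compare it with, but you may wish to record the citation alongside your proof.
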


\begin{proof}
\cite[Lemma II.4.3]{GohbergClasses}
\end{proof}
\noindent
The next lemma is a cornerstone in the definition of the spectral flow.

\begin{lemma}\label{lem:sflneighbourhood}
Let $T_0\in\mathcal{CF}^{sa}(H)$ be given.

\begin{enumerate}
	\item[(i)] There exists a positive real number $a$ such that $\pm a\notin\sigma(T_0)$, and an open neighbourhood $N\subset\mathcal{CF}^\textup{sa}(H)$ of $T_0$ such that
	
	\begin{align}\label{specproj}
	N\rightarrow\mathcal{L}(H),\quad T\mapsto\chi_{[-a,a]}(T)
	\end{align}
is continuous and, moreover, $\sigma_{ess}(T)\cap[-a,a]=\emptyset$ and the projection $\chi_{[-a,a]}(T)$ has finite rank for all $T\in N$.
\item[(ii)] If $-a\leq c<d\leq a$ are such that $c,d\in\rho(T)$ for all $T\in N$, then $T\mapsto\chi_{[c,d]}(T)$ is continuous on $N$. Moreover, the rank of $\chi_{[c,d]}(T)$, $T\in N$, is finite and constant on each connected component of $N$. 
\end{enumerate}
\end{lemma}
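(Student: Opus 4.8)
The plan is to build everything on the spectral stability results of Section~\ref{sec:stability}, in particular Lemma~\ref{gapstabspec} and Corollary~\ref{ClosedOps-cor-contproj}. For part (i): since $T_0$ is selfadjoint and Fredholm, Lemma~\ref{spectralFredSelf} tells us that $0$ is either in $\rho(T_0)$ or an isolated eigenvalue of finite multiplicity; in either case there is a small $a>0$ with $\pm a\notin\sigma(T_0)$ and $[-a,a]\cap\sigma_{ess}(T_0)=\emptyset$, so $[-a,a]\cap\sigma(T_0)$ consists of at most the single isolated eigenvalue $0$, which has finite-dimensional eigenspace. Now apply Lemma~\ref{gapstabspec} with the compact set $K=\{-a,a\}$: the set $N:=\{T\in\mathcal{CF}^\textup{sa}(H):\pm a\in\rho(T)\}$ is an open neighbourhood of $T_0$. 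Shrinking $N$ further using the $\rho_{ess}$-version of Lemma~\ref{gapstabspec} with $K=[-a,a]$, we may also assume $[-a,a]\cap\sigma_{ess}(T)=\emptyset$ for all $T\in N$. With $\Gamma$ the circle of radius $a$ about $0$, every $T\in N$ satisfies $\Gamma\subset\rho(T)$, so Corollary~\ref{ClosedOps-cor-contproj} gives continuity of $T\mapsto\chi_{[-a,a]}(T)=P_\Gamma(T)$ on $N$. Finiteness of the rank follows because $\chi_{[-a,a]}(T)$ is the Riesz projection onto the part of the spectrum in $[-a,a]$, which (being disjoint from $\sigma_{ess}(T)$ and compact) consists of finitely many eigenvalues of finite type by Theorem~\ref{compres}-type reasoning applied to $T\mid_{\im P_\Gamma}$, whose spectrum is $\sigma(T)\cap[-a,a]$ by Theorem~\ref{spectralprojections}(v); alternatively, $[-a,a]\subset\rho_{ess}(T)$ forces $\lambda-T$ Fredholm for $\lambda\in[-a,a]$, and a selfadjoint Fredholm operator has finite-dimensional kernel, so summing over the finitely many eigenvalues in $[-a,a]$ gives finite rank.

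For part (ii): fix $-a\le c<d\le a$ with $c,d\in\rho(T)$ for all $T\in N$, and let $\Gamma'$ be the circle of radius $\frac{d-c}{2}$ about $\frac{c+d}{2}$; then $\Gamma'$ lies in the closed disc of radius $a$ and one checks $\Gamma'\subset\rho(T)$ for all $T\in N$ (points of $\Gamma'$ other than $c,d$ are non-real, hence in $\rho(T)$ since $\sigma(T)\subset\mathbb{R}$, while $c,d\in\rho(T)$ by hypothesis). Corollary~\ref{ClosedOps-cor-contproj} then gives continuity of $T\mapsto\chi_{[c,d]}(T)=P_{\Gamma'}(T)$ on $N$. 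Finiteness of the rank of $\chi_{[c,d]}(T)$ is immediate from part (i) since $\im\chi_{[c,d]}(T)\subset\im\chi_{[-a,a]}(T)$, which is finite-dimensional. Finally, for local constancy of the rank on connected components: the map $T\mapsto\chi_{[c,d]}(T)$ is continuous and projection-valued, so every $T\in N$ has a neighbourhood on which $\|\chi_{[c,d]}(T')-\chi_{[c,d]}(T)\|<1$, and Lemma~\ref{projectionsclose} gives $\dim\im\chi_{[c,d]}(T')=\dim\im\chi_{[c,d]}(T)$ there; thus the integer-valued function $T\mapsto\dim\im\chi_{[c,d]}(T)$ is locally constant, hence constant on each connected component of $N$.

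The only genuinely delicate point is keeping the spectral-stability bookkeeping straight: one must apply Lemma~\ref{gapstabspec} twice (once for the endpoints $\pm a$, i.e. $K$ a two-point set, and once for the whole interval $[-a,a]$ to control $\sigma_{ess}$), and then take $N$ to be the intersection, verifying that $T_0$ indeed lies in this intersection — which it does precisely because of the initial choice of $a$ via Lemma~\ref{spectralFredSelf}. Everything else — continuity of the Riesz projections via Corollary~\ref{ClosedOps-cor-contproj}, finiteness of rank, and local constancy via Lemma~\ref{projectionsclose} — is then routine.
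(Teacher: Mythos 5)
Your proof is correct and follows essentially the same route as the paper: choose $a$ via Lemma~\ref{spectralFredSelf}, define $N$ via Lemma~\ref{gapstabspec} (applied to control both the endpoints $\pm a$ and the essential spectrum on $[-a,a]$), and use Corollary~\ref{ClosedOps-cor-contproj} for continuity of the spectral projections. You are in fact somewhat more explicit than the paper's terse proof — in particular you spell out the verification that the circle $\Gamma'$ through $c$ and $d$ lies in $\rho(T)$ because $\sigma(T)\subset\mathbb{R}$, and you invoke Lemma~\ref{projectionsclose} to obtain local constancy of the rank in part (ii), which the paper treats as immediate.
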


\begin{proof}
By Lemma \ref{spectralFredSelf} there exists $a>0$ such that $[-a,a]\cap\sigma(T_0)\subset\{0\}$. Now

\[N:=\{T\in\mathcal{CF}^\textup{sa}(H):\, [-a,a]\subset\rho_{ess}(T_0),\,\,\pm a\notin\sigma(T_0)\}\]
is open with respect to the gap metric by Lemma \ref{gapstabspec} and the map \eqref{specproj} is continuous by Corollary \ref{ClosedOps-cor-contproj}. Moreover, $\chi_{[-a,a]}(T)$ has finite rank for all $T\in N$ as $[-a,a]\cap\sigma_{ess}(T)=\emptyset$. Hence we have shown the first assertion. The second assertion now follows immediately from Corollary \ref{ClosedOps-cor-contproj}.  
\end{proof}

\noindent
Note that if $c,d$ and $N$ are as in (ii) of the previous lemma, then

\[\im(\chi_{[c,d]}(T))=\bigoplus_{\mu\in[c,d]}{\ker(\mu-T)}\subset H\]
for all $T\in N$.\\
\noindent
Let now $\mathcal{A}:I\rightarrow\mathcal{CF}^{sa}(H)$ be a continuous path with respect to the gap topology, where we denote by $I=[0,1]$ the unit interval. By the previous Lemma \ref{lem:sflneighbourhood} we conclude that for every $\lambda\in I$ there exists $a>0$ and an open neighbourhood $N_{\lambda,a}\subset\mathcal{CF}^\textup{sa}(H)$ such that $\pm a\in\rho(T)$ for all $T\in N_{\lambda,a}$ and the map

\begin{align*}
N_{\lambda,a}\rightarrow\mathcal{L}(H),\quad T\mapsto\chi_{[-a,a]}(T)
\end{align*} 
is continuous. Moreover, all $\chi_{[-a,a]}(T)$, $T\in N_{\lambda,a}$, have the same finite rank. Now the counterimages of the $N_{\lambda,a}$ under $\mathcal{A}$ define an open covering of the unit interval and, by using the Lebesgue number of this covering, we can find $0=\lambda_0\leq \lambda_1\leq\ldots\leq \lambda_n=1$ and $a_i>0$, $i=1,\ldots n$, such that the maps

\begin{align*}
[\lambda_{i-1},\lambda_i]\ni\lambda\mapsto\chi_{[-a_i,a_i]}(\mathcal{A}_\lambda)\in\mathcal{L}(H)
\end{align*}
are continuous and of constant rank. In what follows we denote for $[c,d]\subset[-a_i,a_i]$ by

\[E_{[c,d]}(\mathcal{A}_\lambda)=\bigoplus_{\mu\in[c,d]}{\ker(\mu-\mathcal{A}_\lambda)}\]
the direct sum of the eigenspaces with respect to eigenvalues in the interval $[c,d]$.
Our intention is to define the spectral flow of $\mathcal{A}:I\rightarrow\mathcal{CF}^{sa}(H)$ by

\begin{align}\label{IndPre-align-specflow}
\sfl(\mathcal{A})=\sum^n_{i=1}{\left(\dim E_{[0,a_i]}(\mathcal{A}_{\lambda_i})-\dim E_{[0,a_i]}(\mathcal{A}_{\lambda_{i-1}})\right)}
\end{align}
but we need to prove its well definedness before.

\begin{lemma}
$\sfl(\mathcal{A})$ depends only on the continuous map $\mathcal{A}$.
\end{lemma}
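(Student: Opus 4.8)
The quantity $\sfl(\mathcal{A})$ in \eqref{IndPre-align-specflow} was defined using two choices: a partition $0=\lambda_0\leq\lambda_1\leq\cdots\leq\lambda_n=1$ and numbers $a_i>0$ with $\pm a_i\in\rho(\mathcal{A}_\lambda)$ for $\lambda\in[\lambda_{i-1},\lambda_i]$ and $\lambda\mapsto\chi_{[-a_i,a_i]}(\mathcal{A}_\lambda)$ continuous of constant finite rank on $[\lambda_{i-1},\lambda_i]$. The plan is to show the sum is unchanged under the two basic moves that connect any two admissible choices: (a) refining the partition by inserting one extra point $\mu\in(\lambda_{i-1},\lambda_i)$ while keeping the same $a_i$ on both subintervals, and (b) replacing $a_i$ on a single subinterval $[\lambda_{i-1},\lambda_i]$ by another admissible value $a_i'$. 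Since any two admissible pairs $(\{\lambda_j\},\{a_i\})$ and $(\{\lambda_j'\},\{a_i'\})$ have a common refinement of the partitions on which, by a further application of Lemma \ref{lem:sflneighbourhood}(i) and a Lebesgue-number argument, one can choose a third system of constants that is admissible for both, finitely many moves of type (a) and (b) suffice.

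For move (a): inserting $\mu\in(\lambda_{i-1},\lambda_i)$ splits the $i$-th summand $\dim E_{[0,a_i]}(\mathcal{A}_{\lambda_i})-\dim E_{[0,a_i]}(\mathcal{A}_{\lambda_{i-1}})$ into
\[\bigl(\dim E_{[0,a_i]}(\mathcal{A}_{\lambda_i})-\dim E_{[0,a_i]}(\mathcal{A}_{\mu})\bigr)+\bigl(\dim E_{[0,a_i]}(\mathcal{A}_{\mu})-\dim E_{[0,a_i]}(\mathcal{A}_{\lambda_{i-1}})\bigr),\]
which is a telescoping identity and hence changes nothing; one only has to note that $\pm a_i\in\rho(\mathcal{A}_\mu)$, so the intermediate term is well defined.

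For move (b): fix a subinterval, write it as $[c,d]$, and suppose $0<a<a'$ are both admissible on it, i.e.\ $\pm a,\pm a'\in\rho(\mathcal{A}_\lambda)$ for all $\lambda\in[c,d]$ and both $\lambda\mapsto\chi_{[-a,a]}(\mathcal{A}_\lambda)$, $\lambda\mapsto\chi_{[-a',a']}(\mathcal{A}_\lambda)$ are continuous of constant finite rank. (The general case of two incomparable values $a,a'$ follows by comparing each to $\min(a,a')$, after first shrinking the partition enough that such a smaller common value exists — again via Lemma \ref{lem:sflneighbourhood}.) The key point is the orthogonal decomposition, valid because $\mathcal{A}_\lambda$ is selfadjoint and $\pm a,\pm a'$ lie in the resolvent set,
\[E_{[0,a']}(\mathcal{A}_\lambda)=E_{[0,a]}(\mathcal{A}_\lambda)\oplus E_{(a,a']}(\mathcal{A}_\lambda)\oplus E_{[-a',-a)}(\mathcal{A}_\lambda),\]
wait --- more precisely $E_{[-a',a']}=E_{[-a',-a)}\oplus E_{[-a,a]}\oplus E_{(a,a']}$, and intersecting with the spectral subspace for $[0,\cdot]$ gives $E_{[0,a']}(\mathcal{A}_\lambda)=E_{[0,a]}(\mathcal{A}_\lambda)\oplus E_{(a,a']}(\mathcal{A}_\lambda)$. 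Now $\lambda\mapsto\chi_{(a,a']}(\mathcal{A}_\lambda)=\chi_{[-a',a']}(\mathcal{A}_\lambda)-\chi_{[-a,a]}(\mathcal{A}_\lambda)$ restricted to the compact interval $[c,d]$ is continuous with values in the finite-rank projections (its range sits inside the range of the finite-rank projection $\chi_{[-a',a']}(\mathcal{A}_\lambda)$), so by Lemma \ref{projectionsclose} and connectedness of $[c,d]$ its rank $\dim E_{(a,a']}(\mathcal{A}_\lambda)$ is constant in $\lambda$; call it $m$. Hence
\[\dim E_{[0,a']}(\mathcal{A}_d)-\dim E_{[0,a']}(\mathcal{A}_c)=\bigl(\dim E_{[0,a]}(\mathcal{A}_d)+m\bigr)-\bigl(\dim E_{[0,a]}(\mathcal{A}_c)+m\bigr)=\dim E_{[0,a]}(\mathcal{A}_d)-\dim E_{[0,a]}(\mathcal{A}_c),\]
so the $i$-th summand, and therefore the whole sum, is unaffected by move (b).

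The main obstacle is the bookkeeping at the start: making precise that any two admissible systems can be joined by finitely many moves of types (a) and (b). Concretely, one should take the common refinement $\{\nu_k\}$ of the two partitions; on each subinterval $[\nu_{k-1},\nu_k]$ both original constant-choices are still admissible (continuity and constancy of rank are inherited by subintervals), and by Lemma \ref{lem:sflneighbourhood}(i) applied along $[\nu_{k-1},\nu_k]$ together with a Lebesgue-number argument one may, after possibly refining once more, pick a single $\tilde a_k>0$ admissible on all of $[\nu_{k-1},\nu_k]$ and $\leq$ both previous values there. Then finitely many (a)-moves pass from each original partition to $\{\nu_k\}$, and finitely many (b)-moves adjust the constants to $\{\tilde a_k\}$; since each move preserves the sum, the two original values of $\sfl(\mathcal{A})$ agree. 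Everything else is routine telescoping and an application of Lemma \ref{projectionsclose}.
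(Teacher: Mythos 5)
Your proof follows essentially the same three-stage strategy as the paper: (a) show refining the partition while keeping the same $a_i$ leaves the sum unchanged by telescoping, (b) show changing $a_i$ on a fixed subinterval leaves the sum unchanged because the rank of the intermediate spectral projection is constant, and (c) reduce the general case to (a) and (b) by passing to a common refinement. This is precisely the paper's Step 1, Step 2, and Step 3.

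One slip in your move (b): the identity $\chi_{(a,a']}(\mathcal{A}_\lambda)=\chi_{[-a',a']}(\mathcal{A}_\lambda)-\chi_{[-a,a]}(\mathcal{A}_\lambda)$ is false; the right-hand side is the spectral projection onto $E_{[-a',-a)}(\mathcal{A}_\lambda)\oplus E_{(a,a']}(\mathcal{A}_\lambda)$, which is generally strictly larger than $E_{(a,a']}(\mathcal{A}_\lambda)$. Consequently, constancy of the rank of that difference gives constancy of $\dim E_{[-a',-a)}+\dim E_{(a,a']}$, not of the quantity $m=\dim E_{(a,a']}$ that you actually use in the final computation. The fix is straightforward and brings you back exactly to the paper's argument: since $a,a'\in\rho(\mathcal{A}_\lambda)$ for all $\lambda\in[c,d]$, the projection $\chi_{(a,a']}(\mathcal{A}_\lambda)=\chi_{[a,a']}(\mathcal{A}_\lambda)$ is itself a Riesz projection with contour in the resolvent set, hence continuous and of constant finite rank on $[c,d]$ by Lemma \ref{lem:sflneighbourhood}(ii) (or by Corollary \ref{ClosedOps-cor-contproj} together with Lemma \ref{projectionsclose} and connectedness). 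With that replacement the argument is correct. Also, the parenthetical remark about ``incomparable values $a,a'$'' is vacuous: two positive reals are always comparable, and the paper simply assumes $a_2\leq a_1$ without loss of generality, which suffices here as well.
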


\begin{proof}
We decompose the proof into three steps.\\
Let us begin by considering $\lambda_0,\ldots,\lambda_n$ and $a_1,\ldots,a_n$ as in \eqref{IndPre-align-specflow}, and we take a further instant $\lambda^\ast\in(0,1)$ such that $\lambda_{i-1}<\lambda^\ast<\lambda_i$ for some $i$. If we now use the two maps

\begin{align*}
[\lambda_{i-1},\lambda^\ast]&\ni\lambda\mapsto\chi_{[-a_i,a_i]}(\mathcal{A}_\lambda),\qquad [\lambda^\ast,\lambda_i]\ni\lambda\mapsto\chi_{[-a_i,a_i]}(\mathcal{A}_\lambda)
\end{align*} 
instead of

\begin{align*}
[\lambda_{i-1},\lambda_i]\ni\lambda\mapsto\chi_{[-a_i,a_i]}(\mathcal{A}_\lambda)
\end{align*}
for the computation of \eqref{IndPre-align-specflow}, then the sum does not change as the two new appearing terms cancel each other.\\
In the second step we do not change the partition of the interval but instead the numbers $a_i$. Let $[c,d]\subset[0,1]$ be any subinterval and 

\[\lambda\mapsto\chi_{[-a_1,a_1]}(\mathcal{A}_\lambda),\qquad\lambda\mapsto\chi_{[-a_2,a_2]}(\mathcal{A}_\lambda)\]
two continuous maps as in \eqref{IndPre-align-specflow} which are defined and continuous on $[c,d]$. We may assume without loss of generality that $a_1$ is greater or equal to $a_2$. As $a_1,a_2\notin\sigma(\mathcal{A}_\lambda)$ for all $\lambda\in[c,d]$, we obtain by Theorem \ref{spectralprojections}

\begin{align*} 
\dim E_{[0,a_1]}(\mathcal{A}_\lambda)-\dim E_{[0,a_2]}(\mathcal{A}_\lambda)=\dim\im(\chi_{[a_2,a_1]}(\mathcal{A}_\lambda))
\end{align*}
which is a constant function on $[c,d]$ by Lemma \ref{lem:sflneighbourhood} (ii). Consequently,

\begin{align*}
\dim E_{[0,a_1]}(\mathcal{A}_d)-\dim E_{[0,a_1]}(\mathcal{A}_c)&=(\dim E_{[0,a_2]}(\mathcal{A}_d)+\dim\im(\chi_{[a_2,a_1]}(\mathcal{A}_d))\\
&-(\dim E_{[0,a_2]}(\mathcal{A}_c)+\dim\im(\chi_{[a_2,a_1]}(\mathcal{A}_c)))\\
&=\dim E_{[0,a_2]}(\mathcal{A}_d)-\dim E_{[0,a_2]}(\mathcal{A}_c).
\end{align*}
Finally, let us consider the general case, i.e. we have two partitions $\lambda_0,\ldots,\lambda_n$ and $\lambda'_0,\ldots,\lambda'_m$ having associated numbers $a_1,\ldots,a_n$ and $a'_1,\ldots,a'_m$, respectively, as in \eqref{IndPre-align-specflow}. The union of both partitions yield a third one $\lambda''_0,\ldots,\lambda''_{m+n}$, which is finer than $\lambda_0,\ldots,\lambda_{n}$ and $\lambda'_1,\ldots,\lambda'_{m}$. By our first step of the proof we obtain

\begin{align}\label{proofwelldef}
\begin{split}
&\sum^{n}_{i=1}{\left(\dim E_{[0,a_i]}(\mathcal{A}_{\lambda_i})-\dim E_{[0,a_i]}(\mathcal{A}_{\lambda_{i-1}})\right)}=\sum^{m+n}_{i=1}{\left(\dim E_{[0,b_i]}(\mathcal{A}_{\lambda''_i})-\dim E_{[0,b_i]}(\mathcal{A}_{\lambda''_{i-1}})\right)}\\
&\sum^m_{i=1}{\left(\dim E_{[0,a'_i]}(\mathcal{A}_{\lambda'_i})-\dim E_{[0,a'_i]}(\mathcal{A}_{\lambda'_{i-1}})\right)}=\sum^{m+n}_{i=1}{\left(\dim E_{[0,b'_i]}(\mathcal{A}_{\lambda''_i})-\dim E_{[0,b'_i]}(\mathcal{A}_{\lambda''_{i-1}})\right)},
\end{split}
\end{align}
for suitable $b_1,\ldots,b_{m+n}\in \{a_1,\ldots,a_n\}$ and $b'_1,\ldots,b'_{m+n}\in\{a'_1,\ldots a'_{m}\}$. Now the same partition is used on the right hand sides in \eqref{proofwelldef}, and so these sums are equal by the second step of our proof.
\end{proof}

\noindent
Let us point out that we have defined the spectral flow for paths that are parametrised by the unit interval only for the sake of simplicity of notation. Clearly, the same formula as \eqref{IndPre-align-specflow} can be used to define $\sfl(\mathcal{A})$ for paths $\mathcal{A}:[a,b]\rightarrow\mathcal{CF}^\textup{sa}(H)$, where $[a,b]\subset\mathbb{R}$ is a compact interval. In what follows, we will use this without further mention.

%%%%%%%%%%%%%%%%%%%%%%%%%%%%%%%%%%%%%%%%%%%%%%%%%%%%%%%%%%%%%%%%%%%%%%%%%%%%%%%%%%%%%%%%%%%%%%%%%%%%%%%%%%%%%%%%%%%%%%%%%%%%%%%%%%%%%%%%%%%%%%%%%%%%%%%%%%%%%%%%%%%%%%%%%%%%%%%%%%%%%%%%%%%%%%%%%%%%%%%%%%%%%%%%%%%%%%%%%%%%%%%%%%%%%%%%%%%%%%%%%%%%%%%%%%%%%%%%%%%%%%%%%%%%%%%%%%%%%%%%%%%%%%%%%%%%%%%%%%%%%%%%%%%%%%%%%%%%%%%%%%%%%%%%%%%%%%%%%%%%%%%%%%%%%%%%%%%%%%%%%%

\section{Properties and Uniqueness}\label{section-properties}

In this section we introduce the basic properties of the spectral flow, where we follow essentially \cite{Phillips}. We begin with some simple observations that all follow from its construction.

\begin{lemma}\label{IndPre-lemma-N}
Let $N\subset\mathcal{CF}^{sa}(H)$ be a neighbourhood as in the construction of the spectral flow, i.e. there exists $a>0$ such that $-a,a\in\rho(T)$, $[-a,a]\cap\sigma_{ess}(T)=\emptyset$ for all $T\in N$, the map

\begin{align*}
N\ni T\mapsto \chi_{[-a,a]}(T)\in\mathcal{L}(H)
\end{align*}
is continuous and all $\chi_{[-a,a]}(T)$ have the same finite rank.\\
If $\mathcal{A}^1,\mathcal{A}^2:I\rightarrow\mathcal{CF}^{sa}(H)$ are gap continuous and

\begin{align}\label{neighbourhood}
\mathcal{A}^1(I),\mathcal{A}^2(I)\subset N,\quad \mathcal{A}^1_0=\mathcal{A}^2_0,\quad \mathcal{A}^1_1=\mathcal{A}^2_1,
\end{align} 
then 

\[\sfl(\mathcal{A}^1)=\sfl(\mathcal{A}^2).\]
\end{lemma}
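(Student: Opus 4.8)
The plan is to use the fact that within the neighbourhood $N$, the finite-rank projections $\chi_{[-a,a]}(T)$ vary continuously, so that on a fixed subspace the entire spectral picture is ``finite-dimensional'' and the spectral flow reduces to the difference of dimensions of the non-negative eigenspaces at the endpoints, independently of the path joining them in $N$.

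First I would observe that since $\mathcal{A}^1(I)\subset N$, the map $\lambda\mapsto\chi_{[-a,a]}(\mathcal{A}^1_\lambda)$ is continuous and of constant finite rank, so by the very definition \eqref{IndPre-align-specflow} we may compute $\sfl(\mathcal{A}^1)$ using the trivial partition $\lambda_0=0$, $\lambda_1=1$ together with the single number $a$, giving
\[
\sfl(\mathcal{A}^1)=\dim E_{[0,a]}(\mathcal{A}^1_1)-\dim E_{[0,a]}(\mathcal{A}^1_0).
\]
The point is that the whole path stays inside one neighbourhood on which a single choice of $a$ works, so no subdivision of $I$ is needed. The same reasoning applies verbatim to $\mathcal{A}^2$, yielding
\[
\sfl(\mathcal{A}^2)=\dim E_{[0,a]}(\mathcal{A}^2_1)-\dim E_{[0,a]}(\mathcal{A}^2_0).
\]
Then I would invoke the hypothesis \eqref{neighbourhood}: since $\mathcal{A}^1_0=\mathcal{A}^2_0$ and $\mathcal{A}^1_1=\mathcal{A}^2_1$, the two right-hand sides are literally identical, so $\sfl(\mathcal{A}^1)=\sfl(\mathcal{A}^2)$.

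The only genuine content I need to verify carefully is the claim that the spectral flow of a path lying entirely in such an $N$ can be computed with the trivial partition and the single constant $a$. This follows because the construction of the spectral flow requires one to choose, via the Lebesgue number lemma, a partition $0=\lambda_0\le\cdots\le\lambda_n=1$ and numbers $a_i$ so that each $\lambda\mapsto\chi_{[-a_i,a_i]}(\mathcal{A}_\lambda)$ is continuous and of constant rank on $[\lambda_{i-1},\lambda_i]$; here the constant function with value $a$ and the trivial partition already satisfy this, and by the well-definedness lemma proved just above, any admissible choice gives the same value of $\sfl$. Thus the main (mild) obstacle is simply making explicit that the well-definedness result licenses the use of this degenerate partition; once that is granted the conclusion is immediate.

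Concretely, the proof is: by Lemma~\ref{lem:sflneighbourhood}(ii) applied to $N$ with $c=0$, $d=a$, the rank of $\chi_{[0,a]}(T)$ is finite and constant on each connected component of $N$, and in particular $\lambda\mapsto\chi_{[-a,a]}(\mathcal{A}^k_\lambda)$ is continuous of constant finite rank for $k=1,2$; hence the partition $\{0,1\}$ with the single number $a$ is admissible in the sense of \eqref{IndPre-align-specflow} for both paths. By the preceding lemma on well-definedness, $\sfl(\mathcal{A}^k)$ equals $\dim E_{[0,a]}(\mathcal{A}^k_1)-\dim E_{[0,a]}(\mathcal{A}^k_0)$ for $k=1,2$. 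Since $\mathcal{A}^1_0=\mathcal{A}^2_0$ and $\mathcal{A}^1_1=\mathcal{A}^2_1$ by \eqref{neighbourhood}, these two expressions coincide, and therefore $\sfl(\mathcal{A}^1)=\sfl(\mathcal{A}^2)$.
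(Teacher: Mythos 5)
Your proof is correct and takes essentially the same route as the paper: both compute $\sfl(\mathcal{A}^k)$ via the trivial partition $\{0,1\}$ with the single constant $a$, which is admissible since each path stays inside $N$, and then conclude by equality of endpoints. Your explicit appeal to the well-definedness lemma to justify the degenerate partition is a reasonable elaboration of what the paper leaves implicit.
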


\begin{proof}
We obtain from \eqref{neighbourhood} and the definition \eqref{IndPre-align-specflow}

\begin{align*}
\sfl(\mathcal{A}^1)&=\dim E_{[0,a]}(\mathcal{A}^1_1)-\dim E_{[0,a]}(\mathcal{A}^1_0)\\
&=\dim E_{[0,a]}(\mathcal{A}^2_1)-\dim E_{[0,a]}(\mathcal{A}^2_0)=\sfl(\mathcal{A}^2).
\end{align*}
\end{proof}

\begin{lemma}\label{IndPre-lemma-sflbasicprop}
\begin{enumerate}
\item[(i)] If $\mathcal{A}^1,\mathcal{A}^2:I\rightarrow\mathcal{CF}^{sa}(H)$ are two gap continuous paths such that $\mathcal{A}^2_0=\mathcal{A}^1_1$, then

	\begin{align*}
	\sfl(\mathcal{A}^1\ast\mathcal{A}^2)=\sfl(\mathcal{A}^1)+\sfl(\mathcal{A}^2).
	\end{align*}
\item[(ii)] If $\mathcal{A}:I\rightarrow\mathcal{CF}^{sa}(H)$ is gap continuous and $\mathcal{A}'$ is defined by $\mathcal{A}'_t=\mathcal{A}_{1-t}$, then

\begin{align*}
\sfl(\mathcal{A}')=-\sfl(\mathcal{A}).
\end{align*}

\item[(iii)] If $\mathcal{A}:I\rightarrow \mathcal{CF}^{sa}(H)$ is gap continuous and $\mathcal{A}_t$ is invertible for all $t\in I$, then $\sfl(\mathcal{A})=0$. 
\end{enumerate}
\end{lemma}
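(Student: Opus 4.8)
The plan is to prove each of the three assertions directly from the definition \eqref{IndPre-align-specflow} of the spectral flow, exploiting the freedom we have in choosing the partition and the cut-off numbers $a_i$, together with the well-definedness established in the previous lemma.

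For (i), I would first recall that $\mathcal{A}^1\ast\mathcal{A}^2$ denotes the concatenation, say reparametrised so that $\mathcal{A}^1$ is traversed on $[0,\frac12]$ and $\mathcal{A}^2$ on $[\frac12,1]$. The key point is that when computing $\sfl$ of the concatenation we are free to choose a partition of $[0,1]$ that has $\frac12$ as one of its partition points, and to use on $[0,\frac12]$ exactly the partition and cut-off numbers used for $\mathcal{A}^1$ and on $[\frac12,1]$ those used for $\mathcal{A}^2$. Since the defining sum \eqref{IndPre-align-specflow} is simply a sum of the increments $\dim E_{[0,a_i]}(\mathcal{A}_{\lambda_i})-\dim E_{[0,a_i]}(\mathcal{A}_{\lambda_{i-1}})$ over consecutive subintervals, it splits along the point $\frac12$ into the sum defining $\sfl(\mathcal{A}^1)$ plus the sum defining $\sfl(\mathcal{A}^2)$; well-definedness guarantees the resulting number is independent of this particular choice.

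For (ii), I would take a partition $0=\lambda_0<\dots<\lambda_n=1$ with cut-off numbers $a_i$ adapted to $\mathcal{A}$, and observe that the reversed path $\mathcal{A}'_t=\mathcal{A}_{1-t}$ is adapted to the reversed partition $0=1-\lambda_n<\dots<1-\lambda_0=1$ with the same cut-off numbers (in reversed order), since $\mathcal{A}'$ restricted to $[1-\lambda_i,1-\lambda_{i-1}]$ is just $\mathcal{A}$ on $[\lambda_{i-1},\lambda_i]$ run backwards and so $\lambda\mapsto\chi_{[-a_i,a_i]}(\mathcal{A}'_\lambda)$ is continuous of constant rank there. Then each increment $\dim E_{[0,a_i]}(\mathcal{A}'_{1-\lambda_{i-1}})-\dim E_{[0,a_i]}(\mathcal{A}'_{1-\lambda_i})=\dim E_{[0,a_i]}(\mathcal{A}_{\lambda_{i-1}})-\dim E_{[0,a_i]}(\mathcal{A}_{\lambda_i})$ is the negative of the corresponding increment for $\mathcal{A}$, and summing gives $\sfl(\mathcal{A}')=-\sfl(\mathcal{A})$.

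For (iii), the idea is that if $\mathcal{A}_t$ is invertible for every $t$, then $0\in\rho(\mathcal{A}_t)$ for all $t\in I$. Working with an adapted partition and cut-off numbers $a_i$, I would like to choose, on each subinterval $[\lambda_{i-1},\lambda_i]$, a number $c_i\in(0,a_i]$ with $c_i\in\rho(\mathcal{A}_t)$ for all $t$ in that subinterval; then $E_{[0,a_i]}(\mathcal{A}_t)=E_{[c_i,a_i]}(\mathcal{A}_t)=\im(\chi_{[c_i,a_i]}(\mathcal{A}_t))$ has constant rank on $[\lambda_{i-1},\lambda_i]$ by Lemma \ref{lem:sflneighbourhood}(ii), so every increment vanishes and hence $\sfl(\mathcal{A})=0$. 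The one point that needs a small argument — and which I expect to be the only real obstacle — is producing such a $c_i$: since $0\in\rho(\mathcal{A}_t)$ and $\rho$ is open, for each fixed $t$ there is a small $c>0$ with $[0,c]\subset\rho(\mathcal{A}_t)$, and by Lemma \ref{gapstabspec} (applied to the compact set $\{c\}$, or to $[0,c]$) this persists on a neighbourhood of $t$; a compactness argument on $[\lambda_{i-1},\lambda_i]$ then yields a uniform $c_i$. Alternatively one can simply shrink the original $a_i$ from the construction so that $[-a_i,a_i]\cap\sigma(\mathcal{A}_t)=\emptyset$ throughout, in which case already $E_{[0,a_i]}(\mathcal{A}_t)=\{0\}$ and the conclusion is immediate; the compactness of $I$ makes this possible after refining the partition.
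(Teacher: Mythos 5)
Your proof is correct and follows essentially the same route as the paper. Parts (i) and (ii) are dispatched in the paper with "follow immediately from the definition," and your expanded explanations (splitting the defining sum at the concatenation point, and reversing the partition) are exactly the right fleshing-out of that remark. For (iii), the paper applies Lemma \ref{gapstabspec} together with compactness of $I$ to produce a single $\delta>0$ with $[-\delta,\delta]\cap\sigma(\mathcal{A}_\lambda)=\emptyset$ for all $\lambda$, so that the trivial partition with cut-off $\delta$ gives every term equal to zero; this is precisely your "alternative" argument at the end. Your first (iii) argument with the numbers $c_i$ is also valid but somewhat roundabout — once you have arranged $[0,c_i]\subset\rho(\mathcal{A}_t)$ on each subinterval by the same compactness/openness reasoning, you might as well shrink $a_i$ to $c_i$ and observe that the eigenspaces $E_{[0,a_i]}$ are all $\{0\}$, which is what the paper does in one stroke.
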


\begin{proof}
The first two assertions follow immediately from the definition \eqref{IndPre-align-specflow}. For the third assertion we just have to observe that by Lemma \ref{gapstabspec} we can find $\delta>0$ such that $\sigma(\mathcal{A}_\lambda)\cap[-\delta,\delta]=\emptyset$ for all $\lambda\in[0,1]$. Then

\begin{align*}
\sfl(\mathcal{A})=\dim E_{[0,\delta]}(\mathcal{A}_1)-\dim E_{[0,\delta]}(\mathcal{A}_0)=0.
\end{align*}
\end{proof}

\noindent
In what follows we set

\begin{align*}
G\mathcal{C}^\textup{sa}(H)=\{T\in\mathcal{C}^\textup{sa}(H):\, T\,\text{invertible}\,\}.
\end{align*}
The probably most important property of the spectral flow is its homotopy invariance which we prove in the following lemma.

\begin{lemma}\label{IndPre-lemma-sflhominv}
Let $h:I\times I\rightarrow\mathcal{CF}^{sa}(H)$ be a continuous map such that
 
\begin{align*}
h(I\times\partial I)\subset G\mathcal{C}^{sa}(H).
\end{align*}
Then

\begin{align*}
\sfl(h(0,\cdot))=\sfl(h(1,\cdot)).
\end{align*}

\end{lemma}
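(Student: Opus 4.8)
The plan is to exploit the homotopy $h$ to interpolate, but the spectral flow of a path is only invariant under homotopies relative to the endpoints, so I cannot apply Lemma \ref{IndPre-lemma-N} directly. Instead I would argue by subdividing the square $I\times I$ finely enough. First I would apply Lemma \ref{lem:sflneighbourhood}: for each point $(s,t)\in I\times I$ there is $a_{s,t}>0$ with $\pm a_{s,t}\notin\sigma(h(s,t))$ and an open neighbourhood of $h(s,t)$ in $\mathcal{CF}^\textup{sa}(H)$ on which $T\mapsto\chi_{[-a_{s,t},a_{s,t}]}(T)$ is continuous, has finite rank, and $[-a_{s,t},a_{s,t}]\cap\sigma_{ess}(T)=\emptyset$. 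Pulling these neighbourhoods back along the continuous map $h$ gives an open cover of the compact square $I\times I$; choosing a Lebesgue number $\rho>0$ and a partition $0=t_0<t_1<\cdots<t_n=1$ and $0=s_0<s_1<\cdots<s_m=1$ with mesh less than $\rho$, each subsquare $[s_{j-1},s_j]\times[t_{k-1},t_k]$ is mapped by $h$ into one such neighbourhood, on which some $a_{jk}>0$ works uniformly.

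Next I would set up the bookkeeping. On the horizontal segment $\{s_j\}\times[t_{k-1},t_k]$ the path $t\mapsto h(s_j,t)$ lies in the neighbourhood associated with the subsquare $[s_{j-1},s_j]\times[t_{k-1},t_k]$ (and also the one for $[s_j,s_{j+1}]\times[t_{k-1},t_k]$; if the two admissible constants differ, use the smaller one, which is still admissible by Theorem \ref{spectralprojections} and Lemma \ref{lem:sflneighbourhood}(ii)). Define the ``box contribution''
\begin{align*}
c_{jk}=\dim E_{[0,a_{jk}]}(h(s_j,t_k))-\dim E_{[0,a_{jk}]}(h(s_j,t_{k-1}))-\dim E_{[0,a_{jk}]}(h(s_{j-1},t_k))+\dim E_{[0,a_{jk}]}(h(s_{j-1},t_{k-1})),
\end{align*}
which by Lemma \ref{IndPre-lemma-N} measures the discrepancy, around level $0$ and window $a_{jk}$, between going up the right edge of the box versus going up the left edge after crossing the bottom, equivalently the failure of $T\mapsto\dim E_{[0,a_{jk}]}(T)$ to be locally constant on the connected neighbourhood. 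But by Lemma \ref{lem:sflneighbourhood}(ii), the rank of $\chi_{[0,a_{jk}]}(\cdot)$ — and more relevantly of $\chi_{[c,d]}(\cdot)$ for any $[c,d]$ with $c,d$ in the common resolvent set over the box — is constant on the connected neighbourhood, and since the box is connected and maps into that neighbourhood, $\dim E_{[0,a_{jk}]}(h(\cdot,\cdot))$ is constant over the whole box. Hence $c_{jk}=0$ for every $j,k$.

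Now I would telescope. Fix $k$ and sum the vertical differences over $j=1,\dots,m$: the quantities $\sum_j\bigl(\dim E_{[0,a_{jk}]}(h(s_j,t_k))-\dim E_{[0,a_{jk}]}(h(s_j,t_{k-1}))\bigr)$ along the top row $t=t_k$ and along the bottom row $t=t_{k-1}$ have the same value, because their difference is $\sum_j c_{jk}=0$ after cancelling the telescoping terms in $s$ (here one uses that $h(0,\cdot)$ and $h(1,\cdot)$ take values in $G\mathcal{C}^\textup{sa}(H)$, so that near $s=0$ and $s=1$ one may shrink the window $a$ so that $E_{[0,a]}$ stays $\{0\}$ — more precisely, by Lemma \ref{gapstabspec} applied to the compact paths $h(0,\cdot)$ and $h(1,\cdot)$ there is $\delta>0$ with $\sigma(h(s,t))\cap[-\delta,\delta]=\emptyset$ for $s\in\{0,1\}$, forcing the edge terms at $j=0$ and $j=m$ to agree in the two rows). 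Therefore the spectral flow of $t\mapsto h(s,t)$, computed via \eqref{IndPre-align-specflow} using the partition $t_0,\dots,t_n$ and the constants $a_{jk}$ for that fixed $s=s_j$, is the same for $s=s_0=0$ and $s=s_m=1$; well-definedness of $\sfl$ lets me use this particular partition and these particular constants. The main obstacle is the careful handling of the edges $s\in\{0,1\}$: one must ensure the endpoint contributions of the horizontal telescoping genuinely cancel, and this is exactly where the hypothesis $h(I\times\partial I)\subset G\mathcal{C}^\textup{sa}(H)$ enters, via the uniform spectral gap from Lemma \ref{gapstabspec}.
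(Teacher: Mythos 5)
Your reduction to a fine grid and your use of Lemma \ref{IndPre-lemma-N} on the subsquares is the right idea, and it is essentially the paper's strategy; but your central claim that every box contribution $c_{jk}$ vanishes is false, and the reasoning offered for it misapplies Lemma \ref{lem:sflneighbourhood}(ii). That lemma requires both endpoints $c,d$ of the interval to lie in $\rho(T)$ for all $T$ in the neighbourhood; here $c=0$, and $0$ is generally \emph{not} in the resolvent set on the box (if it always were, the spectral flow of every path would be trivially zero). Concretely, take $h(s,t)=\diag(t-s,1)$ on $\mathbb{C}^2$, the box $[0,0.1]\times[0,0.1]$ and window $a=0.2$: then $\dim E_{[0,a]}(h(0,0))=1$, $\dim E_{[0,a]}(h(0.1,0))=0$, $\dim E_{[0,a]}(h(0,0.1))=1$, $\dim E_{[0,a]}(h(0.1,0.1))=1$, so $c_{jk}=1-0-1+1=1\neq0$. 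What Lemma \ref{IndPre-lemma-N} actually yields is that the two broken paths from the bottom-left to the top-right corner have equal spectral flow, whence by additivity
\begin{align*}
c_{jk}=\sfl(h(s_j,\cdot)\mid_{[t_{k-1},t_k]})-\sfl(h(s_{j-1},\cdot)\mid_{[t_{k-1},t_k]})=\sfl(h(\cdot,t_k)\mid_{[s_{j-1},s_j]})-\sfl(h(\cdot,t_{k-1})\mid_{[s_{j-1},s_j]}),
\end{align*}
which need not vanish box by box.

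The boundary hypothesis also enters differently from your sketch, and your telescoping runs in the wrong direction. With your convention $h(s,t)$, the condition $h(I\times\partial I)\subset G\mathcal{C}^\textup{sa}(H)$ says that $h(s,0)$ and $h(s,1)$ are invertible for \emph{all} $s$, not that $h(0,t)$ and $h(1,t)$ are; so Lemma \ref{IndPre-lemma-sflbasicprop}(iii) gives $\sfl(h(\cdot,0)\mid_{[s_{j-1},s_j]})=\sfl(h(\cdot,1)\mid_{[s_{j-1},s_j]})=0$. The correct telescoping therefore fixes $j$ and sums $c_{jk}$ over $k$: the interior horizontal spectral flows cancel and the top and bottom ones vanish by the hypothesis, giving $\sfl(h(s_j,\cdot))-\sfl(h(s_{j-1},\cdot))=\sum_k c_{jk}=0$; chaining over $j$ then yields $\sfl(h(0,\cdot))=\sfl(h(1,\cdot))$. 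Your proposed sum $\sum_j c_{jk}$ for fixed $k$ does not even telescope, because the admissible window $a_{jk}$ changes with $j$ and the terms at shared grid points carry different windows.
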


\begin{proof}
As $h(I\times I)\subset\mathcal{CF}^{sa}(H)$ is compact, we can find a finite open covering

\begin{align*}
h(I\times I)\subset\bigcup^n_{i=1}{N_i},
\end{align*}
where $N_i\subset\mathcal{CF}^\textup{sa}(H)$, $i=1,\ldots,n$, are open sets as in the construction of the spectral flow. Accordingly, for each $N_i$ there exists $a_i>0$ such that $-a_i,a_i\in\rho(T)$, $[-a_i,a_i]\cap\sigma_{ess}(T)=\emptyset$ for all $T\in N_i$, the map 

\begin{align*}
N_i\ni T\mapsto \chi_{[-a_i,a_i]}(T)\in\mathcal{L}(H)
\end{align*} 
is continuous and all $\chi_{[-a_i,a_i]}(T)$ are projections of the same finite rank.\\
If $\varepsilon_0>0$ is a Lebesgue number of the open covering

\begin{align*}
I\times I=\bigcup^n_{i=1}{h^{-1}(N_i)},
\end{align*} 
then each subset of $I\times I$ of diameter less than $\varepsilon_0$ is entirely contained in one of the $h^{-1}(N_i)$.\\
We choose numbers $0=\lambda_0<\lambda_1<\ldots<\lambda_m=1$ such that $|\lambda_i-\lambda_{i-1}|<\frac{\varepsilon_0}{\sqrt{2}}$, $1\leq i\leq m$. Then for each pair $1\leq i,j\leq m$, $h([\lambda_{i-1},\lambda_i]\times[\lambda_{j-1},\lambda_j])$ is contained in one of the $N_k$. Now we obtain for any $h\mid_{[\lambda_{i-1},\lambda_i]\times[\lambda_{j-1},\lambda_j]}$ from Lemma \ref{IndPre-lemma-N} and Lemma \ref{IndPre-lemma-sflbasicprop}

\begin{align*}
\sfl(h(\lambda_{i-1},\cdot)\mid_{[\lambda_{j-1},\lambda_j]})&=\sfl(h(\cdot,\lambda_{j-1})\mid_{[\lambda_{i-1},\lambda_i]})+\sfl(h(\lambda_i,\cdot)\mid_{[\lambda_{j-1},\lambda_j]})\\
&-\sfl(h(\cdot,\lambda_j)\mid_{[\lambda_{i-1},\lambda_i]}).
\end{align*}
Moreover,

\begin{align*}
\sfl(h(\cdot,0)\mid_{[\lambda_{i-1},\lambda_i]})=\sfl(h(\cdot,1)\mid_{[\lambda_{i-1},\lambda_i]})=0,\quad i=1,\ldots,m,
\end{align*}
by the third part of Lemma \ref{IndPre-lemma-sflbasicprop}. By using the first part of Lemma \ref{IndPre-lemma-sflbasicprop} once again, we obtain

\begin{align*}
\sfl(h(\lambda_{i-1},\cdot))&=\sum^m_{j=1}{\sfl(h(\lambda_{i-1},\cdot)\mid_{[\lambda_{j-1},\lambda_j]})}\\
&=\sum^m_{j=1}{\sfl(h(\cdot,\lambda_{j-1})\mid_{[\lambda_{i-1},\lambda_i]})+\sfl(h(\lambda_i,\cdot)\mid_{[\lambda_{j-1},\lambda_j]})-\sfl(h(\cdot,\lambda_j)\mid_{[\lambda_{i-1},\lambda_i]})}\\
&=\sum^m_{j=1}{\sfl(h(\lambda_{i},\cdot)\mid_{[\lambda_{j-1},\lambda_j]})}=\sfl(h(\lambda_{i},\cdot)).
\end{align*}
Hence

\begin{align*}
\sfl(h(0,\cdot))=\sfl(h(\lambda_0,\cdot))=\sfl(h(\lambda_1,\cdot))=\sfl(h(1,\cdot)).
\end{align*}
\end{proof}

\noindent
We now want to discuss a normalisation property, which is needed for the uniqueness of the spectral flow. Let $\{e_k\}_{k\in\mathbb{Z}}$ be a complete orthonormal system of the Hilbert space $H$, which we now assume to be separable. Denote by $P_+$ the orthogonal projection onto the closure of the space spanned by $\{e_k\}_{k\in\mathbb{N}}$, by $P_-$ the orthogonal projection onto the closure of the space spanned by $\{e_{-k}\}_{k\in\mathbb{N}}$, and by $P_0$ the orthogonal projection onto the span of $e_0$. Then 

\begin{align*}
P_++P_-+P_0=I_H
\end{align*}
and moreover the operator

\begin{align*}
L_\lambda=\lambda P_0+P_+-P_-
\end{align*}
is for each $\lambda\in [-1,1]$ a bounded selfadjoint Fredholm operator. More precisely, $L_\lambda\in GL(H)$ as long as $\lambda\neq 0$, and in the remaining case $L_0$ has a one dimensional kernel and cokernel which are both given by the span of $e_0$. Moreover, $L_\lambda$ is obviously a continuous path with respect to the norm topology and thus continuous with respect to the gap topology as well. Hence the spectral flow of $L$ is well defined. As

\begin{align*}
\sigma(L_\lambda)=\{-1,1,\lambda\},\quad\lambda\in [-1,1],
\end{align*}
it is immediate from the definition that

\begin{align*}
\sfl(L)=1.
\end{align*}
Moreover, we note that if we set $T_0=P_++P_0-P_-$ and $P=P_0$, then 

\begin{align*}
(I_H-P)T_0(I_H-P)&=(I_H-P_0)(P_++P_0-P_-)(I_H-P_0)=(I_H-P_0)(P_+-P_-)=P_+-P_-.
\end{align*}
In particular, $(I_H-P)T_0(I_H-P)$ defines a bounded, invertible and selfadjoint operator on $\ker P$ such that the path

\begin{align*}
\lambda P+(I_H-P)T_0(I_H-P)=L_\lambda,\quad\lambda\in[-1,1],
\end{align*} 
has spectral flow $1$.\\
Now we finally state the so called uniqueness of spectral flow which is the main result of \cite{LeschSpecFlowUniqu}. For a topological space $X$ and a subspace $Y\subset X$, we use the common notation $\Omega(X,Y)$ for the set of all paths in $X$ having endpoints in $Y$.

\begin{theorem}\label{IndPre-theorem-specflowuniqu}
Let $H$ be a separable Hilbert space and let

\begin{align*}
\mu:\Omega(\mathcal{CF}^{sa}(H),G\mathcal{C}^{sa}(H))\rightarrow\mathbb{Z}
\end{align*}
be a map which is additive with respect to concatenation of paths, invariant under gap continuous homotopies inside $\Omega(\mathcal{CF}^{sa}(H),G\mathcal{C}^{sa}(H))$ and which satisfies the following normalisation condition:\\
There is a rank one orthogonal projection $P\in\mathcal{L}(H)$ and a bounded selfadjoint operator $T_0$ having $\sigma(T_0)=\{-1,1\}$ such that the selfadjoint operator $(I_H-P)T_0(I_H-P)$ is invertible on $\ker P$ and

\begin{align*}
\mu(L)=1,
\end{align*}
where $L_\lambda=\lambda P+(I_H-P)T_0(I_H-P)$, $\lambda\in [-1,1]$.\\ 
Then $\mu$ is the spectral flow.
\end{theorem}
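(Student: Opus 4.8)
Set $\nu:=\mu-\sfl$; the goal becomes $\nu\equiv 0$. The point is that $\sfl$ \emph{itself} satisfies all three hypotheses on $\mu$ (additivity and invertible-endpoint homotopy invariance by Lemma \ref{IndPre-lemma-sflbasicprop} and Lemma \ref{IndPre-lemma-sflhominv}, and $\sfl(L)=1$ by the normalisation computation preceding the theorem). First I would record the soft consequences of the axioms shared by $\mu$, $\sfl$, hence $\nu$: $\mu$ vanishes on constant paths (it equals twice itself by additivity); $\mu(\overline{\mathcal A})=-\mu(\mathcal A)$ (concatenation with the reverse is homotopic rel endpoints, which stay invertible, to a constant); $\mu$ is invariant under orientation-preserving reparametrisation; and $\mu(\mathcal A)=0$ whenever $\mathcal A$ takes values in $G\mathcal C^{\textup{sa}}(H)$, using the cone homotopy $h(s,t)=\mathcal A_{st}$ whose endpoint-tracks $s\mapsto\mathcal A_0$, $s\mapsto\mathcal A_s$ remain invertible. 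Thus $\nu$ is additive, gap-homotopy invariant rel invertible endpoints, vanishes on constant paths and on paths of invertibles, and $\nu(L)=1-1=0$.

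Next, localisation. Given $\mathcal A\in\Omega(\mathcal{CF}^{\textup{sa}}(H),G\mathcal C^{\textup{sa}}(H))$, cover $\mathcal A(I)$ by spectral neighbourhoods of the kind produced in Lemma \ref{lem:sflneighbourhood}; by a preliminary $C^0$-small homotopy rel endpoints (locally only finitely many eigenvalues sit near $0$, so a small identity-multiple bumped in $t$ removes $0$ from the spectrum at any prescribed finite set of instants), I may assume a Lebesgue-number partition $0=\lambda_0<\dots<\lambda_n=1$ with every $\mathcal A_{\lambda_i}$ invertible and each $\mathcal A|_{[\lambda_{i-1},\lambda_i]}$ contained in one such neighbourhood $N_i$. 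Since this homotopy leaves $\nu$ unchanged and $\nu$ is additive over the partition, it suffices to compute $\nu$ on paths $\mathcal B:[c,d]\to N$ with invertible endpoints, where inside $N$ the spectral projection $P:=\chi_{[-a,a]}$ commutes with each $\mathcal B_t$ and splits it as $\mathcal B_t=\mathcal B_t^{\textup{fin}}\oplus\mathcal B_t^{\textup{inv}}$, the second summand invertible (its spectrum avoids $(-a,a)\ni 0$) and the first acting on the finite-rank range of $P$.

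I would then show $\nu$ kills every loop lying in such an $N$. Trivialising the Hilbert bundles $t\mapsto\im\chi_{[-a,a]}(\mathcal B_t)$ and $t\mapsto\ker\chi_{[-a,a]}(\mathcal B_t)$ over the circle (Kuiper's theorem), the loop is conjugate, through a null-homotopic loop of unitaries, hence homotopic rel basepoint (which stays invertible under conjugation), to a loop $\mathcal B^{\textup{fin}}\oplus\mathcal B^{\textup{inv}}$ on a fixed $\mathbb C^k\oplus W$; contracting the finite-dimensional factor rel basepoint through the convex, hence simply connected, space of Hermitian $k\times k$ matrices leaves a loop all of whose values are invertible operators, on which $\nu$ vanishes by the first step. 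Consequently $\nu(\mathcal B)$ depends only on the endpoints of $\mathcal B$ inside $N$, so $\nu|_N$ is the jump $Q(\mathcal B_d)-Q(\mathcal B_c)$ of a function $Q$ defined on each component of $N$, and $Q$ is locally constant on the open set of invertible points of $N$.

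Finally, the global assembly — the heart of the matter. Reassembling gives $\nu(\mathcal A)=\sum_i\bigl(Q_i(\mathcal A_{\lambda_i})-Q_i(\mathcal A_{\lambda_{i-1}})\bigr)$, so everything turns on whether the local potentials $Q_i$ patch together, i.e.\ on $\pi_1$ of $\mathcal{CF}^{\textup{sa}}(H)$ in the gap topology. Via a further homotopy one brings each local piece into a concatenation of a path of invertibles with \emph{standard simple crossings} $\lambda\mapsto\lambda\,(\textup{rank-one projection})\oplus(\textup{fixed invertible complement})$; the normalisation path $L_\lambda=\lambda P+(I_H-P)T_0(I_H-P)$ is literally such a crossing, so $\mu(L)=1$ matches $\mu$ of \emph{that} crossing with $\sfl=1$. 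The obstacle is that two standard crossings with different invertible complements need not be joinable rel invertible endpoints: distinct invertible selfadjoint operators may lie in different path components of the invertible selfadjoint operators, and the homotopy-invariance axiom forbids sliding the complement through a non-invertible operator. The way around this is the global structure of $\mathcal{CF}^{\textup{sa}}(H)$ in the gap metric — it is connected (Lesch's theorem quoted above), and spectral flow induces an isomorphism $\pi_1(\mathcal{CF}^{\textup{sa}}(H))\xrightarrow{\ \sim\ }\mathbb Z$ carrying a closed-up copy of the normalisation path to $1$. Granting this, $\nu$ is an additive homotopy-invariant homomorphism $\pi_1(\mathcal{CF}^{\textup{sa}}(H))\to\mathbb Z$ vanishing on the generator, hence identically $0$ on loops; and since $G\mathcal C^{\textup{sa}}(H)$ is connected one closes any $\mathcal A$ up with a path of invertibles into a loop, whence $\nu(\mathcal A)=0$. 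I expect the $\pi_1$-computation for the gap topology (in effect the technical core of \cite{LeschSpecFlowUniqu}) to be the genuinely hard step; everything preceding it is soft topology together with the spectral-neighbourhood machinery of Lemma \ref{lem:sflneighbourhood}.
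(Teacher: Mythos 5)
The paper does not actually prove this theorem: the printed ``proof'' consists of observing that $\sfl$ satisfies the three axioms (Lemmas \ref{IndPre-lemma-N}, \ref{IndPre-lemma-sflbasicprop}, \ref{IndPre-lemma-sflhominv}, and the normalisation computation preceding the statement) and then referring the reader to \cite{LeschSpecFlowUniqu}, with a one-paragraph summary of Lesch's argument: deform an arbitrary gap-continuous path into a normal form in which $\sfl$ is computed from finite-dimensional Hermitian matrices, and then invoke a finite-dimensional uniqueness lemma. Your proposal is a more elaborate sketch that packages the same reduction-to-finite-dimensions idea in the language of $\pi_1$, and it is a reasonable way to organise the argument; you also, correctly, identify the genuine difficulty. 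But it defers the decisive step to exactly the same source, so it is not a self-contained proof any more than the paper's is.

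Two specific points would need attention if you wanted to turn this into a real proof. First, you use that $\sfl$ induces an isomorphism $\pi_1(\mathcal{CF}^\textup{sa}(H),d_G)\cong\mathbb{Z}$ and that $G\mathcal{C}^\textup{sa}(H)$ is gap-path-connected. Neither fact appears in the paper: Section \ref{section-topology} records only the gap-connectedness of $\mathcal{CF}^\textup{sa}(H)$ (Lesch's theorem) and gives the homotopy groups only for $\mathcal{BF}^\textup{sa}_\ast(H)$ and for $\mathcal{CF}^\textup{sa}(H)$ with the \emph{Riesz} metric. Without the connectedness of the invertibles your closing-up step fails, and without the $\pi_1$ computation the normalisation axiom does not force $\nu$ to vanish on loops. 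These are, of course, precisely the contents one has to extract from \cite{LeschSpecFlowUniqu}, so your plan is structurally sound but hollow at its centre. Second, the Kuiper/trivialisation step is phrased too loosely: over the circle, the finite-rank spectral bundle $t\mapsto\im\chi_{[-a,a]}(\mathcal B_t)$ is trivial, but a trivialising loop of unitaries need not be null-homotopic in $U(k)$, and one has to check that the homotopy it produces keeps endpoints invertible and keeps the whole family inside $\mathcal{CF}^\textup{sa}(H)$ (conjugation preserves the spectrum, so this is fine, but it should be said). By contrast, the normal-form reduction in \cite{LeschSpecFlowUniqu} avoids this bookkeeping by deforming directly onto a fixed finite-dimensional subspace.
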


\noindent
Note that we have verified in this section that the spectral flow indeed has all the properties mentioned in its uniqueness theorem. In the proof of Theorem \ref{IndPre-theorem-specflowuniqu} in \cite{LeschSpecFlowUniqu}, it is shown that any gap continuous path can be deformed into a path in a certain normal form in which the spectral flow can be computed by considering finite dimensional matrices. Then the uniqueness of spectral flow follows from a corresponding result in finite dimensions which can also be found in \cite{LeschSpecFlowUniqu}.

%%%%%%%%%%%%%%%%%%%%%%%%%%%%%%%%%%%%%%%%%%%%%%%%%%%%%%%%%%%%%%%%%%%%%%%%%%%%%%%%%%%%%%%%%%%%%%%%%%%%%%%%%%%%%%%%%%%%%%%%%%%%%%%%%%%%%%%%%%%%%%%%%%%%%%%%%%%%%%%%%%%%%%%%%%%%%%%%%%%%%%%%%%%%%%%%%%%%%%%%%%%%%%%%%%%%%%%%%%%%%%%%%%%%%%%%%%%%%%%%%%%%%%%%%%%%%%%%%%%%%%%%%%%%%%%%%%%%%%%%%%%%%%%%%%%%%%%%%%%%%%%%%%%%%%%%%%%%%%%%%%%%%%%%%%%%%%%%%%%%%%%%%%%%%%%%%%%%%%%%%%

\section{Crossing Forms}
In this section we briefly discuss a method which is often helpful for computing the spectral flow and which was introduced in \cite{Robbin-Salamon} and \cite{SFLPejsachowicz}, respectively, and recently generalised by the author in \cite{Homoclinics}. To this aim we have to restrict our considerations to special paths in $\mathcal{CF}^\textup{sa}(H)$.\\
Let $W\subset H$ be a Hilbert space in its own right and assume that $W\hookrightarrow H$ is continuous (e.g., $W=H$, or $W=H^1[0,1]$, $H=L^2[0,1]$). We denote by $\mathcal{BF}^\textup{sa}(W,H)$ the set of all $T\in\mathcal{CF}^\textup{sa}(H)$ such that $\mathcal{D}(T)=W$ and $T\in\mathcal{L}(W,H)$. In what follows we consider $\mathcal{BF}^\textup{sa}(W,H)$ as a topological subspace of the Banach space $\mathcal{L}(W,H)$. It is shown in \cite[Prop. 2.2]{LeschSpecFlowUniqu} that the inclusion

\[\mathcal{BF}^\textup{sa}(W,H)\hookrightarrow\mathcal{CF}^\textup{sa}(H)\] 
is continuous but not a topological embedding, i.e. the resulting topology on $\mathcal{BF}^\textup{sa}(W,H)$ is strictly stronger than the gap topology. From now on we assume that $\mathcal{A}:I\rightarrow\mathcal{BF}^\textup{sa}(W,H)$ is a continuously differentiable path.

\begin{defi}
An instant $\lambda_0\in[0,1]$ is called a \textit{crossing} if $\ker(\mathcal{A}_{\lambda_0})\neq 0$. The \textit{crossing form} at a crossing $\lambda_0$ is the quadratic form defined by

\[\Gamma(\mathcal{A},\lambda_0):\ker(\mathcal{A}_{\lambda_0})\rightarrow\mathbb{R},\,\,\Gamma(\mathcal{A},\lambda_0)[u]=\langle(\frac{d}{d\lambda}\mid_{\lambda=\lambda_0}\mathcal{A}_\lambda)u,u\rangle_H.\]
A crossing $\lambda_0$ is called \textit{regular}, if $\Gamma(\mathcal{A},\lambda_0)$ is non-degenerate.
\end{defi} 

\noindent
We mention without proof the following two theorems.

\begin{theorem}\label{thm:Sard}
There exists $\varepsilon>0$ such that

\begin{itemize}
	\item[i)] $\mathcal{A}+\delta\, I_H$ is a path in $\mathcal{BF}^\textup{sa}(W,H)$ for all $|\delta|<\varepsilon$;
	\item[ii)] $\mathcal{A}+\delta\, I_H$ has only regular crossings for almost every $\delta\in(-\varepsilon,\varepsilon)$.
\end{itemize}
\end{theorem}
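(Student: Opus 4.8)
The plan is to reduce the statement to a finite-dimensional Sard-type argument localized near crossings. First I would address part i): since $\mathcal{A}:I\to\mathcal{BF}^\textup{sa}(W,H)\subset\mathcal{L}(W,H)$ is continuous and $I$ is compact, the image $\mathcal{A}(I)$ is a compact subset of $\mathcal{BF}^\textup{sa}(H)$, and by Lemma \ref{lem:sflneighbourhood} together with the stability result Theorem \ref{stabFred} (applied to the bounded perturbations $\delta\, I_H$ with small $\delta$) the operators $\mathcal{A}_\lambda+\delta\, I_H$ remain selfadjoint Fredholm; moreover $I_H\in\mathcal{L}(W,H)$ since $W\hookrightarrow H$ continuously, so $\mathcal{A}+\delta\, I_H$ stays in $\mathcal{BF}^\textup{sa}(W,H)$. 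A uniform $\varepsilon>0$ exists because $\mathcal{A}(I)$ is compact and Fredholmness is an open condition in $\mathcal{L}(W,H)$.

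For part ii), the key idea is to consider the combined map $F:I\times(-\varepsilon,\varepsilon)\to\mathcal{BF}^\textup{sa}(W,H)$, $F(\lambda,\delta)=\mathcal{A}_\lambda+\delta\, I_H$, and study its set of crossings. Near any fixed $\lambda_*\in I$ one uses the finite-rank spectral projection $\chi_{[-a,a]}(\mathcal{A}_\lambda)$ from Lemma \ref{lem:sflneighbourhood} to pass to a finite-dimensional situation: on a neighbourhood of $\lambda_*$ and for small $\delta$, the part of $\mathcal{A}_\lambda+\delta\, I_H$ near $0$ in the spectrum is governed by a smooth family of selfadjoint matrices $M(\lambda,\delta)$ acting on the (locally constant rank) range of the spectral projection. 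Since $\mathcal{A}$ is $C^1$ and the spectral projection depends $C^1$ on $\lambda$ (this follows by differentiating the Riesz integral formula, using continuity from Corollary \ref{ClosedOps-cor-contproj} and the resolvent identity), $M$ is $C^1$ in $(\lambda,\delta)$, and $\frac{\partial}{\partial\delta}M(\lambda,\delta)$ is the identity on this finite-dimensional space. A crossing of $\mathcal{A}+\delta\,I_H$ at $\lambda$ corresponds to $\det M(\lambda,\delta)=0$, and one checks that the crossing is regular precisely when the crossing form $\Gamma(\mathcal{A}+\delta\,I_H,\lambda)$, which is represented by a submatrix of $\frac{\partial}{\partial\lambda}M(\lambda,\delta)$ restricted to $\ker M(\lambda,\delta)$, is nondegenerate.

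The heart of the argument is then the standard trick: since $\partial_\delta M = \mathrm{Id}$, the map $(\lambda,\delta)\mapsto M(\lambda,\delta)$ is transverse to the stratified "degenerate locus" in the space of selfadjoint matrices, so the set of $(\lambda,\delta)$ at which $M$ has a degenerate crossing has measure zero in $I\times(-\varepsilon,\varepsilon)$; by Fubini, for almost every $\delta$ the slice $\{\lambda: (\lambda,\delta)$ is a non-regular crossing$\}$ is a null set in $I$, but non-regular crossings are isolated when they are regular-free only if there are finitely many, so one must also argue that for generic $\delta$ the crossings are isolated (hence finite, by compactness of $I$) — this follows because regular crossings are automatically isolated (the $C^1$ function $\lambda\mapsto\det M(\lambda,\delta)$ has nonvanishing derivative transverse to the kernel at a regular crossing). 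Patching the local finite-dimensional reductions over a finite subcover of $I$ and taking the intersection of the finitely many full-measure sets of good $\delta$ yields the claim.

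I expect the main obstacle to be the bookkeeping in the finite-dimensional reduction: establishing that the spectral projection $\chi_{[-a,a]}(\mathcal{A}_\lambda)$ is genuinely $C^1$ in $\lambda$ (not merely continuous as recorded in Corollary \ref{ClosedOps-cor-contproj}), and that the resulting matrix family $M(\lambda,\delta)$ has its crossing form correctly identified with $\Gamma(\mathcal{A},\lambda_0)$ up to the identification of $\ker\mathcal{A}_{\lambda_0}$ with $\ker M$. Once this dictionary is set up carefully, the measure-theoretic conclusion is the classical parametric transversality (Sard–Smale or its elementary finite-dimensional version) applied to $M+\delta\,\mathrm{Id}$.
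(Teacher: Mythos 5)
The paper does not prove Theorem~\ref{thm:Sard}: it is explicitly introduced with ``We mention without proof the following two theorems,'' and the surrounding text credits the result to Robbin--Salamon \cite{Robbin-Salamon} (under a compact embedding hypothesis) and to \cite{Homoclinics} for the generality stated here. So there is no in-text proof to compare against; I can only assess your sketch on its own terms and against that cited literature. Your overall strategy --- use the finite-rank spectral projection $\chi_{[-a,a]}$ and a local trivialisation to reduce to a $C^1$ family of Hermitian matrices $M(\lambda,\delta)$ with $\partial_\delta M=\mathrm{Id}$, and then invoke a Sard-type genericity argument --- is indeed the approach taken in those references, and part~(i) as you argue it (compactness of $\mathcal{A}(I)$ plus openness of the set where an interval lies in $\rho_{ess}$, cf.\ Lemma~\ref{gapstabspec}) is fine.

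However, the measure-theoretic core of part~(ii) has a genuine gap. You claim the set of $(\lambda,\delta)$ producing a degenerate crossing has two-dimensional measure zero, and then say ``by Fubini, for almost every $\delta$ the slice \dots is a null set in $I$.'' But that conclusion is strictly weaker than what is needed: you must show the slice is \emph{empty} for almost every $\delta$, i.e.\ that the set of bad $\delta$ has one-dimensional measure zero, and a null slice can still be nonempty. Your attempted repair --- that non-regular crossings are isolated, hence finite --- is circular, because isolation of a crossing is exactly what regularity would buy you, and that is what you are trying to establish. The correct route is a direct Sard argument in the $\delta$-variable, not a Fubini reduction: e.g.\ track the (Lipschitz) eigenvalue branches $\mu_j(\lambda)$ of the reduced matrix family near $0$, apply Sard for Lipschitz functions of one variable to conclude that $-\delta$ is a regular value of every $\mu_j$ for a.e.\ $\delta$, and then invoke a Hellmann--Feynman-type identity to identify the nonzero branch derivatives $\dot\mu_j(\lambda)$ at such a crossing with the eigenvalues of the crossing form $\Gamma(\mathcal{A}+\delta I_H,\lambda)$. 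That last identification is the nontrivial perturbation-theoretic lemma you also glossed over, and it must be proved for merely $C^1$ (not analytic) Hermitian families; together with the $C^1$ dependence of the Riesz projection (which you correctly flag, and which is obtained by differentiating the contour integral using the resolvent identity and the $C^1$-continuity of $\lambda\mapsto\mathcal{A}_\lambda$ in $\mathcal{L}(W,H)$), these are the pieces that need to be filled in before the sketch becomes a proof.
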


\noindent
For a quadratic form $q:V\rightarrow\mathbb{R}$ on a finite dimensional Hilbert space $V$, there is a unique Hermitian matrix $A$ such that $q(u)=\langle Au,u\rangle$ for all $u\in V$. In what follows, we denote by $m^-(q)$ the number of negative eigenvalues of $A$ counted with multiplicities, and we set $m^+(q):=m^-(-A)$ as well as $\sgn(q):=m^+(q)-m^-(q)$.\\
The following theorem shows that the spectral flow of a continuously differentiable path $\mathcal{A}$ in $\mathcal{BF}^\textup{sa}(W,H)$ can be computed easily if all crossings are regular. 

\begin{theorem}\label{thm:crossing}
If $\mathcal{A}$ has only regular crossings, then they are finite in number and

\begin{align}\label{sflcross}
\sfl(\mathcal{A})=-m^-(\Gamma(\mathcal{A},0))+\sum_{\lambda\in(0,1)}{\sgn\Gamma(\mathcal{A},\lambda)}+m^+(\Gamma(\mathcal{A},1)).
\end{align}
\end{theorem}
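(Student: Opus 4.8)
The plan is to reduce the computation to a sum of local contributions, one near each crossing, using the homotopy invariance and concatenation properties of the spectral flow (Lemma~\ref{IndPre-lemma-sflbasicprop} and Lemma~\ref{IndPre-lemma-sflhominv}) together with the neighbourhood description in Lemma~\ref{lem:sflneighbourhood} and Lemma~\ref{IndPre-lemma-N}. First I would show that regular crossings are isolated: if $\lambda_0$ is a regular crossing, then near $\lambda_0$ the finite-rank spectral projection $\chi_{[-a,a]}(\mathcal{A}_\lambda)$ varies continuously (Lemma~\ref{lem:sflneighbourhood}), and the restriction of $\mathcal{A}_\lambda$ to the corresponding finite-dimensional eigenspace is a $C^1$ path of Hermitian matrices whose derivative at $\lambda_0$, compressed to $\ker(\mathcal{A}_{\lambda_0})$, is exactly the matrix representing $\Gamma(\mathcal{A},\lambda_0)$; non-degeneracy of this form forces the eigenvalues passing through $0$ to move with nonzero speed, so $0\notin\sigma(\mathcal{A}_\lambda)$ for $0<|\lambda-\lambda_0|$ small. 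Since $[0,1]$ is compact, there are only finitely many crossings, say $0\le\lambda_1<\dots<\lambda_N\le 1$.

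Next I would use concatenation (Lemma~\ref{IndPre-lemma-sflbasicprop}(i)) to split $\mathcal{A}$ into subpaths, each containing at most one crossing in its interior and possibly a crossing at an endpoint, and observe that on any subinterval with no crossing the spectral flow vanishes (Lemma~\ref{IndPre-lemma-sflbasicprop}(iii), after shrinking via Lemma~\ref{gapstabspec}). So it remains to compute the contribution of a single crossing. For an \emph{interior} crossing $\lambda_0\in(0,1)$, pick $a>0$ and a neighbourhood $N$ as in Lemma~\ref{IndPre-lemma-N} valid on a small interval $[\lambda_0-\delta,\lambda_0+\delta]$, with $\pm a\notin\sigma(\mathcal{A}_\lambda)$ and $\pm a,0$ the only relevant spectrum. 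By Lemma~\ref{IndPre-lemma-N} the spectral flow over this subinterval equals $\dim E_{[0,a]}(\mathcal{A}_{\lambda_0+\delta})-\dim E_{[0,a]}(\mathcal{A}_{\lambda_0-\delta})$, and the key local computation is that, by the eigenvalue-perturbation argument above, an eigenvalue branch starting at $0$ at $\lambda_0$ with derivative given by $\Gamma(\mathcal{A},\lambda_0)$ contributes $+1$ if that derivative is positive on the branch and $0$ if negative (it was already counted on the left, having entered from below, or rather: the net change is the number of positive-derivative directions minus... ), yielding that the contribution is $m^+(\Gamma(\mathcal{A},\lambda_0))-m^-(\Gamma(\mathcal{A},\lambda_0))=\sgn\Gamma(\mathcal{A},\lambda_0)$. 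For an \emph{endpoint} crossing at $\lambda_0=0$: here only the eigenvalues that are negative just after $0$ were ``missed'' relative to the convention in \eqref{IndPre-align-specflow}, so one picks up $-m^-(\Gamma(\mathcal{A},0))$; symmetrically at $\lambda_0=1$ one picks up $+m^+(\Gamma(\mathcal{A},1))$. Summing these local contributions gives \eqref{sflcross}.

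The main obstacle is the local eigenvalue analysis, i.e.\ making precise that a $C^1$ path in $\mathcal{BF}^\textup{sa}(W,H)$ through a regular crossing behaves, on the relevant finite-dimensional spectral subspace, like a $C^1$ path of Hermitian matrices with the stated derivative. The subtlety is that the path is merely continuous in the gap topology a priori, so one must invoke the stronger $\mathcal{L}(W,H)$-continuity (and differentiability) of $\mathcal{A}$, use analytic/differentiable perturbation theory for the finite-rank Riesz projection $\chi_{[-a,a]}(\mathcal{A}_\lambda)$ and for $S(\lambda):=\chi_{[-a,a]}(\mathcal{A}_\lambda)\mathcal{A}_\lambda\chi_{[-a,a]}(\mathcal{A}_\lambda)$, and then differentiate the identity $S(\lambda)u(\lambda)=\mu(\lambda)u(\lambda)$ along an eigenbranch and pair with $u(\lambda_0)\in\ker(\mathcal{A}_{\lambda_0})$ to get $\mu'(\lambda_0)=\langle\dot{\mathcal{A}}_{\lambda_0}u(\lambda_0),u(\lambda_0)\rangle$. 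Once this derivative formula is in hand, together with the sign-counting bookkeeping of how many eigenvalues of $\chi_{[0,a]}$ are gained versus lost across $\lambda_0$, the rest is routine combinatorics on the definition \eqref{IndPre-align-specflow}.
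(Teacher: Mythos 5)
The paper itself does not prove Theorem~\ref{thm:crossing}: it is explicitly ``mentioned without proof'' and attributed to \cite{Robbin-Salamon}, \cite{SFLPejsachowicz} and \cite{Homoclinics}, so there is no in-text proof to compare yours against. That said, your sketch is the standard Robbin--Salamon style argument and its outline is correct: concatenate, localise to one crossing per subinterval using Lemma~\ref{IndPre-lemma-sflbasicprop} and Lemma~\ref{IndPre-lemma-N}, and compute the local jump of $\dim E_{[0,a]}(\mathcal{A}_\lambda)$ from the finite-dimensional reduction $\chi_{[-a,a]}(\mathcal{A}_\lambda)\,\mathcal{A}_\lambda\,\chi_{[-a,a]}(\mathcal{A}_\lambda)$. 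Your bookkeeping at the endpoints is also right once one notes $\dim\ker(\mathcal{A}_0)=m^+(\Gamma(\mathcal{A},0))+m^-(\Gamma(\mathcal{A},0))$ (regularity gives non-degeneracy), so that $m^+(\Gamma)-\dim\ker = -m^-(\Gamma)$, and dually at $\lambda=1$.

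Two points deserve more care than the sketch gives them. First, the step ``differentiate $S(\lambda)u(\lambda)=\mu(\lambda)u(\lambda)$ along an eigenbranch'' is not available for merely $C^1$ (non-analytic) Hermitian families, where individual eigenvalue branches need not be differentiable. The robust route is to trivialise the $C^1$-varying finite-rank range of the Riesz projection $Q(\lambda):=\chi_{[-a,a]}(\mathcal{A}_\lambda)$ by a $C^1$ family of isometries $V(\lambda):\ker(\mathcal{A}_{\lambda_0})\to\im Q(\lambda)$ (e.g.\ the Kato transformation function), set $T(\lambda):=V(\lambda)^\ast \mathcal{A}_\lambda V(\lambda)$, observe $T(\lambda_0)=0$ and $\dot T(\lambda_0)$ represents $\Gamma(\mathcal{A},\lambda_0)$, and then deduce from non-degeneracy of $\dot T(\lambda_0)$ that $T(\lambda)$ has signature $\pm\sgn\Gamma(\mathcal{A},\lambda_0)$ for $\lambda\gtrless\lambda_0$ small; this avoids differentiating eigenvalues altogether. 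Second, you should justify that $\lambda\mapsto\chi_{[-a,a]}(\mathcal{A}_\lambda)$ is in fact $C^1$ in $\mathcal{L}(H)$: Corollary~\ref{ClosedOps-cor-contproj} only gives continuity in the gap topology, and here one uses the stronger hypothesis $\mathcal{A}\in C^1(I,\mathcal{BF}^\textup{sa}(W,H))$ together with the resolvent identity $(\mu-\mathcal{A}_t)^{-1}-(\mu-\mathcal{A}_s)^{-1}=(\mu-\mathcal{A}_t)^{-1}(\mathcal{A}_t-\mathcal{A}_s)(\mu-\mathcal{A}_s)^{-1}$, reading $(\mu-\mathcal{A}_s)^{-1}\in\mathcal{L}(H,W)$. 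With those two refinements your argument is complete.
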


\noindent
Theorem \ref{thm:Sard} and Theorem \ref{thm:crossing} were firstly proved by Robbin and Salamon in \cite{Robbin-Salamon} under the additional assumption that $W$ is compactly embedded in $H$. Later Fitzpatrick, Pejsachowicz and Recht proved Theorem \ref{thm:crossing} for bounded operators, i.e. if $W=H$. Note that the latter case is not covered by Robbin and Salamon's theorem as the inclusion $W\hookrightarrow H$ is not compact if $W$ and $H$ are of infinite dimension. A proof of both Theorems in the generality stated above can be found in \cite{Homoclinics}. Finally, let us mention that crossing forms can also be defined for paths of operators having varying domains \cite[App.A]{Wehrheim}.

%%%%%%%%%%%%%%%%%%%%%%%%%%%%%%%%%%%%%%%%%%%%%%%%%%%%%%%%%%%%%%%%%%%%%%%%%%%%%%%%%%%%%%%%%%%%%%%%%%%%%%%%%%%%%%%%%%%%%%%%%%%%%%%%%%%%%%%%%%%%%%%%%%%%%%%%%%%%%%%%%%%%%%%%%%%%%%%%%%%%%%%%%%%%%%%%%%%%%%%%%%%%%%%%%%%%%%%%%%%%%%%%%%%%%%%%%%%%%%%%%%%%%%%%%%%%%%%%%%%%%%%%%%%%%%%%%%%%%%%%%%%%%%%%%%%%%%%%%%%%%%%%%%%%%%%%%%%%%%%%%%%%%%%%%%%%%%%%%%%%%%%%%%%%%%%%%%%%%%%%%%%%%%%%%%%%%%%%%%%%%%%%%%%%%%%%%%%%%%%%%%%%%%%%%%%%%%%%%%%%%%%%%%%%%%%%%%%%%%%%%%%%%%%%%%%%%%%%%%%%%%%%%%%%%%%%%%%%%%%%%%%%%%%%%%%%%%%%%%%%%%%%%%%%%%%%%%%%%%%%%%%%%%%%%%%%%%%%%%%%%%%%%%%%%%%%%%%%%%%%%%%%%%%%%%%%%%%%%%%%%%%%%%%%%%%%%%%%%%%%%%%%%%%%%%%%%%%%%%%%%%%%%%%%%%%%%%%%%%%%%%%%%%%%%%%%%%%%%%%%%%%%%%%%%%%%%%%%%%%%%%%%%%%%%%%%%%%%%%%%%%%%%%%%%%%%%%%%%%%%%%%%%%%%%%%%%%%%%%%%%%%%%%%%%%%%%%%%%%%%%%%%%%%%%%%%%%%%%%%%%%%%%%%%%%%%%%%%%%%%%%%%%%%%%%%%%%%%%%%%%%%%%%%%%%%%%%%%%%%%%%%%%%

\chapter{A Simple Example and a Glimpse at the Literature} 

\section{A Simple Example}

In this section we consider the Hilbert space $H=L^2[0,1]$ and the differential operators $\mathcal{A}_\lambda u=iu'$ on the domains

\[\mathcal{D}(\mathcal{A}_\lambda)=\{u\in H^1[0,1]:\, u(0)=e^{i\lambda}u(1)\}.\]
Our aim is to show that $\{\mathcal{A}_\lambda\}_{\lambda\in[-\pi,\pi]}$ is a continuous path in $\mathcal{CF}^\textup{sa}(H)$ and we want to compute its spectral flow.

\subsubsection*{The operators $\mathcal{A}_\lambda$ are selfadjoint and Fredholm}
We have already seen in Example \ref{example:selfadjointFredholm} that $\mathcal{A}_\lambda$ is selfadjoint for all $\lambda$. Moreover, we have shown that $\mathcal{A}_\lambda$ is surjective if $\lambda\neq 0$. As the kernel of $\mathcal{A}_\lambda$ is trivial in this case, we see that $\mathcal{A}_\lambda\in\mathcal{CF}^\textup{sa}(H)$ for $\lambda\neq 0$. Let us now consider the operator $\mathcal{A}_0$. Clearly, $\ker(\mathcal{A}_0)$ consists of all constant functions on $[0,1]$ and, moreover, it is readily seen that the image of $\mathcal{A}_0$ is 

\[\im(\mathcal{A}_0)=\{v\in L^2[0,1]:\,\int^1_0{v(t)\,dt}=0\},\]  
which is closed. Indeed, if $\{v_n\}_{n\in\mathbb{N}}$ is a sequence in $\im(\mathcal{A}_0)$ converging to some $v\in H$, then 

\[\left|\int^1_0{v(t)\,dt}\right|=\left|\int^1_0{v_n(t)-v(t)\,dt}\right|\leq\|v_n-v\|_H\rightarrow 0,\quad n\rightarrow\infty,\]
and so $v\in\im(\mathcal{A})$. Hence $\mathcal{A}_0\in\mathcal{CF}^\textup{sa}(H)$ by Lemma \ref{selfadjointperp}.\\
Let us note that there is another way to check that $\mathcal{A}_0$ is Fredholm. If we let $Y$ denote the subspace of $H$ consisting of all constant functions, then we have a direct sum decomposition $H=\im(\mathcal{A}_0)\oplus Y$. Indeed, the intersection of these spaces is trivial and, moreover, every function $v\in H$ can be written as 

\[v(t)=\underbrace{\left(v(t)-\int^1_0{v(t)\,dt}\right)}_{\in\im(\mathcal{A}_0)}+\underbrace{\int^1_0{v(t)\,dt}}_{\in Y}.\]
As the kernel of $\mathcal{A}_0$ is of finite dimension, this shows that $\mathcal{A}_{0}$ is Fredholm by Lemma \ref{codimclosed}.

\subsubsection*{The path $\{\mathcal{A}_\lambda\}_{\lambda\in[-\pi,\pi]}$ is continuous}

We define $T_\lambda:H\rightarrow H$ by $T_\lambda =(\mathcal{A}_\lambda+i)^{-1}$ and note that by \eqref{equivmetric} we need to show that $T_\lambda$ is a continuous path of bounded operators on $H$. To this aim, we will first compute $T_\lambda$ explicitly. If $v\in H$, then $u:=T_\lambda v$ is a solution of $iu'+iu=v$, and we obtain from standard methods for ordinary differential equations that 

\[u(t)=ce^{-t}-ie^{-t}\int^t_0{v(s)e^s\,ds},\quad t\in[0,1],\]
for some constant $c$ depending on $\lambda$ and $v$. As $u\in\mathcal{D}(\mathcal{A}_\lambda)$, we have to require that

\[c=u(0)=e^{i\lambda}u(1)=e^{i\lambda}\left(c e^{-1}-ie^{-1}\int^1_0{v(s) e^s\,ds} \right),\]
which implies that

\[c=-\frac{i}{e^{1-i\lambda}-1}\int^1_0{v(s) e^s\,ds}=:m(\lambda)\,\int^1_0{v(s) e^s\,ds},\]
and so we finally obtain

\[(T_\lambda v)(t)=m(\lambda)\,e^{-t}\int^1_0{v(s) e^s\,ds}-ie^{-t}\int^t_0{v(s) e^s\,ds}.\]
Clearly, $m:[-\pi,\pi]\rightarrow\mathbb{C}$ is a continuous function. If now $\lambda_0,\lambda_1\in[-\pi,\pi]$, then it is readily seen that

\begin{align*}
\|T_{\lambda_1} v-T_{\lambda_0}v\|_H\leq\frac{1}{\sqrt{2}}\sqrt{\cosh(2)-1}\,|m(\lambda_1)-m(\lambda_0)|\,\|v\|_H.
\end{align*}
Consequently, $\{T_\lambda\}_{\lambda\in[-\pi,\pi]}$ is a continuous path of bounded operators on $H=L^2[0,1]$ showing the continuity of $\mathcal{A}$ in $\mathcal{CF}^\textup{sa}(H)$.

\subsubsection*{The spectral flow of $\{\mathcal{A}_\lambda\}_{\lambda\in[-\pi,\pi]}$}
In order to compute the spectra of the operators $\mathcal{A}_\lambda$, we need to consider the differential equations

\[iu'-\mu u=0,\]
where $\mu$ is a real number. Clearly, the solution of this equation is given by $u(t)=c e^{-i\mu t}$, $t\in[0,1]$, for some constant $c$. As $u$ is an element of $\mathcal{D}(\mathcal{A}_\lambda)$, we get in addition

\[c=u(0)=e^{i\lambda}u(1)=c e^{i(\lambda-\mu)},\]
and so $\lambda-\mu=2k\pi$ for some $k\in\mathbb{Z}$. Consequently, we see that

\[\sigma(\mathcal{A}_\lambda)=\{2k\pi+\lambda:k\in\mathbb{Z}\}\]
and each element in $\sigma(\mathcal{A}_\lambda)$ is a simple eigenvalue, i.e. the corresponding eigenspace is one dimensional.\\
For the computation of the spectral flow, we set $\lambda_0=-\pi,\lambda_1=-\frac{\pi}{4}$, $\lambda_2=\frac{\pi}{4}$ and $\lambda_3=\pi$, as well as $a_1=\frac{\pi}{8}$, $a_2=\frac{\pi}{2}$ and $a_3=\frac{\pi}{8}$. Then $\dim E_{[0,a_i]}(\mathcal{A}_{\lambda_i})\neq 0$ only if $i=2$, and moreover $\dim E_{[0,a_i]}(\mathcal{A}_{\lambda_{i-1}})=0$ for all $i$. Hence

\[\sfl(\mathcal{A})=1.\]
Note that the path $\mathcal{A}$ is closed and hence we have found a non-trivial element in $\pi_1(\mathcal{CF}^\textup{sa}(H))$. This shows in particular that the spectral flow does not only depend on the endpoints of a path, for otherwise $\sfl(\mathcal{A})=0$ for every closed path $\mathcal{A}$.

%%%%%%%%%%%%%%%%%%%%%%%%%%%%%%%%%%%%%%%%%%%%%%%%%%%%%%%%%%%%%%%%%%%%%%%%%%%%%%%%%%%%%%%%%%%%%%%%%%%%%%%%%%%%%%%%%%%%%%%%%%%%%%%%%%%%%%%%%%%%%%%%%%%%%%%%%%%%%%%%%%%%%%%%%%%%%%%%%%%%%%%%%%%%%%%%%%%%%%%%%%%%%%%%%%%%%%%%%%%%%%%%%%%%%%%%%%%%%%%%%%%%%%%%%%%%%%%%%%%%%%%%%%%%%%%%%%%%%%%%%%%%%%%%%%%%%%%%%%%%%%%%%%%%%%%%%%%%%%%%%%%%%%%%%%%%%%%%%%%%%%%%%%%%%%%%%%%%%%%%%%

\section{A Glimpse at the Literature}
In this final section we provide some literature on the spectral flow and its applications. Let us emphasise that our selection of articles is highly subjective and not at all an attempt for an exhaustive overview of the existing literature. It reflects the author's personal interests and we apologise to everyone not mentioned in the following paragraphs.

\subsection*{Definition of the Spectral Flow}
The spectral flow was introduced by Atiyah, Patodi and Singer in \cite{AtiyahPatodi} for paths in $\mathcal{BF}^\textup{sa}(H)$, and consequently also on $\mathcal{CF}^\textup{sa}(H)$ with respect to the Riesz metric (cf. Section \ref{section-topology}). A more analytic approach was given by Floer in \cite{Floer} and later fully developed by Phillips in \cite{Phillips}. Alternative constructions can be found in \cite{Robbin-Salamon}, for paths in $\mathcal{BF}^\textup{sa}(W,H)$, and in \cite{SFLPejsachowicz} for paths in $\mathcal{BF}^\textup{sa}(H)$. That the spectral flow can even be defined for paths of unbounded selfadjoint Fredholm operators which are merely continuous in the gap topology was observed by Booss-Bavnbek, Lesch and Phillips in \cite{UnbSpecFlow}. Finally, let us mention that Wahl introduced in \cite{Charlotte} a topology on $\mathcal{CF}^\textup{sa}(H)$ which is even weaker than the gap topology and she extended the spectral flow to this setting.

\subsection*{Uniqueness of the Spectral Flow}
As discussed in Section \ref{section-properties}, one may ask which properties characterise the spectral flow. Theorem \ref{IndPre-theorem-specflowuniqu} was proved by Lesch in \cite{LeschSpecFlowUniqu} and the same source also discusses uniqueness for $\mathcal{CF}^\textup{sa}(H)$ with respect to the Riesz metric, for $\mathcal{BF}^\textup{sa}(W,H)$ and for $\mathcal{BF}^\textup{sa}(H)$. An alternative approach for paths in $\mathcal{BF}^\textup{sa}(H)$ can be found in \cite{PejsaUniqueness}. Robbin and Salamon proved in \cite{Robbin-Salamon} uniqueness for $\mathcal{BF}^\textup{sa}(W,H)$ if $W$ is compactly embedded in $H$.

\subsection*{Crossing Forms and Partial Signatures}
Crossing forms and partial signatures are convenient tools to compute spectral flows. Crossing forms were introduced by Robbin and Salamon in \cite{Robbin-Salamon}, and later adapted in \cite{SFLPejsachowicz} and \cite{Homoclinics}. For partial signatures we refer to \cite{PiccionePartial}, and their application in \cite{PiccioneNicolaescu}.

\subsection*{Global Analysis}
Beginning from Atiyah, Patodi and Singer's article \cite{AtiyahPatodi}, probably most applications of the spectral flow were obtained in global analysis and related fields. We are definitely unable to give an exhaustive list of literature, and so we just want to mention \cite{BoossDesuspension}, \cite{Getzler}, \cite{Bunke}, \cite{Twisted} and that many applications deal with boundary value problems for Dirac operators, e.g. \cite{BunkeII}, \cite{NicolaescuII}, \cite{LeschWoj}, \cite{BoossSpectral}, \cite{Kirk}, \cite{PiccionePartial}, \cite{Prokhorova} and \cite{GoroLesch}. Some of these works equate the spectral flow in this setting with Maslov indices for curves of Lagrangian subspaces in infinite dimensional symplectic spaces (cf. \cite{BoossZhu}). Let us finally mention the author's contributions in \cite{SpinorsIch} and with Bei in \cite{Francesco}.

\subsection*{Symplectic Analysis}
Floer used the spectral flow in connection with his celebrated homology groups in \cite{Floer}. As in global analysis, there are too many papers dealing with the spectral flow in symplectic analysis to give an exhaustive bibliography. Let us just mention \cite{Taubes}, \cite{Yoshida}, \cite{Robbin-Salamon}, \cite{Seidel}, \cite{Wehrheim} and \cite{Paternain}. Also Kronheimer and Mrowka's monograph \cite{Mrowka} contains a section on the spectral flow, to which we refer for further literature.

\subsection*{Mathematical Physics}
Vafa and Witten used the spectral flow in \cite{VaWi} to obtain bounds on eigenvalues of Dirac operators (cf. also \cite{Atiyah}). More recent applications can be found, e.g., in \cite{BoossLeschEspo}, \cite{Pushnitzki}, \cite{Yuri}, \cite{Katsnelson}, \cite{Hermann} and \cite{StrohmaierII}, among many others.

\subsection*{Higher Spectral Flows}
The spectral flow was generalised to several-parameter families under additional assumptions by Dai and Zhang in \cite{DaiZhangI} and \cite{DaiZhangII}, where it is no longer an integer but a $K$-theory class of the parameter space. A central element in this construction are \textit{spectral sections}, which were introduced by Melrose and Piazza in \cite{Paolo}.

\subsection*{Spectral Flow and Operator Algebras}
There is a vast literature on different extensions of the construction of the spectral flow to more general settings. As we are hardly acquainted with these topics, we just want to quote without further comments  \cite{Perera}, \cite{PhillipsII}, \cite{Carey} \cite{Leichtnam}, \cite{CharlotteIII}, \cite{CharlotteI}, \cite{SaraCharlotte} and \cite{Kaad}.

\subsection*{Nonlinear Analysis}
Fitzpatrick, Pejsachowicz and Recht discovered in \cite{SFLPejsachowicz} that the spectral flow can also be used in the bifurcation theory of critical points of strongly indefinite functionals on Hilbert spaces, for which they introduced an alternative construction of the spectral flow. Their result was later improved in \cite{BifJac} by Pejsachowicz in a joint work with the author. Recently, Alexander and Fitzpatrick revisited the spectral flow in bifurcation theory in \cite{Alexander}.

\subsection*{2nd order Partial Differential Equations}
Formulas for the spectral flow for paths of second order PDEs can be found, e.g., in the recent work \cite{Goffeng} by Goffeng and Schrohe, as well as the author's papers \cite{AleIch} with Portaluri and \cite{CompIch}.

\subsection*{Hamiltonian Systems}
The bifurcation theory developed in \cite{SFLPejsachowicz} was firstly applied to bifurcation of periodic orbits of Hamiltonian systems in \cite{SFLPejsachowiczII}. Further applications of the spectral flow in case of periodic orbits can be found in \cite{Robbin-Salamon}, \cite{Izydorek}, \cite{BifJac} and \cite{CalcVar}. The spectral flow for homoclinic solutions was considered in \cite{Hu}, \cite{Jacobo} and \cite{Homoclinics}. Also the works \cite{Pejsachowicz}, \cite{Zhu}, \cite{MussoPejsachowicz} and \cite{MorseK} on a generalisation of the Morse index theorem to geodesics in semi-Riemannian manifolds may be considered as an application to Hamiltonian Systems.

\thebibliography{9999999}
\bibitem[Ab01]{AlbertoBuch} A. Abbondandolo, \textbf{Morse theory for Hamiltonian systems}, Chapman \& Hall/CRC Research Notes in Mathematics, 425. Chapman \& Hall/CRC, Boca Raton, FL,  2001

\bibitem[AF16]{Alexander} J.C. Alexander, P.M. Fitzpatrick, \textbf{Spectral flow is a complete invariant for detecting bifurcation of critical points}, Trans. Amer. Math. Soc. \textbf{368},  2016, 4439--4459

\bibitem[AV05]{Väth} J. Appell, M. Väth, \textbf{Elemente der Funktionalanalysis}, Friedr. Vieweg \& Sohn Verlag, Wiesbaden, 2005

\bibitem[AS68]{ASThm} M.F. Atiyah, I.M. Singer, \textbf{The index of elliptic operators. I}, Ann. of Math. \textbf{87}, 1968, 484--530 

\bibitem[AS69]{AtiyahSinger} M.F. Atiyah, I.M. Singer, \textbf{Index Theory for Skew-Adjoint Fredholm Operators}, Inst. Hautes Etudes Sci. Publ. Math. \textbf{37}, 1969, 5-26 

\bibitem[APS76]{AtiyahPatodi} M.F. Atiyah, V.K. Patodi, I.M. Singer, \textbf{Spectral Asymmetry and Riemannian Geometry III}, Proc. Cambridge Philos. Soc. \textbf{79}, 1976, 71-99

\bibitem[At85]{Atiyah} M.F. Atiyah, \textbf{Eigenvalues of the Dirac operator}, Lecture Notes in Math. \textbf{1111}, Springer, Berlin, 1985, 251--260

\bibitem[AW11]{SaraCharlotte} S. Azzali, C. Wahl, \textbf{Spectral flow, index and the signature operator}, J. Topol. Anal. \textbf{3}, 2011, 37--67

\bibitem[BS15]{StrohmaierII} C. Bär, A. Strohmaier, \textbf{An index theorem for Lorentzian manifolds with compact spacelike Cauchy boundary}, arXiv:1506.00959 [math.DG]

\bibitem[BeW15]{Francesco} F. Bei, N. Waterstraat, \textbf{On the space of connections having non-trivial twisted harmonic spinors}, J. Math. Phys. \textbf{56}, 2015,  no. 9, 093505

\bibitem[BBB04]{BoossSpectral} D. Bleecker, B. Booss-Bavnbek, \textbf{Spectral invariants of operators of Dirac type on partitioned manifolds}, Aspects of boundary problems in analysis and geometry, 1--130, Oper. Theory Adv. Appl., 151, Birkhäuser, Basel,  2004

\bibitem[BW85]{BoossDesuspension} B. Booss, K. Wojciechowski, \textbf{Desuspension of Splitting Elliptic Symbols I}, Ann. Glob. Analysis and Geometry \textbf{3}, 1985, 337-383

\bibitem[BLP02]{UnbSpecFlowPhys} B. Booss-Bavnbek, M. Lesch, J. Phillips, \textbf{Spectral Flow of Paths of Self-Adjoint Fredholm Operators}, Nuclear Phys. B Proc. Suppl. \textbf{104}, 2002, 177--180 

\bibitem[BLP05]{UnbSpecFlow} B. Booss-Bavnbek, M. Lesch, J. Phillips, \textbf{Unbounded Fredholm Operators and Spectral Flow}, Canad. J. Math. \textbf{57},2005, 225--250

\bibitem[BEL07]{BoossLeschEspo} B. Booss-Bavnbek, G. Esposito, M. Lesch, \textbf{Quantum gravity: unification of principles and interactions, and promises of spectral geometry}, SIGMA Symmetry Integrability Geom. Methods Appl. \textbf{3},  2007

\bibitem[BZ13]{BoossZhu} B. Booss-Bavnbek, C. Zhu, \textbf{The Maslov index in weak symplectic functional analysis},  Ann. Global Anal. Geom. \textbf{44}, 2013, 283--318

\bibitem[Bu94]{Bunke} U. Bunke, \textbf{On the spectral flow of families of Dirac operators with constant
 symbol},  Math. Nachr.  \textbf{165},  1994, 191--203

\bibitem[Bu95]{BunkeII} U. Bunke, \textbf{On the gluing problem for the $\eta$-invariant}, J. Differential Geom. \textbf{41}, 1995, 397--448

\bibitem[CLM94]{Cappel} S.E. Cappel, R. Lee, E. Miller, \textbf{On the Maslov Index}, Comm. Pure Appl. Math. \textbf{47}, 1994, 121-186

\bibitem[CP98]{Carey} A. Carey, J. Phillips, \textbf{Unbounded Fredholm modules and spectral flow}, Canad. J. Math. \textbf{50}, 1998, 673--718

\bibitem[CH07]{Hu} C.-N. Chen, X. Hu, \textbf{Maslov index for homoclinic orbits of Hamiltonian systems}, Ann. Inst. H. Poincar\'e Anal. Non Lin\'eaire  \textbf{24},  2007, 589--603

\bibitem[CFP00]{PejsaUniqueness} E. Ciriza, P.M. Fitzpatrick, J. Pejsachowicz, \textbf{Uniqueness of Spectral Flow}, Math. Comp. Mod. \textbf{32}, 2000, 1495-1501

\bibitem[CL63]{Cordes} H.O. Cordes, J.P. Labrousse, \textbf{The invariance of the index in the metric space of closed
 operators}, J. Math. Mech. \textbf{12}, 1963, 693--719 

\bibitem[DZ96]{DaiZhangI} X. Dai, W. Zhang, \textbf{Higher spectral flow}, Math. Res. Lett. \textbf{3}, 1996, 93--102

\bibitem[DZ98]{DaiZhangII} X. Dai, W. Zhang, \textbf{Higher spectral flow}, J. Funct. Anal. \textbf{157},  1998, 432--469

\bibitem[EP07]{PiccioneNicolaescu} J.C.C. Eidam, P. Piccione, \textbf{A generalization of Yoshida-Nicolaescu theorem using partial signatures}, Math. Z. \textbf{255}, 2007, 357--372 

\bibitem[FPR99]{SFLPejsachowicz} P.M. Fitzpatrick, J. Pejsachowicz, L. Recht, 
\textbf{Spectral Flow and Bifurcation of Critical Points of Strongly-Indefinite
Functionals Part I: General Theory},
 J. Funct. Anal. \textbf{162}, 1999, 52--95

\bibitem[FPR00]{SFLPejsachowiczII} P.M. Fitzpatrick, J. Pejsachowicz, L. Recht, 
\textbf{Spectral Flow and Bifurcation of Critical Points of Strongly-Indefinite
Functionals Part II: Bifurcation of Periodic Orbits of Hamiltonian Systems},
 J. Differential Equations \textbf{163}, 2000, 18--40

\bibitem[Fl88]{Floer} A. Floer, \textbf{An Instanton Invariant for 3-Manifolds}, Com. Math. Phys. \textbf{118}, 1988, 215-240

\bibitem[FO91]{Furutani} K. Furutani, N. Otsuki, \textbf{Spectral Flow and Maslov Index Arising from Lagrangian Intersections}, Tokyo J. Math. \textbf{14}, 1991, 135-150

\bibitem[GLM11]{Yuri} F. Gesztesy, Y. Latushkin, K.A. Makarov, F. Sukochev, Y. Tomilov,\textbf{The index formula and the spectral shift function for relatively trace class perturbations}, Adv. Math. \textbf{227}, 2011, 319--420

\bibitem[Ge93]{Getzler} E. Getzler, \textbf{The Odd Chern Character in Cyclic Homology and Spectral Flow}, Topology \textbf{32}, 1993, 489-507

\bibitem[GPP04]{PiccionePartial} R. Giamb\'o, P. Piccione, A. Portaluri, \textbf{Computation of the Maslov index and the spectral flow via partial signatures}, C. R. Math. Acad. Sci. Paris \textbf{338}, 2004, 397--402

\bibitem[GS16]{Goffeng} M. Goffeng, E. Schrohe, \textbf{Spectral Flow of Exterior Landau-Robin Hamiltonians},  	arXiv:1505.06080 [math.SP]

\bibitem[GGK90]{GohbergClasses} I. Gohberg, S. Goldberg, M.A. Kaashoek, \textbf{Classes of Linear Operators Vol. I}, Operator Theory: Advances and Applications Vol. 49, Birkh\"auser, 1990

\bibitem[Go06]{Goldberg} S. Goldberg, \textbf{Unbounded linear operators - Theory and applications}, Reprint of the 1985 corrected edition, Dover Publications Inc., Mineola, NY,  2006

\bibitem[GL15]{GoroLesch} A. Gorokhovsky, M. Lesch, \textbf{On the spectral flow for Dirac operators with local boundary
 conditions}, Int. Math. Res. Not. IMRN \textbf{17}, 2015, 8036--8051

\bibitem[He92]{HeuserFunkAna} H. Heuser, \textbf{Funktionalanalysis}, 3. edition, B.G. Teubner Stuttgart, 1992

\bibitem[HS96]{Hislop} P.D. Hislop, I.M. Sigal, \textbf{Introduction to Spectral Theory}, Applied Math. Sciences \textbf{113}, Springer-Verlag, 1996

\bibitem[Iz99]{Izydorek} M. Izydorek, \textbf{Bourgin-Yang type theorem and its application to $\mathbb{Z}_2$-equivariant Hamiltonian systems}, Trans. Amer. Math. Soc. \textbf{351}, 1999, 2807--2831

\bibitem[JL09]{Twisted} M. Jardim, R.F. Le\~ao, \textbf{On the eigenvalues of the twisted Dirac operator}, J. Math. Phys. \textbf{50}, 2009 

\bibitem[Jo03]{Joachim} M. Joachim, \textbf{Unbounded Fredholm Operators and K-Theory}, Highdimensional Manifold Topology,World Sci. Publishing, 2003, 177-199

\bibitem[KL13]{Kaad} J. Kaad, M. Lesch, \textbf{Spectral flow and the unbounded Kasparov product}, Adv. Math. \textbf{248}, 2013, 495--530

\bibitem[Ka76]{Kato} T. Kato, \textbf{Perturbation Theory of Linear Operators}, Grundlehren der mathematischen Wissenschaften \textbf{132}, 2nd edition, Springer, 1976

\bibitem[KN12]{Katsnelson} M.I. Katsnelson, V.E. Nazaikinskii, \textbf{The Aharonov-Bohm effect for massless Dirac fermions and the spectral flow of Dirac-type operators with classical boundary conditions}, Theoret. and Math. Phys.  \textbf{172}, 2012, 1263--1277

\bibitem[KL04]{Kirk} P. Kirk, M. Lesch, \textbf{The $\eta$-invariant, Maslov index, and spectral flow for Dirac-type
 operators on manifolds with boundary}, Forum Math. \textbf{16}, 2004, 553--629

\bibitem[Kui65]{Kuiper} N.H. Kuiper, \textbf{The Homotopy Type of the Unitary Group of Hilbert Space}, Topology \textbf{3}, 1965, 19-30

\bibitem[KM07]{Mrowka} P. Kronheimer, T. Mrowka, \textbf{Monopoles and three-manifolds}, New Mathematical Monographs, 10. Cambridge University Press, Cambridge,  2007

\bibitem[LP03]{Leichtnam} E. Leichtnam, P. Piazza, \textbf{Dirac index classes and the noncommutative spectral flow}, J. Funct. Anal. \textbf{200},  2003, 348--400

\bibitem[LW96]{LeschWoj} M. Lesch, K.P. Wojciechowski, \textbf{On the $\eta$-invariant of generalized Atiyah-Patodi-Singer boundary value problems}, Illinois J. Math. \textbf{40}, 1996, 30--46

\bibitem[Le05]{LeschSpecFlowUniqu} M. Lesch, \textbf{The Uniqueness of the Spectral Flow on Spaces of Unbounded Self-adjoint Fredholm Operators}, Cont. Math. Amer. Math. Soc. \textbf{366}, 2005, 193-224

\bibitem[MP97]{Paolo} R.B. Melrose, P. Piazza, \textbf{Families of Dirac operators, boundaries and the b-calculus},  J. Differential Geom. \textbf{46}, 1997, 99--180

\bibitem[MP11]{Paternain} W.J. Merry, G.P. Paternain, \textbf{Index computations in Rabinowitz Floer homology}, J. Fixed Point Theory Appl. \textbf{10}, 2011, 87--111

\bibitem[MPP05]{Pejsachowicz} M. Musso, J. Pejsachowicz, A. Portaluri, \textbf{A Morse Index Theorem for Perturbed Geodesics on Semi-Riemannian Manifolds}, Topological Methods in Nonlinear Analysis \textbf{25}, 2005, 69-99 

\bibitem[MPP07]{MussoPejsachowicz} M. Musso, J. Pejsachowicz, A. Portaluri, \textbf{Morse Index and 
Bifurcation for p-Geodesics on Semi-Riemannian Manifolds}, ESAIM Control Optim. Calc. Var \textbf{13}, 2007, 598--621

\bibitem[Ni93]{Nicolaescuspectral} L. Nicolaescu, \textbf{The Maslov Index, the Spectral Flow and Splittings of Manifolds}, C.R. Acad. Sci. Paris \textbf{317}, 1993, 1515-1519

\bibitem[Ni95]{NicolaescuII} L. Nicolaescu, \textbf{The Maslov index, the spectral flow, and decompositions of
 manifolds}, Duke Math. J. \textbf{80}, 1995, 485--533

\bibitem[Ni97]{NicolaescuMem} L. Nicolaescu, \textbf{Generalized Symplectic Geometries and the Index of Families of Elliptic Problems}, Memoirs AMS \textbf{128}, 1997

\bibitem[Ni07]{Nicolaescu} L. Nicolaescu, \textbf{On the Space of Fredholm Operators}, An. Stiint. Univ. Al. I. Cuza Iasi. Mat. (N.S.)  53  (2007),  no. 2, 209--227

\bibitem[Pe08b]{Jacobo} J. Pejsachowicz, \textbf{Bifurcation of Homoclinics of Hamiltonian Systems}, Proc. Amer. Math. Soc. \textbf{136}, 2008, 2055--2065

\bibitem[PeW13]{BifJac} J. Pejsachowicz, N. Waterstraat, \textbf{Bifurcation of critical points for continuous families of $C^2$ functionals of Fredholm type}, J. Fixed Point Theory Appl. \textbf{13},  2013, 537--560, arXiv:1307.1043 [math.FA]

\bibitem[Per95]{Perera} V.S. Perera, \textbf{Real-valued spectral flow}, Multivariable operator theory (Seattle, WA, 1993), 307--318, Contemp. Math., 185, Amer. Math. Soc., Providence, RI,  1995

\bibitem[Ph96]{Phillips} J. Phillips, \textbf{Self-adjoint Fredholm Operators and Spectral Flow}, Canad. Math. Bull. \textbf{39}, 1996, 460-467

\bibitem[Ph97]{PhillipsII} J. Phillips, \textbf{Spectral flow in type I and II factors-a new approach}, Cyclic cohomology and noncommutative geometry (Waterloo, ON, 1995), 137--153, Fields Inst. Commun., 17, Amer. Math. Soc., Providence, RI,  1997

\bibitem[PW15]{AleIch} A. Portaluri, N. Waterstraat, \textbf{A Morse-Smale index theorem for indefinite elliptic systems and bifurcation}, J. Differential Equations \textbf{258}, 2015, 1715--1748

\bibitem[Pr13]{Prokhorova} M. Prokhorova, \textbf{The spectral flow for Dirac operators on compact planar domains with
 local boundary conditions}, Comm. Math. Phys. \textbf{322}, 2013, 385--414

\bibitem[Pu08]{Pushnitzki} A. Pushnitski, \textbf{The spectral flow, the Fredholm index, and the spectral shift
 function}, Spectral theory of differential operators,  141--155, Amer. Math. Soc. Transl. Ser. 2, 225, Amer. Math. Soc., Providence, RI,  2008

\bibitem[RS95]{Robbin-Salamon} J. Robbin, D. Salamon,\textbf{The Spectral Flow and the Maslov Index}, Bull. London Math. Soc \textbf{27}, 1995, 1-33

\bibitem[Ru91]{Rudin} W. Rudin, \textbf{Functional analysis}, Second edition, International Series in Pure and Applied Mathematics, McGraw-Hill Inc., New York, 1991

\bibitem[SZ92]{Salamon-Zehnder} D. Salamon, E. Zehnder, \textbf{Morse Theory for Periodic Solutions of Hamiltonian Systems and the Maslov Index}, Comm. Pure Appl. Math. \textbf{45}, 1992, 1303--1360

\bibitem[SW08]{Wehrheim} D. Salamon, K. Wehrheim, \textbf{Instanton Floer homology with Lagrangian boundary conditions}, Geom. Topol. \textbf{12}, 2008, 747--918.

\bibitem[SB14]{Hermann} H. Schulz-Baldes, \textbf{Signature and spectral flow of J-unitary $S^1$-Fredholm operators}, Integral Equations Operator Theory  \textbf{78}, 2014, 323--374

\bibitem[Se08]{Seidel} P. Seidel, \textbf{Fukaya categories and Picard-Lefschetz theory}, Zurich Lectures in Advanced Mathematics, European Mathematical Society (EMS), Zürich,  2008

\bibitem[Ta90]{Taubes} C.H. Taubes, \textbf{Casson's invariant and gauge theory}, J. Differential Geom. \textbf{31},  1990, 547--599

\bibitem[Te09]{Teschl} G. Teschl, \textbf{Mathematical Methods in Quantum Mechanics}, Graduate Studies in Mathematics \textbf{99}, AMS, 2009

\bibitem[VW84]{VaWi} C. Vafa, E. Witten, \textbf{Eigenvalue Inequalities for Fermions in Gauge Theories}, Commun. Math. Phys. \textbf{95}, 1984, 257--276

\bibitem[W07a]{CharlotteII} C. Wahl, \textbf{Spectral flow as winding number and integral formulas}, Proc. Amer. Math. Soc.  \textbf{135}, 2007, 4063--4073
		
\bibitem[W07b]{CharlotteIII} C. Wahl, \textbf{On the noncommutative spectral flow}, J. Ramanujan Math. Soc.  \textbf{22}, 2007, 135--187

\bibitem[W08a]{Charlotte} C. Wahl, \textbf{A new topology on the space of unbounded selfadjoint operators, K-theory and spectral flow}, $C^\ast$-algebras and elliptic theory II, 297--309, Trends Math., Birkhäuser, Basel,  2008

\bibitem[W08b]{CharlotteI} C. Wahl, \textbf{Spectral flow and winding number in von Neumann algebras}, J. Inst. Math. Jussieu  \textbf{7}, 2008, 589--619

\bibitem[Wa12]{MorseK} N. Waterstraat, \textbf{A K-theoretic proof of the Morse index theorem in semi-Riemannian
 geometry}, Proc. Amer. Math. Soc. \textbf{140}, 2012, 337--349

\bibitem[Wa13]{SpinorsIch} N.~Waterstraat, \textbf{A remark on the space of metrics having non-trivial harmonic spinors}, J.~Fixed Point Theory Appl. \textbf{13}, 2013, 143--149, arXiv:1206.0499 [math.SP]

\bibitem[Wa15a]{CalcVar} N. Waterstraat, \textbf{A family index theorem for periodic Hamiltonian systems and bifurcation}, Calc. Var. Partial Differential Equations \textbf{52}, 2015, 727--753, arXiv:1305.5679 [math.DG]

\bibitem[Wa15b]{Homoclinics} N. Waterstraat, \textbf{Spectral flow, crossing forms and homoclinics of Hamiltonian systems}, Proc. Lond. Math. Soc. (3) \textbf{111}, 2015, 275--304, arXiv:1406.3760 [math.DS] 

\bibitem[Wa16]{CompIch} N. Waterstraat, \textbf{Spectral flow and bifurcation for a class of strongly indefinite elliptic systems}, arXiv:1512.04109 [math.AP]

\bibitem[Wei80]{Weidmann} J. Weidmann, \textbf{Linear Operators in Hilbert Spaces}, Graduate Texts in Mathematics \textbf{68}, Springer-Verlag, 1980

\bibitem[We05]{Werner} D. Werner, \textbf{Funktionalanalysis}, 5. Auflage, Springer, 2005

\bibitem[Yo91]{Yoshida} T. Yoshida, \textbf{Floer homology and splittings of manifolds}, Ann. of Math. (2) \textbf{134},  1991, 277--323

\bibitem[Yo95]{Yosida} K. Yosida, \textbf{Functional Analysis}, Classics in Mathematics, Springer-Verlag, 6th edition, 1995

\bibitem[Zh06]{Zhu} C. Zhu, \textbf{A generalized Morse index theorem}, Analysis, geometry and topology of elliptic operators, 493--540, World Sci. Publ., Hackensack, NJ,  2006

\vspace{1cm}
Nils Waterstraat\\
School of Mathematics,\\
Statistics \& Actuarial Science\\
University of Kent\\
Canterbury\\
Kent CT2 7NF\\
UNITED KINGDOM\\
E-mail: n.waterstraat@kent.ac.uk

\end{document}